\documentclass[11pt]{article}
\usepackage{subfiles}
\usepackage{amssymb}
\usepackage{pifont}

\usepackage{amsfonts}
\usepackage{amsmath, amsthm}
\usepackage{amssymb, amsxtra}

\usepackage{graphicx}

\usepackage{amscd}
\usepackage{multicol}
\usepackage{epstopdf}
\usepackage{color}
\usepackage{xparse}

\usepackage{epsf}
\usepackage{float}
\usepackage{extarrows}
\usepackage[a4paper, text={.8\paperwidth,.83\paperheight}, ratio=1:1]{geometry}
\usepackage[format=hang,font=small,textfont=it]{caption}
\usepackage{authblk}

\usepackage[english]{babel}
\usepackage{latexsym}
\usepackage{nicefrac}

\usepackage{tikz}
\usetikzlibrary{calc}
\usepackage[tight]{subfigure}
\usepackage{afterpage}
\usepackage{hyperref}

\usepackage{color}
\definecolor{greenbean}{RGB}{199,237,204}


\newtheorem{thm}{Theorem}[section]
\newtheorem{lemma}[thm]{Lemma}
\newtheorem{prop}[thm]{Proposition}

\newtheorem{Qst}{Question}[section]
\newtheorem{conjeceture}{Conjecture}[section]

\theoremstyle{definition}
\newtheorem{Def}[thm]{Definition}
\newtheorem{eg}{Example}[section]

\def\C{{\mathbb C}}

\def\P{{\mathbb P}}

\def \ta{\tau}

\def \ta1{\tau_1}

\def \G{\Gamma}

\newcommand\Gal[1]{{{#1}_{\operatorname{Gal}}}}

\newcommand\set[1]{{\{{#1}\}}}

\setlength{\parindent}{2em} 


\newcommand{\Xgal}{X_{\text{Gal}}}

\newcommand{\ug}[1]{\Gamma_#1}
\newcommand{\Ggal}{\widetilde{G}}

\makeatletter
\newcommand{\uGammaSq}[2]{\begin{equation}\label{#1}
	\ug{#2}^2\uGammaSqChecknextarg}
\newcommand{\uGammaSqChecknextarg}{\@ifnextchar\bgroup{\uGammaSqGobblenextarg}{ = e,
		\end{equation} }}
\newcommand{\uGammaSqGobblenextarg}[1]{ = \ug{#1}^2\@ifnextchar\bgroup{\uGammaSqGobblenextarg}{  = e
\end{equation}}}
\makeatother



\newcommand{\mbb}[1]{\mathbb{#1}}
\newcommand{\todo}[1]{\textcolor{red}{[TODO: #1]}}

\newcommand{\ubegineq}[1]{\begin{equation}\label{#1}}
\newcommand{\uendeq}{\end{equation}}
\newcommand{\trip}[2] {{\langle \ug{#1}, \ug{#2} \rangle}}

\newcommand{\comm}[2] {{[\ug{#1}, \ug{#2}]}}

\newcommand{\uOnePoint}[3]{%
\noindent
Vertex $ #1 $ is a $1$-point that gives rise to braid  $Z_{#2 \; #2^{'}}$ and to the following relation in $G$: %
\begin{equation}\label{#3} %
\Gamma_#2=\Gamma_#2'. %
\end{equation} %
}


\newcommand{\uTwoPoint}[4]{
Vertex $ #1 $ is a $2$-point. The braid monodromy corresponding to this point is:
\begin{equation}\label{#4-1}
\widetilde{\Delta_#1} = (Z_{#3\; #3'})^{Z_{#2 \; #2', #3}^2}\cdot Z_{#2 \; #2',#3}^3. \nonumber
\end{equation}
$\widetilde{\Delta_#1}$ gives rise to the following relations:
\begin{equation}\label{#4-2}
\langle \Gamma_#2, \Gamma_#3 \rangle = \langle \Gamma_#2', \Gamma_#3 \rangle = \langle \Gamma_#2^{-1}\Gamma_#2'\Gamma_#2, \Gamma_#3 \rangle = e
\end{equation}
\begin{equation}\label{#4-3}
\Gamma_#3' = \Gamma_#3\Gamma_#2'\Gamma_#2\Gamma_#3\Gamma_#2^{-1}\Gamma_#2'^{-1}\Gamma_#3^{-1}.
\end{equation}
}

\newcommand{\uThreePointOuter}[5]{
Vertex $ #1 $ is an outer $3$-point. The braid monodromy corresponding to this $3$-point is:
\begin{eqnarray} \label{#5-1}
{\widetilde{\Delta_#1}} = Z_{#2 \; #2', #4 \; #4'}^2 \cdot Z_{#2',#3\;#3'}^3\cdot(Z_{#2\;#2'})^{Z_{#2',#3\;#3'}^2}\cdot(Z_{#3\;#3',#4}^3)^{Z_{#2',#3\;#3'}^2}
\cdot(Z_{#4\;#4'})^{Z_{#3\;#3',#4}^2Z_{#2',#3\;#3'}^2}. \nonumber
\end{eqnarray}
${\widetilde{\Delta_#1}}$ thus gives rise to the following relations:
\begin{equation}\label{#5-2}
\langle\Gamma_#2',\Gamma_#3\rangle=\langle\Gamma_#2',\Gamma_#3'\rangle=\langle\Gamma_#2',\Gamma_#3^{-1}\Gamma_#3'\Gamma_#3\rangle=e
\end{equation}
\begin{equation}\label{#5-3}
\Gamma_#2=\Gamma_#3'\Gamma_#3\Gamma_#2'\Gamma_#3^{-1}{\Gamma_#3'}^{-1}
\end{equation}
\begin{equation}\label{#5-4}
\begin{split}
\langle\Gamma_#4,\Gamma_#3'\Gamma_#3\Gamma_#2'\Gamma_#3{\Gamma_#2'}^{-1}\Gamma_#3^{-1}{\Gamma_#3'}^{-1}\rangle=&
\langle\Gamma_#4,\Gamma_#3'\Gamma_#3\Gamma_#2'\Gamma_#3'{\Gamma_#2'}^{-1}\Gamma_#3^{-1}{\Gamma_#3'}^{-1}\rangle= \\
=& \langle\Gamma_#4,\Gamma_#3'\Gamma_#3\Gamma_#2'\Gamma_#3^{-1}\Gamma_#3'\Gamma_#3{\Gamma_#2'}^{-1}\Gamma_#3^{-1}{\Gamma_#3'}^{-1}\rangle=e
\end{split}
\end{equation}
\begin{equation}\label{#5-5}
\Gamma_#4'=\Gamma_#4\Gamma_#3'\Gamma_#3\Gamma_#2'\Gamma_#3'\Gamma_#3{\Gamma_#2'}^{-1}\Gamma_#3^{-1}{\Gamma_#3'}^{-1}\Gamma_#4
\Gamma_#3'\Gamma_#3\Gamma_#2'\Gamma_#3^{-1}{\Gamma_#3'}^{-1}{\Gamma_#2'}^{-1}\Gamma_#3^{-1}{\Gamma_#3'}^{-1}\Gamma_#4^{-1}
\end{equation}
\begin{equation}\label{#5-6}
[\Gamma_#2,\Gamma_#4]=[\Gamma_#2,\Gamma_#4']=[\Gamma_#2',\Gamma_#4]=[\Gamma_#2',\Gamma_#4']=e.
\end{equation}
}

\newcommand{\uThreePointInner}[5]{
Vertex $ #1 $ is an inner $3$-point. Its braid monodromy is
\begin{eqnarray}
\begin{split} \label{#5-1}
{\widetilde{\Delta_#1}} = & Z_{#2',#3\;#3'}^3 \cdot Z_{#2,#3\;#3'}^3 \cdot (Z_{#3\;#4'})^{Z_{#3\;#3'}^2 Z_{#2',#3\;#3'}^2} \cdot (Z_{#3'\;#4})^{Z_{#3\;#3'}^2 Z_{#4\;#4'}^2 Z_{#2',#3\;#3'}^2} \cdot \\
& (Z_{#3\;#4'})^{Z_{#3\;#3'}^2 Z_{#2,#3\;#3'}^2} \cdot (Z_{#3'\;#4})^{Z_{#3\;#3'}^2 Z_{#4\;#4'}^2 Z_{#2,#3\;#3'}^2}. \nonumber
\end{split}
\end{eqnarray}
These braids give rise to the following relations in $G$:
\begin{equation}\label{#5-1}
\langle\Gamma_#2',\Gamma_#3\rangle=\langle\Gamma_#2',\Gamma_#3'\rangle=\langle\Gamma_#2',\Gamma_#3^{-1}\Gamma_#3'\Gamma_#3\rangle=e
\end{equation}
\begin{equation}\label{#5-2}
\langle\Gamma_#2,\Gamma_#3\rangle=\langle\Gamma_#2,\Gamma_#3'\rangle=\langle\Gamma_#2,\Gamma_#3^{-1}\Gamma_#3'\Gamma_#3\rangle=e
\end{equation}
\begin{equation}\label{#5-3}
\Gamma_#3'\Gamma_#3\Gamma_#2'\Gamma_#3\Gamma_#2'^{-1}\Gamma_#3^{-1}{\Gamma_#3'}^{-1} = \Gamma_#4'
\end{equation}
\begin{equation}\label{#5-4}
\Gamma_#3'\Gamma_#3\Gamma_#2'\Gamma_#3'\Gamma_#2'^{-1}\Gamma_#3^{-1}{\Gamma_#3'}^{-1} = \Gamma_#4'\Gamma_#4{\Gamma_#4'}^{-1}
\end{equation}
\begin{equation}\label{#5-5}
\Gamma_#3'\Gamma_#3\Gamma_#2\Gamma_#3\Gamma_#2^{-1}\Gamma_#3^{-1}{\Gamma_#3'}^{-1} = \Gamma_#4'
\end{equation}
\begin{equation}\label{#5-6}
\Gamma_#3'\Gamma_#3\Gamma_#2\Gamma_#3'\Gamma_#2^{-1}\Gamma_#3^{-1}{\Gamma_#3'}^{-1} = \Gamma_#4'\Gamma_#4\Gamma_#4'^{-1}.
\end{equation}
}

\newcommand{\uFourPointInner}[6]{
	Vertex $#1$ is an inner $4$-point.
	Its braid monodromy appears in \cite{Ogata}.
	The corresponding relations in $G$ are:
	\begin{equation}\label{#6-1} \langle\Gamma_#2',\Gamma_#3\rangle=\langle\Gamma_#2',\Gamma_#3'\rangle=\langle\Gamma_#2',\Gamma_#3^{-1}\Gamma_#3'\Gamma_#3\rangle=e
	\end{equation}
	\begin{equation}\label{#6-2} \langle\Gamma_#4,\Gamma_#5\rangle=\langle\Gamma_#4',\Gamma_#5\rangle=\langle\Gamma_#4^{-1}\Gamma_#4'\Gamma_#4,\Gamma_#5\rangle=e
	\end{equation}
	\begin{equation}\label{#6-3}
	[\Gamma_#3'\Gamma_#3\Gamma_#2'\Gamma_#3^{-1}{\Gamma_#3'}^{-1},\Gamma_#5] = e
	\end{equation}
	\begin{equation}\label{#6-4} [\Gamma_#3'\Gamma_#3\Gamma_#2'\Gamma_#3^{-1}{\Gamma_#3'}^{-1},\Gamma_#4^{-1}{\Gamma_#4'}^{-1}\Gamma_#5^{-1}\Gamma_#5'\Gamma_#5\Gamma_#4'\Gamma_#4] = e
	\end{equation}
	\begin{equation}\label{#6-5} \langle\Gamma_#2,\Gamma_#3\rangle=\langle\Gamma_#2,\Gamma_#3'\rangle=\langle\Gamma_#2,\Gamma_#3^{-1}\Gamma_#3'\Gamma_#3\rangle=e
	\end{equation}
	\begin{equation}\label{#6-6} \langle\Gamma_#4,\Gamma_#5^{-1}\Gamma_#5'\Gamma_#5\rangle=\langle\Gamma_#4',\Gamma_#5^{-1}\Gamma_#5'\Gamma_#5\rangle=\langle\Gamma_#4^{-1}
\Gamma_#4'\Gamma_#4,\Gamma_#5^{-1}\Gamma_#5'\Gamma_#5\rangle=e
	\end{equation}
	\begin{equation}\label{#6-7}
	[\Gamma_#3'\Gamma_#3\Gamma_#2\Gamma_#3^{-1}{\Gamma_#3'}^{-1},\Gamma_#5^{-1}\Gamma_#5'\Gamma_#5] = e
	\end{equation}
	\begin{equation}\label{#6-8}
	[\Gamma_#3'\Gamma_#3\Gamma_#2\Gamma_#3^{-1}{\Gamma_#3'}^{-1}, \Gamma_#4^{-1}{\Gamma_#4'}^{-1}\Gamma_#5^{-1}{\Gamma_#5'}^{-1}\Gamma_#5\Gamma_#5'\Gamma_#5\Gamma_#4'\Gamma_#4] = e
	\end{equation}
	\begin{equation}\label{#6-9}
	\Gamma_#3'\Gamma_#3\Gamma_#2'\Gamma_#3\Gamma_#2'^{-1}\Gamma_#3^{-1}{\Gamma_#3'}^{-1} = \Gamma_#5\Gamma_#4'\Gamma_#5^{-1}
	\end{equation}
	\begin{equation}\label{#6-10}
	\Gamma_#3'\Gamma_#3\Gamma_#2'\Gamma_#3'\Gamma_#2'^{-1}\Gamma_#3^{-1}{\Gamma_#3'}^{-1} = \Gamma_#5\Gamma_#4'\Gamma_#4{\Gamma_#4'}^{-1}\Gamma_#5^{-1}
	\end{equation}
	\begin{equation}\label{#6-11}
	\Gamma_#3'\Gamma_#3\Gamma_#2\Gamma_#3\Gamma_#2^{-1}\Gamma_#3^{-1}{\Gamma_#3'}^{-1} = \Gamma_#5^{-1}\Gamma_#5'\Gamma_#5\Gamma_#4'\Gamma_#5^{-1}{\Gamma_#5'}^{-1}\Gamma_#5
	\end{equation}
	\begin{equation}\label{#6-12}
	\Gamma_#3'\Gamma_#3\Gamma_#2\Gamma_#3'\Gamma_#2^{-1}\Gamma_#3^{-1}{\Gamma_#3'}^{-1} = \Gamma_#5^{-1}\Gamma_#5'\Gamma_#5\Gamma_#4'\Gamma_#4\Gamma_#4'^{-1}\Gamma_#5^{-1}{\Gamma_#5'}^{-1}\Gamma_#5.
	\end{equation}
}


\newcommand{\uFourPointOuter}[6]{
	Vertex $ #1 $ is a $4$-point. Its braid monodromy is
	\begin{align*}\label{#6-1}
	\widetilde{\Delta}_{#1} &=  Z_{#4\; #4', #5}^3 \cdot (Z_{#2\; #2', #5}^2)^{Z_{#3\; #3', #5}^{-2}} \cdot Z_{#3\; #3', #5}^2 \cdot {\bar{Z}}_{#2\; #2', #5'}^2 \cdot {\bar{Z}}_{#3\; #3', #5'}^2 \cdot (Z_{#5\; #5'})^{Z_{#4\; #4', #5}^2} \cdot Z_{#2', #3 \; #3'}^3 \cdot
	(Z_{#2\; #2'})^{Z_{#2', #3\; #3'}^2} \cdot \\
	& (Z_{#3\; #3', #4}^3)^{Z_{#2', #3\; #3'}^2 Z_{#4\; #4', #5}^2} \cdot
	(Z_{#4\; #4'})^{Z_{#3\; #3', #4}^2 Z_{#2', #3\; #3'}^2 Z_{#4\; #4', #5}^2} \cdot (Z_{#2\; #2', #4\; #4'}^2)^{Z_{#4\; #4' , #5}^2}.
	\end{align*}
	It gives rise to the following relations:
	\begin{equation}\label{#6-2}
		\trip{#4}{#5} = \trip{#4'}{#5} = \langle \ug{#4}^{-1}\ug{#4'}\ug{#4}, \ug{#5} \rangle =e
	\end{equation}
	\begin{equation}\label{#6-4}
	[\ug{#3'}\ug{#3}\ug{#2}\ug{#3}^{-1}\ug{#3'}^{-1}, \ug{#5}] = [\ug{#3'}\ug{#3}\ug{#2'}\ug{#3}^{-1}\ug{#3'}^{-1}, \ug{#5} ] = e
	\end{equation}
	\begin{equation}\label{#6-5}
		\comm{#3}{#5} = \comm{#3'}{#5} = e
	\end{equation}
	\begin{equation}\label{#6-6}
	[\ug{#4'}\ug{#4}\ug{#3'}\ug{#3}\ug{#2}\ug{#3}^{-1}\ug{#3}'^{-1}\ug{#4}^{-1}\ug{#4'}^{-1}, \ug{#5}^{-1}\ug{#5'}\ug{#5} ] = 	 [\ug{#4'}\ug{#4}\ug{#3'}\ug{#3}\ug{#2'}\ug{#3}^{-1}\ug{#3}'^{-1}\ug{#4}^{-1}\ug{#4'}^{-1}, \ug{#5}^{-1}\ug{#5'}\ug{#5} ] = e
	\end{equation}
	\begin{equation}\label{#6-7}
	[\ug{#4'}\ug{#4}\ug{#3}\ug{#4}^{-1}\ug{#4'}^{-1}, \ug{#5}^{-1}\ug{#5'}\ug{#5}] = 	 [\ug{#4'}\ug{#4}\ug{#3'}\ug{#4}^{-1}\ug{#4'}^{-1}, \ug{#5}^{-1}\ug{#5'}\ug{#5}] = e
	\end{equation}
	\begin{equation}\label{#6-8}
	\ug{#5}\ug{#4'}\ug{#4}\ug{#5}\ug{#4}^{-1}\ug{#4'}^{-1}\ug{#5}^{-1} = \ug{#5'}
	\end{equation}
	\begin{equation}\label{#6-9}
	\trip{#2'}{#3} = \trip{#2'}{#3'} = \langle \ug{#2'}, \ug{#3}^{-1}\ug{#3'}\ug{#3} \rangle =e
	\end{equation}
	\begin{equation}\label{#6-10}
	\ug{#2} = \ug{#3'}\ug{#3}\ug{#2'}\ug{#3}^{-1}\ug{#3'}^{-1}
	\end{equation}
	\begin{equation}\label{#6-11}
	\begin{split}
	\langle \ug{#3'}\ug{#3}\ug{#2'} \ug{#3} \ug{#2'}^{-1}\ug{#3}^{-1}\ug{#3'}^{-1}, \ug{#5}\ug{#4}\ug{#5}^{-1} \rangle = \langle \ug{#3'}\ug{#3}\ug{#2'} \ug{#3'} \ug{#2'}^{-1}\ug{#3}^{-1}\ug{#3'}^{-1}, \ug{#5}\ug{#4}\ug{#5}^{-1} \rangle = \\
	=  \langle \ug{#3'}\ug{#3}\ug{#2'} \ug{#3}^{-1}\ug{#3'}\ug{#3} \ug{#2'}^{-1}\ug{#3}^{-1}\ug{#3'}^{-1}, \ug{#5}\ug{#4}\ug{#5}^{-1} \rangle = e
	\end{split}
	\end{equation}
	\begin{equation}\label{#6-12} \ug{#3'}\ug{#3}\ug{#2'}\ug{#3}^{-1}\ug{#3'}^{-1}\ug{#2'}^{-1}\ug{#3}^{-1}\ug{#3'}^{-1}\ug{#5}\ug{#4}^{-1}
\ug{#4'}\ug{#4}\ug{#5}^{-1}\ug{#3'}\ug{#3}\ug{#2'}\ug{#3'}\ug{#3}\ug{#2'}^{-1}\ug{#3}^{-1}\ug{#3'}^{-1} = \ug{#5}\ug{#4}\ug{#5}^{-1}
	\end{equation}
	\begin{equation}\label{#6-13}
	[\ug{#2}, \ug{#5}\ug{#4}\ug{#5}^{-1}] = [\ug{#2'}, \ug{#5}\ug{#4}\ug{#5}^{-1}] = [\ug{#2}, \ug{#5}\ug{#4'}\ug{#5}^{-1}] =
[\ug{#2'}, \ug{#5}\ug{#4'}\ug{#5}^{-1}] = e.
	\end{equation}
} 

\newcommand{\uFivePointInner}[7]{
Vertex $ #1 $ is a $5$-point. According to \cite[Corollary\;2.5]{FT08},
the braid monodromy corresponding to it yields the following relations in $G$:
\begin{equation}\label{#7-1}
[\Gamma_#4,\Gamma_#5]=[\Gamma_#4',\Gamma_#5]=e
\end{equation}
\begin{equation}\label{#7-2}
\langle\Gamma_#5',\Gamma_#6\rangle=\langle\Gamma_#5',\Gamma_#6'\rangle=\langle\Gamma_#5',\Gamma_#6^{-1}\Gamma_#6'\Gamma_#6\rangle=e
\end{equation}
\begin{equation}\label{#7-3}
\langle\Gamma_#3,\Gamma_#5\rangle=\langle\Gamma_#3',\Gamma_#5\rangle=\langle\Gamma_#3^{-1}\Gamma_#3'\Gamma_#3,\Gamma_#5\rangle=e
\end{equation}
\begin{equation}\label{#7-4}
[\Gamma_#5\Gamma_#4\Gamma_#5^{-1},\Gamma_#6'\Gamma_#6\Gamma_#5'\Gamma_#6^{-1}{\Gamma_#6'}^{-1}]=
[\Gamma_#5\Gamma_#4'\Gamma_#5^{-1},\Gamma_#6'\Gamma_#6\Gamma_#5'\Gamma_#6^{-1}{\Gamma_#6'}^{-1}]=e
\end{equation}
\begin{equation}\label{#7-5}
\Gamma_#5\Gamma_#3'\Gamma_#3\Gamma_#5\Gamma_#3^{-1}{\Gamma_#3'}^{-1}\Gamma_#5^{-1}=\Gamma_#6'\Gamma_#6\Gamma_#5'\Gamma_#6^{-1}{\Gamma_#6'}^{-1}
\end{equation}
\begin{equation}\label{#7-6}
[\Gamma_#2,\Gamma_#5]=[\Gamma_#2',\Gamma_#5]=
[\Gamma_#2,\Gamma_#6'\Gamma_#6\Gamma_#5'\Gamma_#6^{-1}{\Gamma_#6'}^{-1}]=[\Gamma_#2',\Gamma_#6'\Gamma_#6\Gamma_#5'\Gamma_#6^{-1}{\Gamma_#6'}^{-1}]=e
\end{equation}
\begin{equation}\label{#7-7}
\langle\Gamma_#2',\Gamma_#3\rangle=\langle\Gamma_#2',\Gamma_#3'\rangle=\langle\Gamma_#2',\Gamma_#3^{-1}\Gamma_#3'\Gamma_#3\rangle=e
\end{equation}
\begin{equation}\label{#7-8}
\langle\Gamma_#5\Gamma_#4\Gamma_#5^{-1},\Gamma_#6\rangle=\langle\Gamma_#5\Gamma_#4'\Gamma_#5^{-1},\Gamma_#6\rangle=\langle\Gamma_#5\Gamma_#4^{-1}\Gamma_#4'\Gamma_#4\Gamma_#5^{-1},\Gamma_#6\rangle=e
\end{equation}
\begin{equation}\label{#7-9}
\Gamma_#3'\Gamma_#3\Gamma_#2'\Gamma_#3\Gamma_#2'^{-1}\Gamma_#3^{-1}{\Gamma_#3'}^{-1}=\Gamma_#5^{-1}\Gamma_#6\Gamma_#5\Gamma_#4'
\Gamma_#5^{-1}\Gamma_#6^{-1}\Gamma_#5
\end{equation}
\begin{equation}\label{#7-10}
\Gamma_#3'\Gamma_#3\Gamma_#2'\Gamma_#3'\Gamma_#2'^{-1}\Gamma_#3^{-1}{\Gamma_#3'}^{-1}=
\Gamma_#5^{-1}\Gamma_#6\Gamma_#5\Gamma_#4'\Gamma_#4\Gamma_#4'^{-1}\Gamma_#5^{-1}\Gamma_#6^{-1}\Gamma_#5
\end{equation}
\begin{equation}\label{#7-11}
[\Gamma_#3'\Gamma_#3\Gamma_#2'\Gamma_#3^{-1}{\Gamma_#3'}^{-1},\Gamma_#5^{-1}\Gamma_#6\Gamma_#5]=e
\end{equation}
\begin{equation}\label{#7-12}
[\Gamma_#4'\Gamma_#4\Gamma_#3'\Gamma_#3\Gamma_#2'\Gamma_#3^{-1}{\Gamma_#3'}^{-1}\Gamma_#4^{-1}\Gamma_#4'^{-1},
\Gamma_#5^{-1}\Gamma_#6^{-1}\Gamma_#6'\Gamma_#6\Gamma_#5]=e
\end{equation}
\begin{equation}\label{#7-13}
\langle\Gamma_#2,\Gamma_#3\rangle=\langle\Gamma_#2,\Gamma_#3'\rangle=\langle\Gamma_#2,\Gamma_#3^{-1}\Gamma_#3'\Gamma_#3\rangle=e
\end{equation}
\begin{equation} \label{#7-14}
\begin{split}
\langle\Gamma_#5\Gamma_#4\Gamma_#5^{-1},\Gamma_#6^{-1}\Gamma_#6'\Gamma_#6\rangle&=\langle\Gamma_#5\Gamma_#4'\Gamma_#5^{-1},
\Gamma_#6^{-1}\Gamma_#6'\Gamma_#6\rangle\\
&=\langle\Gamma_#5\Gamma_#4^{-1}\Gamma_#4'\Gamma_#4\Gamma_#5^{-1},\Gamma_#6^{-1}\Gamma_#6'\Gamma_#6\rangle=e
\end{split}
\end{equation}
\begin{equation}\label{#7-15}
\Gamma_#3'\Gamma_#3\Gamma_#2\Gamma_#3\Gamma_#2^{-1}\Gamma_#3^{-1}{\Gamma_#3'}^{-1}=
\Gamma_#5^{-1}\Gamma_#6^{-1}\Gamma_#6'\Gamma_#6\Gamma_#5\Gamma_#4'\Gamma_#5^{-1}\Gamma_#6^{-1}{\Gamma_#6'}^{-1}\Gamma_#6\Gamma_#5
\end{equation}
\begin{equation}\label{#7-16}
\Gamma_#3'\Gamma_#3\Gamma_#2\Gamma_#3'\Gamma_#2^{-1}\Gamma_#3^{-1}{\Gamma_#3'}^{-1}=
\Gamma_#5^{-1}\Gamma_#6^{-1}\Gamma_#6'\Gamma_#6\Gamma_#5\Gamma_#4'\Gamma_#4\Gamma_#4'^{-1}\Gamma_#5^{-1}\Gamma_#6^{-1}{\Gamma_#6'}^{-1}\Gamma_#6\Gamma_#5
\end{equation}
\begin{equation}\label{#7-17}
[\Gamma_#3'\Gamma_#3\Gamma_#2\Gamma_#3^{-1}{\Gamma_#3'}^{-1},\Gamma_#5^{-1}\Gamma_#6^{-1}\Gamma_#6'\Gamma_#6\Gamma_#5]=e,
\end{equation}
\begin{equation}\label{#7-18}
[\Gamma_#4'\Gamma_#4\Gamma_#3'\Gamma_#3\Gamma_#2\Gamma_#3^{-1}{\Gamma_#3'}^{-1}\Gamma_#4^{-1}\Gamma_#4'^{-1},\Gamma_#5^{-1}\Gamma_#6^{-1}
{\Gamma_#6'}^{-1}\Gamma_#6\Gamma_#6'\Gamma_#6\Gamma_#5]=e.
\end{equation}
}

\newcommand{\uFivePointOuter}[7]{
	\todo{}
}

\newcommand{\uParasit}[3]{
\begin{equation}\label{#3}
\comm{#1}{#2} = \comm{#1'}{#2} = \comm{#1}{#2'} = \comm{#1'}{#2'} = e
\end{equation}}

\makeatletter
\newcommand{\uProjRel}[2]{
\begin{equation}\label{#1}
\ug{#2'}\ug{#2}\uProjRelChecknextarg}
\newcommand{\uProjRelChecknextarg}{\@ifnextchar\bgroup{\uProjRelGobblenextarg}{ = e
	\end{equation}}}
\newcommand{\uProjRelGobblenextarg}[1]{\ug{#1'}\ug{#1}\@ifnextchar\bgroup{\uProjRelGobblenextarg}{  = e. \end{equation}}}
\makeatother

\newcommand{\uChernSummary}[7]{
	In this case, the number of branch points is $#1$, the number of cusps is $#3$, the number of nodes is $#2$ and $\deg S=#4$.
	So the first Chern number is $ c_1^2 = #5\cdot 6! $, the second Chern number is $ c_2 = #6\cdot 6! $, and the signature is $ \chi = #7 6! $.
}


\begin{document}
		\definecolor{circ_col}{rgb}{0,0,0}
	
	\title{Fundamental group of Galois covers of degree $6$ surfaces\footnotetext{Email addresses: Meirav Amram (corresponding author): meiravt@sce.ac.il; Cheng Gong: cgong@suda.edu.cn; \\ Uriel Sinichkin: sinichkin@mail.tau.ac.il; Wan-Yuan Xu: wanyuanxu@fudan.edu.cn\\
			Sheng-Li Tan: sltan@math.ecnu.edu.cn; Michael Yoshpe: mikeking8890@gmail.com \\
			2010 Mathematics Subject Classification. 14D05, 14D06, 14H30, 14J10, 20F36. \\{\bf Key words}: Degeneration, generic projection, Galois cover, braid monodromy, fundamental group
	}}
	\author[1]{Meirav Amram}
	\author[2]{Cheng Gong}
	\author[3]{Uriel Sinichkin}
	\author[4]{Sheng-Li Tan}
	\author[5]{Wan-Yuan Xu}
	\author[6]{Michael Yoshpe}
	
	\affil[1, 6]{\small{Department of Mathematics, Shamoon College of Engineering, Ashdod, Israel}}
	\affil[2]{\small{Department of Mathematics, Soochow University, Suzhou 215006, Jiangsu, P. R. China}}
	\affil[3]{\small{School of Mathematical Sciences, Tel Aviv University, Tel Aviv, Israel}}
	\affil[4]{\small{School of Mathematical Sciences, Shanghai Key Labaratory of PMMP, East China Normal University, Shanghai 200241, P. R. China}}
	\affil[5]{\small{Department of Mathematics, Shanghai Normal University, Shanghai 200234, P. R. China}}
	
	\date{}
	
	\maketitle

	\abstract{
		In this paper we consider the Galois covers of algebraic surfaces of degree 6, with all associated planar degenerations.
		We compute the fundamental groups of those Galois covers, using their degeneration.
		We show that for 8 types of degenerations the fundamental group of the Galois cover is non-trivial and for 20 types it is trivial.
		Moreover, we compute the Chern numbers of all the surfaces with this type of degeneration and prove that the signatures of all their Galois covers are negative.
		We formulate a conjecture regarding the structure of the fundamental groups of the Galois covers based on our findings.
	}
	
	\newpage
	
	\tableofcontents

	\newpage
	
\section{Introduction}\label{outline}

Classification of algebraic surfaces has been one of the most significant problems guiding the development of algebraic geometry throughout its history.
In modern mathematics this classification has often been studied through its moduli space (see, for example, the works of Catanese \cite{C1,C2}).

Recent progress in this area is due to the study of invariants related to the Galois covers of such surfaces.
To an algebraic surface $ X $ of degree $ n $ embedded in a projective space $ \mbb{CP}^N $, we attach a Galois cover.
The Galois cover ${X}_{\text{Gal}}$ is the Zariski closure of the fibered
product, with respect to a generic projection to $\C\P^2$, of $ n $ copies of $X$, where the generalized diagonal is excluded. For a detailed definition of the Galois cover, see Section \ref{section:method}. 

Galois covers and their related fundamental groups were studied by Gieseker \cite{Gie}, Liedtke \cite{Li08}, Moishezon-Teicher \cite{MoTe87}, A.-Goldberg \cite{AG04}, A.-Teicher-Vishne \cite{ATV08}. Works on  surfaces of degrees $4$ and $5$ and their Galois covers were done by A.-Lehman-Shwartz-Teicher \cite{A-R-T} and A.-G.-Teicher-X. \cite{AGT}, respectively.
Lately, the fundamental group of Galois covers were computed for surfaces with Zappatic singularity by A.-G.-T.-Teicher-X. in \cite{ZAPP} and for embeddings of $ \mbb{CP}^1\times T $ of higher degrees by A.-T.-X.-Y. in \cite{CP1*T}.
In all those works, the Galois covers provide a way to construct important invariants of the base surfaces and contribute to their classification.
They are also primary tool used to construct an example of a simply-connected surface of positive index \cite{MoTe87}, thus disproving Bogomolov's watershed conjecture.

In this paper we study $29$ surfaces of degree $6$ and their Galois covers.
It is a continuation of the work on degenerations of degree $5$ \cite{AGT}. We study braid monodromy, the fundamental groups of the complements of the branch curves and of the Galois covers of degree $6$ surfaces with nice planar degenerations. Degenerations were studied by Calabri-Ciliberto-Flamini-Miranda in \cite{CCFM}, Ciliberto-Lopez-Miranda in \cite{CLM}, A.-G.-T.-Teicher-X. in \cite{ZAPP}, and in other papers such as \cite{pillow,agst,AG04}. The braid monodromy algorithm was presented by Moishezon-Teicher in \cite{BGT2,19}.
Fundamental groups of the complements of branch curves were studied for their own sake by Zariski in \cite{Z1} and by Auroux-Donaldson-Katzarkov-Yotov in \cite{ADKY}.

Below we describe briefly the process taken in this paper; for full details see Section \ref{section:method}.	
	The computation of the Galois cover and its invariants begins with the inspection of the branch curve $ S $.
	In general, it is difficult to describe $ S $ for a given projection $X$ onto  $\mathbb{CP}^2$.
	In order to overcome this obstacle, we degenerate $ X $ to a union of planes $ X_0 $, which then  degenerates the branch curve into an arrangement of lines $ S_0 $.
	Once we have this line arrangement, we can use the reverse process of regeneration, which is described by the so called "regeneration Lemmas" from \cite{BGT2}, to recover partial information regarding the branch curve $S$ of $X$.
	
	This suffices to compute the braid monodromy of $S$ by the Moishezon-Teicher algorithm.
	Having the braid monodromy of $S$, one can apply the van-Kampen Theorem \cite{vk} to calculate the  fundamental group $\pi_1(\mathbb{CP}^2-S)$ of the complement of $S$ in $\mathbb{CP}^2$.
	Then it is possible to calculate the fundamental group $\pi_1(\Xgal)$ of the Galois cover ${X}_{\text{Gal}}$ of $X$ by considering an exact sequence (see \cite{MoTe87}), 
	\begin{equation}\label{M-T}
	0 \rightarrow \pi_1(\Xgal) \rightarrow \widetilde{G} \rightarrow S_n \rightarrow 0,
	\end{equation}
	where $ \Ggal:=\pi_1(\mathbb{CP}^2-S)/\langle \Gamma_i^2 \rangle $.
	The group $\pi_1(\Xgal)$ is the kernel of the natural projection $\widetilde{G} \rightarrow S_n$.
	The above process computes the correct fundamental group $ \pi_1(\Xgal) $, because  Moishezon-Teicher showed in \cite{MoTe87} that when the complex structure of $X$ changes continuously, $\pi_1(\Xgal)$ does not change.

Other important invariants of surfaces are their Chern numbers $ c_1^2 $ and $ c_2 $ (see, for  example, Manetti's work \cite{Ma}). 
Both $ c_1^2(\Xgal) $ and $ c_2(\Xgal) $ can be computed from the braid monodromy (see \cite{MoTe87}).

We have considered surfaces that have degeneration with planar representation (see Definition \ref{def_planar_representation}), of which there are 29 types (see \cite[Appendix A]{6degree}).
The main result of this paper is that $\pi_1(\Gal{X})$ is not trivial in eight out of those 29 types, trivial in 20 other cases, and for one case the question whether $ \pi_1(\Gal{X}) $ is trivial or not remains open (see Table \ref{table_all_invariants} for details).
As an application, we compute the Chern numbers for all 29 Galois covers.
We find degenerations of surfaces that have the same Chern numbers, i.e., the related surfaces are in the same component of the moduli spaces.
Moreover, the signatures of all 29 types, that are computed using the Chern number of those Galois covers, are all negative.

This paper is organized as follows:  In Section \ref{section:method}, we explain important methods and present the fundamental and necessary background  that we use in this paper.
	In Section \ref{sec:main} we state and prove auxiliary lemmas we used in the calculations and provide the full computation of three of the cases we compute, to show an example of our methods.
	In Section \ref{sec:results} we present the results of all our calculations, including the computation of the Chern numbers and signatures of the Galois covers of the surfaces we consider.
	We include an appendix with a proof of the classification of the 29 cases of surfaces degenerating to degree 6 objects. Moreover, we give full calculations of the fundamental groups of the Galois covers.

	\paragraph{Acknowledgements:} The authors  are grateful  to   Guo  Zhiming for
	useful discussions about his work on the classification of the degree six surfaces.
	This research was supported by the ISF-NSFC joint research program (Grant No. 2452/17) .

\section{Preliminaries}\label{section:method}
	
	In this section we give the background needed for the computations that appear in later sections.
	Additionally, we introduce all the notations used in the paper.

	Let $ X $ be a projective algebraic surface embedded in projective space $ \mathbb{CP}^N $, for some $ N $.
	Consider a generic projection  $f:\mathbb{CP}^N\to\mathbb{CP}^2$.
	The restriction of $ f|_X $ is branched along a curve $ S\subseteq \mathbb{CP}^2 $.
	The branch curve $ S $ can tell a lot about $ X $, but it is difficult to describe it explicitly.
	To tackle this problem we consider degeneration of $ X $, defined as follows.

	\begin{Def}
		Let $\Delta$ be the unit disc,
		and let $X, Y$ be projective algebraic surfaces.
		Let $p: Y \rightarrow \mathbb{CP}^2$ and $p': X \rightarrow \mathbb{CP}^2$
		be projective embeddings.
		We say that $p'$ is a \emph{projective degeneration} of $p$
		if there exists a flat family $\pi: V \rightarrow \Delta$
		and an embedding $F:V\rightarrow \Delta \times \mathbb{CP}^2$,
		such that $F$ composed with the first projection is $\pi$,
		and:
		\begin{itemize}
			\item[(a)] $\pi^{-1}(0) \simeq X$;
			\item[(b)] there is a $t_0 \neq 0$ in $\Delta$ such that
			$\pi^{-1}(t_0) \simeq Y$;
			\item[(c)] the family $V-\pi^{-1}(0) \rightarrow \Delta-{0}$
			is smooth;
			\item[(d)] restricting to $\pi^{-1}(0)$, $F = {0}\times p'$
			under the identification of $\pi^{-1}(0)$ with $X$;
			\item[(e)] restricting to $\pi^{-1}(t_0)$, $F = {t_0}\times p$
			under the identification of $\pi^{-1}(t_0)$ with $Y$.
		\end{itemize}
	\end{Def}

	We construct a degeneration of $ X $ into a union of planes, as a sequence of \emph{partial degenerations} $X: =X_r \leadsto X_{r-1} \leadsto \cdots X_{r-i} \leadsto
	X_{r-(i+1)} \leadsto \cdots \leadsto X_0$.
	Consider generic projections $\pi^{(i)} : X_{i} \rightarrow \mathbb{CP}^2$ with the branch curve $S_i$
	for $0 \leq i \leq r$. Note that $S_{i-1}$ is a degeneration of
	$S_{i}$.
	
	Because $ X_0 $ is a union of planes, its projection $ S_0 $ is a line arrangement.
	Locally around each singular point, $ S_0 $ is defined by the multiplicity of this singularity.
	We call a singular point of multiplicity $ k $, a \emph{$ k $-point}.
	A $ 1 $-point always comes from the intersection of $ 2 $ planes in $ X_0 $.
	Similarly, a $ 2 $-point is always the projection of the intersection of $ 3 $ planes $ P_1,P_2,P_3 $ in $ X_0 $, with $ P_1 $ and $ P_3 $ intersecting only in the singular point (and not in a line).
	
	Next, for a $3$-point in $ S_0 $, two options can occur.
	Either the $3$-point is an intersection of three planes in $ X_0 $, with every pair of planes intersecting in a line, or it is the intersection of four planes $ P_1,\dots,P_4 $ in $ X_0 $, s.t. $ P_i $ and $ P_j $ intersect in a line only if $ |i-j|=1 $.
	We call the first option an \emph{inner $3$-point} and the second an \emph{outer $3$-point}.
	For singularities of higher multiplicities additional types of singularities can occur in $ S_0 $, but in the arrangements we consider in the current work, only 2 types can happen for each multiplicity.
	
	\begin{Def}\label{def:inner_and_outer}
	We call a $ k $-point that is the intersection of $ k+1 $ planes $ P_1,\dots,P_{k+1} $, s.t. $ P_i $ intersect $ P_j $ in a line if and only if $ |i-j|=1 $, an \emph{outer $ k $-point}.
	
	We call a $ k $-point that is the intersection of $ k $ planes $ P_1,\dots,P_{k} $, s.t. $ P_i $ intersects $ P_j $ in a line if $ |i-j|=1 $ and additionally $ P_1 $ intersects $ P_k $, an \emph{inner $ k $-point}.
	\end{Def}

	Note that those singularities were considered in \cite{CCFM}, where the inner $ k $-point is denoted by $ E_k $ and the outer $ k $-point is denoted by $ R_k $.

	\begin{eg}
		We explain the above notions on the degeneration using Figure \ref{fig_degeneration_example}.
		The figure is a schematic representation of $ X_0 $.
		The components of $ X_0 $ are represented by triangles and their lines (common edges).
		So $ P_1 $ and $ P_2 $ intersect in a line indexed $ 1 $, $ P_2 $ and $ P_3 $ intersect in a line indexed $ 2 $, $ P_1 $ and $ P_4 $ are disjoint, and so on.
		
		The vertices of the diagram that have inner edges connected to them are the singular points of $ X_0 $.
		Vertex $ 6 $ is a $ 1 $-point, vertices $ 2 $ and $ 3 $ are $ 2 $-points, vertex $ 4 $ is an inner $ 3 $-point and vertex $ 7 $ is an outer $ 4 $-point.
	\end{eg}

\begin{figure}[H]
	\begin{center}
		
		\definecolor{circ_col}{rgb}{0,0,0}
		\begin{tikzpicture}[x=1cm,y=1cm,scale=3]
		
		\draw [fill=circ_col] (0.6,1.03) node [anchor=west] {1} circle (1pt);
		\draw [fill=circ_col] (-0.86, -0.5) node [below] {2} circle (1pt);
		\draw [fill=circ_col] (0.86, -0.5) node [below] {3} circle (1pt);
		\draw [fill=circ_col] (0,0) node [anchor=east] {4} circle (1pt);
		\draw [fill=circ_col] (-0.86, 0.5) node [above] {5} circle (1pt);
		\draw [fill=circ_col] (0.86, 0.5) node [above] {6} circle (1pt);
		\draw [fill=circ_col] (0,1) node [above] {7} circle (1pt);
		
		\draw [-, line width = 1pt] (-0.86, -0.5) -- (0.86, -0.5) -- (0.86, 0.5)  -- (0.6,1.03)-- (0,1) -- (-0.86, 0.5) -- (-0.86, -0.5);
		
		\draw [-, line width = 1pt] (-0.86,-0.5) -- node [above] {4} (0, 1);
		\draw [-, line width = 1pt] (0,0) -- node [anchor=west] {3} (0, 1);
		\draw [-, line width = 1pt] (0.86,-0.5) -- node [anchor=west] {2} (0, 1);
		\draw [-, line width = 1pt] (0,0) -- node [above] {5} (-0.86, -0.5);
		\draw [-, line width = 1pt] (0,0) -- node [above] {6} (0.86, -0.5);
		\draw [-, line width = 1pt] (0.86,0.5) -- node [below] {1} (0,1);

		\draw (-0.625, 0.375) node {\footnotesize $ P_6 $};
		\draw (-0.25, 0.15) node {\footnotesize $ P_5 $};
		\draw (0, -0.3) node {\footnotesize $ P_4 $};
		\draw (0.25, 0.15) node {\footnotesize $ P_3 $};
		\draw (0.625, 0.375) node {\footnotesize $ P_2 $};
		\draw (0.5, 0.85) node {\footnotesize $ P_1 $};
		\end{tikzpicture}
	\end{center}
	\setlength{\abovecaptionskip}{-0.15cm}
	\caption{An example of degeneration into a union of planes.}\label{fig_degeneration_example}
\end{figure}
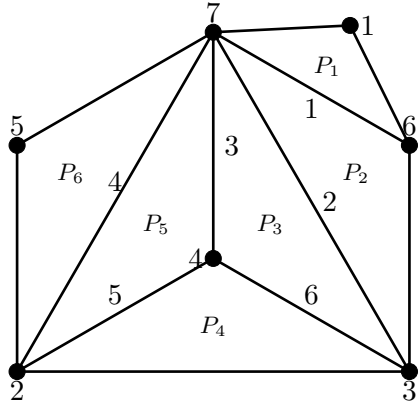
All the degenerations considered in the current work appear in Figure \ref{fig_all_cases_compact}.

	We note that $1$- and $2$-points were considered in \cite{pillow,Ogata,MoTe87}, $3$-points were considered in \cite{pillow}, $4$-points were considered in \cite{pillow,Ogata}, and $5$-points were considered in \cite{FT08}.
	The regeneration process shown below can be quite difficult for a large $k$, but work has been done for some specific values (see \cite{FT08,ZAPP,agst} for $5$-, $6$-, and $8$-points, respectively).	
	
	One of the principal tools we use is a reverse process of degeneration, called \emph{regeneration}.
	Using this tool, which was described in  \cite{BGT2} as regeneration Lemmas, we can recover $ S_i $ from $ S_{i-1} $.
	Applying it multiple times we can recover the original branch curve $ S $	from the line arrangement $ S_0 $.
	In the following diagram, 
	we illustrate this process.

	\[\begin{CD}
	X\subseteq \mathbb{CP}^N  @>\text{degeneration}>> X_0\subseteq \mathbb{CP}^M \\
	@V\text{generic~ projection}VV                      @VV\text{generic~ projection}V \\
	S\subseteq \mathbb{CP}^2 @<\phantom{regeneration}<\text{regeneration}< S_0\subseteq \mathbb{CP}^2
	\end{CD}\]
	
	\vspace{0.2cm}	
	
	A line in $ S_0 $ regenerates either to a conic or a double line.
	The resulting components of the partial regeneration are tangent to each other.
	To get a transversal intersection of components, we regenerate further, and this gives us three cusps for each tangency point (see \cite{BGT2, 19} for more details). Therefore, the regenerated branch curve $ S $ is a cuspidal curve.

Using the notations of \cite{19} as well as the additional notation $ Z_{i\; i', j\; j'} $ introduced here, we can denote the braids related to $S$ as follows:
	\begin{enumerate}
		\item for a branch point, $Z_{j \;j'}$ is a counterclockwise
		half-twist of $j$ and $j'$ along a path below the real
		axis,
		\item for nodes, $Z^2_{i, j \;j'}=Z_{i\;j}^2 \cdot Z_{i\;j'}^2$ and $Z^2_{i \;i', j \;j'}=Z_{i'\;j'}^2 \cdot Z_{i\;j'}^2 \cdot Z_{i'\;j}^2 \cdot Z_{i\;j}^2$,
		\item for cusps, $Z^3_{i, j \;j'}=Z^3_{i\; j} \cdot (Z^3_{i\; j})^{Z_{j\;j'}} \cdot (Z^3_{i \; j})^{Z^{-1}_{j \; j'}}$.
	\end{enumerate}
	A singular point of $ S $ gives rise to braids $ (1)-(3) $ only locally.
	To get the final braids, one needs to perform a conjugation, which we denote as  $a^b = b^{-1}ab$.
	In several places we use the notation $ \bar{a} $ where $ a $ is a braid that means the same braid as $ a $ but above the real axis.
	
	Denote $ G:=\pi_1(\mathbb{CP}^2-S) $ and its standard generators as $\G_1, \G_{1}', \dots, \G_{2m}, \G_{2m}'$. By the van Kampen
	Theorem \cite{vk} we can get a presentation of $G$ by means of generators $\{\G_{j}, \G_{j}'\}$ and relations of the types:
	\begin{enumerate}
		\item for a branch point, $ Z_{j\; j'} $ corresponds to the relation $\G_{j} = \G_{j}'$,
		\item for nodes, $ Z_{i\; j}^2 $ corresponds to   $[\G_{i},\G_{j}]=\G_{i}\G_{j}\G_{i}^{-1}\G_{j}^{-1}=e$,
		\item for cusps, $ Z_{i\; j}^3 $ corresponds to  $\langle\G_{i},\G_{j}\rangle=\G_{i}\G_{j}\G_{i}\G_{j}^{-1}\G_{i}^{-1}\G_{j}^{-1}=e$.
	\end{enumerate}
	To get all the relations, we write the braids in a product and collect all the relations that correspond to the different factors.
	See \cite{BGT2, 19} for full treatment of the subject.
	
	To each list of relations we add the projective relation $\prod\limits_{j=m}^1 \G_{j}'\G_{j}=e$. Moreover, in some cases in the paper, we have parasitic intersections that induce commutative relations. These intersections come from lines in $X_0$ that do not intersect, but when projecting $X_0$ onto $\C\P^2$, they will intersect (see details in \cite{MoTe87}).

	\bigskip
	
	Our techniques also allow us to compute fundamental groups of Galois covers. We recall from \cite{MoTe87} that if
	$f : X\rightarrow \mathbb{CP}^2$ is a generic projection of degree $n$, then ${X}_{\text{Gal}}$, the Galois cover, is defined as follows:
	$${X}_{\text{Gal}}=\overline{(X \times_{\mathbb{CP}^{2}} \ldots \times_{\mathbb{CP}^{2}} X)-\triangle},$$
	where the product is taken $n$ times, and $\triangle$ is the diagonal.
	To apply a theorem of Moishezon-Teicher \cite{MoTe87}, we define $ \Ggal:=G/\langle \Gamma_i^2 \rangle $.
	Then, there is an exact sequence
	\begin{equation}\label{M-T}
	0 \rightarrow \pi_1(\Xgal) \rightarrow \widetilde{G} \rightarrow S_n \rightarrow 0,
	\end{equation}
	where the second map takes the generators $ \Gamma_i $ of $G$ to transpositions in the symmetric group $S_n$ according to the order of the lines in the degenerated surface.
	We thus obtain a presentation of the fundamental group of the Galois cover.
	We next simplify the relations to produce a canonical presentation that identifies with $\pi_1(\Xgal)$, using the theory of Coxeter covers of the symmetric
	groups. For more details see the proof of Theorem \ref{thm_computation_U_0_5_3_paper}.

	\section{Explicit fundamental group computation}\label{sec:main}
	
	In this section, we define the degenerations we are studying in this paper.
	The total number of such degenerations of degree 6 is 29, as shown in \cite[Appendix A]{6degree}, and as depicted in Figure \ref{fig_all_cases_compact}.
	We next compute the fundamental groups of their Galois covers.
	
	The full calculations for all the cases are quite long and would not fit in the current paper.
	We present here three examples of possible choices for $ X_0 $: $ U_{0,5,3},U_{0,6,1} $ and $ U_{4\cup 3,1 } $\footnote{The labeling of the possible choices for $ X_0 $ came from the combinatorial classification (see \cite[Appendix A]{6degree}).}.
	Those examples indicate the techniques we are using.
	The computations of $ \pi_1(\Gal{X}) $ for the 26 additional cases, as well as the computation of the Chern numbers of all 29 cases, are presented in \cite[Appendix B]{6degree}.

	\subsection{The degenerations of interest}
	
	Here we will explain the types of surface degenerations we have considered and the way to enumerate them.
	
	\begin{Def}\label{def_planar_representation}
		A degeneration of smooth toric surface $ X $ into a union of planes $ X_0 $ is said to have a \emph{planar representation} if:
		\begin{enumerate}
			\item
			No three planes in $ X_0 $ intersect in a line.
			
			\item
			There exists a simplicial complex with connected interior, embedded in $ \mathbb{R}^2 $, s.t. its two dimensional cells correspond bijectively to irreducible components of $ X_0 $ and this bijection preserves an incidence relation.
		\end{enumerate}
	\end{Def}
	
	This choice is not the most general that can be made, but most degenerations of surfaces that are classically of interest are of this form.
	It is also the choice made in \cite{AGT, A-R-T}.
	
	Note that degenerations that have planar representation of bounded degree can be enumerated recursively, because the planar simplicial complexes can be enumerated.
	Such enumeration for degree 6 is presented in \cite[Appendix A]{6degree}.
	
	The list of degenerations we get from this enumeration, and thus the degenerations we are considering in the paper, are presented in Figure \ref{fig_all_cases_compact}.
	
\begin{figure}[H]
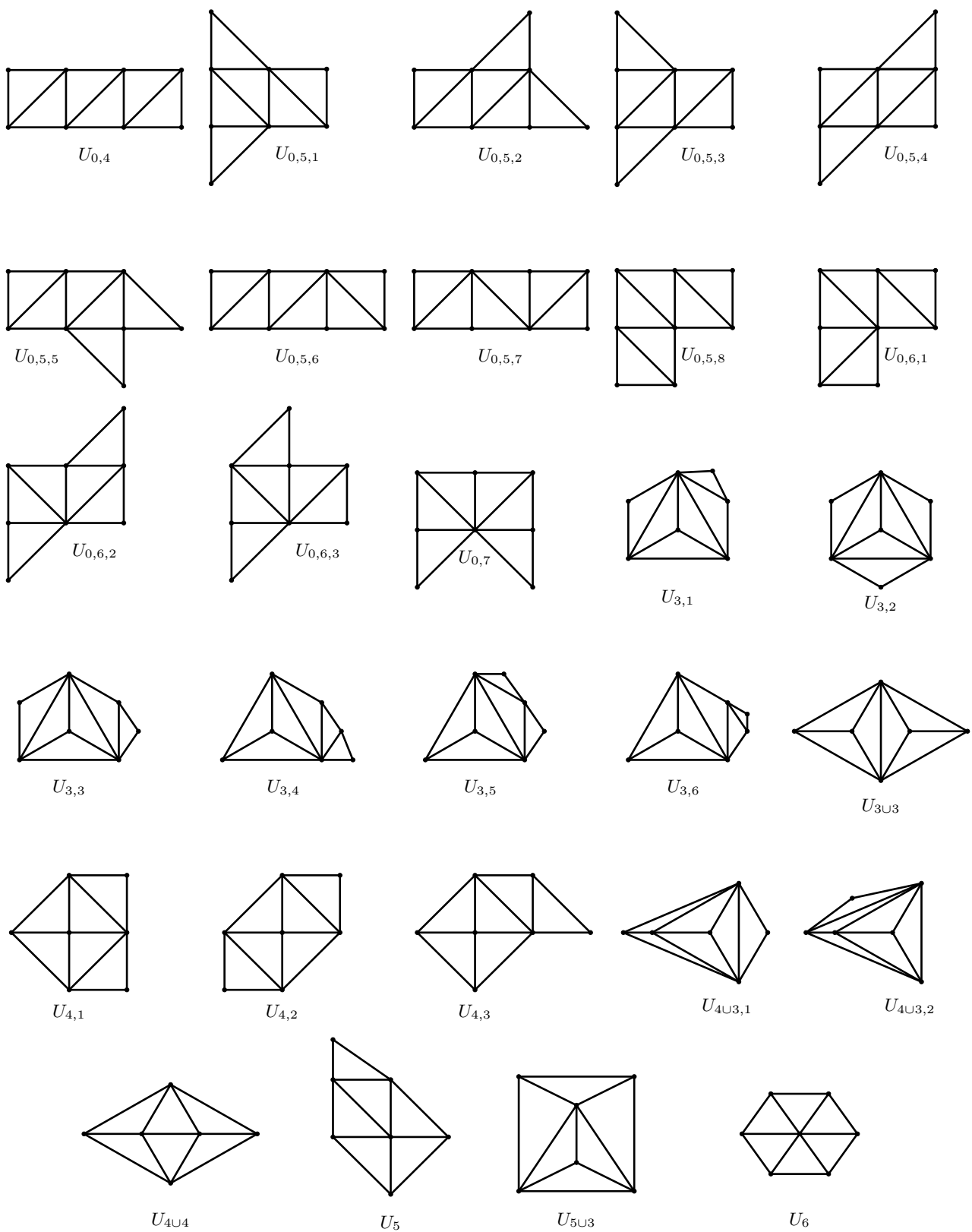


\begin{center}

\end{center}
		
\caption{The collection of all the degenerations we inspect in the current work.}\label{fig_all_cases_compact}
\end{figure}

\subsection{General setup and useful lemmas}
In this subsection we prove couple of results we will use later in the computation.

We will work with a dual graph $ T $ of $ X_0 $, which is defined when vertices of $ T $ are in bijection with the planes in $ X_0 $, and the vertices corresponding to the planes $ P_1 $ and $ P_2 $ are connected by an edge if $ P_1 $ and $ P_2 $ intersect in a line.

First, we will provide the principal tool we will use to prove that $ \pi_1(\Xgal) $ is not trivial in certain cases.

\begin{lemma}\label{not-trivial}
	Let $ \hat{G}:=\Ggal/\langle \Gamma_i = \Gamma_i' \rangle $.
	If $ T $ has a vertex of valency of at least 3 that is not part of a cycle in $ T $, then the kernel of the natural projection $ \hat{G}\to S_6 $ is not trivial.
	In particular, $ \pi_1(\Xgal) $ is not trivial in this case.
\end{lemma}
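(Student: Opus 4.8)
The plan is to deduce the nontriviality of $\pi_1(\Xgal)$ from the nontriviality of $\ker(\hat G\to S_6)$, and to detect the latter by producing an element of infinite order that is killed in $S_6$. First I would record the reduction. Both $\Gamma_i$ and $\Gamma_i'$ come from the same line $i$ of $S_0$, so the map $\widetilde G\to S_6$ of \eqref{M-T} sends them to the same transposition; hence each $\Gamma_i(\Gamma_i')^{-1}$ lies in $\pi_1(\Xgal)=\ker(\widetilde G\to S_6)$. Consequently the normal subgroup $N=\langle\langle \Gamma_i(\Gamma_i')^{-1}\rangle\rangle$ by which we quotient to form $\hat G$ is contained in $\pi_1(\Xgal)$, the map $\widetilde G\to S_6$ factors through $\hat G$, and $\ker(\hat G\to S_6)=\pi_1(\Xgal)/N$. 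Thus it suffices to prove $\ker(\hat G\to S_6)\neq e$.

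Next I would analyse $\hat G$ itself. After imposing $\Gamma_i=\Gamma_i'$ and $\Gamma_i^2=e$, each edge $e$ of $T$ contributes a single involution $\gamma_e$, mapped by $\hat G\to S_6$ to the transposition of the two planes joined by $e$. Reducing the point relations under these identifications, two edges sharing a vertex satisfy the braid relation $\langle\gamma_e,\gamma_f\rangle=e$, i.e. $(\gamma_e\gamma_f)^3=e$, while two disjoint edges commute. The only relations capable of forcing a further identity among edges that pairwise share a single vertex are the inner $k$-point relations: for an inner $3$-point one of the displayed relations collapses to an identity of the form $\gamma_g=\gamma_e\gamma_f\gamma_e$, expressing one of the three edge-generators through the other two, and such a relation occurs exactly when the planes involved form a cycle of $T$. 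Now choose three edges $e_1,e_2,e_3$ at the given vertex $v$, joining $v$ to neighbours $a_1,a_2,a_3$; they pairwise share only $v$, so their images generate a quotient of the affine Coxeter group $\widetilde A_2=\langle s_1,s_2,s_3\mid s_i^2,(s_is_j)^3\rangle$, and the composite $\langle\gamma_{e_1},\gamma_{e_2},\gamma_{e_3}\rangle\to S_6$ has image $\mathrm{Sym}\{v,a_1,a_2,a_3\}\cong S_4$. Because $v$ is not part of a cycle, none of the inner-point (cyclic) relations involves these three edges, so no relation of $\hat G$ forces a collapse among $\gamma_{e_1},\gamma_{e_2},\gamma_{e_3}$ beyond the braid relations. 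This is also where the valency hypothesis enters: two edges would only generate the finite group $S_3$, whereas three are needed to produce the infinite group $\widetilde A_2$.

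The crux, and the step I expect to be the main obstacle, is to turn ``no collapsing relation is present'' into ``the subgroup is genuinely infinite.'' One cannot do this by a single homomorphism $\hat G\to\widetilde A_2$ sending $\gamma_{e_i}\mapsto s_i$: since $v$ is acyclic, each $e_i$ is a bridge and the far branches lie in distinct components, so the parasitic relations force every branch generator to commute with the two bridges leading to the other branches; a branch edge adjacent to $a_i$ must satisfy a braid relation with $s_i$ yet commute with $s_j,s_k$, and a direct check shows no such element exists in $\widetilde A_2$, so the global map fails. Instead I would exploit the bridge structure directly: removing $v$ splits $T$ into the components carrying the three branches, and the relations of $\hat G$ decompose along these cut edges. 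A normal-form (Bass--Serre / graph-of-groups) argument along the bridges, or alternatively the Coxeter-cover computation of \cite{ATV08} applied to the acyclic local model, then shows that $\langle\gamma_{e_1},\gamma_{e_2},\gamma_{e_3}\rangle$ is the full $\widetilde A_2$, hence infinite.

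Granting this, the conclusion is immediate. The Coxeter element $c=\gamma_{e_1}\gamma_{e_2}\gamma_{e_3}$ has infinite order in $\widetilde A_2$, whereas its image in $S_6$ is the $4$-cycle $(v\,a_3\,a_2\,a_1)$, of order $4$. Hence $c^4\neq e$ in $\hat G$ while $c^4\mapsto e$ in $S_6$, so $c^4$ is a nontrivial element of $\ker(\hat G\to S_6)$. By the reduction of the first paragraph this yields $\pi_1(\Xgal)\neq e$, as claimed.
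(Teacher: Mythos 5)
Your first paragraph is correct and matches what the lemma needs: since each $\Gamma_i(\Gamma_i')^{-1}$ lies in $\ker(\widetilde{G}\to S_6)$, the kernel of $\hat{G}\to S_6$ is $\pi_1(\Xgal)/N$, so it suffices to find a nontrivial element of that kernel. You have also correctly located the crux: one must show that the three edge-generators at the fork vertex do not satisfy any relation beyond the $\widetilde{A}_2$ ones, or at least that some specific kernel element survives. But at exactly that point the proposal stops being a proof. The assertion that a Bass--Serre/graph-of-groups normal form ``then shows that $\langle\gamma_{e_1},\gamma_{e_2},\gamma_{e_3}\rangle$ is the full $\widetilde{A}_2$'' is the entire content of the lemma, and no argument is given for it. Worse, the decomposition you propose is undermined by your own (correct) observation about the parasitic relations: generators lying in different components of $T\setminus\{v\}$ commute with one another in $\hat{G}$, so the group does not split as a free product or amalgam along the bridges in any evident way; you would need to exhibit the edge groups and verify the normal-form hypotheses, and none of that is done. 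Identifying a subgroup of a finitely presented group from a generating set is in general far harder than identifying a quotient, and that is precisely the difficulty left unresolved here.

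The paper avoids the subgroup problem entirely by working with a quotient. It fixes the fork element $[\Gamma_i,\Gamma_j\Gamma_k\Gamma_j]$, which lies in $\ker(\hat{G}\to S_6)$ by \cite{RTV}, and certifies its nontriviality by passing to the quotient of $\hat{G}$ by the normal closure of all generators other than $\Gamma_i,\Gamma_j,\Gamma_k$, then checking relation type by relation type (branch points, nodes, cusps) that the surviving relations cannot produce the fork relation; planarity is used only to guarantee that no single relation involves all three of $\Gamma_i,\Gamma_j,\Gamma_k$. Since an element nontrivial in a quotient is nontrivial in the group, this is a legitimate and finite verification, whereas your route additionally requires an injectivity statement that is never established. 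Note also that your stated objection to ``a single homomorphism $\hat{G}\to\widetilde{A}_2$'' concerns a map that keeps the branch generators alive; it does not apply to the paper's move of killing them, which you do not consider. To repair your argument, replace the claim ``the subgroup is all of $\widetilde{A}_2$'' by an analysis of $\hat{G}$ modulo the other generators showing that your chosen witness survives there; as written, the proof has a genuine gap at its central step.
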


\begin{proof}
	Let $ i,j,k $ be three distinct edges connected to the vertex $ p $ of $ T $ that satisfy the properties required in the Lemma.
	Because $ X_0 $ has a planar representation (Definition \ref{def_planar_representation}), $ T $ is a planar graph.
	Thus, the three lines in $ X_0 $ corresponding to $ i,j,k $ cannot meet in a point.
	By \cite{RTV}, the kernel of the natural homomorphism $ \hat{G}\to S_6 $ contains the relation $ [\Gamma_i, \Gamma_j\Gamma_k\Gamma_j]=e $, so we are left to show that this relation is not trivial in $ \hat{G} $.
	
	To that end, we take the quotient of $ \hat{G} $ by the group normally generated by $ \{ \Gamma_l | l\notin \{i,j,k\} \} $.
	We now can inspect what can happen to the relations that arise from branch points, nodes, and cusps.
	Note that in every such relation, at most two instances of $ \Gamma_i, \Gamma_j, \Gamma_k $ can appear, because the lines $ i,j,k $ in $ X_0 $ do not pass through one point.
	
	\begin{description}
		\item[Branch points:]
		A branch point relation with only two of the generators $ \Gamma_i, \Gamma_j, \Gamma_k $ is either redundant or equivalent to a commutation relation between two of the generators.
		Because no two such generators can commute (otherwise it would contradict the existence of the natural homomorphism $ \widetilde{G}\to S_6 $, all those relations are trivial in $ \hat{G} $.
		
		\item[Nodes:]
		This case is identical to the previous - the only commutation relations with only two generators are redundant or a commutation between two generators that do not hold in $ \hat{G} $.
		
		\item[Cusps:]
		Cusps give rise to triple relations $ \langle w_1, w_2 \rangle  $ for some $ w_1,w_2\in \hat{G} $, which obviously cannot lead to the fork relation $ [\Gamma_i, \Gamma_j\Gamma_k\Gamma_j]=e $.
	\end{description}
\end{proof}

We next list some results that will allow us to show $ \Gamma_i=\Gamma_i' $ in $ \widetilde{G} $.
This will be the principal tool at our disposal to show that the group $ \pi_1(\Gal{X}) $ is trivial.

\begin{lemma}\label{2pt-equal}
	Let $ p $ be a $2$-point in $ X_0 $ with lines $ i $ and $ j $.
	Then $ \Gamma_i=\Gamma_i' $ in $ \widetilde{G} $ iff $ \Gamma_j=\Gamma_j' $ in $ \widetilde{G} $.
\end{lemma}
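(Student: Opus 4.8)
The plan is to work throughout in $\widetilde{G}$, where every standard generator is an involution, and to read off the local relations attached to the $2$-point from its braid monodromy: the triple (cusp) relations $\langle \Gamma_i, \Gamma_j\rangle = \langle \Gamma_i', \Gamma_j\rangle = \langle \Gamma_i^{-1}\Gamma_i'\Gamma_i, \Gamma_j\rangle = e$, together with the defining relation $\Gamma_j' = \Gamma_j\Gamma_i'\Gamma_i\Gamma_j\Gamma_i^{-1}\Gamma_i'^{-1}\Gamma_j^{-1}$. Since this presentation treats the two lines asymmetrically (only $\Gamma_j'$ is written explicitly, with the triple relations living on the $i$-side), I would prove the biconditional as two separate implications; proving both covers the statement regardless of which of $i,j$ plays the role of which index in the monodromy. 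A useful preliminary is the standard translation, valid once the generators are involutions, of each triple relation $\langle u,v\rangle=e$ into the cube relation $(uv)^3=e$.

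For the easy implication, I would assume $\Gamma_i=\Gamma_i'$ and substitute into the defining relation for $\Gamma_j'$. Using $\Gamma_i^2=e$, the two blocks $\Gamma_i'\Gamma_i$ and $\Gamma_i^{-1}\Gamma_i'^{-1}$ both collapse to the identity, leaving $\Gamma_j' = \Gamma_j\Gamma_j\Gamma_j^{-1} = \Gamma_j$. This direction uses nothing beyond $\Gamma_i^2=e$.

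The reverse implication is the heart of the matter. Assume $\Gamma_j=\Gamma_j'$ and abbreviate $a=\Gamma_i$, $a'=\Gamma_i'$, $b=\Gamma_j$, and $x:=a'a$. Using the involution relations, the defining relation becomes $b = b\,x\,b\,x^{-1}b^{-1}$, which simplifies at once to $[x,b]=e$, so $x$ commutes with $b$. I would then feed this into the two relevant triple relations rewritten as cube relations: $\langle a,b\rangle=e$ gives $(ab)^3=e$, and $\langle a^{-1}a'a,b\rangle=e$ gives $(cb)^3=e$, where $c=aa'a=ax$ so that $cb=axb=abx$. The crux is the short computation that, since $b$ commutes with $x$ and $a$ inverts $x$ (the identity $axa=x^{-1}$ being automatic from $x=a'a$ and $a^2=a'^2=e$),
\[
(abx)^2 = abab = (ab)^2 .
\]
Combining this with the two cube relations yields
\[
e = (abx)^3 = (abx)^2(abx) = (ab)^2(abx) = (ab)^3 x = x,
\]
so $x=a'a=e$, that is $\Gamma_i=\Gamma_i'$.

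The main obstacle is precisely this reverse direction: one must see that the single commutation $[x,b]=e$ extracted from $\Gamma_j=\Gamma_j'$, when combined with the two cube relations, forces $x$ itself to vanish. The identity $(abx)^2=(ab)^2$ is brief but it is the decisive step, and it relies essentially on having passed to $\widetilde{G}$ so that $a,a',b$ are involutions — the analogous statement fails in $G$ before quotienting by the squares of the generators. I would finish by noting that the same argument applies after interchanging the roles of $i$ and $j$, so the equivalence holds for the unordered pair of lines at the $2$-point.
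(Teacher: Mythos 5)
Your proof is correct and follows essentially the same route as the paper's: both directions are read off from the local relations of the $2$-point, the forward implication being the collapse of the branch-point relation under $\Gamma_i^2=e$, and the reverse implication combining the commutation $[\Gamma_i'\Gamma_i,\Gamma_j]=e$ (extracted from $\Gamma_j=\Gamma_j'$) with the cusp relations. The only difference is in the closing computation: the paper chains $\Gamma_j\Gamma_i'\Gamma_j=\Gamma_i'\Gamma_j\Gamma_i'=\Gamma_i\Gamma_j\Gamma_i=\Gamma_j\Gamma_i\Gamma_j$ using the cusp relations for $\Gamma_i$ and $\Gamma_i'$, whereas you use the cusp relations for $\Gamma_i$ and $\Gamma_i^{-1}\Gamma_i'\Gamma_i$ in cube form to deduce $x=\Gamma_i'\Gamma_i=e$; both computations are valid and of comparable length.
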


\begin{proof}
	\uTwoPoint{p}{i}{j}{lemma2pt_vert_p}
	
	Now, if $ \Gamma_i=\Gamma_i' $, relation \eqref{lemma2pt_vert_p-3} becomes $ \Gamma_j=\Gamma_j' $.
	On the other hand, if $ \Gamma_j=\Gamma_j' $, we can deduce from \eqref{lemma2pt_vert_p-2} and \eqref{lemma2pt_vert_p-3} that
	\begin{equation*}
	\Gamma_j\Gamma_i'\Gamma_j = \Gamma_i'\Gamma_j\Gamma_i'=\Gamma_i\Gamma_j\Gamma_i=\Gamma_j\Gamma_i\Gamma_j
	\end{equation*}
	and so we get $ \Gamma_i=\Gamma_i' $, as needed.
\end{proof}

\begin{lemma}\label{3pt-in-bigmid}
	Let $ p $ be an inner $3$-point (see Definition \ref{def:inner_and_outer}) in $ X_0 $ with lines $ i < j < k $ (see Figure \ref{fig_lemma_3pt_inner_bigmid}).
	If either $ \Gamma_j=\Gamma_j' $ or $ \Gamma_k=\Gamma_k' $ in $ \widetilde{G} $ then $ \Gamma_l=\Gamma_l' $ for all $ l\in \{i,j,k\} $.
	
	Moreover, $ \Gamma_i=\Gamma_i' $ is always present in this case.
\end{lemma}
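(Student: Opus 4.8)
The plan is to work throughout in $\widetilde{G}$, where every standard generator is an involution, so that $\Gamma_l^{2}=e$ and $\Gamma_l^{-1}=\Gamma_l$ for each line $l$. First I would record the relations attached to the inner $3$-point $p$, taking $i$ to be the first strand, $j$ the middle strand, and $k$ the last. Among these are the braid relations $\langle\Gamma_i,\Gamma_j\rangle=e$ and $\langle\Gamma_i',\Gamma_j\rangle=e$ (parts of the two triple relations carried by $p$), together with four conjugation identities: two expressing $\Gamma_k'$ and two expressing $\Gamma_k'\Gamma_k{\Gamma_k'}^{-1}$ as explicit words in $\Gamma_i,\Gamma_i',\Gamma_j,\Gamma_j'$.

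I expect the unconditional equality $\Gamma_i=\Gamma_i'$ to be the heart of the argument, so I would establish it first. Reducing inverses by $\Gamma_l^{-1}=\Gamma_l$, the two relations for $\Gamma_k'$ read $\Gamma_k'=\Gamma_j'\Gamma_j\Gamma_i'\Gamma_j\Gamma_i'\Gamma_j\Gamma_j'$ and $\Gamma_k'=\Gamma_j'\Gamma_j\Gamma_i\Gamma_j\Gamma_i\Gamma_j\Gamma_j'$. Equating the two right-hand words and cancelling the common prefix $\Gamma_j'\Gamma_j$ and suffix $\Gamma_j\Gamma_j'$ leaves $\Gamma_i'\Gamma_j\Gamma_i'=\Gamma_i\Gamma_j\Gamma_i$. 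I would then feed this into the braid relations, which rewrite the two sides as $\Gamma_j\Gamma_i'\Gamma_j$ and $\Gamma_j\Gamma_i\Gamma_j$; cancelling the flanking $\Gamma_j$ yields $\Gamma_i'=\Gamma_i$, which is precisely the \emph{Moreover} clause.

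Next I would link $j$ and $k$ by two short implications. If $\Gamma_j=\Gamma_j'$, then $\Gamma_j'\Gamma_j=e$, and the relation for $\Gamma_k'$ collapses to $\Gamma_k'=\Gamma_i'\Gamma_j\Gamma_i'$, while the relation for $\Gamma_k'\Gamma_k{\Gamma_k'}^{-1}$ collapses to the same word; hence $\Gamma_k'\Gamma_k{\Gamma_k'}^{-1}=\Gamma_k'$, forcing $\Gamma_k=\Gamma_k'$. Conversely, if $\Gamma_k=\Gamma_k'$ then $\Gamma_k'\Gamma_k{\Gamma_k'}^{-1}=\Gamma_k'$, so the two conjugation relations equate their long words to the same element $\Gamma_k'$; since these long words differ only in a single interior letter ($\Gamma_j$ versus $\Gamma_j'$), cancelling the surrounding letters gives $\Gamma_j=\Gamma_j'$. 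Because both implications hold, it is immaterial which of $j,k$ is named the middle strand, and combining either hypothesis with the unconditional $\Gamma_i=\Gamma_i'$ yields $\Gamma_l=\Gamma_l'$ for all $l\in\{i,j,k\}$.

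The only genuine subtlety I anticipate is the first step: one must use the involution relations to bring the conjugation words to palindromic form and then orient the braid relations correctly so that the flanking $\Gamma_j$'s cancel. Every other step is a formal cancellation internal to the single vertex $p$, using no projective relation, no parasitic relation, and no planarity of $T$; in this respect the argument is purely local and parallels the proof of Lemma~\ref{2pt-equal}.
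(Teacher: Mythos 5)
Your proof is correct and follows essentially the same route as the paper: you derive $\Gamma_i'\Gamma_j\Gamma_i'^{-1}=\Gamma_i\Gamma_j\Gamma_i^{-1}$ by equating the two expressions for $\Gamma_k'$ (relations \eqref{lemma3pt_inner_vert_p-3} and \eqref{lemma3pt_inner_vert_p-5}) and then apply the triple relations from \eqref{lemma3pt_inner_vert_p-1} and \eqref{lemma3pt_inner_vert_p-2} to cancel down to $\Gamma_i=\Gamma_i'$, and you obtain the equivalence $\Gamma_j=\Gamma_j'\Leftrightarrow\Gamma_k=\Gamma_k'$ by comparing \eqref{lemma3pt_inner_vert_p-3} with \eqref{lemma3pt_inner_vert_p-4}, exactly as in the paper's proof. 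The only cosmetic differences are the order of the steps and your explicit use of the involution relations of $\widetilde{G}$, which the paper leaves implicit.
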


\begin{figure}[H]
	\begin{center}
		
		\definecolor{circ_col}{rgb}{0,0,0}
		\begin{tikzpicture}[x=1cm,y=1cm,scale=2]
		\draw [fill=circ_col] (0,0) node [below] {$ p $} circle (1pt);
		
		\draw [-, line width = 1pt] (0,0) -- node [anchor=west] {$ i $} (0, 1);
		\draw [-, line width = 1pt] (0,0) -- node [above] {$ j $} (-0.86, -0.5);
		\draw [-, line width = 1pt] (0,0) -- node [above] {$ k $} (0.86, -0.5);
		
		\end{tikzpicture}
	\end{center}
	\setlength{\abovecaptionskip}{-0.15cm}
	\caption{The vertex $ p $ in Lemma \ref{3pt-in-bigmid}.}\label{fig_lemma_3pt_inner_bigmid}
\end{figure}
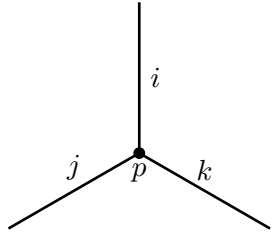

\begin{proof}
	\uThreePointInner{p}{i}{j}{k}{lemma3pt_inner_vert_p}
	
	First, we will use \eqref{lemma3pt_inner_vert_p-3}	 and \eqref{lemma3pt_inner_vert_p-4} to get that $ \Gamma_j=\Gamma_j' $ if and only if $ \Gamma_k=\Gamma_k' $.
	Indeed, if $ \Gamma_k=\Gamma_k' $ then we get that the right sides of those equations are equal, so by equating the left sides, we get $ \Gamma_j=\Gamma_j' $.
	Conversely, if $ \Gamma_j=\Gamma_j' $, we get that the left sides of \eqref{lemma3pt_inner_vert_p-3}	 and \eqref{lemma3pt_inner_vert_p-4} are equal, so by equating the right sides we get that $ \Gamma_k=\Gamma_k' $.
	
	For the second part of the statement, we equate the left sides of \eqref{lemma3pt_inner_vert_p-3} and \eqref{lemma3pt_inner_vert_p-5} and then use \eqref{lemma3pt_inner_vert_p-1} and \eqref{lemma3pt_inner_vert_p-2}, similarly to the proof of Lemma \ref{2pt-equal}.
\end{proof}

\begin{lemma}\label{3pt-out-smallmid}
	Let $ p $ be an outer $3$-point in $ X_0 $ with lines $ i < j < k $ (see Figure \ref{fig_lemma3pt_outer}).
	If either $ \Gamma_i=\Gamma_i' $ or $ \Gamma_j=\Gamma_j' $ in $ \widetilde{G} $ then $ \Gamma_l=\Gamma_l' $ for all $ l\in \{i,j,k\} $.
\end{lemma}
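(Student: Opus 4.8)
The plan is to feed the braid monodromy of the outer $3$-point at $p$ into the van Kampen relations, exactly in the spirit of the proofs of \Lref{2pt-equal} and \Lref{3pt-in-bigmid}. With the lines ordered $i<j<k$, the middle line $j$ braids with both $i$ and $k$, whereas $i$ and $k$ are the two end lines and commute; the relations I want are therefore the conjugation relation $\Gamma_i=\Gamma_j'\Gamma_j\Gamma_i'\Gamma_j^{-1}{\Gamma_j'}^{-1}$, the cusp (braid) relations $\langle\Gamma_i',\Gamma_j\rangle=\langle\Gamma_i',\Gamma_j'\rangle=e$, and the relation expressing $\Gamma_k'$ as $\Gamma_k$ conjugating a long word in $\Gamma_i',\Gamma_j,\Gamma_j'$. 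The single feature that makes all of this tractable is that we compute in $\widetilde G$, where every generator satisfies $\Gamma_\ell^2=e$; hence any conjugation by a factor of the form $\Gamma_\ell'\Gamma_\ell$ or $\Gamma_\ell^{\pm1}$ collapses the moment we know $\Gamma_\ell=\Gamma_\ell'$.

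First I would establish the equivalence $\Gamma_i=\Gamma_i'\Leftrightarrow\Gamma_j=\Gamma_j'$. The easy direction is immediate from $\Gamma_i=\Gamma_j'\Gamma_j\Gamma_i'\Gamma_j^{-1}{\Gamma_j'}^{-1}$: assuming $\Gamma_j=\Gamma_j'$ turns the conjugating factor $\Gamma_j'\Gamma_j$ into $\Gamma_j^2=e$, so $\Gamma_i=\Gamma_i'$ drops out at once. For the converse, assume $\Gamma_i=\Gamma_i'$; the same relation then reads $\Gamma_i=\Gamma_j'\Gamma_j\,\Gamma_i\,\Gamma_j^{-1}{\Gamma_j'}^{-1}$, i.e. $\Gamma_i$ commutes with $\Gamma_j'\Gamma_j$. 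Combining this commutation with the two braid relations $\langle\Gamma_i,\Gamma_j\rangle=\langle\Gamma_i,\Gamma_j'\rangle=e$ (which are the cusp relations once $\Gamma_i=\Gamma_i'$) and cancelling, in the same manner as the chain of equalities in \Lref{2pt-equal}, should force $\Gamma_j=\Gamma_j'$. Concretely, I expect to rewrite $\Gamma_i\Gamma_j'\Gamma_i$ two ways: once via the braid relation with $\Gamma_j'$, and once by pushing $\Gamma_j'$ through the commutation and then applying the braid relation with $\Gamma_j$; equating the two and cancelling the common prefix $\Gamma_j'\Gamma_i$ leaves precisely $\Gamma_j=\Gamma_j'$.

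Once $\Gamma_j=\Gamma_j'$ is in hand, the last step is to read $\Gamma_k=\Gamma_k'$ off the relation giving $\Gamma_k'$ as $\Gamma_k$ conjugating a word in $\Gamma_i',\Gamma_j,\Gamma_j'$. Substituting $\Gamma_j=\Gamma_j'$ and repeatedly using $\Gamma_\ell^2=e$, each inner subword of that relation telescopes to the identity, so the expression reduces to $\Gamma_k'=\Gamma_k\Gamma_k\Gamma_k^{-1}=\Gamma_k$. Together with the first step this delivers $\Gamma_l=\Gamma_l'$ for every $l\in\{i,j,k\}$ starting from either hypothesis. I expect the only genuine obstacle to be the converse in the first step: unlike the clean one-line substitutions, it really requires juggling both braid relations (with $\Gamma_j$ and with $\Gamma_j'$) together with the commutation $[\Gamma_i,\Gamma_j'\Gamma_j]=e$ and arranging the cancellations so that a single identification $\Gamma_j=\Gamma_j'$ survives. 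The $\Gamma_k'$ reduction is longer but purely mechanical once $\Gamma_\ell^2=e$ is exploited.
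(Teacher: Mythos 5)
Your proposal is correct and follows essentially the same route as the paper: the paper likewise reads $\Gamma_i=\Gamma_i'$ off the conjugation relation $\Gamma_i=\Gamma_j'\Gamma_j\Gamma_i'\Gamma_j^{-1}\Gamma_j'^{-1}$ when $\Gamma_j=\Gamma_j'$, handles the converse by the same commutation-plus-braid-relation cancellation "as in \Lref{2pt-equal}", and then obtains $\Gamma_k=\Gamma_k'$ by telescoping the long relation for $\Gamma_k'$ using $\Gamma_\ell^2=e$. Your explicit computation of $\Gamma_i\Gamma_j'\Gamma_i$ in two ways is a valid instantiation of the step the paper leaves implicit.
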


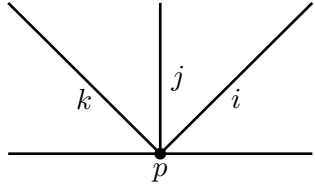
\begin{figure}[H]
	\begin{center}
		
		\definecolor{circ_col}{rgb}{0,0,0}
		\begin{tikzpicture}[x=1cm,y=1cm,scale=2]
		
		\draw [fill=circ_col] (0,0) node [below] {$ p $} circle (1pt);
		
		\draw [-, line width = 1pt] (0,0) -- (1,0);
		\draw [-, line width = 1pt] (0,0) -- (-1,0);
		
		\draw [-, line width = 1pt] (0,0) -- node [below]{$ i $} (1,1);
		\draw [-, line width = 1pt] (0,0) -- node [anchor=west]{$ j $} (0,1);
		\draw [-, line width = 1pt] (0,0) -- node [below]{$ k $} (-1,1);
		\end{tikzpicture}
		
	\end{center}
	\setlength{\abovecaptionskip}{-0.15cm}
	\caption{The vertex $ p $ in Lemma \ref{3pt-out-smallmid}.}\label{fig_lemma3pt_outer}
\end{figure}

\begin{proof}
	\uThreePointOuter{p}{i}{j}{k}{lemma3pt_outer_vert_p}
	
	If $ \Gamma_j=\Gamma_j' $ we get $ \Gamma_i=\Gamma_i' $ from \eqref{lemma3pt_outer_vert_p-3} and then $ \Gamma_k=\Gamma_k' $ from \eqref{lemma3pt_outer_vert_p-5}.
	If $ \Gamma_i=\Gamma_i' $, we get $ \Gamma_j=\Gamma_j' $ similarly to the proof of Lemma \ref{2pt-equal}, and then $ \Gamma_k=\Gamma_k' $ from \eqref{lemma3pt_outer_vert_p-5}.
\end{proof}

\subsection{$ X $ degenerates to $ U_{0,5,3} $}\label{section_computation_U_0_5_3_paper}

\begin{figure}[H]
	\begin{center}
		
		\definecolor{circ_col}{rgb}{0,0,0}
		\begin{tikzpicture}[x=1cm,y=1cm,scale=2]
		
		\draw [fill=circ_col] (0,-1) node [below] {1} circle (1pt);
		\draw [fill=circ_col] (0,0) node [anchor=east] {2} circle (1pt);
		\draw [fill=circ_col] (1,0) node [below] {3} circle (1pt);
		\draw [fill=circ_col] (2,0) node [below] {4} circle (1pt);
		\draw [fill=circ_col] (0,1) node [anchor=east] {5} circle (1pt);
		\draw [fill=circ_col] (1,1) node [above] {6} circle (1pt);
		\draw [fill=circ_col] (2,1) node [anchor=west] {7} circle (1pt);
		\draw [fill=circ_col] (0,2) node [above] {8} circle (1pt);
		
		\draw [-, line width = 1pt] (1,0) -- (2,0);
		\draw [-, line width = 1pt] (1,1) -- (0,2);
		\draw [-, line width = 1pt] (2,0) -- (2,1);
		
		\draw [-, line width = 1pt] (0,1) -- node [above]{1} (1,1);
		\draw [-, line width = 1pt] (0,0) -- node [above]{2} (1,1);
		\draw [-, line width = 1pt] (0,0) -- node [above]{3}  (1,0);
		\draw [-, line width = 1pt] (1,1) -- node [anchor=east]{4} (1,0);
		\draw [-, line width = 1pt] (1,0) -- node [above]{5} (2,1);
		
		\draw [-, line width = 1pt] (1,1) -- (2,1);
		\draw [-, line width = 1pt] (0,0) --  (0,1);
		\draw [-, line width = 1pt] (0,1) -- (0,2);
		\draw [-, line width = 1pt] (1,0) -- (0,-1);
		\draw [-, line width = 1pt] (0,0) -- (0,-1);
		\end{tikzpicture}
	\end{center}
	\setlength{\abovecaptionskip}{-0.15cm}
	\caption{The arrangement of planes $ U_{0,5,3} $.}\label{fig_computation_U_0_5_3}
\end{figure}

\begin{thm}\label{thm_computation_U_0_5_3_paper}
	If $ X $ degenerates to $ U_{0,5,3} $, then $\pi_1(\Xgal)$ is not trivial.
\end{thm}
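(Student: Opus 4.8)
The plan is to apply \Lref{not-trivial} directly. That lemma reduces nontriviality of $\pi_1(\Xgal)$ to a purely combinatorial condition on the dual graph $T$ of $X_0$, so all I need to do is build $T$ for $U_{0,5,3}$ and exhibit a vertex of valency at least $3$ that lies on no cycle of $T$. Everything else—the passage through $\hat G$, the fork relation, the cusp/node/branch-point analysis—is already packaged inside \Lref{not-trivial}.

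First I would read the six planes off the planar representation in \Fref{fig_computation_U_0_5_3}. The $2$-cells are the triangles with corners $\{2,3,6\}$, $\{2,5,6\}$, $\{5,6,8\}$, $\{3,6,7\}$, $\{3,4,7\}$ and $\{1,2,3\}$, and two planes are joined by an edge of $T$ exactly when the corresponding triangles share an interior edge of the complex, i.e. one of the five labeled lines $1,\dots,5$. Recording these adjacencies, the triangle $\{2,3,6\}$ meets three distinct planes: $\{1,2,3\}$ along line $3$, $\{3,6,7\}$ along line $4$, and $\{2,5,6\}$ along line $2$. Hence the vertex of $T$ corresponding to $\{2,3,6\}$ has valency $3$. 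Moreover $T$ has six vertices and exactly five edges (one per interior line), and it is connected, so $T$ is a tree; in particular the trivalent vertex lies on no cycle.

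With this in hand, \Lref{not-trivial} applies verbatim to the plane $\{2,3,6\}$ and its three lines $2,3,4$: the kernel of the natural projection $\hat G\to S_6$ is nontrivial, and therefore so is $\pi_1(\Xgal)$. I do not expect a genuine obstacle here—the hard part is entirely absorbed into \Lref{not-trivial}, which itself rests on the fork relation from \cite{RTV}. The only thing to be careful about is the bookkeeping: correctly extracting $T$ from \Fref{fig_computation_U_0_5_3} and confirming that the chosen trivalent vertex is acyclic, which in this case comes for free since the whole of $T$ is a tree.
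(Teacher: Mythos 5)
Your proposal is correct and rests on the same decisive step as the paper's proof, namely \Lref{not-trivial} applied to the trivalent vertex of the dual graph of $U_{0,5,3}$; your identification of $T$ as a tree on six vertices with the plane $\{2,3,6\}$ trivalent (via lines $2,3,4$) matches the configuration in \Fref{fig_computation_U_0_5_3}. The paper additionally regenerates all vertices and records an explicit presentation of $\widetilde{G}$ before invoking the lemma, but as you observe this extra computation is not logically needed for nontriviality, since the hypothesis of \Lref{not-trivial} is purely combinatorial.
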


\begin{proof}
	The branch curve $S_0$ in $\mathbb{CP}^2$ is an arrangement of five lines. We regenerate each vertex in turn and compute the group $G$.

	\uOnePoint{5}{1}{21-vert5}
	\uOnePoint{7}{5}{21-vert7}
	\uTwoPoint{2}{2}{3}{21-vert2}
	\uThreePointOuter{3}{3}{4}{5}{21-vert3}
	\uThreePointOuter{6}{1}{2}{4}{21-vert6}
	We also have the following parasitic and projective relations:
	\uParasit{1}{3}{21-parasit-1-3}
	\uParasit{1}{5}{21-parasit-1-5}
	\uParasit{2}{5}{21-parasit-2-5}
	\uProjRel{21-proj}{5}{4}{3}{2}{1}
	
	By Lemma \ref{3pt-out-smallmid} we get $\ug{2}=\ug{2'}$ and $\ug{4}=\ug{4'}$. Then by Lemma \ref{2pt-equal}, we get $\ug{3}=\ug{3'}$.
	
	Thus $\Ggal$ is generated by $\set{\ug{i} | i=1,\dots,5}$ modulo the following relations:
	\uGammaSq{21-simpl-4}{1}{2}{3}{4}{5}
	\ubegineq{21-simpl-5}
	\trip{1}{2} = \trip{2}{3} = \trip{2}{4}= \trip{3}{4} = \trip{4}{5}=e
	\uendeq
	\ubegineq{21-simpl-6}
	\comm{1}{3} = \comm{1}{4} = \comm{1}{5} = \comm{2}{5} = \comm{3}{5}=e.
	\uendeq
	By Lemma \ref{not-trivial}, $\pi_1(\Xgal)$ is not trivial.

\end{proof}

\subsection{$ X $ degenerates to $ U_{0,6,1} $}\label{section_computation_U_0_6_1_paper}

\begin{figure}[H]
	\begin{center}
		
		\definecolor{circ_col}{rgb}{0,0,0}
		\begin{tikzpicture}[x=1cm,y=1cm,scale=2]
		
		\draw [fill=circ_col] (0,0) node [below] {1} circle (1pt);
		\draw [fill=circ_col] (1,0) node [below] {2} circle (1pt);
		\draw [fill=circ_col] (0,1) node [anchor=east] {3} circle (1pt);
		\draw [fill=circ_col] (1,1) node [anchor=north west] {4} circle (1pt);
		\draw [fill=circ_col] (2,1) node [below] {5} circle (1pt);
		\draw [fill=circ_col] (0,2) node [above] {6} circle (1pt);
		\draw [fill=circ_col] (1,2) node [above] {7} circle (1pt);
		\draw [fill=circ_col] (2,2) node [above] {8} circle (1pt);
		
		\draw [-, line width = 1pt] (0,0) -- (1,0) -- (1,1) -- (2,1) -- (2,2) -- (1,2) -- (0,2) -- (0,1) -- (0,0);
		
		\draw [-, line width = 1pt] (0,0) -- node [below]{1} (1,1);
		\draw [-, line width = 1pt] (0,1) -- node [below]{2} (1,1);
		\draw [-, line width = 1pt] (0,2) -- node [below]{3} (1,1);
		\draw [-, line width = 1pt] (1,1) -- node [anchor=west]{4} (1,2);
		\draw [-, line width = 1pt] (1,2) -- node [anchor=west]{5} (2,1);
		\end{tikzpicture}
		
	\end{center}
	\setlength{\abovecaptionskip}{-0.15cm}
	\caption{The arrangement of planes $ U_{0,6,1} $.}\label{fig_computation_U_0_6_1}
\end{figure}

\begin{thm}\label{thm_computation_U_0_6_1_paper}
	If $ X $ degenerates to $ U_{0,6,1} $, then $\pi_1(\Xgal)$ is trivial.
\end{thm}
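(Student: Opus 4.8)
The plan is to follow the regeneration-and-collapse scheme of \Tref{thm_computation_U_0_5_3_paper}, but now to drive every generator onto its primed twin so that the resulting quotient is forced to be exactly $S_6$. First I would read the singular points of $S_0$ off Figure~\ref{fig_computation_U_0_6_1}: vertices $1,3,6,5$ are $1$-points carrying lines $1,2,3,5$ respectively, vertex $7$ is a $2$-point on lines $4,5$, and vertex $4$ is an outer $4$-point on lines $1,2,3,4$ (five planes meet there in an open fan, so by \Dref{def:inner_and_outer} it is outer, the lines occurring in the path order $1,2,3,4$). Regenerating these vertices and adjoining the projective relation together with the parasitic commutators for the pairs of lines that do not share a vertex in $X_0$ --- namely $(1,5),(2,5),(3,5)$ --- yields the full presentation of $G$, hence of $\Ggal$.

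The first substantive step is to kill all the primes in $\Ggal$. The four $1$-points give $\ug{1}=\ug{1}'$, $\ug{2}=\ug{2}'$, $\ug{3}=\ug{3}'$ and $\ug{5}=\ug{5}'$ at once. Line $4$ has no $1$-point endpoint, but its other endpoint is the $2$-point at vertex $7$ on lines $4,5$; since $\ug{5}=\ug{5}'$, \Lref{2pt-equal} delivers $\ug{4}=\ug{4}'$. Thus no separate collapse lemma for the $4$-point is needed: already $\Ggal=\hat G$, generated by $\ug{1},\dots,\ug{5}$ with $\ug{i}^2=e$.

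The heart of the proof is to simplify the long outer $4$-point relations at vertex $4$ once the primes are gone. Substituting $\ug{i}=\ug{i}'$ and using $\ug{i}^2=e$, the triple relations there collapse to the braid relations $\trip{1}{2}=\trip{3}{4}=e$ and the commutator relations collapse directly to $\comm{1}{4}=\comm{2}{4}=e$. The conjugated commutator relation then reads $[\ug{1},\ug{4}\ug{3}\ug{4}^{-1}]=e$, which together with $\comm{1}{4}=e$ forces $\comm{1}{3}=e$. Likewise the conjugated braid relation becomes $\langle \ug{1}\ug{2}\ug{1}, \ug{4}\ug{3}\ug{4}\rangle=e$, and feeding in $\comm{1}{3}=\comm{1}{4}=\comm{2}{4}=e$ (conjugating first by $\ug{4}$, then by $\ug{1}$) reduces it to the middle braid relation $\trip{2}{3}=e$. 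Finally the $2$-point at vertex $7$ contributes $\trip{4}{5}=e$ and the parasitic relations contribute $\comm{1}{5}=\comm{2}{5}=\comm{3}{5}=e$.

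Collecting these, $\Ggal$ is presented on $\ug{1},\dots,\ug{5}$ by $\ug{i}^2=e$, a braid relation for each consecutive pair and a commutation relation for every non-consecutive pair: this is precisely the Coxeter presentation of type $A_5$, and the natural map $\Ggal\to S_6$ carries $\ug{i}$ to the adjacent transposition $(i\,\,i{+}1)$ generating it. Hence $\Ggal\cong S_6$, the kernel of $\Ggal\to S_6$ is trivial, and by the exact sequence~\eqref{M-T} $\pi_1(\Xgal)$ is trivial. The step I expect to be the main obstacle is the simplification just sketched: the raw outer $4$-point relations are genuinely long, and one must establish the commutations \emph{before} the conjugated braid- and commutator-type relations will collapse, while also checking that the remaining long relations become redundant under $\ug{i}^2=e$ rather than forcing any further identification that would shrink the group below $S_6$.
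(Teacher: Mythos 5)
Your proposal is correct and follows essentially the same route as the paper: the four $1$-points kill the primes on $\Gamma_1,\Gamma_2,\Gamma_3,\Gamma_5$, \Lref{2pt-equal} applied at the $2$-point (vertex $7$) then gives $\Gamma_4=\Gamma_4'$, and the regenerated relations collapse to the Coxeter presentation of $S_6$, so $\Ggal\cong S_6$ and $\pi_1(\Xgal)$ is trivial. The only difference is that you spell out the reduction of the outer $4$-point relations (deriving $[\Gamma_1,\Gamma_3]=e$ and $\langle\Gamma_2,\Gamma_3\rangle=e$ from the conjugated relations), a simplification the paper leaves implicit; your intermediate and final relation lists agree with those in the paper.
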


\begin{proof}
	The branch curve $S_0$ in $\mathbb{CP}^2$ is an arrangement of five lines. We regenerate each vertex in turn and compute the group $G$.

	\uOnePoint{1}{1}{14-vert1}
	\uOnePoint{3}{2}{14-vert3}
	\uOnePoint{5}{5}{14-vert5}
	\uOnePoint{6}{3}{14-vert6}
	\uTwoPoint{7}{4}{5}{14-vert7}
	\uFourPointOuter{4}{1}{2}{3}{4}{14-vert4}
	We also have the following parasitic and projective relations:
	\uParasit{1}{5}{14-parasit-1-5}
	\uParasit{2}{5}{14-parasit-2-5}
	\uParasit{3}{5}{14-parasit-3-5}
	\uProjRel{14-proj}{5}{4}{3}{2}{1}
	
	By Lemma \ref{2pt-equal}, we get $\ug{4}=\ug{4'}$.
	
	$\Ggal$ is thus generated by $\set{\ug{i} | i=1,\dots,5}$ with the following relations:
	\uGammaSq{14-gamma-sq}{1}{2}{3}{4}{5}
	\ubegineq{14-simpl-3}
	\trip{1}{2} = \trip{2}{3} = \trip{3}{4} = \trip{4}{5} = e,
	\uendeq
	\ubegineq{14-simpl-4}
	\comm{1}{3} = \comm{1}{4} = \comm{1}{5} = \comm{2}{4} = \comm{2}{5} = \comm{3}{5} = e.
	\uendeq
	Thus, $\Ggal \cong S_6$, therefore $\pi_1(\Xgal)$ is trivial.
	
\end{proof}

\subsection{$ X $ degenerates to $ U_{4 \cup 3,1} $}\label{section_computation_U_4_cup_3_1_paper}

\begin{figure}[H]
	\begin{center}
		
		\definecolor{circ_col}{rgb}{0,0,0}
		\begin{tikzpicture}[x=1cm,y=1cm,scale=3]

		\draw [fill=circ_col] (-1.4, 0) node [below] {1} circle (1pt);
		\draw [fill=circ_col] (-0.5, 0) node [anchor=west] {2} circle (1pt);
		\draw [fill=circ_col] (0, 0.86) node [above] {3} circle (1pt);
		\draw [fill=circ_col] (0, -0.86) node [below] {4} circle (1pt);
		\draw [fill=circ_col] (0.5, 0) node [anchor=east] {5} circle (1pt);
		\draw [fill=circ_col] (-2, 0) node [anchor=east] {6} circle (1pt);
		
		\draw [-, line width = 1pt] (-2,0) -- (0, -0.86) -- (0.5, 0) -- (0, 0.86) -- (-2, 0);
		
		\draw [-, line width = 1pt] (-1.4,0) -- node [above] {5} (-0.5,0);
		\draw [-, line width = 1pt] (0, 0.86) -- node [above] {6} (-0.5,0);
		\draw [-, line width = 1pt] (0, -0.86) -- node [above] {2} (-0.5,0);
		\draw [-, line width = 1pt] (0, -0.86) -- node [anchor=east] {1} (0, 0.86);
		\draw [-, line width = 1pt] (-2,0) -- node [above] {4} (-1.4,0);
		\draw [-, line width = 1pt] (0, 0.86) -- node [below] {7} (-1.4,0);
		\draw [-, line width = 1pt] (0, -0.86) -- node [above] {3} (-1.4,0);
		\end{tikzpicture}
		
	\end{center}
	\setlength{\abovecaptionskip}{-0.15cm}
	\caption{The arrangement of planes $U_{4 \cup 3,1}$.}\label{fig_computation_U_4_cup_3_1}
\end{figure}

\begin{thm}\label{thm_computation_U_4_cup_3_1_paper}
	If $ X $ degenerates to $ U_{4 \cup 3,1} $, then $\pi_1(\Xgal)$ is trivial.	
\end{thm}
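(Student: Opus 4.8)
The plan is to follow the regeneration-and-simplification scheme of \Tref{thm_computation_U_0_6_1_paper}. First I would list the five singular points of the line arrangement $S_0$ from \Fref{fig_computation_U_4_cup_3_1} and classify them: vertex $6$ is a $1$-point on line $4$; vertex $2$ is an inner $3$-point on lines $2,5,6$; vertices $3$ and $4$ are outer $3$-points on lines $1,6,7$ and $1,2,3$ respectively; and vertex $1$ is an inner $4$-point on lines $3,4,5,7$. Feeding each into its braid-monodromy template, and adjoining the parasitic commutations coming from the non-adjacent planes together with the projective relation $\prod_{j=7}^{1}\ug{j}'\ug{j}=e$, gives a presentation of $G$ and hence of $\Ggal=G/\langle\ug{i}^2\rangle$. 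Since $\pi_1(\Xgal)=\ker(\Ggal\to S_6)$, the statement is equivalent to $\Ggal\cong S_6$, so the work splits into (i) forcing $\ug{i}=\ug{i}'$ for every line and (ii) identifying the collapsed group with $S_6$.

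For (i) I would propagate the identities $\ug{i}=\ug{i}'$ through the graph by \Lref{2pt-equal}--\Lref{3pt-out-smallmid}, organized as a chain. Two identities come for free: the $1$-point at vertex $6$ gives $\ug{4}=\ug{4}'$, and the ``moreover'' clause of \Lref{3pt-in-bigmid} at the inner $3$-point (vertex $2$, smallest line $2$) gives $\ug{2}=\ug{2}'$. Then the outer $3$-point at vertex $4$ has lines $1<2<3$ with middle line $2$ already equal, so \Lref{3pt-out-smallmid} yields $\ug{1}=\ug{1}'$ and $\ug{3}=\ug{3}'$; the outer $3$-point at vertex $3$ has lines $1<6<7$ with smallest line $1$ now equal, so \Lref{3pt-out-smallmid} yields $\ug{6}=\ug{6}'$ and $\ug{7}=\ug{7}'$; and returning to vertex $2$ with largest line $6$ equal, \Lref{3pt-in-bigmid} yields $\ug{5}=\ug{5}'$. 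All seven generators are then identified with their primes, and notably the inner $4$-point at vertex $1$ is not even needed for this step, since its lines $3,4,5,7$ have already been treated elsewhere.

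For (ii), after imposing $\ug{i}=\ug{i}'$ and $\ug{i}^2=e$ I would simplify the presentation: every triple relation $\trip{i}{j}$ becomes the braid relation $\ug{i}\ug{j}\ug{i}=\ug{j}\ug{i}\ug{j}$, every $\comm{i}{j}$ remains a commutation, and the bulky inner-$3$ and inner-$4$ relations collapse to the cycle relations of the two cycles in the transposition graph. Labelling the six planes (the two-cells) $A,\dots,F$, the seven lines send $\ug{i}$ to the transpositions $(AB),(BD),(DF),(EF),(CD),(BC),(CE)$ on $\{A,\dots,F\}$; their graph is connected, so the images generate $S_6$, and it has exactly two independent cycles, the triangle $BCD$ and the square $CDEF$, which are precisely the inner $3$- and $4$-points. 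Hence the simplified presentation is a Coxeter-type presentation of $S_6$, identified with $S_6$ by the same Coxeter-cover argument used for \Tref{thm_computation_U_0_5_3_paper} and \Tref{thm_computation_U_0_6_1_paper}; then \eqref{M-T} forces $\pi_1(\Xgal)=1$.

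The hard part will be (ii): checking that the collapsed group is \emph{exactly} $S_6$, and not a strictly larger group admitting a nontrivial kernel. The delicate piece is the inner $4$-point at vertex $1$, whose long relation block must be shown, after $\ug{i}=\ug{i}'$, to encode precisely the $4$-cycle relation on $\{C,D,E,F\}$ and nothing stronger or weaker; one must also confirm that the parasitic commutations supply every disjoint-transposition relation. I expect this to be a routine but lengthy Reidemeister--Schreier computation of the kind deferred to \cite[Appendix B]{6degree}. A reassuring consistency check is \Lref{not-trivial}: in the dual graph $T$ the only vertices of valency $\ge 3$ are the planes $B,C,D$, and each of them lies on a cycle, so the nontriviality criterion does not fire, in agreement with the expected $\pi_1(\Xgal)=1$.
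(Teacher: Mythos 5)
Your proposal is correct and follows the same overall strategy as the paper's proof: regenerate each vertex, collapse all the $\ug{i}'$ onto the $\ug{i}$ using Lemmas \ref{2pt-equal}--\ref{3pt-out-smallmid}, and then identify $\Ggal$ with $S_6$ via the Coxeter-cover machinery of \cite{RTV}. The one genuine difference is how the collapse is seeded. The paper first substitutes $\ug{4}=\ug{4}'$ (from the $1$-point at vertex $6$) into the two inner-$4$-point relations at vertex $1$ whose left-hand sides then coincide, extracting $\ug{5}=\ug{5}'$, and only afterwards propagates through the $3$-points. You instead start from the ``moreover'' clause of \Lref{3pt-in-bigmid} at the inner $3$-point (vertex $2$, smallest line $2$), which gives $\ug{2}=\ug{2}'$ unconditionally, and propagate $2\Rightarrow\{1,3\}\Rightarrow\{6,7\}\Rightarrow 5$; I checked this chain against the lemmas and it is valid, with the small advantage of never needing the inner $4$-point for this step. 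For the final identification with $S_6$ you correctly locate the two independent cycles of the dual graph (the triangle on the planes meeting at vertex $2$ and the quadrilateral at vertex $1$) but defer the verification that the collapsed inner-point relations are exactly the corresponding cycle relations; the paper does carry this out, recording them as $\ug{6}=\ug{2}\ug{5}\ug{2}$ and $\ug{3}\ug{4}\ug{3}=\ug{7}\ug{5}\ug{7}$ together with the extra commutators from Remark 2.8 of \cite{RTV}, and then concludes by Theorem 2.3 of \cite{RTV}. So your plan lands exactly where the paper does, and the only outstanding work is the routine verification you yourself flag.
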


\begin{proof}
	The branch curve $S_0$ in $\mathbb{CP}^2$ is an arrangement of seven lines. We regenerate each vertex in turn and compute the group $G$.
	
	\uOnePoint{6}{4}{U4cup3,1-vert6}
	\uThreePointInner{2}{2}{5}{6}{U4cup3,1-vert2}
	\uThreePointOuter{3}{1}{6}{7}{U4cup3,1-vert3}
	\uThreePointOuter{4}{1}{2}{3}{U4cup3,1-vert4}
	\uFourPointInner{1}{3}{4}{5}{7}{U4cup3,1-vert1}
	We also have the following parasitic and projective relations:
	\uParasit{1}{4}{U4cup3,1-parasit-1-4}
	\uParasit{1}{5}{U4cup3,1-parasit-1-5}
	\uParasit{2}{4}{U4cup3,1-parasit-2-4}
	\uParasit{2}{7}{U4cup3,1-parasit-2-7}
	\uParasit{3}{6}{U4cup3,1-parasit-3-6}
	\uParasit{4}{6}{U4cup3,1-parasit-4-6}
	\uProjRel{U4cup3,1-proj}{7}{6}{5}{4}{3}{2}{1}
	
	Substituting \eqref{U4cup3,1-vert6} and equating  \eqref{U4cup3,1-vert1-9} and \eqref{U4cup3,1-vert1-10}, we get $\ug{5}=\ug{5'}$.
	By Lemma \ref{3pt-in-bigmid}, we have $\ug{2}=\ug{2'}$ and $\ug{6}=\ug{6'}$. Then by Lemma \ref{3pt-out-smallmid}, we get $\ug{1}=\ug{1'}, \ug{3}=\ug{3'}$ and $\ug{7}=\ug{7'}$.
	
	$\Ggal$ is thus generated by $\set{\ug{i} | i=1,\dots,7}$ with the following relations:
	\uGammaSq{U4cup3,1-gamma-sq}{1}{2}{3}{4}{5}{6}{7}
	\begin{align}	
	&&\trip{1}{2} = \trip{1}{6} = \trip{2}{3} = \trip{2}{5} = \trip{2}{6} = \trip{3}{4}= \trip{3}{5} = \\
	&&\trip{4}{7} = \trip{5}{6}= \trip{5}{7} = \trip{6}{7} =  e \nonumber
	\end{align}	
	\ubegineq{U4cup3,1-simpl-4}
	\begin{split}	
	\comm{1}{3} = \comm{1}{4} = \comm{1}{5} = \comm{1}{7} = \comm{2}{4} &= \\
	= \comm{2}{7} = \comm{3}{6} = \comm{3}{7} = \comm{4}{5} = \comm{4}{6} &= e
	\end{split}
	\uendeq
	\ubegineq{U4cup3,1-simpl-5}
	\ug{6}= \ug{2}\ug{5}\ug{2}
	\uendeq
	\ubegineq{U4cup3,1-simpl-6}
	\ug{3}\ug{4}\ug{3}=\ug{7}\ug{5}\ug{7}.
	\uendeq
	By Remark 2.8 in \cite{RTV}, we get also the relations
	\ubegineq{U4cup3,1-simpl-7}
	[\ug{1},\ug{2}\ug{6}\ug{2}]=[\ug{2},\ug{3}\ug{5}\ug{3}]=[\ug{6},\ug{7}\ug{5}\ug{7}] = e.
	\uendeq
	By Theorem 2.3 in \cite{RTV} we have $\Ggal \cong S_6$, therefore $\pi_1(\Xgal)$ is trivial.

\end{proof}

\section{Results}\label{sec:results}

In this Section we present our findings.
We have considered the following invariants of $ \Xgal $: its fundamental group $ \pi_1(\Xgal) $, its Chern numbers $ c_1^2, c_2 $, and its signature $ \chi:=\frac{1}{3}(c_1^2-2c_2) $.

\subsection{The fundamental group of the Galois cover}

We begin with the fundamental group, for which we were able to deduce, with the exception of a single case, whether it is trivial.

\begin{thm}
	The group $\pi_1(\Xgal)$ is not trivial for $U_{0,5,1}$, $U_{0,5,2}$, $U_{0,5,3}$, $U_{0,5,4}$, $U_{0,5,5}$, $U_{0,6,2}$, $U_{0,6,3}$, $U_{3,5}$. The group is trivial in all the other cases, except possibly $ U_{4,2} $.
\end{thm}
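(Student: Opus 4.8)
The plan is to dispose of the $29$ cases by sorting them according to the two mechanisms already developed in this section, and to isolate $U_{4,2}$ as the single degeneration where neither mechanism closes.

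First I would handle the eight non-trivial cases by a uniform graph-theoretic argument. For each of $U_{0,5,1},\dots,U_{0,5,5}$, $U_{0,6,2}$, $U_{0,6,3}$ and $U_{3,5}$, I would read off the dual graph $T$ from Figure \ref{fig_all_cases_compact} and exhibit a vertex of valency at least $3$ that lies on no cycle of $T$. Lemma \ref{not-trivial} then applies verbatim and gives that $\pi_1(\Xgal)$ is non-trivial. This step is essentially bookkeeping: for each diagram one only needs to point to the relevant high-valency vertex, exactly as the central vertex of $U_{0,5,3}$ was used in Theorem \ref{thm_computation_U_0_5_3_paper}.

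The substantial work lies in the $20$ trivial cases. For each such degeneration I would carry out the regeneration at every vertex, writing the braid-monodromy relations by means of the appropriate vertex templates ($1$-, $2$-, $3$-, $4$- and $5$-points), and adjoining the parasitic and projective relations, exactly as in Theorems \ref{thm_computation_U_0_6_1_paper} and \ref{thm_computation_U_4_cup_3_1_paper}. The aim at this stage is to force $\ug{i}=\ug{i'}$ for every line $i$. I would begin at a vertex where such an equality holds unconditionally (a $1$-point, or the equality extracted by equating two relations at a higher-order vertex, as $\ug{5}=\ug{5'}$ was obtained in the $U_{4\cup 3,1}$ computation) and then propagate it across $T$ via Lemmas \ref{2pt-equal}, \ref{3pt-in-bigmid} and \ref{3pt-out-smallmid}. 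Once all the equalities $\ug{i}=\ug{i'}$ are in hand, the surviving relations are triple and commutation relations matching a Coxeter presentation, and the results on Coxeter covers of symmetric groups from \cite{RTV} identify $\Ggal\cong S_6$, whence $\pi_1(\Xgal)$ is trivial by the Moishezon--Teicher exact sequence \eqref{M-T}.

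The main obstacle is precisely this propagation step, and it is where $U_{4,2}$ fails. The reduction lemmas each move an equality $\ug{i}=\ug{i'}$ only from a line to its neighbours across a vertex of a suitable type, so for the triviality argument to close these implications must reach every edge of $T$, starting from the vertices that force an equality unconditionally. For most of the graphs the combination of $2$-points with inner and outer $3$-points suffices, but one must verify case by case that no line is left unreached. In $U_{4,2}$ the configuration of vertex types does not let the equalities propagate to all lines, so $\Ggal$ cannot be identified with $S_6$ by this method; at the same time $T$ fails the hypothesis of Lemma \ref{not-trivial}, so non-triviality cannot be concluded either. I would therefore record $U_{4,2}$ as open, as in the statement.
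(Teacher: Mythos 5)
Your proposal follows the paper's own strategy essentially verbatim: the eight non-trivial cases are settled by applying Lemma \ref{not-trivial} to a high-valency acyclic vertex of the dual graph, the twenty trivial cases by regenerating each vertex, propagating $\Gamma_i=\Gamma_i'$ through Lemmas \ref{2pt-equal}, \ref{3pt-in-bigmid} and \ref{3pt-out-smallmid} (seeded at $1$-points or at equalities extracted from higher-order vertices), and then invoking the Coxeter-cover results of \cite{RTV} to conclude $\widetilde{G}\cong S_6$; and $U_{4,2}$ is left open for exactly the reason the paper gives. The only detail you omit is that a couple of trivial cases require a supplementary device within this framework ($U_{4\cup 4}$ is finished by a computer-algebra verification and $U_6$ by citing \cite{ZAPP}), but this does not change the argument.
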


The case of $ U_{4,2} $ is especially challenging because there are no 1-points, there are no 3-valent vertices in the dual graph not attached to a cycle, and there is no inner 3-point.
Thus it is the only case for which the techniques presented in the current work are not able to deduce whether $ \pi_1(\Gal{X}) $ is trivial.
We believe this question can be interesting to those seeking to develop additional algebraic tools in the study of Coxeter groups.

Note that the technique used in \cite{A-R-T} to find the group $ \pi_1(\Gal{X}) $ explicitly is not applicable in higher degrees.
We can therefore formulate the following question.
\begin{Qst}
	What are the isomorphism classes of the groups $ \pi_1(\Gal{X}) $, where $ X $ degenerates to one of $U_{0,5,1}$, $U_{0,5,2}$, $U_{0,5,3}$, $U_{0,5,4}$, $U_{0,5,5}$, $U_{0,6,2}$, $U_{0,6,3}$, or $U_{3,5}$?
\end{Qst}

In all those cases we can show that $ \pi_1(\Gal{X}) $ is normally generated by 1 or 2 elements in $ \widetilde{G} $ (namely the fork relations).
Whenever this group is normally generated by a unique element, based results in smaller degrees, we believe all the conjugations of this element commute and thus formulate the following conjecture:
\begin{conjeceture}
	The group $ \pi_1(\Gal{X}) $ is free abelian group whenever $ X $ can be degenerated to one of $U_{0,5,1}$, $U_{0,5,2}$, $U_{0,5,3}$, $U_{0,5,5}$, $U_{0,6,2}$, $U_{0,6,3}$, or $U_{3,5}$.
\end{conjeceture}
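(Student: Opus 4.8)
The plan is to analyze $K := \pi_1(\Xgal) = \ker(\widetilde{G}\to S_6)$ directly as a subgroup of the finitely presented group $\widetilde{G}$, and to begin with two structural reductions. Since $\widetilde{G}$ is generated by the finitely many $\Gamma_i$ and $[\widetilde{G}:K]=|S_6|=720$, a finite-index subgroup of a finitely generated group is finitely generated (Reidemeister--Schreier), so $K$ is finitely generated; and a finitely generated abelian group is free abelian precisely when it is torsion-free. Hence the conjecture reduces to two claims: that $K$ is abelian, and that $K$ is torsion-free. In each of the seven listed cases the computations of Section~\ref{sec:main} already exhibit $K$ as the normal closure $\langle\langle w\rangle\rangle_{\widetilde{G}}$ of a single fork element $w=[\Gamma_i,\Gamma_j\Gamma_k\Gamma_j]$, so $K$ is generated, as a group, by the conjugacy class $\{g\,w\,g^{-1}:g\in\widetilde{G}\}$.

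The abelianness is the heart of the matter, and is exactly the part the surrounding discussion flags as believed-but-unproven. Because $K$ is generated by the conjugates of $w$, it is abelian if and only if those conjugates pairwise commute, equivalently if and only if $[\,w,\,g\,w\,g^{-1}\,]=e$ for every $g\in\widetilde{G}$. I would attack this by an orbit-and-commutation closure argument: starting from $w$, repeatedly conjugate by the generators $\Gamma_s$, at each step rewriting $\Gamma_s\,w'\,\Gamma_s^{-1}$ for an already-produced conjugate $w'$ into the $\Gamma_i$-alphabet using the defining relations of $\widetilde{G}$ (the $\Gamma_i^2=e$, the triple relations as in \eqref{21-simpl-5}, the commutations, and the projective relation), and verifying that each newly produced conjugate commutes with all previously produced ones. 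Two facts must emerge from this bookkeeping: that the orbit of $w$ under conjugation is finite (plausible, since $w\in K$ maps to $e$ in the finite group $S_6$, so conjugation only permutes finitely many ``positions''), and that the pairwise commutators vanish as consequences of the braid and commutation relations. This is precisely the style of computation carried out explicitly in degree $4$ in \cite{A-R-T} and in degree $5$ in \cite{AGT}; the intended proof transports those arguments, case by case, to the seven degree-$6$ graphs, ideally packaged through the Coxeter-cover structure theory of \cite{RTV} (the same machinery that yields $\widetilde{G}\cong S_6$, hence $K=e$, in the trivial cases via its Theorem~2.3 and Remark~2.8).

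Once $K$ is known to be abelian, I would present it as a $\mathbb{Z}[S_6]$-module generated by the image of $w$, the relations among the now finitely many conjugates $\sigma\,w\,\sigma^{-1}$ (indexed by $S_6/\Stab(w)$) being the additive consequences of the commutation analysis above. Torsion-freeness then amounts to checking that the resulting presentation matrix --- an explicit integer matrix encoding the incidence and cycle data of the dual graph $T$ --- presents a torsion-free, hence free, abelian group, which I would verify directly in each case, again matching the free abelian answers obtained in smaller degree.

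The main obstacle is the abelianness step, and within it two sharp points. The first is the size and intricacy of controlling all conjugates of $w$: without a normal form or a solved word problem for these Coxeter-cover groups, establishing $[\,w,\,g\,w\,g^{-1}\,]=e$ for all $g$ is a genuine computation rather than a formal consequence, and the \cite{RTV} theory is developed chiefly for the regime in which the kernel is trivial, so it must be pushed into the non-trivial regime. The second, subtler point --- the one that makes ``free'' rather than merely ``abelian'' a conjecture --- is ruling out torsion: even granting that the conjugates commute, one must certify that no combination of the braid-induced relations forces a nonzero finite-order element, which is conceivable a priori. The exclusion of $U_{0,5,4}$ (where $K$ requires two fork generators) from the statement is a useful signal here: the single-generator hypothesis is what keeps the module cyclic and makes the free abelian conclusion credible.
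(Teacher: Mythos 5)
The statement you were given is labeled a \emph{Conjecture} in the paper, and the paper contains no proof of it: the authors only record (in Section~\ref{sec:results}) that in each of the listed cases $\pi_1(\Xgal)$ is the normal closure in $\widetilde{G}$ of a single fork element $w=[\Gamma_i,\Gamma_j\Gamma_k\Gamma_j]$, and that ``based on results in smaller degrees'' they \emph{believe} all conjugates of this element commute. Your proposal reproduces this framing correctly — the reduction to (i) abelianness and (ii) torsion-freeness of a finitely generated group is sound (Reidemeister--Schreier gives finite generation of the index-$720$ kernel), and you rightly note the significance of excluding $U_{0,5,4}$ — but it does not go beyond it. Both of the substantive steps are left as computations you ``would'' perform: the commutation $[\,w,\,g\,w\,g^{-1}\,]=e$ for all $g\in\widetilde{G}$ is exactly the open content of the conjecture, and your torsion-freeness step presupposes an explicit presentation matrix for $K$ as a $\mathbb{Z}[S_6]$-module that is never produced. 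A plan whose two load-bearing steps are deferred is not a proof; it is a restatement of why this is a conjecture.

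There is also a concrete logical flaw inside the plan. You justify finiteness of the conjugation orbit of $w$ by the observation that $w$ maps to $e$ in the finite group $S_6$; this is not a valid inference. An element of an infinite normal subgroup of $\widetilde{G}$ can perfectly well have infinitely many $\widetilde{G}$-conjugates (already in a free group every nontrivial element of an infinite normal subgroup does). What is true is the converse direction: \emph{if} $K=\pi_1(\Xgal)$ is abelian, then $K\subseteq C_{\widetilde{G}}(w)$, so $[\widetilde{G}:C_{\widetilde{G}}(w)]\le 720$ and the orbit is finite. Using orbit-finiteness as an ingredient in establishing abelianness is therefore circular as written. Finally, be cautious about leaning on \cite{RTV}: as you yourself note, that machinery (Theorem~2.3, Remark~2.8) is what the paper uses precisely in the cases where the kernel is trivial, and extending it to compute a nontrivial kernel — or transporting the explicit degree-$4$ and degree-$5$ computations of \cite{A-R-T} and \cite{AGT} to these seven degree-$6$ graphs — is the missing work, not a citation away.
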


\subsection{Chern numbers and signature}

The Chern numbers and the signature of the Galois cover are additional important topological  invariants.
To compute them we use the following proposition.

\begin{prop}\cite[Proposition\, 0.2]{MoTe87} \label{prop_chern}
	The Chern classes of $ \Xgal $ are as follows:
	\begin{enumerate}
		\item  $c_1^2(\Xgal)=\frac{n!}{4} (m-6)^2,$
		\item $c_2(\Xgal)=n! (3-m+\frac{d}{4}+\frac{\mu}{2}+\frac{\rho}{6}),$
		
		where $n=\deg f, m=\deg S$, $\mu=$ number of branch points in $S$, $d=$ number of nodes in $S$,
		and $\rho=$ number of cusps in $S$.
	\end{enumerate}
\end{prop}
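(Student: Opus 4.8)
The plan is to compute the two Chern numbers through their geometric meaning, namely $c_1^2(\Xgal)=K_{\Xgal}^2$ and $c_2(\Xgal)=e(\Xgal)$ (the topological Euler number), exploiting that the Galois projection $f_{\mathrm{Gal}}\colon \Xgal\to\mathbb{CP}^2$ is a Galois cover of degree $n!$ with group $S_n$, branched along the cuspidal curve $S$ of degree $m$.

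For the first Chern number I would begin by identifying the canonical class. Away from the finitely many nodes, cusps and simple branch points of the projection, the local monodromy of $f_{\mathrm{Gal}}$ around a smooth point of $S$ is a transposition, so the ramification along $S$ is simple; hence $f_{\mathrm{Gal}}^{*}S = 2R$ for the reduced ramification divisor $R$, and the surface adjunction (Hurwitz) formula gives $K_{\Xgal}=f_{\mathrm{Gal}}^{*}K_{\mathbb{CP}^2}+R=f_{\mathrm{Gal}}^{*}\big((-3+\tfrac m2)H\big)=f_{\mathrm{Gal}}^{*}\big(\tfrac{m-6}{2}H\big)$, where $H$ is the hyperplane class. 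Squaring and using $(f_{\mathrm{Gal}}^{*}H)^2=\deg(f_{\mathrm{Gal}})\,H^2=n!$ yields $c_1^2=\tfrac{n!}{4}(m-6)^2$. The delicate point is that $\Xgal$ is a priori singular over the nodes and cusps of $S$, so one must pass to a resolution and check that the excess contributions to $K^2$ cancel (equivalently, that these are canonical singularities), which is exactly why no $d$ or $\rho$ terms survive in $c_1^2$.

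For the second Chern number I would use additivity of the constructible Euler characteristic under a stratification of the base. Stratify $\mathbb{CP}^2$ into $\mathbb{CP}^2\setminus S$, the smooth locus of $S$ with the $\mu+d+\rho$ special points deleted, and the three zero-dimensional strata of branch points, nodes and cusps; over each stratum $f_{\mathrm{Gal}}$ is a topologically locally trivial fibration, so that $e(\Xgal)=\sum_{\text{strata}} e(\text{stratum})\cdot e(\text{fiber})$. The fiber over $\mathbb{CP}^2\setminus S$ has $n!$ points, while over a smooth point of $S$ the inertia group is generated by a transposition, giving a fiber of $n!/2$ points; the local models over a node, a cusp and a simple branch point supply the remaining fiber Euler numbers, which assemble into the fractional coefficients $\tfrac14$, $\tfrac16$ and $\tfrac12$ of $d$, $\rho$ and $\mu$. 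Collecting $e(\mathbb{CP}^2)=3$ and the Euler characteristics of the one-dimensional strata and simplifying then produces $c_2=n!\big(3-m+\tfrac d4+\tfrac\mu2+\tfrac\rho6\big)$.

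The main obstacle is the local analysis of the $S_n$-Galois fiber over the singular and ramification points of $S$: one must read off, from the transposition-valued braid monodromy, precisely how many points lie in the fiber over a node, a cusp and a branch point and with what local topology, and verify that these local Euler numbers give exactly the stated coefficients; the parallel obstacle for $c_1^2$ is controlling the singularities of $\Xgal$ over those same points. Since this local bookkeeping is precisely the content of \cite[Proposition 0.2]{MoTe87}, I would in the end invoke that computation for the local contributions rather than redo them.
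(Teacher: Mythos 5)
The paper offers no proof of this proposition at all: it is imported verbatim as \cite[Proposition 0.2]{MoTe87}, so falling back on that reference for the local analysis, as you do in your last paragraph, is exactly what the authors do. Your outline of the global argument is the standard one, and the $c_1^2$ half is sound: the inertia over a generic point of $S$ is a single transposition, so $f_{\mathrm{Gal}}^*S=2R$, Hurwitz gives $K_{\Xgal}=f_{\mathrm{Gal}}^*\big(\tfrac{m-6}{2}H\big)$, and the worry about the fibers over nodes and cusps is settled by the smoothness of $\Xgal$ for a generic projection, which is also established in \cite{MoTe87}.

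There is, however, a conceptual slip in the $c_2$ half that would derail the computation if carried out as written. The quantity $\mu$ is not the number of points of $\mathbb{CP}^2$ over which $f_{\mathrm{Gal}}$ has special fibers: in this braid-monodromy setting the ``branch points'' of $S$ are the simple vertical tangency points of $S$ with respect to the auxiliary generic projection $S\to\mathbb{CP}^1$, i.e.\ smooth points of $S$ at which the Galois cover looks exactly like it does over any other smooth point of $S$ (fiber of $n!/2$ points, inertia a single transposition). So a zero-dimensional stratum at these points contributes nothing extra, and if you only stratify by nodes and cusps you get $e(\Xgal)=n!\big(3-\tfrac{e(S)}{2}-\tfrac d4-\tfrac\rho3\big)$ with the correct local counts $n!/4$ over a node (two commuting transpositions) and $n!/6$ over a cusp (a braid-related pair generating $S_3$). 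The coefficient $\tfrac{\mu}{2}$ only appears when you then compute $e(S)$ itself by Riemann--Hurwitz for $S\to\mathbb{CP}^1$, namely $e(S)=2m-\mu-\rho-d$; substituting this recovers $c_2=n!\big(3-m+\tfrac d4+\tfrac\mu2+\tfrac\rho6\big)$. So the formula is right, but the $\mu$-term comes from the Euler characteristic of the one-dimensional stratum, not from any special local structure of $\Xgal$ over the branch points as your sketch asserts.
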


We denote by $\chi(\Xgal)$ the signature of the Galois cover, and we compute it by the formula
$$\chi(\Xgal) = \frac{1}{3}({c_1^2}-2{c_2}).$$

Using Proposition \ref{prop_chern} and the computations appearing in \cite[Appendix B]{6degree}, we can obtain the Chern numbers of all the Galois covers we consider.

\begin{thm}
	The Chern numbers and signatures of the Galois covers of the surfaces appear in Table \ref{table_all_invariants}.
	
	Moreover, all the signatures of degree 6 surfaces with a degeneration that have planar representation are negative and have the form $ -\frac{a}{3}\cdot 6! $ where $ a\in \{1,\dots,7\} $.
\end{thm}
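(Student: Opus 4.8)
The plan is to reduce the statement to a finite, explicit verification governed by a single closed formula for the signature. First I would specialize Proposition \ref{prop_chern} to the situation at hand: since every surface considered has degree $6$, the generic projection has degree $n=6$, so $n!=6!$, and the two Chern numbers read $c_1^2(\Xgal)=\tfrac{6!}{4}(m-6)^2$ and $c_2(\Xgal)=6!\bigl(3-m+\tfrac{d}{4}+\tfrac{\mu}{2}+\tfrac{\rho}{6}\bigr)$, where $m=\deg S$ and $\mu,d,\rho$ are the numbers of branch points, nodes and cusps of the regenerated branch curve $S$. Substituting into $\chi=\tfrac13(c_1^2-2c_2)$ and collecting terms gives
\[\chi(\Xgal)=\frac{6!}{3}\left[\frac{(m-6)^2}{4}-6+2m-\frac{d}{2}-\mu-\frac{\rho}{3}\right],\]
so proving the claim is equivalent to showing that the bracketed integer $Q$ lies in $\{-7,\dots,-1\}$ in each of the $29$ cases, with $a=-Q$.

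Second, I would record the combinatorial recipe producing $(m,d,\mu,\rho)$ from a degeneration, so the verification is mechanical. Every line of the arrangement $S_0$ regenerates to a conic, hence $m=2L$ with $L$ the number of lines (the number of edges of the dual graph $T$); this is why each case names its generators $\Gamma_1,\Gamma_1',\dots,\Gamma_L,\Gamma_L'$. The local contributions of each vertex are read off from its braid monodromy factorization, as displayed in the macros of Section \ref{section:method}: a half-twist $Z_{j\;j'}$ counts one branch point; $Z^2_{i,j\;j'}$ counts two nodes and $Z^2_{i\;i',j\;j'}$ counts four; a cube $Z^3_{i,j\;j'}$ counts three cusps; and each parasitic pair contributes four nodes. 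Summing these over all vertices and parasitic pairs yields the entries tabulated in Table \ref{table_all_invariants} and justified in \cite[Appendix B]{6degree}.

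Third, I would dispose of integrality once and for all, which also explains why $\chi$ is always an integer multiple of $\tfrac{6!}{3}$. Because $m=2L$ is even, $\tfrac{(m-6)^2}{4}=(L-3)^2\in\Z$; nodes always occur with even multiplicity in the braids (and parasitic pairs contribute four each), so $\tfrac{d}{2}\in\Z$; and cusps always arise in triples from the regeneration of a tangency, so $\tfrac{\rho}{3}\in\Z$. Hence $Q\in\Z$ and $a=-Q\in\Z$. It is also worth invoking the curve-theoretic consistency relation $m(m-1)=\mu+2d+3\rho$ (the abelianized degree of the braid factorization of the full twist is $m(m-1)$), which determines $\mu$ from $(m,d,\rho)$; substituting it collapses the formula to $a=3(L^2-1)-\tfrac{3d}{2}-\tfrac{8\rho}{3}$, a convenient cross-check against the tabulated values.

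Finally I would run the finite verification: for each of the $29$ degenerations, insert the tabulated $(m,d,\mu,\rho)$ into $Q$ and confirm $Q\in\{-7,\dots,-1\}$, so that $\chi(\Xgal)=-\tfrac{a}{3}\,6!<0$ with $a\in\{1,\dots,7\}$. The main obstacle is not the formula but the bookkeeping behind it: the honest determination of $\mu,d,\rho$ demands care to avoid double-counting branch points on lines shared by several vertices and to identify exactly which non-adjacent line pairs project to parasitic intersections, which is precisely what the appendix carries out case by case. I expect the genuinely delicate point to be the \emph{uniformity} of the bound $a\le 7$: there is no evident a priori reason forcing the narrow window $\{1,\dots,7\}$, so it emerges only from the explicit counts (equivalently, from the fact that $\tfrac{3d}{2}+\tfrac{8\rho}{3}$ stays within $6$ of $3(L^2-1)$ for every case), and proving it without the case analysis would require a sharper combinatorial estimate bounding the numbers of nodes and cusps in terms of $L$.
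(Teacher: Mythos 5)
Your main line of argument is exactly the paper's: Theorem 4.5 is proved by direct substitution of the case-by-case values of $n,m,\mu,d,\rho$ (recorded in the appendix) into Proposition \ref{prop_chern}, followed by a finite check that the resulting $\chi$ is $-\tfrac{a}{3}\cdot 6!$ with $a\in\{1,\dots,7\}$; your specialized formula for $\chi$ and the integrality remarks are correct and consistent with the table (e.g.\ for $U_{0,4}$: $m=10$, $d=24$, $\mu=6$, $\rho=12$ gives $Q=-4$). One concrete warning, though: the ``consistency relation'' $m(m-1)=\mu+2d+3\rho$ that you propose as a cross-check, and the collapsed formula $a=3(L^2-1)-\tfrac{3d}{2}-\tfrac{8\rho}{3}$ derived from it, do \emph{not} hold against the tabulated data in most of the $29$ cases. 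For $U_{0,5,1}$ one has $\mu+2d+3\rho=7+40+45=92$ while $m(m-1)=90$, and your collapsed formula would return $a=2$ instead of the tabulated $a=4$; similar discrepancies occur for $U_{0,5,4}$, $U_{0,5,5}$, $U_{0,7}$ and others (only $U_{0,4}$ among the cases I checked satisfies the identity). So either the identity requires a correction in this setting or it exposes an inconsistency in the counts; in either case it cannot be used as a cross-check as stated, and you should rely only on the direct substitution into Proposition \ref{prop_chern}, which is what the paper does.
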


\begin{proof}
	This is an immediate application of Proposition \ref{prop_chern}.
	For the values of $ n, m, \mu, d$ and $ \rho $ that appear in this proposition, see \cite[Appendix B]{6degree}.
\end{proof}

\subsection{Summary of results}

We attach the following table that includes the invariant computed in the paper: Chern numbers, signature and information whether the fundamental group of the Galois cover is trivial or not.

\begin{table}[h]
	\begin{center}
	\begin{tabular}{| c | c | c | c | c |}
		\hline
		$ X_0 $ & $ c_1^2 $ & $ c_2 $ & $ \chi $ & $ \pi_1(\Xgal) $ \\ \hline \hline
		$U_{0,4}$ & $4 \cdot 6!$ & $4 \cdot 6!$ & $-\frac{4}{3} \cdot 6!$ & trivial \\ \hline
		$U_{0,5,1}, U_{0,5,2}, U_{0,5,3}, U_{0,6,2}, U_{0,6,3}$ &	$ 4 \cdot 6!$ & $4 \cdot 6!$& $ -\frac{4}{3} \cdot 6!$ & not trivial \\		\hline
		$U_{0,5,4}$ &	$ 4 \cdot 6!$ & $3 \cdot 6!$ & $ -\frac{2}{3} \cdot 6!$ & not trivial \\ \hline
		$U_{0,5,5}$ & $4 \cdot 6!$ & $ \frac{7}{2} \cdot 6!$ & $ - 6!$ & not trivial \\		\hline
		$U_{0,5,6}, U_{0,5,8}$ & $4 \cdot 6!$ & $ \frac{9}{2} \cdot 6!$ & $- \frac{5}{3} \cdot 6!$ & trivial \\	\hline
		$U_{0,5,7}, U_{0,6,1}$ & $4 \cdot 6!$ & $5 \cdot 6!$ & $- 2 \cdot 6!$ & trivial \\ \hline
		$U_{0,7}$ & $4 \cdot 6!$ & $\frac{11}{2} \cdot 6!$ & $- \frac{7}{3} \cdot 6!$ & trivial \\ \hline
		$U_{3,1}, U_{3,3}, U_{4,3}$ & $9 \cdot 6!$ & $\frac{13}{2} \cdot 6!$ & $- \frac{4}{3} \cdot 6!$ & trivial \\ \hline
		$U_{3,5}$ & $9 \cdot 6!$ & $\frac{13}{2} \cdot 6!$ & $- \frac{4}{3} \cdot 6!$ & not trivial \\ \hline
		$U_{3,2}, U_{5}$ & $9 \cdot 6!$ & $6 \cdot 6!$ & $- 6!$ & trivial \\ \hline
		$U_{3,4}$ & $9 \cdot 6!$ & $\frac{15}{2}\cdot 6!$ & $- 2 \cdot 6!$ & trivial \\ \hline
		$U_{3,6}$ & $9 \cdot 6!$ & $7 \cdot 6!$ & $- \frac{5}{3} \cdot 6!$ & trivial \\ \hline
		$U_{3 \cup 3}$ & $16 \cdot 6!$ & $11 \cdot 6!$ & $- 2 \cdot 6!$ & trivial \\ \hline
		$U_{4,1}$ & $9 \cdot 6!$ & $\frac{11}{2} \cdot 6!$ & $- \frac{2}{3} \cdot 6!$ & trivial \\ \hline
		$U_{4,2}$ & $9 \cdot 6!$ & $5 \cdot 6!$ & $- \frac{1}{3} \cdot 6!$ & open question \\ \hline
		$U_{4 \cup 3,1}, U_{5 \cup 3}$ & $16 \cdot 6!$ & $\frac{19}{2} \cdot 6!$ & $- 6!$ & trivial \\ \hline
		$U_{4 \cup 3,2}, U_{4 \cup 4}$ & $16 \cdot 6!$ & $9 \cdot 6!$ & $- \frac{2}{3} \cdot 6!$ & trivial \\ \hline
		$U_{6}$ & $9 \cdot 6!$ & $ 7 \cdot 6!$ & $- \frac{5}{3} \cdot 6!$ & trivial \\ \hline
	\end{tabular}
\end{center}
\caption{Table of invariants computed in the paper.}\label{table_all_invariants}
\end{table}

\newpage
\cleardoublepage
\phantomsection
\addcontentsline{toc}{section}{References}

\newpage

\appendix

\section{Classification of the cases}\label{sec:classification}

We are grateful to Guo Zhiming for proving the completeness of the following classification.

\begin{thm}\label{thm:classification}
	There are exactly 29 non-isomorphic sextic degenerations that have a planar representation (see Definition \ref{def_planar_representation}).
\end{thm}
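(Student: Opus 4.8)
The plan is to translate the classification into the enumeration of a well-understood class of combinatorial objects and then to stratify that enumeration so that each stratum becomes a short finite check. By \Dref{def_planar_representation}, a sextic degeneration with planar representation is recorded, up to isomorphism, by a connected simplicial complex $K$ embedded in $\mathbb{R}^2$ whose $2$-cells correspond, preserving incidence, to the planes of $X_0$; since $\deg X = 6$, the complex $K$ has exactly six triangular $2$-cells, and part (1) of the definition says that every interior edge meets exactly two of them. For the smooth toric $X$ under consideration $K$ is moreover a triangulation of a disk (the triangulated moment polygon), so the problem becomes: \emph{count the isomorphism classes of triangulations of a disk into six triangles}, where the planar embedding, and hence the cyclic order of triangles around each vertex, is part of the data.

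First I would use Euler's formula to bound the possibilities. Writing $F=6$ for the number of triangles, $V=V_i+V_b$ for the interior and boundary vertices, and $E$ for the edges, the relations $V-E+F=1$ and $3F = 2E - V_b$ (each of the $V_b$ boundary edges lies in one triangle, each interior edge in two) combine to give $V_i = 4 - V_b/2$. Thus $V_b$ is even, the boundary polygon has $V_b \in \{4,6,8\}$ sides, and correspondingly $V_i \in \{2,1,0\}$. This already organizes the classification into three strata according to the number of interior vertices, which is exactly the information encoded by the labels: $V_i=0$ gives the family $U_{0,\ast}$, a single interior vertex gives the single inner-point cases $U_{k,\ast}$, and two interior vertices give the ``glued'' cases $U_{a\cup b}$.

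Next I would enumerate within each stratum. When $V_i=0$ one triangulates an octagon with no interior vertex; a finite check of the distinct (oriented) triangulation types yields the thirteen cases $U_{0,4}$, $U_{0,5,1},\dots,U_{0,5,8}$, $U_{0,6,1},U_{0,6,2},U_{0,6,3}$, $U_{0,7}$. When $V_i=1$ the unique interior vertex $v$ carries a closed fan whose size equals its degree $k\in\{3,4,5,6\}$, and for each $k$ one lists the ways the remaining $6-k$ boundary vertices and their ear triangles can be arranged; this produces $U_{3,1},\dots,U_{3,6}$, $U_{4,1},U_{4,2},U_{4,3}$, $U_5$, and $U_6$, eleven cases in all. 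When $V_i=2$ the two interior vertices each carry a closed fan, and enumerating the admissible pairs of fan sizes together with their relative position gives the five cases $U_{3\cup3}$, $U_{4\cup3,1}$, $U_{4\cup3,2}$, $U_{4\cup4}$, $U_{5\cup3}$. Summing over the strata gives $13+11+5=29$.

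The main obstacle is not any individual step but establishing simultaneously that the list is \emph{exhaustive} and \emph{irredundant}. Exhaustiveness is ultimately a finite verification: once the boundary size $V_b$ and, in the $V_i\ge1$ cases, the fan(s) at the interior vertices are fixed, only finitely many edge-gluings remain, and one must confirm that every gluing compatible with planarity (part (2) of \Dref{def_planar_representation}) and with connectivity of the interior has been listed. Irredundancy is the more delicate point and is where a complete invariant is needed: I would certify pairwise non-isomorphism using the dual graph $T$ together with the fan data at each vertex (inner versus outer point, and its size) and the induced cyclic order of triangles about each vertex. Care is required here because the same dual graph $T$ can underlie genuinely different complexes once the inner/outer structure is recorded, and because one must fix, consistently across all cases, whether mirror-image triangulations are to be identified — the choice that distinguishes the dihedral count from the (here relevant) oriented count and accounts for the appearance of exactly thirteen, rather than twelve, cases in the $U_{0,\ast}$ family.
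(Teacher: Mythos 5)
Your reduction and stratification are sound and take a genuinely different route from the paper. The paper organizes the enumeration around the largest ``rigid'' inner fan $D_n$ (the closed star of an interior vertex) and then, in the residual ``general type'', around the largest boundary point; at no stage does it justify a priori why only $D_3,\dots,D_6$ can occur, why at most two such fans can coexist, or why the boundary never exceeds eight vertices. Your Euler-characteristic identity $V_i=4-V_b/2$ supplies exactly this missing bound: it proves $V_b\in\{4,6,8\}$ and $V_i\in\{2,1,0\}$ before any case analysis begins, and your three strata ($13+11+5$) match the paper's list after regrouping (the paper's $D_5$ stratum, for instance, splits between your $V_i=1$ and $V_i=2$ strata as $U_5$ and $U_{5\cup 3}$). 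You are also more careful than the paper in noting that the disk topology must come from the toric hypothesis: six triangles do suffice to triangulate an annulus, which is planar with connected interior, so \Dref{def_planar_representation} alone does not exclude it.

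The genuine gap is in the $V_i=0$ stratum. Those complexes are exactly the triangulations of a convex octagon by five non-crossing diagonals, of which there are $C_6=132$ in total, $19$ up to rotation, and $12$ up to rotation and reflection. Neither number is $13$, so your ``finite check of the distinct (oriented) triangulation types yields the thirteen cases'' cannot be carried out as described, and your closing claim --- that passing from the dihedral to the oriented count accounts for thirteen rather than twelve cases --- is numerically false, since the oriented count is $19$. To close the argument you must first fix the equivalence relation under which ``non-isomorphic'' is measured. If it is plain isomorphism of simplicial complexes (which, for a triangulated disk, coincides with equivalence under the dihedral action on the boundary octagon, since any simplicial isomorphism preserves the boundary cycle), then at most $12$ of the thirteen $U_{0,\ast}$ cases can be pairwise non-isomorphic and a redundant mirror pair must be exhibited; if instead the degeneration carries extra data (an orientation, or the ordering of planes and lines used in the braid monodromy), you must explain why exactly one of the seven chiral dihedral classes is counted twice, since $12$ classes with $5$ achiral gives $12+7=19$ oriented types, not $13$. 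Until this is settled your argument does not establish the count of $29$ --- and the same point is left unaddressed in the paper's own proof, which simply asserts the eight-plus-three-plus-one-plus-one list for this stratum without discussing reflections.
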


\begin{proof}
	In order to classify all planar representations, it is equivalent to classify all combinations of six plane triangles with common edges.  First, we consider a special kind of n-point configuration, which does not change by gluing other triangles. We denote the n-point configuration with these kinds of n-points by $D_n$ (corresponding to degeneration of n-degree surface). Note that the smallest $D_n$ is $D_3$. First, we classify sextic degenerations by gluing $D_k$ with $6-k$ triangles.
	
	\begin{itemize}
		\item The largest  $D_n$ in the configuration is $D_6$. There is only one possible configuration (see Figure \ref{fig_cases_U_6}), which we denote by $ U_6 $.
		
		\begin{figure}[H]
			\begin{center}
				

		
	\end{center}
	\setlength{\abovecaptionskip}{-0.15cm}
	\caption{The arrangement of planes $ U_{0,4}$.}\label{fig_computation_U_0_4}
\end{figure}

\begin{thm}\label{thm_computation_U_0_4}
	If $ X $ degenerates to $ U_{0,4} $, then $\pi_1(\Xgal)$ is trivial.
\end{thm}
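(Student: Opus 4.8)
The plan is to mirror the computations carried out for $U_{0,5,3}$, $U_{0,6,1}$ and $U_{4\cup3,1}$ in \Tref{thm_computation_U_0_5_3_paper}--\Tref{thm_computation_U_4_cup_3_1_paper}: regenerate every singular vertex of the line arrangement $S_0$, collect the resulting relations of $G=\pi_1(\mathbb{CP}^2-S)$, pass to $\Ggal=G/\langle\ug{i}^2\rangle$, and identify the quotient explicitly. From \Fref{fig_computation_U_0_4} the arrangement has five lines $1,\dots,5$ meeting in a single chain: vertices $1$ and $8$ are $1$-points lying on lines $1$ and $5$ respectively, while vertices $6,2,7,3$ are $2$-points carrying the consecutive pairs $\{1,2\}$, $\{2,3\}$, $\{3,4\}$, $\{4,5\}$. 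Equivalently the dual graph $T$ is a path on the six planes, a tree in which every vertex has valency at most $2$; in particular \Lref{not-trivial} cannot be invoked, which is exactly what one expects when the group turns out to be trivial.

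First I would write down the regeneration relations vertex by vertex. Each $1$-point yields an equality of a generator with its primed copy, giving $\ug{1}=\ug{1'}$ and $\ug{5}=\ug{5'}$. Each $2$-point contributes a triple (braid) relation $\trip{i}{j}=e$ between its two lines, together with a conjugation relation expressing the primed generator of the larger-indexed line in terms of the remaining generators at that vertex. To these I would append the parasitic commutation relations, one for each pair of lines that do not meet in $X_0$ but whose images cross after projection, and the single projective relation.

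Next I would propagate the equalities $\ug{i}=\ug{i'}$ along the chain using \Lref{2pt-equal}. Starting from $\ug{1}=\ug{1'}$ at the $2$-point $\{1,2\}$ gives $\ug{2}=\ug{2'}$; the $2$-point $\{2,3\}$ then gives $\ug{3}=\ug{3'}$, the $2$-point $\{3,4\}$ gives $\ug{4}=\ug{4'}$, and the $2$-point $\{4,5\}$ gives $\ug{5}=\ug{5'}$, consistently with the $1$-point at vertex $8$. With all primed generators eliminated, $\Ggal$ is generated by $\ug{1},\dots,\ug{5}$ subject to $\ug{i}^2=e$, the braid relations $\trip{1}{2}=\trip{2}{3}=\trip{3}{4}=\trip{4}{5}=e$ between consecutive generators, and commutation relations $\comm{i}{j}=e$ for the non-consecutive pairs. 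This is precisely the standard Coxeter presentation of $S_6$ --- indeed the very presentation obtained for $U_{0,6,1}$ in \Tref{thm_computation_U_0_6_1_paper} --- so $\Ggal\cong S_6$ and the kernel $\pi_1(\Xgal)$ of $\Ggal\to S_6$ is trivial.

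The only point requiring genuine care, beyond routine bookkeeping, is checking that the parasitic intersections supply exactly the commutators $\comm{i}{j}$ for all $|i-j|\ge2$ and nothing else, i.e.\ that the six non-adjacent pairs $(1,3),(1,4),(1,5),(2,4),(2,5),(3,5)$ are all realised as commuting pairs and that no unexpected relation among the $\ug{i}$ survives. Once this relation list matches that of $U_{0,6,1}$, the identification with $S_6$ is immediate and the theorem follows.
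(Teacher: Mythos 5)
Your proposal is correct and follows essentially the same route as the paper: regenerate the two $1$-points and four $2$-points, propagate the equalities $\ug{i}=\ug{i'}$ along the chain via \Lref{2pt-equal}, and recognize the resulting presentation of $\Ggal$ as the Coxeter presentation of $S_6$. The paper's proof is exactly this computation, so no further comment is needed.
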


\begin{proof}
	
	The branch curve $S_0$ in $\mathbb{CP}^2$ is an arrangement of five lines. We regenerate each vertex in turn and compute the group $G$.
	
	\uOnePoint{1}{1}{U04-vert1}
	\uOnePoint{8}{5}{U04-vert5}
	\uTwoPoint{2}{2}{3}{U04-vert2}
	\uTwoPoint{3}{4}{5}{U04-vert3}
	\uTwoPoint{6}{1}{2}{U04-vert6}
	\uTwoPoint{7}{3}{4}{U04-vert7}
	We also have the following parasitic and projective relations:
	\uParasit{1}{3}{U04-parasit-1-3}
	\uParasit{1}{4}{U04-parasit-1-4}
	\uParasit{1}{5}{U04-parasit-1-5}
	\uParasit{2}{4}{U04-parasit-2-4}
	\uParasit{2}{5}{U04-parasit-2-5}
	\uParasit{3}{5}{U04-parasit-3-5}
	\uProjRel{2-proj}{5}{4}{3}{2}{1}
	
	By Lemma \ref{2pt-equal} we get in $\Ggal$ that $\ug{2}=\ug{2'}, \ug{3'}  = \ug{3}$ and $\ug{4} = \ug{4'}$.
	
	Thus $\Ggal$ is generated by $\set{\ug{i} | i=1,\dots,5}$ modulo the following relations:
	\uGammaSq{U04-simpl-4}{1}{2}{3}{4}{5}
	\ubegineq{U04-simpl-5}
	\trip{1}{2} = \trip{2}{3} = \trip{3}{4} = \trip{4}{5}=e
	\uendeq
	\ubegineq{U04-simpl-6}
	\comm{1}{3} = \comm{1}{4} = \comm{1}{5} = \comm{2}{4} = \comm{2}{5} = \comm{3}{5}=e.
	\uendeq
	The generators of $\Ggal$ satisfy the same relations as the transpositions in $S_6$, therefore $\Ggal \cong S_6 $ and $\pi_1(\Xgal)$ is trivial.

\end{proof}

\uChernSummary{6}{24}{12}{10}{4}{4}{-\frac{4}{3}\cdot}


\subsection{{$ X $ degenerates to $ U_{0,5,1} $}}\label{section_computation_U_0_5_1}

\begin{figure}[H]
	\begin{center}
		
		\definecolor{circ_col}{rgb}{0,0,0}
		\begin{tikzpicture}[x=1cm,y=1cm,scale=2]
		
		\draw [fill=circ_col] (0,0) node [anchor=east] {1} circle (1pt);
		\draw [fill=circ_col] (0,1) node [anchor=east] {2} circle (1pt);
		\draw [fill=circ_col] (1,1) node [below] {3} circle (1pt);
		\draw [fill=circ_col] (2,1) node [below] {4} circle (1pt);
		\draw [fill=circ_col] (0,2) node [anchor=east] {5} circle (1pt);
		\draw [fill=circ_col] (1,2) node [above] {6} circle (1pt);
		\draw [fill=circ_col] (2,2) node [above] {7} circle (1pt);
		\draw [fill=circ_col] (0,3) node [above] {8} circle (1pt);
		
		\draw [-, line width = 1pt] (0,0) -- (1,1) -- (2,1) -- (2,2) -- (1,2) -- (0,3) -- (0,2) -- (0,1) -- (0,0);
		
		\draw [-, line width = 1pt] (0,1) -- node [below]{1} (1,1);
		\draw [-, line width = 1pt] (0,2) -- node [below]{2} (1,1);
		\draw [-, line width = 1pt] (0,2) -- node [above]{3} (1,2);
		\draw [-, line width = 1pt] (1,1) -- node [anchor=west]{4} (1,2);
		\draw [-, line width = 1pt] (1,2) -- node [anchor=west]{5} (2,1);
		\end{tikzpicture}
		
	\end{center}
	\setlength{\abovecaptionskip}{-0.15cm}
	\caption{The arrangement of planes $ U_{0,5,1} $.}\label{fig_computation_U_0_5_1}
\end{figure}

\begin{thm}\label{thm_computation_U_0_5_1}
	If $ X $ degenerates to $ U_{0,5,1} $, then $\pi_1(\Xgal)$ is not trivial.
\end{thm}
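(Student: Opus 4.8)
The plan is to obtain non-triviality directly from Lemma \ref{not-trivial}, whose hypothesis is purely combinatorial; the real work is therefore to read off the dual graph $T$ of $U_{0,5,1}$ from Figure \ref{fig_computation_U_0_5_1}. Labelling the six planes of $X_0$ by $P_1,\dots,P_6$ and recording the five lines of pairwise intersection (the labelled interior edges of the complex), one finds that the incidences are $P_1\cap P_2$, $P_2\cap P_3$, $P_3\cap P_6$, $P_3\cap P_4$, $P_4\cap P_5$, so that $T$ is the tree
\[
P_1 - P_2 - P_3 - P_4 - P_5 , \qquad P_3 - P_6 ,
\]
on six vertices and five edges. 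In particular $T$ is acyclic, and $P_3$ is a vertex of valency $3$, its three incident edges being the three sides of the triangle $P_3$.

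Granting this, the theorem is immediate: $P_3$ has valency $3$ and, since $T$ is a tree, lies on no cycle, so Lemma \ref{not-trivial} applies and exhibits a non-trivial fork relation $[\ug{i},\ug{j}\ug{k}\ug{j}]$ on the three lines $i,j,k$ incident to $P_3$ in the kernel of $\hat G\to S_6$; hence $\pi_1(\Xgal)$ is not trivial. This already settles the statement, and no braid-monodromy computation is strictly required.

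For uniformity with the treatment of $U_{0,5,3}$, and to record the presentation used later for the Chern numbers and for the structural conjecture, I would nonetheless carry out the regeneration explicitly. The singular points of $S_0$ are two $1$-points (on lines $1$ and $5$), one $2$-point (lines $2,3$) and two outer $3$-points (lines $1,2,4$ and lines $3,4,5$), with parasitic pairs exactly $(1,3),(1,5),(2,5)$ — the same line-incidence data as $U_{0,5,3}$. Starting from the $1$-point relations $\ug{1}=\ug{1'}$ and $\ug{5}=\ug{5'}$, Lemma \ref{3pt-out-smallmid} at the $3$-point on lines $1,2,4$ yields $\ug{2}=\ug{2'}$ and $\ug{4}=\ug{4'}$, and Lemma \ref{2pt-equal} at the $2$-point then gives $\ug{3}=\ug{3'}$. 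The surviving presentation of $\Ggal$ on $\ug{1},\dots,\ug{5}$ coincides verbatim with the one obtained for $U_{0,5,3}$, which is reassuring given that the two cases share the same incidence pattern.

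There is no genuine obstacle here: the regeneration follows the fixed templates, and the decisive input — non-triviality — is delivered by the combinatorial criterion of Lemma \ref{not-trivial}. The only point I would check carefully is the classification of the two $3$-points as \emph{outer} rather than inner (each arising from four planes in a linear chain, with the two extreme planes meeting only at the vertex and not along an edge), since a misclassification would change which auxiliary lemma governs the equalities $\ug{l}=\ug{l'}$; this is confirmed directly from Figure \ref{fig_computation_U_0_5_1} by noting that at each such vertex the two outermost planes share only the point.
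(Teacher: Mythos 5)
Your proposal is correct and follows essentially the same route as the paper: the paper likewise regenerates the two $1$-points, the $2$-point on lines $2,3$, and the two outer $3$-points on lines $1,2,4$ and $3,4,5$, deduces $\ug{2}=\ug{2'}$, $\ug{4}=\ug{4'}$ via Lemma~\ref{3pt-out-smallmid} and $\ug{3}=\ug{3'}$ via Lemma~\ref{2pt-equal}, arrives at the same presentation of $\Ggal$ as in the $U_{0,5,3}$ case, and concludes by Lemma~\ref{not-trivial}. Your added observation that the dual graph is a tree with a valency-$3$ vertex, so that Lemma~\ref{not-trivial} already yields non-triviality without the explicit regeneration, is a correct reading of that lemma's purely combinatorial hypothesis and is consistent with how the paper invokes it.
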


\begin{proof}
	The branch curve $S_0$ in $\mathbb{CP}^2$ is an arrangement of five lines. We regenerate each vertex in turn and compute the group $G$.

\uOnePoint{2}{1}{U051-vert2}
\uOnePoint{4}{5}{U051-vert4}
\uTwoPoint{5}{2}{3}{U051-vert5}
\uThreePointOuter{3}{1}{2}{4}{U051-vert3}
\uThreePointOuter{6}{3}{4}{5}{U051-vert6}
We also have the following parasitic and projective relations:
	\uParasit{1}{3}{U051-parasit-1-3}
	\uParasit{1}{5}{U051-parasit-1-5}
	\uParasit{2}{5}{U051-parasit-2-5}
	\uProjRel{U051-proj}{5}{4}{3}{2}{1}
	
By Lemma \ref{3pt-out-smallmid} we get $\ug{2}=\ug{2'}$ and $\ug{4}=\ug{4'}$. Then by Lemma \ref{2pt-equal}, $\ug{3}=\ug{3'}$.

Thus $\Ggal$ is generated by $\set{\ug{i} | i=1,\dots,5}$ modulo the following relations:
	\uGammaSq{U051-simpl-4}{1}{2}{3}{4}{5}
	\ubegineq{U051-simpl-5}
	\trip{1}{2} = \trip{2}{3} = \trip{2}{4}= \trip{3}{4} = \trip{4}{5}=e
	\uendeq
	\ubegineq{U051-simpl-6}
	\comm{1}{3} = \comm{1}{4} = \comm{1}{5} = \comm{2}{5} = \comm{3}{5}=e.
	\uendeq
	By Lemma \ref{not-trivial}, $\pi_1(\Xgal)$ is not trivial.

\end{proof}

\uChernSummary{7}{20}{15}{10}{4}{4}{-\frac{4}{3}\cdot}

\subsection{{$ X $ degenerates to $ U_{0,5,2}$}}\label{section_computation_U_0_5_2}

\begin{figure}[H]
	\begin{center}
		
		\definecolor{circ_col}{rgb}{0,0,0}
		\begin{tikzpicture}[x=1cm,y=1cm,scale=2]
		
		\draw [fill=circ_col] (0,0) node [below] {1} circle (1pt);
		\draw [fill=circ_col] (1,0) node [below] {2} circle (1pt);
		\draw [fill=circ_col] (2,0) node [below] {3} circle (1pt);
		\draw [fill=circ_col] (3,0) node [below] {4} circle (1pt);
		\draw [fill=circ_col] (0,1) node [above] {5} circle (1pt);
		\draw [fill=circ_col] (1,1) node [above] {6} circle (1pt);
		\draw [fill=circ_col] (2,1) node [anchor=south west] {7} circle (1pt);
		\draw [fill=circ_col] (2,2) node [above] {8} circle (1pt);

		\draw [-, line width = 1pt] (0,0) -- (1,0);
		\draw [-, line width = 1pt] (1,0) -- (2,0);
		\draw [-, line width = 1pt] (2,0) -- (3,0);
		\draw [-, line width = 1pt] (0,1) -- (1,1);
		
		\draw [-, line width = 1pt] (1,1) -- node [above]{3} (2,1);
		\draw [-, line width = 1pt] (0,0) -- node [above]{1} (1,1);
		\draw [-, line width = 1pt] (1,1) -- node [anchor=west]{2} (1,0);
		\draw [-, line width = 1pt] (1,0) -- node [above]{4} (2,1);
		\draw [-, line width = 1pt] (2,1) -- node [anchor=west]{5} (2,0);
		
		\draw [-, line width = 1pt] (0,0) --  (0,1);
		\draw [-, line width = 1pt] (2,1) -- (3,0);
		\draw [-, line width = 1pt] (1,1) -- (2,2);
		\draw [-, line width = 1pt] (2,1) -- (2,2);
		\end{tikzpicture}
		
	\end{center}
	\setlength{\abovecaptionskip}{-0.15cm}
	\caption{The arrangement of planes $ U_{0,5,2}$.}\label{fig_computation_U_0_5_2}
\end{figure}

\begin{thm}\label{thm_computation_U_0_5_2}
	If $ X $ degenerates to $ U_{0,5,2} $, then $\pi_1(\Xgal)$ is not trivial.
\end{thm}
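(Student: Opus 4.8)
The plan is to reproduce the template already used for $U_{0,5,1}$ and $U_{0,5,3}$, whose arrangements have the same combinatorial shape. First I would read the singular vertices off \Fref{fig_computation_U_0_5_2}: vertices $1$ and $3$ are $1$-points carrying the single lines $1$ and $5$; vertex $2$ is a $2$-point carrying lines $2,4$; and vertices $6$ and $7$ are outer $3$-points (\Dref{def:inner_and_outer}) carrying the line triples $1<2<3$ and $3<4<5$, respectively, where in each case the numerically middle line is the central line of the corresponding path of four planes. I would then apply the one-, two- and three-point regeneration templates (\texttt{\textbackslash uOnePoint}, \texttt{\textbackslash uTwoPoint}, \texttt{\textbackslash uThreePointOuter}) at these five vertices to record the local braid-monodromy relations in $G$, and adjoin the parasitic commutators coming from the three pairs of lines that are disjoint in $X_0$, namely $\{1,4\},\{1,5\},\{2,5\}$, together with the projective relation.

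The heart of the argument is to show $\Gamma_l=\Gamma_l'$ for all $l$. The two $1$-points give $\Gamma_1=\Gamma_1'$ and $\Gamma_5=\Gamma_5'$ at once. Since $\Gamma_1=\Gamma_1'$ and line $1$ is the smallest line at the outer $3$-point $6$, \Lref{3pt-out-smallmid} forces $\Gamma_2=\Gamma_2'$ and $\Gamma_3=\Gamma_3'$; then, with $\Gamma_3=\Gamma_3'$ (the smallest line at the outer $3$-point $7$), the same lemma forces $\Gamma_4=\Gamma_4'$, while the $2$-point at vertex $2$ is consistent via \Lref{2pt-equal}. Hence $\widetilde{G}$ collapses to the group on $\Gamma_1,\dots,\Gamma_5$ with $\Gamma_i^2=e$, triple relations $\trip{1}{2}=\trip{2}{3}=\trip{2}{4}=\trip{3}{4}=\trip{4}{5}=e$, and commutators $\comm{1}{3}=\comm{1}{4}=\comm{1}{5}=\comm{2}{5}=\comm{3}{5}=e$ — exactly the presentation obtained for $U_{0,5,3}$.

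Finally I would invoke \Lref{not-trivial}. The dual graph $T$ of $U_{0,5,2}$ is a tree on the six planes in which the central plane bounded by lines $2,3,4$ meets three distinct planes (along lines $2$, $3$ and $4$) and lies on no cycle; it is therefore a vertex of valency $3$ not contained in any cycle of $T$. \Lref{not-trivial} then shows that the kernel of $\hat{G}\to S_6$ is non-trivial, and a fortiori that $\pi_1(\Xgal)$ is non-trivial.

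Because all the needed machinery is already in place, I do not anticipate a genuine obstacle; the computation is essentially bookkeeping. The one point requiring care is the assignment of $(i,j,k)$ to each outer $3$-point: \Lref{3pt-out-smallmid} only propagates an equality starting from its first or second line, so the two collapses chain together precisely because line $1$ is the $i$ of vertex $6$ and line $3$ is the $i$ of vertex $7$. Verifying that the geometric middle line of each four-plane path is indeed the numerically central generator (line $2$ at vertex $6$, line $4$ at vertex $7$) is thus the step I would check most carefully, since a mislabeling there would make the lemma inapplicable from the data $\Gamma_1=\Gamma_1'$ alone.
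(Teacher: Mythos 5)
Your proposal is correct and follows essentially the same route as the paper: identical vertex/line data, the same parasitic pairs, the collapse to $\Gamma_i=\Gamma_i'$ via the $1$-points and Lemma \ref{3pt-out-smallmid}, the same final presentation, and the conclusion via Lemma \ref{not-trivial} (whose hypothesis you correctly verify at the plane bounded by lines $2,3,4$). The only cosmetic difference is that the paper obtains $\Gamma_4=\Gamma_4'$ from the $2$-point via Lemma \ref{2pt-equal} rather than from the outer $3$-point at vertex $7$; both are valid.
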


\begin{proof}
	
	The branch curve $S_0$ in $\mathbb{CP}^2$ is an arrangement of five lines. We regenerate each vertex in turn and compute the group $G$.

	\uOnePoint{1}{1}{4-vert1}
	\uOnePoint{3}{5}{4-vert3}
	\uTwoPoint{2}{2}{4}{4-vert2}
	\uThreePointOuter{6}{1}{2}{3}{4-vert6}
	\uThreePointOuter{7}{3}{4}{5}{4-vert7}
	We also have the following parasitic and projective relations:
	\uParasit{1}{4}{4-parasit-1-4}
	\uParasit{1}{5}{4-parasit-1-5}
	\uParasit{2}{5}{4-parasit-2-5}
	\uProjRel{4-proj}{5}{4}{3}{2}{1}

By Lemma \ref{3pt-out-smallmid} we get $\ug{2}=\ug{2'}$ and $\ug{3}=\ug{3'}$. Then by Lemma \ref{2pt-equal}, $\ug{4}=\ug{4'}$.

Thus $\Ggal$ is generated by $\set{\ug{i} | i=1,\dots,5}$ modulo the following relations:
	\uGammaSq{4-simpl-4}{1}{2}{3}{4}{5}
	\ubegineq{4-simpl-5}
	\trip{1}{2} = \trip{2}{3} = \trip{2}{4}= \trip{3}{4} = \trip{4}{5}=e
	\uendeq
	\ubegineq{4-simpl-6}
	\comm{1}{3} = \comm{1}{4} = \comm{1}{5} = \comm{2}{5} = \comm{3}{5}=e.
	\uendeq
	By Lemma \ref{not-trivial}, $\pi_1(\Xgal)$ is not trivial.

\end{proof}

	\uChernSummary{7}{20}{15}{10}{4}{4}{-\frac{4}{3}\cdot}

\subsection{{$ X $ degenerates to $ U_{0,5,3} $}}\label{section_computation_U_0_5_3}

\begin{figure}[H]
	\begin{center}
		
		\definecolor{circ_col}{rgb}{0,0,0}
		\begin{tikzpicture}[x=1cm,y=1cm,scale=2]
		
		\draw [fill=circ_col] (0,-1) node [below] {1} circle (1pt);
		\draw [fill=circ_col] (0,0) node [anchor=east] {2} circle (1pt);
		\draw [fill=circ_col] (1,0) node [below] {3} circle (1pt);
		\draw [fill=circ_col] (2,0) node [below] {4} circle (1pt);
		\draw [fill=circ_col] (0,1) node [anchor=east] {5} circle (1pt);
		\draw [fill=circ_col] (1,1) node [above] {6} circle (1pt);
		\draw [fill=circ_col] (2,1) node [anchor=west] {7} circle (1pt);
		\draw [fill=circ_col] (0,2) node [above] {8} circle (1pt);
		
		\draw [-, line width = 1pt] (1,0) -- (2,0);
		\draw [-, line width = 1pt] (1,1) -- (0,2);
		\draw [-, line width = 1pt] (2,0) -- (2,1);
		
		\draw [-, line width = 1pt] (0,1) -- node [above]{1} (1,1);
		\draw [-, line width = 1pt] (0,0) -- node [above]{2} (1,1);
		\draw [-, line width = 1pt] (0,0) -- node [above]{3}  (1,0);
		\draw [-, line width = 1pt] (1,1) -- node [anchor=east]{4} (1,0);
		\draw [-, line width = 1pt] (1,0) -- node [above]{5} (2,1);
		
		\draw [-, line width = 1pt] (1,1) -- (2,1);
		\draw [-, line width = 1pt] (0,0) --  (0,1);
		\draw [-, line width = 1pt] (0,1) -- (0,2);
		\draw [-, line width = 1pt] (1,0) -- (0,-1);
		\draw [-, line width = 1pt] (0,0) -- (0,-1);
		\end{tikzpicture}
	\end{center}
	\setlength{\abovecaptionskip}{-0.15cm}
	\caption{The arrangement of planes $ U_{0,5,3} $.}\label{fig_computation_U_0_5_3_appendix}
\end{figure}

\begin{thm}\label{thm_computation_U_0_5_3}
	If $ X $ degenerates to $ U_{0,5,3} $, then $\pi_1(\Xgal)$ is not trivial.
\end{thm}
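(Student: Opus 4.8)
The plan is to reproduce the computation already carried out for Theorem~\ref{thm_computation_U_0_5_3_paper}, since the arrangement in Figure~\ref{fig_computation_U_0_5_3_appendix} and its labeling coincide with those of Figure~\ref{fig_computation_U_0_5_3}. First I would regenerate each of the five singular vertices using the standard local models: vertices $5$ and $7$ are $1$-points, contributing $\Gamma_1=\Gamma_1'$ and $\Gamma_5=\Gamma_5'$; vertex $2$ is a $2$-point on the lines $2,3$; and vertices $3$ and $6$ are outer $3$-points on the line-triples $\{3,4,5\}$ and $\{1,2,4\}$ respectively, each contributing the relation packages of the outer $3$-point model. To these local relations I would append the parasitic commutations coming from the non-adjacent line pairs $(1,3)$, $(1,5)$, $(2,5)$, together with the projective relation $\prod_{j=5}^{1}\Gamma_j'\Gamma_j=e$. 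This assembles a full presentation of $\widetilde{G}$.

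The second step is to collapse the primed and unprimed generators. Applying Lemma~\ref{3pt-out-smallmid} to the two outer $3$-points forces $\Gamma_2=\Gamma_2'$ and $\Gamma_4=\Gamma_4'$ in $\widetilde{G}$, and then Lemma~\ref{2pt-equal} applied to the $2$-point on lines $2,3$ yields $\Gamma_3=\Gamma_3'$; combined with $\Gamma_1=\Gamma_1'$ and $\Gamma_5=\Gamma_5'$ from the $1$-points, every generator satisfies $\Gamma_i=\Gamma_i'$. Substituting these identifications and discarding the now-redundant triple and node relations, $\widetilde{G}$ becomes generated by $\Gamma_1,\dots,\Gamma_5$ subject to $\Gamma_i^2=e$, the triple relations $\langle\Gamma_1,\Gamma_2\rangle=\langle\Gamma_2,\Gamma_3\rangle=\langle\Gamma_2,\Gamma_4\rangle=\langle\Gamma_3,\Gamma_4\rangle=\langle\Gamma_4,\Gamma_5\rangle=e$, and the commutations $[\Gamma_1,\Gamma_3]=[\Gamma_1,\Gamma_4]=[\Gamma_1,\Gamma_5]=[\Gamma_2,\Gamma_5]=[\Gamma_3,\Gamma_5]=e$. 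This is exactly the simplified presentation recorded in Theorem~\ref{thm_computation_U_0_5_3_paper}, and I would verify that no further collapse occurs (i.e.\ this presentation does not already coincide with $S_6$).

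The final step, which carries the actual content of the statement, is to deduce that $\pi_1(\Xgal)$ is nontrivial by invoking Lemma~\ref{not-trivial}. For this I would inspect the dual graph $T$ of $U_{0,5,3}$ and exhibit a vertex (plane) of valency at least $3$ whose incident edges do not close up into a cycle; the incidence data of Figure~\ref{fig_computation_U_0_5_3_appendix} provides such a plane, and the planarity of $T$ guarantees that the three corresponding lines do not pass through a common point, so the hypotheses of Lemma~\ref{not-trivial} hold. I expect the main obstacle to be precisely this last verification: one must confirm from the combinatorics of the figure that the required valency-$\geq 3$ vertex genuinely avoids every cycle of $T$, since it is this tree-like branching that produces the nontrivial fork relation $[\Gamma_i,\Gamma_j\Gamma_k\Gamma_j]=e$ in the kernel of $\widehat{G}\to S_6$. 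Once that is in place, Lemma~\ref{not-trivial} immediately gives the nontriviality of $\pi_1(\Xgal)$.
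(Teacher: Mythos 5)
Your proposal is correct and follows essentially the same route as the paper: the paper's proof of this appendix statement simply defers to Theorem~\ref{thm_computation_U_0_5_3_paper}, whose argument is exactly your regeneration of vertices $5,7$ ($1$-points), $2$ ($2$-point on lines $2,3$), and $3,6$ (outer $3$-points on $\{3,4,5\}$ and $\{1,2,4\}$), followed by Lemma~\ref{3pt-out-smallmid} and Lemma~\ref{2pt-equal} to collapse all primed generators, and an appeal to Lemma~\ref{not-trivial}. Your closing observation is also the right one to check: since the dual graph here has six vertices and five edges it is a tree, so the valency-$3$ plane bounded by lines $2,3,4$ automatically lies on no cycle and the hypotheses of Lemma~\ref{not-trivial} are satisfied.
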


\begin{proof}
	See Theorem \ref{thm_computation_U_0_5_3_paper}.

\end{proof}

	\uChernSummary{7}{20}{15}{10}{4}{4}{-\frac{4}{3}\cdot}


\subsection{{$ X $ degenerates to $ U_{0,5,4} $}}\label{section_computation_U_0_5_4}

\begin{figure}[H]
	\begin{center}
		
		\definecolor{circ_col}{rgb}{0,0,0}
		\begin{tikzpicture}[x=1cm,y=1cm,scale=2]
		
		\draw [fill=circ_col] (0,1) node [anchor=east] {1} circle (1pt);
		\draw [fill=circ_col] (0,0) node [anchor=east] {2} circle (1pt);
		\draw [fill=circ_col] (1,1) node [below] {3} circle (1pt);
		\draw [fill=circ_col] (2,1) node [below] {4} circle (1pt);
		\draw [fill=circ_col] (0,2) node [anchor=east] {5} circle (1pt);
		\draw [fill=circ_col] (1,2) node [above] {6} circle (1pt);
		\draw [fill=circ_col] (2,3) node [above] {7} circle (1pt);
		\draw [fill=circ_col] (2,2) node [anchor=west] {8} circle (1pt);

		\draw [-, line width = 1pt] (0,0) -- (1,1) -- (2,1) -- (2,2) -- (1,2) -- (2,3) -- (2,2);
		\draw [-, line width = 1pt] (0,2) -- (1,2);
		\draw [-, line width = 1pt] (0,2) -- (0,1) -- (0,0);
		
		\draw [-, line width = 1pt] (0,1) -- node [above]{1} (1,2);
		\draw [-, line width = 1pt] (0,1) -- node [above]{2} (1,1);
		\draw [-, line width = 1pt] (1,1) -- node [anchor=west]{3} (1,2);
		\draw [-, line width = 1pt] (1,2) -- node [above]{4} (2,2);
		\draw [-, line width = 1pt] (1,1) -- node [anchor=west]{5} (2,2);
		\end{tikzpicture}
		
	\end{center}
	\setlength{\abovecaptionskip}{-0.15cm}
	\caption{The arrangement of planes $ U_{0,5,4} $.}\label{fig_computation_U_0_5_4}
\end{figure}

\begin{thm}\label{thm_computation_U_0_5_4}
	If $ X $ degenerates to $ U_{0,5,4} $, then $\pi_1(\Xgal)$ is not trivial.
\end{thm}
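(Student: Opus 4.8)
The plan is to read off the singular configuration of the line arrangement $S_0$ from Figure~\ref{fig_computation_U_0_5_4} and then apply Lemma~\ref{not-trivial}. The arrangement consists of the five lines labelled $1,\dots,5$. Its singular points are vertex $1$ and vertex $8$, each a $2$-point (on lines $1,2$, resp. on lines $4,5$), together with vertex $3$ and vertex $6$, both outer $3$-points (on lines $2,3,5$, resp. on lines $1,3,4$). First I would regenerate these four vertices using the $2$-point relations of Lemma~\ref{2pt-equal} and the outer $3$-point relations of Lemma~\ref{3pt-out-smallmid}, and record the two parasitic relations coming from the non-incident line pairs $(1,5)$ and $(2,4)$ together with the projective relation $\prod_{j=5}^{1}\Gamma_j'\Gamma_j=e$.

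The decisive structural observation is that $U_{0,5,4}$ contains neither a $1$-point nor an inner $3$-point. In the earlier non-trivial cases (for instance $U_{0,5,3}$) the argument is seeded by an identity $\Gamma_l=\Gamma_l'$ furnished by a $1$-point, which is then propagated through the arrangement by Lemmas~\ref{2pt-equal}, \ref{3pt-in-bigmid} and \ref{3pt-out-smallmid}. Here no such seed exists: the $2$-point and outer $3$-point relations only transport an identity $\Gamma_l=\Gamma_l'$ once one is already known, and Lemma~\ref{3pt-in-bigmid}, the only lemma that produces such an identity unconditionally, does not apply. Consequently one cannot collapse the primed and unprimed generators, and the route through an explicit Coxeter-type presentation on five generators is blocked. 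So instead I would apply Lemma~\ref{not-trivial} directly.

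To this end I would form the dual graph $T$, whose six vertices are the six triangles of the planar complex and whose five edges are the lines $1,\dots,5$. Matching the triangles to the figure, the plane bounded by lines $1,2,3$ (the triangle on the vertices $1,3,6$) meets three distinct planes and hence has valency $3$ in $T$; the same holds for the plane bounded by lines $3,4,5$ (the triangle on the vertices $3,6,8$). Since $T$ is connected with six vertices and five edges, it is a tree, so in particular these valency-$3$ vertices lie on no cycle. Lemma~\ref{not-trivial} then shows that the kernel of $\hat G\to S_6$ is non-trivial, and therefore $\pi_1(\Xgal)$ is non-trivial, which is the assertion of Theorem~\ref{thm_computation_U_0_5_4}.

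The only genuine obstacle here is conceptual rather than computational: one must recognise that the standard simplification, seeded by a $1$-point or an inner $3$-point, is unavailable for this arrangement, so that the non-triviality has to be extracted from the combinatorics of $T$ via Lemma~\ref{not-trivial} rather than from a reduced group presentation. Once this is seen, the remaining work is the bookkeeping of identifying the six triangles with the six planes, listing their pairwise line-incidences, and checking that $T$ is acyclic with a vertex of valency at least $3$; all of this is routine once the edges of Figure~\ref{fig_computation_U_0_5_4} are tabulated.
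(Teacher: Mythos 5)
Your proposal is correct and follows essentially the same route as the paper: regenerate the two $2$-points (vertices $1,8$) and two outer $3$-points (vertices $3,6$), note that no identification $\Gamma_i=\Gamma_i'$ can be seeded, pass to the quotient $\hat G=\Ggal/\langle\Gamma_i=\Gamma_i'\rangle$, and conclude by Lemma~\ref{not-trivial}. The only difference is cosmetic: the paper writes out the resulting Coxeter-type presentation of the quotient explicitly, while you instead verify the hypothesis of Lemma~\ref{not-trivial} directly on the dual graph $T$ (a tree with two valency-$3$ vertices), which the paper leaves implicit.
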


\begin{proof}
	The branch curve $S_0$ in $\mathbb{CP}^2$ is an arrangement of five lines. We regenerate each vertex in turn and compute the group $G$.

	\uTwoPoint{1}{1}{2}{20-vert1}
     \uTwoPoint{8}{4}{5}{20-vert8}
	\uThreePointOuter{3}{2}{3}{5}{20-vert3}
	\uThreePointOuter{6}{1}{3}{4}{20-vert6}
	We also have the following parasitic and projective relations:
	\uParasit{1}{5}{20-parasit-1-5}
	\uParasit{2}{4}{20-parasit-2-4}
	\uProjRel{20-proj}{5}{4}{3}{2}{1}

We compute the quotient $\Ggal/\langle{\Gamma_i=\Gamma_i^{'}} \rangle$:
	\uGammaSq{20-quot-1}{1}{2}{3}{4}{5}
	\ubegineq{20-quot-2}
	\trip{1}{2} = \trip{1}{3} = \trip{2}{3} = \trip{3}{4} =  \trip{3}{5} = \trip{4}{5} = e
	\uendeq
	\ubegineq{20-quot-3}
	\comm{1}{4} = \comm{1}{5} = \comm{2}{4} = \comm{2}{5} = e.
	\uendeq
By Lemma \ref{not-trivial}, $\pi_1(\Xgal)$ is not trivial.

\end{proof}

\uChernSummary{6}{16}{18}{10}{4}{3}{-\frac{2}{3}\cdot}

\subsection{{$ X $ degenerates to $ U_{0,5,5} $}}\label{section_computation_U_0_5_5}

\begin{figure}[H]
	\begin{center}
		
		\definecolor{circ_col}{rgb}{0,0,0}
		\begin{tikzpicture}[x=1cm,y=1cm,scale=2]
		
		\draw [fill=circ_col] (0,0) node [below] {1} circle (1pt);
		\draw [fill=circ_col] (1,0) node [below] {2} circle (1pt);
		\draw [fill=circ_col] (2,0) node [anchor=north west] {4} circle (1pt);
		\draw [fill=circ_col] (3,0) node [below] {5} circle (1pt);
		\draw [fill=circ_col] (0,1) node [above] {6} circle (1pt);
		\draw [fill=circ_col] (1,1) node [above] {7} circle (1pt);
		\draw [fill=circ_col] (2,1) node [above] {8} circle (1pt);
		\draw [fill=circ_col] (2,-1) node [below] {3} circle (1pt);
		
		\draw [-, line width = 1pt] (0,0) -- (1,0);
		\draw [-, line width = 1pt] (1,1) -- (2,1);
		\draw [-, line width = 1pt] (2,0) -- (3,0);
		\draw [-, line width = 1pt] (0,1) -- (1,1);

		\draw [-, line width = 1pt] (1,0) -- node [above]{4} (2,0);
		\draw [-, line width = 1pt] (0,0) -- node [above]{1} (1,1);
		\draw [-, line width = 1pt] (1,1) -- node [anchor=east]{2} (1,0);
		\draw [-, line width = 1pt] (1,0) -- node [above]{3} (2,1);
		\draw [-, line width = 1pt] (2,1) -- node [anchor=east]{5} (2,0);
		
		\draw [-, line width = 1pt] (0,0) --  (0,1);
		\draw [-, line width = 1pt] (2,1) -- (3,0);
		\draw [-, line width = 1pt] (1,0) -- (2,-1);
		\draw [-, line width = 1pt] (2,0) -- (2,-1);
		\end{tikzpicture}
		
	\end{center}
	\setlength{\abovecaptionskip}{-0.15cm}
	\caption{The arrangement of planes $ U_{0,5,5} $.}\label{fig_computation_U_0_5_5}
\end{figure}

\begin{thm}\label{thm_computation_U_0_5_5}
	If $ X $ degenerates to $ U_{0,5,5} $, then $\pi_1(\Xgal)$ is not trivial.
\end{thm}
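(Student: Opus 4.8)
The plan is to imitate the regeneration-and-reduction argument already carried out for the other non-trivial cases, in particular Theorem~\ref{thm_computation_U_0_5_3_paper}. Reading the singular points off Figure~\ref{fig_computation_U_0_5_5}, the branch curve $S_0$ is the arrangement of the five labelled lines, and the vertices that regenerate nontrivially are: vertex $1$, a $1$-point on line $1$; vertex $2$, an outer $3$-point carrying the lines $2,3,4$ (with $3$ the middle line); and the three $2$-points at vertices $4$, $7$, $8$, carrying the line-pairs $\{4,5\}$, $\{1,2\}$ and $\{3,5\}$. First I would write out the van Kampen relations contributed by each of these vertices, and then append the parasitic relations for the pairs of lines that do not meet at a finite vertex, namely $(1,3)$, $(1,4)$, $(1,5)$ and $(2,5)$, together with the projective relation $\prod_{j}\Gamma_j'\Gamma_j=e$.

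The next step is to show that $\Gamma_i=\Gamma_i'$ holds in $\widetilde{G}$ for every $i$, which is exactly the condition that makes $\widetilde{G}$ collapse onto a Coxeter-type quotient. The $1$-point at vertex $1$ gives $\Gamma_1=\Gamma_1'$ directly. Feeding this through Lemma~\ref{2pt-equal} at the $2$-point $7$ (lines $1,2$) yields $\Gamma_2=\Gamma_2'$; since $2$ is the smallest index at the outer $3$-point, Lemma~\ref{3pt-out-smallmid} then forces $\Gamma_3=\Gamma_3'$ and $\Gamma_4=\Gamma_4'$; and a final application of Lemma~\ref{2pt-equal} at the $2$-point $4$ (lines $4,5$) gives $\Gamma_5=\Gamma_5'$, the $2$-point $8$ being automatically consistent. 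With all $\Gamma_i=\Gamma_i'$ in force, $\widetilde{G}$ is generated by $\Gamma_1,\dots,\Gamma_5$ subject to $\Gamma_i^2=e$, the triple relations $\langle\Gamma_1,\Gamma_2\rangle=\langle\Gamma_2,\Gamma_3\rangle=\langle\Gamma_3,\Gamma_4\rangle=\langle\Gamma_3,\Gamma_5\rangle=\langle\Gamma_4,\Gamma_5\rangle=e$, and the commutators $[\Gamma_1,\Gamma_3]=[\Gamma_1,\Gamma_4]=[\Gamma_1,\Gamma_5]=[\Gamma_2,\Gamma_4]=[\Gamma_2,\Gamma_5]=e$; note that, although lines $2$ and $4$ meet at vertex $2$, the outer structure there produces the commutator $[\Gamma_2,\Gamma_4]=e$ rather than a triple relation.

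Finally I would invoke Lemma~\ref{not-trivial}. The dual graph $T$ of $X_0$ has the six planes as vertices and one edge for each of the five lines; tracing the incidences shows that $T$ is a tree in which the plane $\{2,4,8\}$ (the middle plane of both the $2$-points at vertices $4$ and $8$) has valency $3$, being joined to its neighbours by lines $3,4,5$. Since a tree has no cycles, this degree-$3$ vertex lies on no cycle, so Lemma~\ref{not-trivial} applies and $\pi_1(\Xgal)$ is non-trivial. The only genuinely non-mechanical steps are confirming that vertex $2$ is an \emph{outer} and not an inner $3$-point — this is precisely what lets the chain of equalities $\Gamma_i=\Gamma_i'$ be seeded from the single $1$-point, since for an inner $3$-point Lemma~\ref{3pt-in-bigmid} would require the equality of the middle or largest generator rather than the smallest — and checking that the valency-$3$ plane of $T$ really is off every cycle. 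Of these, correctly classifying the $3$-point is the step I would expect to be the main source of error, and I would verify it directly from the incidence data of the four planes meeting at vertex $2$ in Figure~\ref{fig_computation_U_0_5_5}.
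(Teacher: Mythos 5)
Your proposal is correct and follows essentially the same route as the paper's proof: the same classification of the regenerating vertices (the $1$-point at vertex $1$, the outer $3$-point at vertex $2$ on lines $2,3,4$, and the three $2$-points), the same chain Lemma~\ref{2pt-equal} $\to$ Lemma~\ref{3pt-out-smallmid} $\to$ Lemma~\ref{2pt-equal} to obtain $\Gamma_i=\Gamma_i'$ for all $i$, the same resulting presentation of $\widetilde{G}$, and the same final appeal to Lemma~\ref{not-trivial}. Your only addition is the explicit verification that the dual graph is a tree with a valency-$3$ vertex at the plane bounded by lines $3,4,5$, which the paper leaves implicit.
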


\begin{proof}
	The branch curve $S_0$ in $\mathbb{CP}^2$ is an arrangement of five lines. We regenerate each vertex in turn and compute the group $G$.

	\uOnePoint{1}{1}{3-vert1}
	\uTwoPoint{4}{4}{5}{3-vert4}
	\uTwoPoint{7}{1}{2}{3-vert7}
	\uTwoPoint{8}{3}{5}{3-vert8}
    \uThreePointOuter{2}{2}{3}{4}{3-vert2}
	We also have the following parasitic and projective relations:
	\uParasit{1}{3}{3-parasit-1-3}
	\uParasit{1}{4}{3-parasit-1-4}
	\uParasit{1}{5}{3-parasit-1-5}
	\uParasit{2}{5}{3-parasit-2-5}
	\uProjRel{3-proj}{5}{4}{3}{2}{1}

By Lemma \ref{2pt-equal}, $\ug{2}=\ug{2'}$. Then by Lemma \ref{3pt-out-smallmid} we get $\ug{3}=\ug{3'}$ and $\ug{4}=\ug{4'}$. Again by Lemma \ref{2pt-equal}, we get $\ug{5}=\ug{5'}$.

Thus $\Ggal$ is generated by $\set{\ug{i} | i=1,\dots,5}$ modulo the following relations:
	\uGammaSq{3-simpl-4}{1}{2}{3}{4}{5}
	\ubegineq{3-simpl-5}
	\trip{1}{2} = \trip{2}{3} = \trip{3}{4}= \trip{3}{5} = \trip{4}{5}=e
	\uendeq
	\ubegineq{3-simpl-6}
	\comm{1}{3} = \comm{1}{4} = \comm{1}{5} = \comm{2}{4} = \comm{2}{5}=e.
	\uendeq
	By Lemma \ref{not-trivial}, $\pi_1(\Xgal)$ is not trivial.

\end{proof}

	\uChernSummary{6}{20}{15}{10}{4}{\frac{7}{2}}{-}


\subsection{{$ X $ degenerates to $U_{0,5,6}$}}\label{section_computation_U_0_5_6}

\begin{figure}[H]
	\begin{center}
		
		\definecolor{circ_col}{rgb}{0,0,0}
		\begin{tikzpicture}[x=1cm,y=1cm,scale=2]
		
		\draw [fill=circ_col] (0,0) node [below] {1} circle (1pt);
		\draw [fill=circ_col] (1,0) node [below] {2} circle (1pt);
		\draw [fill=circ_col] (2,0) node [below] {3} circle (1pt);
		\draw [fill=circ_col] (3,0) node [below] {4} circle (1pt);
		\draw [fill=circ_col] (0,1) node [above] {5} circle (1pt);
		\draw [fill=circ_col] (1,1) node [above] {6} circle (1pt);
		\draw [fill=circ_col] (2,1) node [above] {7} circle (1pt);
		\draw [fill=circ_col] (3,1) node [above] {8} circle (1pt);

		\draw [-, line width = 1pt] (0,0) -- (1,0);
		\draw [-, line width = 1pt] (1,0) -- (2,0);
		\draw [-, line width = 1pt] (2,0) -- (3,0);
		\draw [-, line width = 1pt] (0,1) -- (1,1);
		
		\draw [-, line width = 1pt] (0,0) -- node [above]{1} (1,1);
		\draw [-, line width = 1pt] (1,1) -- node [anchor=west]{2} (1,0);
		\draw [-, line width = 1pt] (1,0) -- node [above]{3} (2,1);
		\draw [-, line width = 1pt] (2,0) -- node [anchor=west]{4} (2,1);
		\draw [-, line width = 1pt] (2,1) -- node [anchor=west]{5} (3,0);
		
		\draw [-, line width = 1pt] (0,0) --  (0,1);
		\draw [-, line width = 1pt] (2,1) -- (1,1);
		\draw [-, line width = 1pt] (2,1) -- (3,1);
		\draw [-, line width = 1pt] (3,0) -- (3,1);
		\end{tikzpicture}
		
	\end{center}
	\setlength{\abovecaptionskip}{-0.15cm}
	\caption{The arrangement of planes $U_{0,5,6}$.}\label{fig_computation_U_0_5_6}
\end{figure}

\begin{thm}\label{thm_computation_U_0_5_6}	
	If $ X $ degenerates to $ U_{0,5,6} $, then $\pi_1(\Xgal)$ is trivial.
\end{thm}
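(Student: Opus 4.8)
The plan is to reuse the regeneration-and-collapse strategy that succeeded for $U_{0,4}$, exploiting the fact that the dual graph $T$ of $U_{0,5,6}$ is a single path $P_2$--$P_1$--$P_4$--$P_3$--$P_5$--$P_6$ with no vertex of valency $\ge 3$, so that Lemma~\ref{not-trivial} imposes no obstruction and one may hope to collapse $\widetilde G$ onto $S_6$. First I would read off the singular points of $S_0$ from Figure~\ref{fig_computation_U_0_5_6}: vertices $1$, $3$, $4$ are $1$-points lying on lines $1$, $4$, $5$; vertices $6$ and $2$ are $2$-points on the line pairs $\{1,2\}$ and $\{2,3\}$; and vertex $7$ is an outer $3$-point on lines $3,4,5$, whose four surrounding planes form the path $P_4$--$P_3$--$P_5$--$P_6$, so that the middle line is line $4$. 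I would then record the local relations produced by the $1$-point, $2$-point and outer $3$-point formulas, together with the parasitic commutators coming from the line pairs $(1,3),(1,4),(1,5),(2,4),(2,5)$, which share no plane, and the projective relation.

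The heart of the proof is to show $\Gamma_i=\Gamma_i'$ in $\widetilde G$ for all $i$. The three $1$-points give $\Gamma_1=\Gamma_1'$, $\Gamma_4=\Gamma_4'$ and $\Gamma_5=\Gamma_5'$ at once. Because $\Gamma_4=\Gamma_4'$ and line $4$ is precisely the middle line of the outer $3$-point at vertex $7$, Lemma~\ref{3pt-out-smallmid} applies and yields $\Gamma_3=\Gamma_3'$ (reconfirming $\Gamma_5=\Gamma_5'$). Feeding $\Gamma_3=\Gamma_3'$ into the $2$-point at vertex $2$ and applying Lemma~\ref{2pt-equal} gives $\Gamma_2=\Gamma_2'$. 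At this point all five generators coincide with their primed copies, so $\widetilde G$ is generated by $\Gamma_1,\dots,\Gamma_5$.

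Substituting $\Gamma_i=\Gamma_i'$ back into the relations and reducing modulo $\Gamma_i^2=e$, the two $2$-points contribute $\langle\Gamma_1,\Gamma_2\rangle=\langle\Gamma_2,\Gamma_3\rangle=e$, while the outer $3$-point --- after the cancellations forced by $\Gamma_4^2=e$ and a conjugation by $\Gamma_3$ using the commutator $[\Gamma_3,\Gamma_5]=e$ it also produces --- collapses to $\langle\Gamma_3,\Gamma_4\rangle=\langle\Gamma_4,\Gamma_5\rangle=e$. Combined with the parasitic commutators $[\Gamma_1,\Gamma_3]=[\Gamma_1,\Gamma_4]=[\Gamma_1,\Gamma_5]=[\Gamma_2,\Gamma_4]=[\Gamma_2,\Gamma_5]=e$ and $[\Gamma_3,\Gamma_5]=e$, this is exactly the Coxeter presentation of $S_6$ by the transpositions $t_i=(i\;i{+}1)$ attached to the path $T$. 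Hence $\widetilde G\cong S_6$, the projection $\widetilde G\to S_6$ of the Moishezon--Teicher sequence \eqref{M-T} has trivial kernel, and $\pi_1(\Xgal)$ is trivial.

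The step demanding the most care is the simplification of the outer $3$-point relations at vertex $7$ down to the two clean braid relations: the raw output of the formula consists of long conjugated words, and one must check honestly that, after imposing $\Gamma_i=\Gamma_i'$ and $\Gamma_i^2=e$, nothing survives beyond $\langle\Gamma_3,\Gamma_4\rangle$, $\langle\Gamma_4,\Gamma_5\rangle$ and $[\Gamma_3,\Gamma_5]$ --- that is, that no extra identification enlarges the kernel and no expected relation is lost. The secondary bookkeeping risk is matching each required commutator to its source (parasitic intersection versus the $3$-point) so that precisely the six non-adjacent pairs, and no more, are declared to commute.
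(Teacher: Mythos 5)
Your proposal is correct and follows essentially the same route as the paper: regenerate each vertex, use the $1$-points together with the auxiliary lemmas to force $\Gamma_i=\Gamma_i'$ for all $i$, and recognize the resulting presentation of $\widetilde{G}$ as the Coxeter presentation of $S_6$ on the path. The only (immaterial) difference is that the paper obtains $\Gamma_2=\Gamma_2'$ and $\Gamma_3=\Gamma_3'$ by chaining Lemma~\ref{2pt-equal} through the $2$-points at vertices $6$ and $2$ starting from $\Gamma_1=\Gamma_1'$, whereas you start from $\Gamma_4=\Gamma_4'$ and go through Lemma~\ref{3pt-out-smallmid} at vertex $7$; both deductions are valid.
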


\begin{proof}
	
	The branch curve $S_0$ in $\mathbb{CP}^2$ is an arrangement of five lines. We regenerate each vertex in turn and compute the group $G$.

	\uOnePoint{1}{1}{1a-vert1}
     \uOnePoint{3}{4}{1a-vert3}
	\uOnePoint{4}{5}{1a-vert4}
	\uTwoPoint{2}{2}{3}{1a-vert2}
	\uTwoPoint{6}{1}{2}{1a-vert6}
	\uThreePointOuter{7}{3}{4}{5}{1a-vert7}
	We also have the following parasitic and projective relations:
	\uParasit{1}{3}{1a-parasit-1-3}
	\uParasit{1}{4}{1a-parasit-1-4}
	\uParasit{1}{5}{1a-parasit-1-5}
	\uParasit{2}{4}{1a-parasit-2-4}
	\uParasit{2}{5}{1a-parasit-2-5}
	\uProjRel{1a-proj}{5}{4}{3}{2}{1}

By Lemma \ref{2pt-equal}, we get $\ug{2}=\ug{2'}$ and $\ug{3}=\ug{3'}$.

$\Ggal$ is thus generated by $\set{\ug{i} | i=1,\dots,5}$ with the following relations:
	\uGammaSq{1a-gamma-sq}{1}{2}{3}{4}{5}
	\ubegineq{1a-simpl-3}
	\trip{1}{2} = \trip{2}{3} = \trip{3}{4} = \trip{4}{5} = e
	\uendeq
	\ubegineq{1a-simpl-4}
	\comm{1}{3} = \comm{1}{4} = \comm{1}{5} = \comm{2}{4} = \comm{2}{5} = \comm{3}{5} = e.
	\uendeq
Thus, $\Ggal \cong S_6$, therefore $\pi_1(\Xgal)$ is trivial.

\end{proof}

\uChernSummary{7}{24}{12}{10}{4}{\frac{9}{2}}{-\frac{5}{3}\cdot}


\subsection{{$ X $ degenerates to $U_{0,5,7}$}}\label{section_computation_U_0_5_7}

\begin{figure}[H]
	\begin{center}
		
		\definecolor{circ_col}{rgb}{0,0,0}
		\begin{tikzpicture}[x=1cm,y=1cm,scale=2]
		
		\draw [fill=circ_col] (0,0) node [below] {1} circle (1pt);
		\draw [fill=circ_col] (1,0) node [below] {2} circle (1pt);
		\draw [fill=circ_col] (2,0) node [below] {3} circle (1pt);
		\draw [fill=circ_col] (3,0) node [below] {4} circle (1pt);
		\draw [fill=circ_col] (0,1) node [above] {5} circle (1pt);
		\draw [fill=circ_col] (1,1) node [above] {6} circle (1pt);
		\draw [fill=circ_col] (2,1) node [above] {7} circle (1pt);
		\draw [fill=circ_col] (3,1) node [above] {8} circle (1pt);

		\draw [-, line width = 1pt] (0,0) -- (1,0);
		\draw [-, line width = 1pt] (1,0) -- (2,0);
		\draw [-, line width = 1pt] (2,0) -- (3,0);
		\draw [-, line width = 1pt] (0,1) -- (1,1);
		
		\draw [-, line width = 1pt] (0,0) -- node [above]{1} (1,1);
		\draw [-, line width = 1pt] (1,1) -- node [anchor=west]{2} (1,0);
		\draw [-, line width = 1pt] (1,1) -- node [above]{3} (2,0);
		\draw [-, line width = 1pt] (2,0) -- node [anchor=west]{4} (2,1);
		\draw [-, line width = 1pt] (2,0) -- node [anchor=west]{5} (3,1);
		
		\draw [-, line width = 1pt] (0,0) --  (0,1);
		\draw [-, line width = 1pt] (2,1) -- (1,1);
		\draw [-, line width = 1pt] (2,1) -- (3,1);
		\draw [-, line width = 1pt] (3,0) -- (3,1);
		\end{tikzpicture}
		
	\end{center}
	\setlength{\abovecaptionskip}{-0.15cm}
	\caption{The arrangement of planes $ U_{0,5,7}$.}\label{fig_computation_U_0_5_7}
\end{figure}

\begin{thm}\label{thm_computation_U_0_5_7}
	If $ X $ degenerates to $ U_{0,5,7} $, then $\pi_1(\Xgal)$ is trivial.
\end{thm}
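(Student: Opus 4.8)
The plan is to follow the template already used for the other five-line degenerations (compare Theorems~\ref{thm_computation_U_0_5_6} and \ref{thm_computation_U_0_5_3_paper}): read the singular points of the line arrangement $S_0$ off Figure~\ref{fig_computation_U_0_5_7}, regenerate each of them, prove that $\Gamma_i=\Gamma_i'$ for every $i$, and then check that the resulting presentation of $\Ggal$ is the Coxeter presentation of $S_6$.

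First I would classify the singular vertices. From Figure~\ref{fig_computation_U_0_5_7}, line $1$ joins vertices $1,6$; line $2$ joins $2,6$; line $3$ joins $3,6$; line $4$ joins $3,7$; and line $5$ joins $3,8$. Hence vertices $1,2,7,8$ are $1$-points (carrying lines $1,2,4,5$ respectively), while vertices $6$ and $3$ are $3$-points. Inspecting the triangles glued around vertices $6$ and $3$ shows that in each case the four incident planes form a chain in which the two extreme planes meet only at the vertex; by Definition~\ref{def:inner_and_outer} both are therefore \emph{outer} $3$-points, with lines $1<2<3$ (middle line $2$) at vertex $6$ and lines $3<4<5$ (middle line $4$) at vertex $3$. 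The pairs of lines sharing no vertex, namely $(1,4),(1,5),(2,4),(2,5)$, give the parasitic commutation relations, and there is the standard projective relation.

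Next I would record all relations, applying the one-point rule at vertices $1,2,7,8$, the outer three-point rule at vertices $6$ and $3$, the parasitic rule to the four disjoint pairs, and the projective relation. The four $1$-points immediately yield $\Gamma_1=\Gamma_1'$, $\Gamma_2=\Gamma_2'$, $\Gamma_4=\Gamma_4'$ and $\Gamma_5=\Gamma_5'$, so the single remaining equality is $\Gamma_3=\Gamma_3'$. Since $\Gamma_1=\Gamma_1'$ is already in hand, Lemma~\ref{3pt-out-smallmid} applied to the outer $3$-point at vertex $6$ (lines $1<2<3$) produces $\Gamma_3=\Gamma_3'$ at once; equivalently one could apply it at vertex $3$, where $\Gamma_4=\Gamma_4'$ is available.

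Finally I would pass to $\Ggal=G/\langle\Gamma_i^2\rangle$. With every $\Gamma_i=\Gamma_i'$, the two cusp (outer $3$-point) contributions collapse to the braid relations $\langle\Gamma_1,\Gamma_2\rangle=\langle\Gamma_2,\Gamma_3\rangle=\langle\Gamma_3,\Gamma_4\rangle=\langle\Gamma_4,\Gamma_5\rangle=e$ and the commutations $[\Gamma_1,\Gamma_3]=[\Gamma_3,\Gamma_5]=e$, while the parasitic relations give $[\Gamma_1,\Gamma_4]=[\Gamma_1,\Gamma_5]=[\Gamma_2,\Gamma_4]=[\Gamma_2,\Gamma_5]=e$. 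These are exactly the Coxeter relations for the adjacent transpositions generating $S_6$, so $\Ggal\cong S_6$, and the exact sequence \eqref{M-T} then forces $\pi_1(\Xgal)$ to be trivial. The only step that is not purely mechanical, and the one I would verify most carefully, is the geometric classification of the two $3$-points as outer together with the identification of their middle lines: an inner/outer mistake would change the regenerated relations and would break the reduction to the $S_6$ presentation.
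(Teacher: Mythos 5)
Your proposal matches the paper's proof essentially step for step: the same identification of vertices $1,2,7,8$ as $1$-points carrying lines $1,2,4,5$, of vertices $6$ and $3$ as outer $3$-points with lines $1<2<3$ and $3<4<5$, the same parasitic pairs, the same use of Lemma~\ref{3pt-out-smallmid} to obtain $\Gamma_3=\Gamma_3'$, and the same reduction of $\Ggal$ to the Coxeter presentation of $S_6$. The argument is correct and no further comparison is needed.
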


\begin{proof}
	
	The branch curve $S_0$ in $\mathbb{CP}^2$ is an arrangement of five lines. We regenerate each vertex in turn and compute the group $G$.

	\uOnePoint{1}{1}{1d-vert1}
	\uOnePoint{2}{2}{1d-vert2}
	\uOnePoint{7}{4}{1d-vert7}
	\uOnePoint{8}{5}{1d-vert8}
	\uThreePointOuter{3}{3}{4}{5}{1d-vert3}
	\uThreePointOuter{6}{1}{2}{3}{1d-vert6}
	We also have the following parasitic and projective relations:
	\uParasit{1}{4}{1d-parasit-1-4}
	\uParasit{1}{5}{1d-parasit-1-5}
	\uParasit{2}{4}{1d-parasit-2-4}
	\uParasit{2}{5}{1d-parasit-2-5}
	\uProjRel{1d-proj}{5}{4}{3}{2}{1}

	By Lemma \ref{3pt-out-smallmid}, $\ug{3} = \ug{3'}$.
	
$\Ggal$ is thus generated by $\set{\ug{i} | i=1,\dots,5}$ with the following relations:
	\uGammaSq{1d-gamma-sq}{1}{2}{3}{4}{5}
	\ubegineq{1d-simpl-3}
	\trip{1}{2} = \trip{2}{3} = \trip{3}{4} = \trip{4}{5} = e
	\uendeq
	\ubegineq{1d-simpl-4}
	\comm{1}{3} = \comm{1}{4} = \comm{1}{5} = \comm{2}{4} = \comm{2}{5} = \comm{3}{5} = e.
	\uendeq
Thus, $\Ggal \cong S_6$, therefore $\pi_1(\Xgal)$ is trivial.

\end{proof}

\uChernSummary{8}{24}{12}{10}{4}{5}{-2\cdot}


\subsection{{$ X $ degenerates to $ U_{0,5,8} $}} \label{section_computation_U_0_5_8}

\begin{figure}[H]
	\begin{center}
		
		\definecolor{circ_col}{rgb}{0,0,0}
		\begin{tikzpicture}[x=1cm,y=1cm,scale=2]
		
		\draw [fill=circ_col] (0,0) node [below] {1} circle (1pt);
		\draw [fill=circ_col] (1,0) node [below] {2} circle (1pt);
		\draw [fill=circ_col] (0,1) node [anchor=east] {3} circle (1pt);
		\draw [fill=circ_col] (1,1) node [anchor=north west] {4} circle (1pt);
		\draw [fill=circ_col] (2,1) node [below] {5} circle (1pt);
		\draw [fill=circ_col] (0,2) node [above] {6} circle (1pt);
		\draw [fill=circ_col] (1,2) node [above] {7} circle (1pt);
		\draw [fill=circ_col] (2,2) node [above] {8} circle (1pt);
		
		\draw [-, line width = 1pt] (0,0) -- (1,0) -- (1,1) -- (2,1) -- (2,2) -- (1,2) -- (0,2) -- (0,1) -- (0,0);
		
		\draw [-, line width = 1pt] (1,0) -- node [below]{1} (0,1);
		\draw [-, line width = 1pt] (0,1) -- node [below]{2} (1,1);
		\draw [-, line width = 1pt] (0,2) -- node [above]{3} (1,1);
		\draw [-, line width = 1pt] (1,1) -- node [anchor=west]{4} (1,2);
		\draw [-, line width = 1pt] (1,2) -- node [anchor=west]{5} (2,1);
		\end{tikzpicture}
		
	\end{center}
	\setlength{\abovecaptionskip}{-0.15cm}
	\caption{The arrangement of planes $ U_{0,5,8} $.}\label{fig_computation_U_0_5_8}
\end{figure}

\begin{thm}\label{thm_computation_U_0_5_8}
	If $ X $ degenerates to $ U_{0,5,8} $, then $\pi_1(\Xgal)$ is trivial.
\end{thm}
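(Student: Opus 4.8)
The plan is to follow the same regeneration-and-collapse strategy used in \Tref{thm_computation_U_0_4} and \Tref{thm_computation_U_0_5_6}, since proving $\pi_1(\Xgal)$ trivial is, via the exact sequence \eqref{M-T}, equivalent to proving $\Ggal\cong S_6$. First I would read off the singular vertices of the arrangement in \Fref{fig_computation_U_0_5_8} together with their combinatorial types, by counting the planes meeting at each vertex. This gives: vertices $2$, $5$, $6$ are $1$-points carrying the single lines $1$, $5$, $3$ respectively; vertices $3$ and $7$ are $2$-points with lines $\{1,2\}$ and $\{4,5\}$; and vertex $4$, where four planes meet along the three lines $2<3<4$ arranged in a path, is an \emph{outer} $3$-point. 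The two corners that lie on a single plane each contribute no relation.

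Next I would record the regeneration relations. The three $1$-points immediately give $\ug{1}=\ug{1'}$, $\ug{5}=\ug{5'}$ and $\ug{3}=\ug{3'}$. The two $2$-points and the outer $3$-point contribute their standard families of braid-type relations, and to these I would adjoin the parasitic commutators coming from the pairs of lines that are disjoint in $X_0$ — namely $\{1,3\},\{1,4\},\{1,5\},\{2,5\},\{3,5\}$ — together with the projective relation $\prod_j \ug{j'}\ug{j}=e$.

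The heart of the argument is to upgrade these to $\ug{i}=\ug{i'}$ for every $i$. From the $1$-point at vertex $6$ we already have $\ug{3}=\ug{3'}$; since $3$ is the middle line of the outer $3$-point at vertex $4$, \Lref{3pt-out-smallmid} forces $\ug{2}=\ug{2'}$ and $\ug{4}=\ug{4'}$. Combined with $\ug{1}=\ug{1'}$ and $\ug{5}=\ug{5'}$ from the remaining $1$-points, this shows $\ug{i}=\ug{i'}$ for all $i=1,\dots,5$ in $\Ggal$ (alternatively one can propagate $\ug{1}=\ug{1'}$ across the $2$-point at vertex $3$ using \Lref{2pt-equal}).

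Finally I would eliminate every primed generator and simplify using $\ug{i}^2=e$. I expect the outer $3$-point to yield the braid relations $\trip{2}{3}=\trip{3}{4}=e$ and the commutator $\comm{2}{4}=e$, the two $2$-points to yield $\trip{1}{2}=e$ and $\trip{4}{5}=e$, and the parasitic relations to supply the commutators of the remaining non-consecutive pairs. The surviving presentation of $\Ggal$ on $\ug{1},\dots,\ug{5}$ should then be exactly $\ug{i}^2=e$, the chain of braid relations $\trip{1}{2}=\trip{2}{3}=\trip{3}{4}=\trip{4}{5}=e$, and commutation of every pair $\{i,j\}$ with $|i-j|\ge 2$, which is precisely the Coxeter presentation of $S_6$ on its adjacent transpositions; hence $\Ggal\cong S_6$ and $\pi_1(\Xgal)$ is trivial. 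The main obstacle is not a hard computation but the bookkeeping: correctly classifying vertex $4$ as an outer $3$-point (not an inner $3$-point nor a $4$-point, despite four planes meeting there) with the right ordering of its lines, and then checking that after the primed generators are removed the long outer-$3$-point relations collapse to exactly the braid/commutator pattern of $S_6$, with no extra surviving relation that could enlarge the kernel.
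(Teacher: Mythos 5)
Your proposal is correct and follows essentially the same route as the paper: the same identification of the singular vertices (three $1$-points on lines $1,5,3$, two $2$-points on $\{1,2\}$ and $\{4,5\}$, and an outer $3$-point on lines $2<3<4$), the same parasitic relations, and the same collapse to the Coxeter presentation of $S_6$. The only difference is that you obtain $\ug{2}=\ug{2'}$ and $\ug{4}=\ug{4'}$ from $\ug{3}=\ug{3'}$ via \Lref{3pt-out-smallmid}, whereas the paper propagates $\ug{1}=\ug{1'}$ and $\ug{5}=\ug{5'}$ across the two $2$-points via \Lref{2pt-equal} — the alternative you yourself note — and both are valid.
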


\begin{proof}
	The branch curve $S_0$ in $\mathbb{CP}^2$ is an arrangement of five lines. We regenerate each vertex in turn and compute the group $G$.
	
	\uOnePoint{2}{1}{13-vert2}
\uOnePoint{5}{5}{13-vert5}
	\uOnePoint{6}{3}{13-vert6}
	\uTwoPoint{3}{1}{2}{13-vert3}
\uTwoPoint{7}{4}{5}{13-vert7}
	\uThreePointOuter{4}{2}{3}{4}{13-vert4}
	We also have the following parasitic and projective relations:
	\uParasit{1}{3}{13-parasit-1-3}
	\uParasit{1}{4}{13-parasit-1-4}
	\uParasit{1}{5}{13-parasit-1-5}
	\uParasit{2}{5}{13-parasit-2-5}
	\uParasit{3}{5}{13-parasit-3-5}
	\uProjRel{13-proj}{5}{4}{3}{2}{1}

By Lemma \ref{2pt-equal}, we get $\ug{2}=\ug{2'}$ and $\ug{4}=\ug{4'}$.
	
$\Ggal$ is thus generated by $\set{\ug{i} | i=1,\dots,5}$ with the following relations:
	\uGammaSq{13-gamma-sq}{1}{2}{3}{4}{5}
	\ubegineq{13-simpl-3}
	\trip{1}{2} = \trip{2}{3} = \trip{3}{4} = \trip{4}{5} = e
	\uendeq
	\ubegineq{13-simpl-4}
	\comm{1}{3} = \comm{1}{4} = \comm{1}{5} = \comm{2}{4} = \comm{2}{5} = \comm{3}{5} = e.
	\uendeq
Thus, $\Ggal \cong S_6$, therefore $\pi_1(\Xgal)$ is trivial.

\end{proof}

\uChernSummary{7}{24}{12}{10}{4}{\frac{9}{2}}{-\frac{5}{3}\cdot}

\subsection{{$ X $ degenerates to $ U_{0,6,1} $}}\label{section_computation_U_0_6_1}

\begin{figure}[H]
	\begin{center}
		
		\definecolor{circ_col}{rgb}{0,0,0}
		\begin{tikzpicture}[x=1cm,y=1cm,scale=2]
		
		\draw [fill=circ_col] (0,0) node [below] {1} circle (1pt);
		\draw [fill=circ_col] (1,0) node [below] {2} circle (1pt);
		\draw [fill=circ_col] (0,1) node [anchor=east] {3} circle (1pt);
		\draw [fill=circ_col] (1,1) node [anchor=north west] {4} circle (1pt);
		\draw [fill=circ_col] (2,1) node [below] {5} circle (1pt);
		\draw [fill=circ_col] (0,2) node [above] {6} circle (1pt);
		\draw [fill=circ_col] (1,2) node [above] {7} circle (1pt);
		\draw [fill=circ_col] (2,2) node [above] {8} circle (1pt);
		
		\draw [-, line width = 1pt] (0,0) -- (1,0) -- (1,1) -- (2,1) -- (2,2) -- (1,2) -- (0,2) -- (0,1) -- (0,0);
		
		\draw [-, line width = 1pt] (0,0) -- node [below]{1} (1,1);
		\draw [-, line width = 1pt] (0,1) -- node [below]{2} (1,1);
		\draw [-, line width = 1pt] (0,2) -- node [below]{3} (1,1);
		\draw [-, line width = 1pt] (1,1) -- node [anchor=west]{4} (1,2);
		\draw [-, line width = 1pt] (1,2) -- node [anchor=west]{5} (2,1);
		\end{tikzpicture}
		
	\end{center}
	\setlength{\abovecaptionskip}{-0.15cm}
	\caption{The arrangement of planes $ U_{0,6,1} $.}\label{fig_computation_U_0_6_1_appendix}
\end{figure}

\begin{thm}\label{thm_computation_U_0_6_1}
	If $ X $ degenerates to $ U_{0,6,1} $, then $\pi_1(\Xgal)$ is trivial.
\end{thm}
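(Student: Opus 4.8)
The plan is to follow the computation already carried out for this degeneration in Section~\ref{sec:main}; indeed the statement is identical to Theorem~\ref{thm_computation_U_0_6_1_paper}, so the shortest route is simply to invoke that result. For the sake of a self-contained argument, let me describe how I would reconstruct the proof directly from Figure~\ref{fig_computation_U_0_6_1_appendix}. First I would degenerate the branch curve $S$ to the line arrangement $S_0$ associated with $U_{0,6,1}$, which consists of five lines, and then regenerate $S$ one singular vertex at a time, recording at each step the braid monodromy and the van Kampen relations it imposes on $G=\pi_1(\mathbb{CP}^2-S)$ and hence on $\Ggal$.

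Reading the incidences off the diagram, the vertices fall into three kinds: vertices $1,3,5,6$ are $1$-points (on lines $1,2,5,3$ respectively), vertex $7$ is a $2$-point on lines $4$ and $5$, and vertex $4$ is an outer $4$-point on lines $1,2,3,4$. The four $1$-points contribute relations of the form $\ug{j}=\ug{j'}$ for the lines $1,2,3,5$. For the one remaining pair I would apply Lemma~\ref{2pt-equal} to the $2$-point at vertex $7$: since $\ug{5}=\ug{5'}$ already holds, the lemma forces $\ug{4}=\ug{4'}$ as well. Thus every generator satisfies $\ug{i}=\ug{i'}$ in $\Ggal$, and the primed generators can be eliminated.

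Once $\ug{i}=\ug{i'}$ is imposed, I expect the braid relations coming from the outer $4$-point, together with the nodes, cusps and the parasitic intersections, to collapse to the adjacent triple relations $\trip{1}{2}=\trip{2}{3}=\trip{3}{4}=\trip{4}{5}=e$ and the commutation relations $\comm{i}{j}=e$ for every non-adjacent pair $i,j$. Combined with the defining relations $\ug{i}^2=e$ of $\Ggal$, this is exactly the Coxeter presentation of $S_6$ on the five adjacent transpositions. Hence $\Ggal\cong S_6$, the projection $\Ggal\to S_6$ in the Moishezon--Teicher sequence~\eqref{M-T} is an isomorphism, and its kernel $\pi_1(\Xgal)$ is therefore trivial.

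The hard part will be the bookkeeping at the outer $4$-point: one must verify that after substituting $\ug{i}=\ug{i'}$ its full list of relations reduces to precisely the adjacent braid and commutation relations above, so that no extra relation appears that would collapse $\Ggal$ strictly below $S_6$, and no residual relation survives that would be incompatible with the surjection onto $S_6$. The remaining steps, namely the four $1$-points, the single application of Lemma~\ref{2pt-equal} at the $2$-point, and the final comparison with the Coxeter presentation of $S_6$, are routine.
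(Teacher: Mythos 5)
Your proposal is correct and follows essentially the same route as the paper: the paper's appendix entry simply refers back to Theorem~\ref{thm_computation_U_0_6_1_paper}, whose proof regenerates the four $1$-points (lines $1,2,3,5$), applies Lemma~\ref{2pt-equal} at the $2$-point on lines $4,5$ to get $\ug{4}=\ug{4'}$, and then reduces the outer $4$-point, parasitic and projective relations to the Coxeter presentation of $S_6$, exactly as you describe. The "bookkeeping" step you flag as the hard part is likewise left implicit in the paper, which just records the resulting presentation \eq{14-simpl-3}--\eq{14-simpl-4} and concludes $\Ggal\cong S_6$.
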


\begin{proof}
	See Theorem \ref{thm_computation_U_0_6_1_paper}.

\end{proof}

	\uChernSummary{8}{24}{12}{10}{4}{5}{-2\cdot}

\subsection{{$ X $ degenerates to $ U_{0,6,2} $}}\label{section_computation_U_0_6_2}

\begin{figure}[H]
	\begin{center}
		
		\definecolor{circ_col}{rgb}{0,0,0}
		\begin{tikzpicture}[x=1cm,y=1cm,scale=2]
		
		\draw [fill=circ_col] (0,0) node [anchor=east] {1} circle (1pt);
		\draw [fill=circ_col] (2,1) node [anchor=west] {2} circle (1pt);
		\draw [fill=circ_col] (0,1) node [anchor=east] {3} circle (1pt);
		\draw [fill=circ_col] (1,1) node [below] {4} circle (1pt);
		\draw [fill=circ_col] (2,2) node [anchor=west] {5} circle (1pt);
		\draw [fill=circ_col] (0,2) node [above] {6} circle (1pt);
		\draw [fill=circ_col] (1,2) node [above] {7} circle (1pt);
		\draw [fill=circ_col] (2,3) node [above] {8} circle (1pt);
		
		\draw [-, line width = 1pt] (0,0) -- (1,1) -- (2,1) -- (2,2) -- (2,3) -- (1,2) -- (0,2) -- (0,1) -- (0,0);
		
		\draw [-, line width = 1pt] (0,1) -- node [below]{1} (1,1);
		\draw [-, line width = 1pt] (0,2) -- node [below]{2} (1,1);
		\draw [-, line width = 1pt] (1,2) -- node [anchor=east]{3} (1,1);
		\draw [-, line width = 1pt] (2,2) -- node [below]{4} (1,1);
		\draw [-, line width = 1pt] (1,2) -- node [below]{5} (2,2);
		\end{tikzpicture}
		
	\end{center}
	\setlength{\abovecaptionskip}{-0.15cm}
	\caption{The arrangement of planes $ U_{0,6,2} $.}\label{fig_computation_U_0_6_2_appendix}
\end{figure}

\begin{thm}
	If $ X $ degenerates to $ U_{0,6,2} $,  then $\pi_1(\Xgal)$ is not trivial.
\end{thm}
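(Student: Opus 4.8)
The plan is to mirror the regeneration computations already carried out for $U_{0,5,1}$ through $U_{0,5,5}$. First I would read off the singular points of the degenerate arrangement from Figure~\ref{fig_computation_U_0_6_2_appendix} and classify each by its multiplicity: the points labelled $3$ and $6$ are $1$-points (lying on lines $1$ and $2$), the points labelled $5$ and $7$ are $2$-points (on the line pairs $\{4,5\}$ and $\{3,5\}$), and the point labelled $4$ is an outer $4$-point through which lines $1,2,3,4$ pass. I would then record the corresponding braid-monodromy relations and append the parasitic and projective relations exactly as in the preceding subsections.

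Second, I would collapse the generators. The two $1$-points give $\Gamma_1=\Gamma_1'$ and $\Gamma_2=\Gamma_2'$ outright; feeding these into the outer $4$-point relations and then invoking Lemma~\ref{2pt-equal} at the two $2$-points propagates $\Gamma_i=\Gamma_i'$ to the remaining indices, so that $\Ggal$ is presented on $\Gamma_1,\dots,\Gamma_5$ by triple relations along adjacent lines and by commutators otherwise.

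The decisive step is purely combinatorial. The dual graph $T$ of $U_{0,6,2}$ is a tree: five of the planes form a chain glued successively along lines $1,2,3,4$, while the sixth plane is glued along line $5$ to the plane that already carries lines $3$ and $4$. That plane is therefore a vertex of valency $3$ in $T$, and since $T$ has no cycle it cannot lie on one. Its three incident lines $3,4,5$ are the three edges of a single triangle in $X_0$ and so are not concurrent, whence Lemma~\ref{not-trivial} applies directly: the fork relation $[\Gamma_3,\Gamma_4\Gamma_5\Gamma_4]=e$ lies in the kernel of $\hat G\to S_6$ and is non-trivial there. Consequently $\pi_1(\Xgal)$ is not trivial.

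The only genuine labor is the bookkeeping at the outer $4$-point, whose monodromy produces the longest block of relations; one must list its four lines in the correct order along the fan so that the resulting triple and commutator relations stay compatible with the lemmas used to identify each $\Gamma_i$ with $\Gamma_i'$. I expect this to be the main obstacle, though it is routine given the template. The non-triviality itself is already forced by the valency-$3$ vertex of $T$, and hence follows from Lemma~\ref{not-trivial} independently of the detailed presentation.
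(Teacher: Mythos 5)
Your proposal follows the paper's proof essentially verbatim: the same classification of vertices (two $1$-points on lines $1$ and $2$, two $2$-points on $\{4,5\}$ and $\{3,5\}$, one outer $4$-point on lines $1,2,3,4$), the same propagation of $\Gamma_i=\Gamma_i'$ by substituting the $1$-point relations into the $4$-point relations and then applying Lemma~\ref{2pt-equal}, and the same final appeal to Lemma~\ref{not-trivial} at the valency-$3$ vertex of the dual tree carrying lines $3,4,5$. This is correct and is exactly the route the paper takes.
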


\begin{proof}
The branch curve $S_0$ in $\mathbb{CP}^2$ is an arrangement of five lines. We regenerate each vertex in turn and compute the group $G$.
	
	\uOnePoint{3}{1}{15-vert3}
	\uOnePoint{6}{2}{15-vert6}
	\uTwoPoint{5}{4}{5}{15-vert5}
    \uTwoPoint{7}{3}{5}{15-vert7}
	\uFourPointOuter{4}{1}{2}{3}{4}{15-vert4}
	We also have the following parasitic and projective relations:
	\uParasit{1}{5}{15-parasit-1-5}
	\uParasit{2}{5}{15-parasit-2-5}
	\uProjRel{15-proj}{5}{4}{3}{2}{1}
	
Substituting \eqref{15-vert3} and \eqref{15-vert6} in \eqref{15-vert4-12}, we get $\ug{3}=\ug{3'}$.
By Lemma \ref{2pt-equal} , we have $\ug{5}=\ug{5'}$ and $\ug{4}=\ug{4'}$.

$\Ggal$ is thus generated by $\set{\ug{i} | i=1,\dots,5}$ with the following relations:
	\uGammaSq{15-gamma-sq}{1}{2}{3}{4}{5}
	\ubegineq{15-simpl-3}
	\trip{1}{2} = \trip{2}{3} = \trip{3}{4} = \trip{3}{5} = \trip{4}{5} = e
	\uendeq
	\ubegineq{15-simpl-4}
	\comm{1}{3} = \comm{1}{4} = \comm{1}{5} = \comm{2}{4} = \comm{2}{5} = e.
	\uendeq
By Lemma \ref{not-trivial}, $\pi_1(\Xgal)$ is not trivial.

\end{proof}

\uChernSummary{7}{20}{15}{10}{4}{4}{-\frac{4}{3}\cdot}

\subsection{{$ X $ degenerates to $U_{0,6,3}$}}\label{section_computation_U_0_6_3}

\begin{figure}[H]
	\begin{center}
		
		\definecolor{circ_col}{rgb}{0,0,0}
		\begin{tikzpicture}[x=1cm,y=1cm,scale=2]
		
		\draw [fill=circ_col] (0,0) node [anchor=east] {1} circle (1pt);
		\draw [fill=circ_col] (2,1) node [anchor=west] {2} circle (1pt);
		\draw [fill=circ_col] (0,1) node [anchor=east] {3} circle (1pt);
		\draw [fill=circ_col] (1,1) node [below] {4} circle (1pt);
		\draw [fill=circ_col] (2,2) node [anchor=west] {5} circle (1pt);
		\draw [fill=circ_col] (0,2) node [above] {6} circle (1pt);
		\draw [fill=circ_col] (1,2) node [anchor=south west] {7} circle (1pt);
		\draw [fill=circ_col] (1,3) node [above] {8} circle (1pt);
		
		\draw [-, line width = 1pt] (0,0) -- (1,1) -- (2,1) -- (2,2) -- (1,2) -- (1,3) -- (0,2) -- (0,1) -- (0,0);
		
		\draw [-, line width = 1pt] (0,1) -- node [below]{1} (1,1);
		\draw [-, line width = 1pt] (0,2) -- node [below]{2} (1,1);
		\draw [-, line width = 1pt] (1,2) -- node [anchor=east]{3} (1,1);
		\draw [-, line width = 1pt] (2,2) -- node [below]{4} (1,1);
		\draw [-, line width = 1pt] (0,2) -- node [below]{5} (1,2);
		\end{tikzpicture}
		
	\end{center}
	\setlength{\abovecaptionskip}{-0.15cm}
	\caption{The arrangement of planes $ U_{0,6,3} $.}\label{fig_computation_U_0_6_3_appendix}
\end{figure}

\begin{thm}
	If $ X $ degenerates to $ U_{0,6,3} $,  then $\pi_1(\Xgal)$ is not trivial.
\end{thm}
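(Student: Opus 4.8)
The plan is to regenerate the five singular points of the line arrangement $S_0$ exactly as in the sibling case $U_{0,6,2}$, and then to deduce non-triviality from the combinatorics of the dual graph via \Lref{not-trivial}. First I would classify the vertices of \Fref{fig_computation_U_0_6_3_appendix} by counting how many of the five lines are incident to each one. This yields: vertices $3$ and $5$ are $1$-points (carrying lines $1$ and $4$), vertices $6$ and $7$ are $2$-points (on the line pairs $\{2,5\}$ and $\{3,5\}$), and vertex $4$ is an \emph{outer} $4$-point carrying lines $1,2,3,4$, because the five planes meeting there form a fan $P_1,\dots,P_5$ in which $P_i\cap P_j$ is a line exactly when $|i-j|=1$ (no cycle, so outer rather than inner). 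The two line pairs that share no vertex, namely $\{1,5\}$ and $\{4,5\}$, contribute parasitic commutation relations, and one adds the projective relation $\prod_{j=5}^{1}\Gamma_j'\Gamma_j=e$.

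Next I would propagate the equalities $\Gamma_i=\Gamma_i'$. The two $1$-points give $\Gamma_1=\Gamma_1'$ and $\Gamma_4=\Gamma_4'$ at once. The key observation is that, among the relations of the outer $4$-point at vertex $4$, the braid relations $\langle\Gamma_3,\Gamma_4\rangle=\langle\Gamma_3',\Gamma_4\rangle=\langle\Gamma_3^{-1}\Gamma_3'\Gamma_3,\Gamma_4\rangle=e$ together with $\Gamma_4'=\Gamma_4\Gamma_3'\Gamma_3\Gamma_4\Gamma_3^{-1}\Gamma_3'^{-1}\Gamma_4^{-1}$ have \emph{precisely} the shape of the two relations used in the proof of \Lref{2pt-equal} (with the pair $(3,4)$ playing the role of $(i,j)$). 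Hence the identical manipulation produces $\Gamma_3=\Gamma_3'$ out of $\Gamma_4=\Gamma_4'$. From here \Lref{2pt-equal} applied to the $2$-point at vertex $7$ gives $\Gamma_5=\Gamma_5'$, and applied to the $2$-point at vertex $6$ gives $\Gamma_2=\Gamma_2'$. All primed generators are then eliminated, so $\Ggal$ is presented on the five involutions $\Gamma_1,\dots,\Gamma_5$ with the triple (braid) relations coming from the $2$-points and from the $4$-point, together with the commutation relations coming from the parasitic intersections and the surviving $4$-point relations; this also supplies the combinatorial data ($m,\mu,d,\rho$) needed for the subsequent Chern computation.

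Finally I would invoke \Lref{not-trivial}. The dual graph $T$ has the six planes as vertices and the five lines as edges: the path $P_1-P_2-P_3-P_4-P_5$ (edges $1,2,3,4$) with one further edge (line $5$) attaching the sixth plane to $P_3$. Thus $T$ is a tree whose central vertex $P_3$ has valency $3$ and lies on no cycle, so the hypothesis of \Lref{not-trivial} is met and $\pi_1(\Xgal)$ is non-trivial. I expect the main obstacle to be the middle step: the two $1$-points sit on the \emph{outer} lines $1$ and $4$ of the $4$-point rather than adjacent to the $2$-points, so one cannot reach $\Gamma_2,\Gamma_3,\Gamma_5$ by the $2$-point lemma alone and must bridge through the $4$-point by recognizing its embedded ``$2$-point-like'' relation on lines $3,4$. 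Conceptually, however, non-triviality itself is immediate from the dual-graph criterion; the rest is bookkeeping required only for the explicit presentation.
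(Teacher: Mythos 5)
Your proposal is correct and follows essentially the same route as the paper: the same regeneration of the two $1$-points, two $2$-points, and the outer $4$-point, the same extraction of $\Gamma_3=\Gamma_3'$ from the $4$-point relations using $\Gamma_4=\Gamma_4'$ (the paper uses its equations \eqref{U063-vert4-2} and \eqref{U063-vert4-8}, which are exactly the ``$2$-point-shaped'' relations you identify), and the same final appeal to \Lref{not-trivial} at the valency-$3$ tree vertex of the dual graph. The only cosmetic difference is that the paper obtains $\Gamma_2=\Gamma_2'$ directly from the $4$-point relations \eqref{U063-vert4-9}--\eqref{U063-vert4-10} using $\Gamma_1=\Gamma_1'$ and then gets $\Gamma_5=\Gamma_5'$ from \Lref{2pt-equal}, whereas you chain $\Gamma_3=\Gamma_3'\Rightarrow\Gamma_5=\Gamma_5'\Rightarrow\Gamma_2=\Gamma_2'$ through the two $2$-points; both orderings are valid.
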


\begin{proof}
	The branch curve $S_0$ in $\mathbb{CP}^2$ is an arrangement of five lines. We regenerate each vertex in turn and compute the group $G$.
	
	\uOnePoint{3}{1}{U063-vert3}
	\uOnePoint{5}{4}{U063-vert5}
	\uTwoPoint{6}{2}{5}{U063-vert6}
    \uTwoPoint{7}{3}{5}{U063-vert7}
	\uFourPointOuter{4}{1}{2}{3}{4}{U063-vert4}
	We also have the following parasitic and projective relations:
	\uParasit{1}{5}{U063-parasit-1-5}
	\uParasit{4}{5}{U063-parasit-4-5}
	\uProjRel{U063-proj}{5}{4}{3}{2}{1}
	
Using \eqref{U063-vert5} and \eqref{U063-vert4-2} in \eqref{U063-vert4-8}, we get $\ug{3}=\ug{3'}$.
We use also  \eqref{U063-vert3} and \eqref{U063-vert4-9} in \eqref{U063-vert4-10}, we get $\ug{2}=\ug{2'}$.
By Lemma \ref{2pt-equal} , we have $\ug{5}=\ug{5'}$.

$\Ggal$ is thus generated by $\set{\ug{i} | i=1,\dots,5}$ with the following relations:
	\uGammaSq{U063-gamma-sq}{1}{2}{3}{4}{5}
	\ubegineq{U063-simpl-3}
	\trip{1}{2} = \trip{2}{3} = \trip{2}{5} = \trip{3}{4} = \trip{3}{5} = e,
	\uendeq
	\ubegineq{U063-simpl-4}
	\comm{1}{3} = \comm{1}{4} = \comm{1}{5} = \comm{2}{4} = \comm{4}{5} = e.
	\uendeq
By Lemma \ref{not-trivial}, $\pi_1(\Xgal)$ is not trivial.	

\end{proof}

\uChernSummary{7}{20}{15}{10}{4}{4}{-\frac{4}{3}\cdot}

\subsection{{$ X $ degenerates to $U_{0,7}$}}\label{section_computation_U_0_7}

\begin{figure}[H]
	\begin{center}
		
		\definecolor{circ_col}{rgb}{0,0,0}
		\begin{tikzpicture}[x=1cm,y=1cm,scale=2]
		
		\draw [fill=circ_col] (0,0) node [below] {1} circle (1pt);
		\draw [fill=circ_col] (1,-1) node [anchor=west] {2} circle (1pt);
		\draw [fill=circ_col] (1,0) node [anchor=west] {3} circle (1pt);
		\draw [fill=circ_col] (1,1) node [anchor=west] {4} circle (1pt);
		\draw [fill=circ_col] (0,1) node [above] {5} circle (1pt);
		\draw [fill=circ_col] (-1,1) node [anchor=east] {6} circle (1pt);
		\draw [fill=circ_col] (-1,0) node [anchor=east] {7} circle (1pt);
		\draw [fill=circ_col] (-1,-1) node [anchor=east] {8} circle (1pt);
		
		\draw [-, line width=1pt] (0,0) -- (1,-1) -- (1,0) -- (1,1) -- (0,1) -- (-1,1) -- (-1,0) -- (-1,-1) -- (0,0);
		
		\draw [-, line width = 1pt] (0,0) -- node [below]{1} (1,0);
		\draw [-, line width = 1pt] (0,0) -- node [above]{2} (1,1);
		\draw [-, line width = 1pt] (0,0) -- node [anchor=east]{3} (0,1);
		\draw [-, line width = 1pt] (0,0) -- node [above]{4} (-1,1);
		\draw [-, line width = 1pt] (0,0) -- node [below]{5} (-1,0);

		\end{tikzpicture}
		
	\end{center}
	\setlength{\abovecaptionskip}{-0.15cm}
	\caption{The arrangement of planes $ U_{0,7} $.}\label{fig_computation_U_0_7}
\end{figure}

\begin{thm}\label{thm_computation_U_0_7}
If $ X $ degenerates to $ U_{0,7} $, then $\pi_1(\Xgal)$ is trivial.
\end{thm}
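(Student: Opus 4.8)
The plan is to follow the same regeneration procedure as in the trivial cases above, exploiting the fact that $U_{0,7}$ is the maximally degenerate configuration in our list: the six planes form a fan around the central vertex, so all five lines $1,\dots,5$ of the branch curve $S_0$ pass through vertex $1$. Thus $S_0$ is a pencil of five concurrent lines, vertex $1$ is a single outer $5$-point, the vertices $3,4,5,6,7$ are $1$-points (one at the free end of each line), and vertices $2,8$ are corners carrying no double line. Because any two of the five lines meet only at vertex $1$, there are no parasitic intersections, so the whole presentation of $G$ comes from these vertices together with the projective relation.

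First I would regenerate the five $1$-points. Each $1$-point at a vertex $v\in\{3,4,5,6,7\}$ is a branch point and contributes the relation $\Gamma_i=\Gamma_i'$ for the line $i$ ending there. In this way all five equalities $\Gamma_1=\Gamma_1',\dots,\Gamma_5=\Gamma_5'$ are obtained immediately and simultaneously, directly from branch points; unlike every other case in this section, no appeal to \Lref{2pt-equal}, \Lref{3pt-in-bigmid}, or \Lref{3pt-out-smallmid} is needed to force the generators to coincide with their primed partners.

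The main step is to regenerate the outer $5$-point at vertex $1$. Its braid monodromy is obtained from the regeneration of a $5$-point exactly as in the inner case treated in \cite{FT08}; the outer variant, carried out in \cite{6degree}, is entirely parallel to the outer $3$- and $4$-point computations encoded in the macros above. Substituting the equalities $\Gamma_i=\Gamma_i'$ already established, every relation produced by vertex $1$ should collapse: the triple relations survive only between consecutive lines, giving $\langle\Gamma_i,\Gamma_{i+1}\rangle=e$ for $i=1,2,3,4$, while all non-consecutive pairs commute, $[\Gamma_i,\Gamma_j]=e$ for $|i-j|\ge 2$. Adjoining $\Gamma_i^2=e$ and the projective relation, $\widetilde{G}$ is then presented on $\Gamma_1,\dots,\Gamma_5$ by precisely the Coxeter relations of type $A_5$.

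Finally, these are exactly the defining relations of $S_6$ under $\Gamma_i\mapsto(i,i+1)$, so the natural surjection $\widetilde{G}\to S_6$ is an isomorphism and $\pi_1(\Xgal)$ is trivial. I expect the only genuine obstacle to be the explicit outer $5$-point braid monodromy: one must verify that, after the branch-point substitutions, it contributes nothing beyond the $A_5$ Coxeter relations---in particular no fork relation $[\Gamma_i,\Gamma_j\Gamma_k\Gamma_j]=e$ of the type exploited in \Lref{not-trivial}. That such a relation cannot appear is consistent with the dual graph of $U_{0,7}$ being a path on six vertices, which has no vertex of valence $\ge 3$; this is precisely what distinguishes the concurrent-pencil case from the non-trivial ones in \Tref{thm_computation_U_0_5_3_paper}.
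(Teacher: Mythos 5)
Your proposal is correct and follows essentially the same route as the paper: the five $1$-points at vertices $3,\dots,7$ immediately give $\Gamma_i=\Gamma_i'$ for all $i$, there are no parasitic relations, and after substitution the outer $5$-point braid monodromy at vertex $1$ collapses to the $A_5$ Coxeter presentation, so $\widetilde{G}\cong S_6$. The only difference is that the paper writes out the outer $5$-point braid monodromy and its nineteen relations explicitly rather than citing it, but the verification that they reduce to consecutive triple relations and non-consecutive commutators is exactly the step you anticipate.
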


\begin{proof}
	The branch curve $S_0$ in $\mathbb{CP}^2$ is an arrangement of five lines. We regenerate each vertex in turn and compute the group $G$.
	
   \uOnePoint{3}{1}{U07-vert3}
   \uOnePoint{4}{2}{U07-vert4}
   \uOnePoint{5}{3}{U07-vert5}
   \uOnePoint{6}{4}{U07-vert6}
   \uOnePoint{7}{5}{U07-vert7}
Vertex 1 is an outer $5$-point, the braid monodromy corresponding to it is:

\begin{align*}\label{}
\widetilde{\Delta}_{1} =  &Z_{4\; 4', 5}^3 \cdot (Z_{1\; 1', 5}^2)^{Z_{2\; 2', 5}^{-2} Z_{3\; 3', 5}^{-2}} \cdot
(Z_{2\; 2', 5}^2)^{Z_{3\; 3', 5}^{-2}} \cdot Z_{3\; 3', 5}^2 \cdot {\bar{Z}}_{1\; 1', 5'}^2 \cdot {\bar{Z}}_{2\; 2', 5'}^2
\cdot {\bar{Z}}_{3\; 3', 5'}^2  \cdot \\
& (Z_{5\; 5'})^{Z_{4\; 4', 5}^2} \cdot (Z_{3\; 3', 4}^3)^{Z_{4\; 4', 5}^2}
\cdot (Z_{1\; 1', 4}^2)^{Z_{2\; 2', 4}^{-2} Z_{4\; 4', 5}^2} \cdot (Z_{2\; 2', 4}^2)^{Z_{4\; 4', 5}^2} \cdot ({\bar{Z}}_{1\; 1', 4'}^2)^{Z_{4\; 4', 5}^2}   \cdot \\
& ({\bar{Z}}_{2\; 2', 4'}^2)^{Z_{4\; 4', 5}^2} \cdot  (Z_{4\; 4'})^{Z_{3\; 3', 4}^2 Z_{4\; 4', 5}^2} \cdot Z_{1', 2 \; 2'}^3 \cdot
(Z_{1\; 1'})^{Z_{1', 2\; 2'}^2} \cdot  \\
& (Z_{2\; 2', 3}^3)^{Z_{1', 2\; 2'}^2 Z_{3\; 3', 4}^2 Z_{4\; 4', 5}^2} \cdot  (Z_{3\; 3'})^{Z_{2\; 2', 3}^2 Z_{1', 2\; 2'}^2 Z_{3\; 3', 4}^2 Z_{4\; 4', 5}^2} \cdot (Z_{1\; 1', 3\; 3'}^2)^{Z_{3\; 3' , 4}^2 Z_{4\; 4', 5}^2}.
\end{align*}
It gives rise to the following relations:
	\begin{equation}\label{five1}
	\trip{4}{5} = \trip{4'}{5} = \langle \ug{4}^{-1}\ug{4'}\ug{4}, \ug{5} \rangle =e
	\end{equation}
	\begin{equation}\label{five2}
	[\ug{3'}\ug{3}\ug{2'}\ug{2}\ug{1}\ug{2}^{-1}\ug{2'}^{-1}\ug{3}^{-1}\ug{3'}^{-1}, \ug{5}] = [\ug{3'}\ug{3}\ug{2'}\ug{2}\ug{1'}\ug{2}^{-1}\ug{2'}^{-1}\ug{3}^{-1}\ug{3'}^{-1}, \ug{5}] = e
	\end{equation}
\begin{equation}\label{five3}
	[\ug{3'}\ug{3}\ug{2}\ug{3}^{-1}\ug{3'}^{-1}, \ug{5}] =[\ug{3'}\ug{3}\ug{2'}\ug{3}^{-1}\ug{3'}^{-1}, \ug{5}] = e
	\end{equation}
	\begin{equation}\label{five4}
	\comm{3}{5} = \comm{3'}{5} = e
	\end{equation}
	\begin{equation}\label{five5}
 \begin{split}
	[\ug{4'}\ug{4}\ug{3'}\ug{3}\ug{2'}\ug{2}\ug{1}\ug{2}^{-1}\ug{2'}^{-1}\ug{3}^{-1}\ug{3'}^{-1}\ug{4}^{-1}\ug{4'}^{-1}, \ug{5}^{-1}\ug{5'}\ug{5} ] = 	\\ [\ug{4'}\ug{4}\ug{3'}\ug{3}\ug{2'}\ug{2}\ug{1'}\ug{2}^{-1}\ug{2'}^{-1}\ug{3}^{-1}\ug{3'}^{-1}\ug{4}^{-1}\ug{4'}^{-1}, \ug{5}^{-1}\ug{5'}\ug{5} ] = e
\end{split}
	\end{equation}
\begin{equation}\label{five6}
	[\ug{4'}\ug{4}\ug{3'}\ug{3}\ug{2}\ug{3}^{-1}\ug{3'}^{-1}\ug{4}^{-1}\ug{4'}^{-1}, \ug{5}^{-1}\ug{5'}\ug{5} ] = 	 [\ug{4'}\ug{4}\ug{3'}\ug{3}\ug{2'}\ug{3}^{-1}\ug{3'}^{-1}\ug{4}^{-1}\ug{4'}^{-1}, \ug{5}^{-1}\ug{5'}\ug{5} ] = e
	\end{equation}
\begin{equation}\label{five7}
	[\ug{4'}\ug{4}\ug{3}\ug{4}^{-1}\ug{4'}^{-1}, \ug{5}^{-1}\ug{5'}\ug{5} ] = 	 [\ug{4'}\ug{4}\ug{3'}\ug{4}^{-1}\ug{4'}^{-1}, \ug{5}^{-1}\ug{5'}\ug{5} ] = e
	\end{equation}
    \begin{equation}\label{five8}
	\ug{5}\ug{4'}\ug{4}\ug{5}\ug{4}^{-1}\ug{4'}^{-1}\ug{5}^{-1} = \ug{5'}
	\end{equation}
	\begin{equation}\label{five9}
	\langle \ug{3}, \ug{5}\ug{4}\ug{5}^{-1} \rangle = \langle \ug{3'}, \ug{5}\ug{4}\ug{5}^{-1} \rangle =
 \langle \ug{3}^{-1}\ug{3'}\ug{3}, \ug{5}\ug{4}\ug{5}^{-1} \rangle = e
	\end{equation}
	\begin{equation}\label{five10}
	[\ug{2'}\ug{2}\ug{1}\ug{2}^{-1}\ug{2'}^{-1}, \ug{5}\ug{4}\ug{5}^{-1}] = [\ug{2'}\ug{2}\ug{1'}\ug{2}^{-1}\ug{2'}^{-1}, \ug{5}\ug{4}\ug{5}^{-1}] = e
	\end{equation}
\begin{equation}\label{five11}
	[\ug{2}, \ug{5}\ug{4}\ug{5}^{-1}] = [\ug{2'}, \ug{5}\ug{4}\ug{5}^{-1}] = e
	\end{equation}
    \begin{equation}\label{five12}
	[\ug{3'}\ug{3}\ug{2'}\ug{2}\ug{1}\ug{2}^{-1}\ug{2'}^{-1}\ug{3}^{-1}\ug{3'}^{-1}, \ug{5}\ug{4}^{-1}\ug{4'}\ug{4}\ug{5}^{-1} ] = 	 [\ug{3'}\ug{3}\ug{2'}\ug{2}\ug{1'}\ug{2}^{-1}\ug{2'}^{-1}\ug{3}^{-1}\ug{3'}^{-1}, \ug{5}\ug{4}^{-1}\ug{4'}\ug{4}\ug{5}^{-1} ] =  e
	\end{equation}
     \begin{equation}\label{five13}
	[\ug{3'}\ug{3}\ug{2}\ug{3}^{-1}\ug{3'}^{-1}, \ug{5}\ug{4}^{-1}\ug{4'}\ug{4}\ug{5}^{-1} ] = 	 [\ug{3'}\ug{3}\ug{2'}\ug{3}^{-1}\ug{3'}^{-1}, \ug{5}\ug{4}^{-1}\ug{4'}\ug{4}\ug{5}^{-1} ] =  e
	\end{equation}
     \begin{equation}\label{five14}
	 \ug{3}^{-1}\ug{3'}^{-1} \ug{5}\ug{4}^{-1}\ug{4'}\ug{4}\ug{5}^{-1} \ug{3'}\ug{3} = \ug{5}\ug{4}\ug{5}^{-1} 	
     \end{equation}
      \begin{equation}\label{five15}
	\langle \ug{1'}, \ug{2} \rangle = \langle \ug{1'}, \ug{2'} \rangle =
	 \langle \ug{1'}, \ug{2}^{-1}\ug{2'}\ug{2} \rangle = e
	\end{equation}
\begin{equation}\label{five16}
	 \ug{1} = \ug{2'}\ug{2}\ug{1'}\ug{2}^{-1}\ug{2'}^{-1} 	
     \end{equation}
 \begin{equation}\label{five17}
     \begin{split}
\langle \ug{2'}\ug{2}\ug{1'}\ug{2}\ug{1'}^{-1}\ug{2}^{-1}\ug{2'}^{-1},\ug{5}\ug{4}\ug{5}^{-1}\ug{3}\ug{5}\ug{4}^{-1}\ug{5}^{-1} \rangle =
\langle \ug{2'}\ug{2}\ug{1'}\ug{2'}\ug{1'}^{-1}\ug{2}^{-1}\ug{2'}^{-1},\ug{5}\ug{4}\ug{5}^{-1}\ug{3}\ug{5}\ug{4}^{-1}\ug{5}^{-1} \rangle  =\\
 \langle \ug{2'}\ug{2}\ug{1'}\ug{2}^{-1}\ug{2'}\ug{2}\ug{1'}^{-1}\ug{2}^{-1}\ug{2'}^{-1},\ug{5}\ug{4}\ug{5}^{-1}\ug{3}\ug{5}\ug{4}^{-1}\ug{5}^{-1} \rangle  =	e
\end{split}
	\end{equation}
\begin{equation}\label{five18}
\begin{split}
	\ug{2'}\ug{2}\ug{1'}\ug{2}^{-1}\ug{2'}^{-1}\ug{1'}^{-1}\ug{2}^{-1}\ug{2'}^{-1} \ug{5}\ug{4}\ug{5}^{-1}\ug{3}^{-1}\ug{3'} \ug{3}\ug{5}\ug{4}^{-1}\ug{5}^{-1}\ug{2'}\ug{2}\ug{1'}\ug{2'}\ug{2}\ug{1'}^{-1}\ug{2}^{-1}\ug{2'}^{-1} = \\
 \ug{5}\ug{4}\ug{5}^{-1}\ug{3} \ug{5}\ug{4}^{-1}\ug{5}^{-1}
 \end{split}
\end{equation}
 \begin{equation}\label{five19}
     \begin{split}
	[\ug{1}, \ug{5}\ug{4}\ug{5}^{-1}\ug{3}\ug{5}\ug{4}^{-1}\ug{5}^{-1} ] = 	 [\ug{1'},\ug{5}\ug{4}\ug{5}^{-1}\ug{3}\ug{5}\ug{4}^{-1}\ug{5}^{-1} ] =\\
[\ug{1}, \ug{5}\ug{4}\ug{5}^{-1}\ug{3'}\ug{5}\ug{4}^{-1}\ug{5}^{-1} ] =
[\ug{1'}, \ug{5}\ug{4}\ug{5}^{-1}\ug{3'}\ug{5}\ug{4}^{-1}\ug{5}^{-1} ] = e.
\end{split}
	\end{equation}
   We also have the following projective relation:
	\uProjRel{U07-proj}{5}{4}{3}{2}{1}

By \eqref{U07-vert3}-\eqref{U07-vert7}, $\Ggal$ is generated by $\set{\ug{i} | i=1,\dots,5}$ with the following relations:
    \uGammaSq{U07-gamma-sq}{1}{2}{3}{4}{5}
	\ubegineq{U07-simpl-3}
	\trip{1}{2} = \trip{2}{3} = \trip{3}{4} = \trip{4}{5} = e
	\uendeq
	\ubegineq{U07-simpl-4}
	\comm{1}{3} = \comm{1}{4} = \comm{1}{5} = \comm{2}{4} =  \comm{2}{5} = \comm{3}{5} = e.
	\uendeq
Thus, $\Ggal \cong S_6$, therefore $\pi_1(\Xgal)$ is trivial.

\end{proof}

\uChernSummary{9}{24}{12}{10}{4}{\frac{11}{2}}{-\frac{7}{3}\cdot}

\subsection{{$ X $ degenerates to $U_{3,1}$}}\label{section_computation_U_3_1}

\begin{figure}[H]
	\begin{center}
		
		\definecolor{circ_col}{rgb}{0,0,0}
		\begin{tikzpicture}[x=1cm,y=1cm,scale=2]
		
		\draw [fill=circ_col] (0.6,1.03) node [anchor=west] {1} circle (1pt);
		\draw [fill=circ_col] (-0.86, -0.5) node [below] {2} circle (1pt);
		\draw [fill=circ_col] (0.86, -0.5) node [below] {3} circle (1pt);
		\draw [fill=circ_col] (0,0) node [below] {4} circle (1pt);
		\draw [fill=circ_col] (-0.86, 0.5) node [above] {5} circle (1pt);
		\draw [fill=circ_col] (0.86, 0.5) node [above] {6} circle (1pt);
		\draw [fill=circ_col] (0,1) node [above] {7} circle (1pt);
		
		\draw [-, line width = 1pt] (-0.86, -0.5) -- (0.86, -0.5) -- (0.86, 0.5)  -- (0.6,1.03)-- (0,1) -- (-0.86, 0.5) -- (-0.86, -0.5);
		
		\draw [-, line width = 1pt] (-0.86,-0.5) -- node [above] {4} (0, 1);
		\draw [-, line width = 1pt] (0,0) -- node [anchor=west] {3} (0, 1);
		\draw [-, line width = 1pt] (0.86,-0.5) -- node [anchor=west] {2} (0, 1);
		\draw [-, line width = 1pt] (0,0) -- node [above] {5} (-0.86, -0.5);
		\draw [-, line width = 1pt] (0,0) -- node [above] {6} (0.86, -0.5);
		\draw [-, line width = 1pt] (0.86,0.5) -- node [below] {1} (0,1);
		
		\end{tikzpicture}
	\end{center}
	\setlength{\abovecaptionskip}{-0.15cm}
	\caption{The arrangement of planes $U_{3,1}$.}\label{fig_computation_U_3_1}
\end{figure}

\begin{thm}\label{thm_computation_U_3_1}
If $ X $ degenerates to $ U_{3,1} $, then $\pi_1(\Xgal)$ is trivial.
\end{thm}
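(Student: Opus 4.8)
The plan is to prove $\Ggal \cong S_6$; by the Moishezon--Teicher exact sequence this forces $\pi_1(\Xgal)$ to be trivial. As in the proof of \Tref{thm_computation_U_4_cup_3_1_paper}, everything reduces to showing that $\Gamma_i = \Gamma_i'$ holds in $\widetilde{G}$ for every line $i$. First I would regenerate the singular vertices of $U_{3,1}$ and collect their relations: vertex $6$ is a $1$-point on line $1$, vertices $2$ and $3$ are $2$-points (on lines $4,5$ and on lines $2,6$), vertex $4$ is an inner $3$-point on lines $3 < 5 < 6$, and vertex $7$ is an outer $4$-point on lines $1,2,3,4$. To these I would adjoin the parasitic commutation relations between the non-adjacent planes and the projective relation.

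Two equalities are free: $\Gamma_1 = \Gamma_1'$ from the $1$-point at vertex $6$, and $\Gamma_3 = \Gamma_3'$ from the ``moreover'' clause of \Lref{3pt-in-bigmid} applied to the inner $3$-point at vertex $4$. The crux is to manufacture one more equality among $\Gamma_2,\Gamma_4,\Gamma_5,\Gamma_6$: for this I would substitute $\Gamma_1 = \Gamma_1'$ and $\Gamma_3 = \Gamma_3'$ into the explicit relations of the outer $4$-point at vertex $7$ and simplify, exactly as in the treatment of $U_{0,6,2}$ in \Sref{section_computation_U_0_6_2} (where substituting two $1$-point relations into the long commutator-type relation of the outer $4$-point yields an analogous equality). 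This should deliver, say, $\Gamma_4 = \Gamma_4'$. The remaining equalities then propagate formally: \Lref{2pt-equal} at vertex $2$ gives $\Gamma_5 = \Gamma_5'$, \Lref{3pt-in-bigmid} at vertex $4$ gives $\Gamma_6 = \Gamma_6'$, and \Lref{2pt-equal} at vertex $3$ gives $\Gamma_2 = \Gamma_2'$.

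Once all primes are eliminated, $\Ggal$ is generated by the six involutions $\Gamma_1,\dots,\Gamma_6$, subject to a list of triple (braid) and commutation relations among them, together with the single relation $\Gamma_6 = \Gamma_3\Gamma_5\Gamma_3$ issuing from the inner $3$-point (equivalently, from the unique $3$-cycle of the dual graph $T$). Reading the $\Gamma_i$ as the transpositions prescribed by the edges of $T$, this is exactly the data of a Coxeter cover of $S_6$; I would then invoke Remark~$2.8$ of \cite{RTV} to harvest the extra commutators forced by the triangle and Theorem~$2.3$ of \cite{RTV} to conclude $\Ggal \cong S_6$. Note that \Lref{not-trivial} cannot interfere here: the only $3$-valent vertices of $T$ are the two planes lying on this triangle, so no free fork relation survives, in agreement with triviality.

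The main obstacle is the outer $4$-point step. Its relations are long words in $\Gamma_1^{\pm},\dots,\Gamma_4^{\pm}$ and their primed copies, and extracting one clean equality demands a careful substitution of $\Gamma_1 = \Gamma_1'$ and $\Gamma_3 = \Gamma_3'$ together with repeated use of the cusp (triple) and node (commutation) relations to cancel the conjugating factors. A secondary technical point is to confirm that the resulting presentation really meets the hypotheses of the Coxeter-cover theorems of \cite{RTV} in the presence of the $3$-cycle, so that the cover collapses to $S_6$ rather than to a proper extension.
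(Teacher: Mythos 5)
Your proposal is correct and has the same overall architecture as the paper's proof (eliminate all the primed generators, reduce $\Ggal$ to a Coxeter-cover presentation containing the triangle relation $\Gamma_6=\Gamma_3\Gamma_5\Gamma_3$, and invoke Theorem~2.3 and Remark~2.8 of \cite{RTV}), but it bootstraps the prime-elimination from the opposite end of the dual chain. The paper starts from $\Gamma_1=\Gamma_1'$ at the $1$-point and extracts $\Gamma_2=\Gamma_2'$ from the outer $4$-point using its cusp relations \eqref{U31-vert7-9} together with \eqref{U31-vert7-10}, then propagates $2\to 6\to\{3,5\}\to 4$ through the two $2$-points and the inner $3$-point; you instead start from $\Gamma_3=\Gamma_3'$, which is indeed free by the ``moreover'' clause of \Lref{3pt-in-bigmid}, and run the chain backwards. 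Your worry about the ``long words'' at the outer $4$-point is unfounded: once $\Gamma_3=\Gamma_3'$ is known, you do not need the commutator-type relations at all, since the branch-point relation \eqref{U31-vert7-8} of that vertex reads $\Gamma_4\Gamma_3'\Gamma_3\Gamma_4\Gamma_3^{-1}{\Gamma_3'}^{-1}\Gamma_4^{-1}=\Gamma_4'$ and collapses at once to $\Gamma_4=\Gamma_4'$ in $\widetilde{G}$, where $\Gamma_3^2=e$. The rest of your chain ($\Gamma_5=\Gamma_5'$ by \Lref{2pt-equal} at vertex $2$, then $\Gamma_6=\Gamma_6'$ by \Lref{3pt-in-bigmid}, then $\Gamma_2=\Gamma_2'$ by \Lref{2pt-equal} at vertex $3$) and the final appeal to \cite{RTV} match the paper's ending exactly, so both routes deliver the same presentation; yours trades the braid-relation manipulation at the $4$-point for a second use of the inner $3$-point, and is if anything slightly cleaner at the crux step than you anticipated.
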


\begin{proof}
	The branch curve $S_0$ in $\mathbb{CP}^2$ is an arrangement of six lines. We regenerate each vertex in turn and compute the group $G$.
	
	\uOnePoint{6}{1}{U31-vert6}
	\uTwoPoint{2}{4}{5}{U31-vert2}
	\uTwoPoint{3}{2}{6}{U31-vert3}
	\uThreePointInner{4}{3}{5}{6}{U31-vert4}
	\uFourPointOuter{7}{1}{2}{3}{4}{U31-vert7}
	We also have the following parasitic and projective relations:
	\uParasit{1}{5}{U31-parasit-1-5}
	\uParasit{1}{6}{U31-parasit-1-6}
	\uParasit{2}{5}{U31-parasit-2-5}
	\uParasit{4}{6}{U31-parasit-4-6}
	\uProjRel{U31-proj}{6}{5}{4}{3}{2}{1}

Using \eqref{U31-vert6} and \eqref{U31-vert7-9} in \eqref{U31-vert7-10}, we get $\ug{2}=\ug{2'}$.
By Lemma \ref{2pt-equal}, we get $\ug{6}=\ug{6'}$. Then by Lemma \ref{3pt-in-bigmid}, we have  $\ug{3}=\ug{3'}, \ug{5}=\ug{5'}$. And again by Lemma \ref{2pt-equal} we get $\ug{4}=\ug{4'}$.
	
$\Ggal$ is thus generated by $\set{\ug{i} | i=1,\dots,6}$ with the following relations:
\uGammaSq{U31-gamma-sq}{1}{2}{3}{4}{5}{6}
	\ubegineq{U31-simpl-3}
	\trip{1}{2} = \trip{2}{3} = \trip{2}{6} = \trip{3}{4} = \trip{3}{5} = \trip{3}{6} = \trip{4}{5} = \trip{5}{6} = e
	\uendeq
	\ubegineq{U31-simpl-4}
	\comm{1}{3} = \comm{1}{4} = \comm{1}{5} = \comm{1}{6} = \comm{2}{4} =  \comm{2}{5} = \comm{4}{6} = e
	\uendeq
	\ubegineq{U31-simpl-5}
	\ug{6}=\ug{3}\ug{5}\ug{3}.
	\uendeq
By Remark 2.8 in \cite{RTV}, we get also the relations
\ubegineq{U31-simpl-6}
	[\ug{2},\ug{3}\ug{6}\ug{3}]=[\ug{4},\ug{3}\ug{5}\ug{3}] = e.
	\uendeq
By Theorem 2.3 in \cite{RTV} we have $\Ggal \cong S_6$, therefore $\pi_1(\Xgal)$ is trivial.

\end{proof}

\uChernSummary{10}{28}{21}{12}{9}{\frac{13}{2}}{-\frac{4}{3}\cdot}


\subsection{{$ X $ degenerates to $ U_{3,2} $}}\label{section_computation_U_3_2}

\begin{figure}[H]
	\begin{center}
		
		\definecolor{circ_col}{rgb}{0,0,0}
		\begin{tikzpicture}[x=1cm,y=1cm,scale=2]
		
		\draw [fill=circ_col] (0,-1) node [below] {1} circle (1pt);
		\draw [fill=circ_col] (-0.86, -0.5) node [below] {2} circle (1pt);
		\draw [fill=circ_col] (0.86, -0.5) node [below] {3} circle (1pt);
		\draw [fill=circ_col] (0,0) node [below] {4} circle (1pt);
		\draw [fill=circ_col] (-0.86, 0.5) node [above] {5} circle (1pt);
		\draw [fill=circ_col] (0.86, 0.5) node [above] {6} circle (1pt);
		\draw [fill=circ_col] (0,1) node [above] {7} circle (1pt);
		
		\draw [-, line width = 1pt] (-0.86, -0.5) -- (0,-1) -- (0.86, -0.5) -- (0.86, 0.5) -- (0,1) -- (-0.86, 0.5) -- (-0.86, -0.5);
		
		\draw [-, line width = 1pt] (-0.86,-0.5) -- node [above] {1} (0, 1);
		\draw [-, line width = 1pt] (0,0) -- node [anchor=west] {2} (0, 1);
		\draw [-, line width = 1pt] (0.86,-0.5) -- node [anchor=west] {3} (0, 1);
		\draw [-, line width = 1pt] (0,0) -- node [above] {4} (-0.86, -0.5);
		\draw [-, line width = 1pt] (0,0) -- node [above] {5} (0.86, -0.5);
		\draw [-, line width = 1pt] (-0.86,-0.5) -- node [below] {6} (0.86, -0.5);
		
		\end{tikzpicture}
	\end{center}
	\setlength{\abovecaptionskip}{-0.15cm}
	\caption{The arrangement of planes $ U_{3,2} $.}\label{fig_computation_U_3_2}
\end{figure}

\begin{thm}\label{thm_computation_U_3_2}
	If $ X $ degenerates to $ U_{3,2} $, then $\pi_1(\Xgal)$ is trivial.
\end{thm}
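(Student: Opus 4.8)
The plan is to follow the same regeneration-and-collapse scheme used for the other inner-point cases, most closely $U_{3,1}$. First I would read off the line arrangement $S_0$: the six lines are $1,\dots,6$, and the four singular points are the outer $3$-points at vertex $2$ (lines $1,4,6$), vertex $3$ (lines $3,5,6$) and vertex $7$ (lines $1,2,3$), together with the inner $3$-point at vertex $4$ (lines $2,4,5$). The three planes meeting at vertex $4$ are pairwise intersecting, so lines $2,4,5$ form a $3$-cycle in the dual graph $T$. Accordingly I would apply the outer $3$-point regeneration at vertices $2,3,7$ and the inner $3$-point regeneration at vertex $4$, and then record the parasitic relations for the three non-adjacent pairs $(1,5)$, $(2,6)$ and $(3,4)$ (the pairs of lines whose planes share no edge in $T$), together with the projective relation.

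The next step is to establish $\Gamma_i=\Gamma_i'$ for every $i$ in $\Ggal$. By \Lref{3pt-in-bigmid} the inner $3$-point at vertex $4$ automatically yields $\Gamma_2=\Gamma_2'$, the smallest of its three lines. Feeding $\Gamma_2=\Gamma_2'$ into the outer $3$-point at vertex $7$ and invoking \Lref{3pt-out-smallmid} gives $\Gamma_1=\Gamma_1'$ and $\Gamma_3=\Gamma_3'$. Then $\Gamma_1=\Gamma_1'$ propagates through vertex $2$ (lines $1,4,6$) to give $\Gamma_4=\Gamma_4'$ and $\Gamma_6=\Gamma_6'$, and $\Gamma_3=\Gamma_3'$ propagates through vertex $3$ (lines $3,5,6$) to give $\Gamma_5=\Gamma_5'$. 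Thus all six generators are identified with their primed copies, exactly as in the $U_{3,1}$ computation.

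With every $\Gamma_i=\Gamma_i'$ and $\Gamma_i^2=e$, I would collapse the relations: the cusp relations become braid relations $\langle\Gamma_i,\Gamma_j\rangle=e$ between adjacent lines, the node and parasitic relations become commutations $[\Gamma_i,\Gamma_j]=e$, and the inner $3$-point contributes a single cycle relation of the form $\Gamma_a=\Gamma_b\Gamma_c\Gamma_b$ among the lines $2,4,5$ (the analogue of $\Gamma_6=\Gamma_3\Gamma_5\Gamma_3$ for $U_{3,1}$). Because of this triangle the presentation is not a free Coxeter-type quotient, so, as in the $U_{3,1}$ and $U_{4\cup 3,1}$ arguments, I would finish by adjoining the commutation relations supplied by Remark~2.8 of \cite{RTV} and then applying Theorem~2.3 of \cite{RTV} to conclude $\Ggal\cong S_6$; the Moishezon--Teicher exact sequence then forces $\pi_1(\Xgal)$ to be trivial. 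I would also note at the outset why \Lref{not-trivial} cannot be applied here: the only vertices of $T$ of valency $\geq 3$ are the three planes at vertex $4$, and all of them lie on the $3$-cycle through lines $2,4,5$, so no valency-$3$ vertex is free of a cycle, which is consistent with triviality.

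The main obstacle is the final step. The routine part is the identification $\Gamma_i=\Gamma_i'$, which the lemmas hand over immediately. The real work lies in correctly extracting the full set of braid-monodromy relations from the four $3$-points (three outer, one inner), tracking the conjugations, and then verifying via the Coxeter-cover criterion of \cite{RTV} that the surviving triple and commutation relations, together with the one cycle relation coming from the inner $3$-point, match the standard Coxeter presentation of $S_6$ and do not enlarge $\Ggal$ beyond $S_6$.
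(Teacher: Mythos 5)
Your proposal matches the paper's proof essentially step for step: the same regeneration of the three outer $3$-points and one inner $3$-point, the same chain of identifications starting from $\Gamma_2=\Gamma_2'$ at the inner $3$-point (the paper derives this by equating the inner-point relations directly, which is exactly the computation behind the ``moreover'' clause of Lemma~\ref{3pt-in-bigmid} that you cite), the same propagation via Lemma~\ref{3pt-out-smallmid}, the same cycle relation $\Gamma_5=\Gamma_2\Gamma_4\Gamma_2$, and the same appeal to Remark~2.8 and Theorem~2.3 of \cite{RTV} to conclude $\Ggal\cong S_6$. The argument is correct and follows the paper's route.
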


\begin{proof}

	The branch curve $S_0$ in $\mathbb{CP}^2$ is an arrangement of six lines. We regenerate each vertex in turn and compute the group $G$.
	
	\uThreePointOuter{2}{1}{4}{6}{24-vert2}
	\uThreePointOuter{3}{3}{5}{6}{24-vert3}
	\uThreePointInner{4}{2}{4}{5}{24-vert4}
	\uThreePointOuter{7}{1}{2}{3}{24-vert7}
	We also have the following parasitic and projective relations:
	\uParasit{1}{5}{24-parasit-1-5}
	\uParasit{2}{6}{24-parasit-2-6}
	\uParasit{3}{4}{24-parasit-3-4}
	\uProjRel{24-proj}{6}{5}{4}{3}{2}{1}

Equating \eqref{24-vert4-3} and \eqref{24-vert4-5} and using \eqref{24-vert4-1} and \eqref{24-vert4-2}, we get  $\ug{2}=\ug{2'}$.
By Lemma \ref{3pt-out-smallmid}, we get $\ug{1}=\ug{1'}, \ug{3}=\ug{3'}, \ug{4}=\ug{4'}, \ug{5}=\ug{5'}$ and $\ug{6}=\ug{6'}$.
	
$\Ggal$ is thus generated by $\set{\ug{i} | i=1,\dots,6}$ with the following relations:
	\uGammaSq{24-gamma-sq}{1}{2}{3}{4}{5}{6}
	\ubegineq{24-simpl-3}
	\begin{split}
		\trip{1}{2} = \trip{1}{4} = \trip{2}{3} = \trip{2}{4} = \trip{2}{5} & = \\
	= \trip{3}{5} = \trip{4}{5} = \trip{4}{6} = \trip{5}{6} &= e
	\end{split}
	\uendeq
	\ubegineq{24-simpl-4}
	\comm{1}{3} = \comm{1}{5} = \comm{1}{6} = \comm{2}{6} = \comm{3}{4} = \comm{3}{6} = e
	\uendeq
	\ubegineq{24-simpl-5}
	\ug{5}=\ug{2}\ug{4}\ug{2}.
	\uendeq
By Remark 2.8 in \cite{RTV}, we get also the relations
\ubegineq{24-simpl-6}
	[\ug{1},\ug{2}\ug{4}\ug{2}]=[\ug{3},\ug{2}\ug{5}\ug{2}]=[\ug{6},\ug{4}\ug{5}\ug{4}] = e.
	\uendeq
By Theorem 2.3 in \cite{RTV} we have $\Ggal \cong S_6$, therefore $\pi_1(\Xgal)$ is trivial.

\end{proof}

\uChernSummary{10}{24}{24}{12}{9}{6}{-}


\subsection{{$ X $ degenerates to $ U_{3,3} $}}\label{section_computation_U_3_3}

\begin{figure}[H]
	\begin{center}
		
		\definecolor{circ_col}{rgb}{0,0,0}
		\begin{tikzpicture}[x=1cm,y=1cm,scale=2]
		
		\draw [fill=circ_col] (1.2,0) node [anchor=west] {1} circle (1pt);
		\draw [fill=circ_col] (-0.86, -0.5) node [below] {2} circle (1pt);
		\draw [fill=circ_col] (0.86, -0.5) node [below] {3} circle (1pt);
		\draw [fill=circ_col] (0,0) node [below] {4} circle (1pt);
		\draw [fill=circ_col] (-0.86, 0.5) node [above] {5} circle (1pt);
		\draw [fill=circ_col] (0.86, 0.5) node [above] {6} circle (1pt);
		\draw [fill=circ_col] (0,1) node [above] {7} circle (1pt);
		
		\draw [-, line width = 1pt] (-0.86, -0.5) -- (0.86, -0.5) -- (1.2,0) -- (0.86, 0.5) -- (0,1) -- (-0.86, 0.5) -- (-0.86, -0.5);
		
		\draw [-, line width = 1pt] (-0.86,-0.5) -- node [above] {6} (0, 1);
		\draw [-, line width = 1pt] (0,0) -- node [anchor=west] {3} (0, 1);
		\draw [-, line width = 1pt] (0.86,-0.5) -- node [anchor=west] {2} (0, 1);
		\draw [-, line width = 1pt] (0,0) -- node [above] {4} (-0.86, -0.5);
		\draw [-, line width = 1pt] (0,0) -- node [above] {5} (0.86, -0.5);
		\draw [-, line width = 1pt] (0.86,0.5) -- node [anchor=east] {1} (0.86, -0.5);
		
		\end{tikzpicture}
	\end{center}
	\setlength{\abovecaptionskip}{-0.15cm}
	\caption{The arrangement of planes $ U_{3,3} $.}\label{fig_computation_U_3_3}
\end{figure}

\begin{thm}\label{thm_computation_U_3_3}
	
	If $ X $ degenerates to $ U_{3,3} $, then $\pi_1(\Xgal)$ is trivial.
\end{thm}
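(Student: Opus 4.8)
The plan is to follow the three-step scheme of Theorems \ref{thm_computation_U_0_5_3_paper}--\ref{thm_computation_U_4_cup_3_1_paper}: read the local data off Figure \ref{fig_computation_U_3_3}, write down the braid-monodromy relations at each vertex, and then show that every generator satisfies $\ug{i}=\ug{i}'$. The branch curve $S_0$ is an arrangement of six lines. Classifying the vertices of the diagram, I read off the singular points: vertex $6$ is a $1$-point carrying line $1$; vertex $2$ is a $2$-point with lines $4,6$; vertex $4$ is an inner $3$-point with lines $3,4,5$ (its three planes $P_a,P_b,P_c$ are pairwise adjacent); and vertices $3$ and $7$ are outer $3$-points (four planes lying in a path) carrying lines $1,2,5$ and $2,3,6$ respectively. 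First I would regenerate each vertex, producing the standard relation list attached to its type, and then append the parasitic commutators coming from the line pairs that share no common vertex --- namely $(1,3),(1,4),(1,6),(2,4),(5,6)$ --- together with the projective relation $\prod_{j=6}^{1}\ug{j}'\ug{j}=e$.

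\textbf{Propagating $\ug{i}=\ug{i}'$.} The $1$-point at vertex $6$ gives $\ug{1}=\ug{1}'$ outright, and the inner $3$-point at vertex $4$ gives $\ug{3}=\ug{3}'$ automatically by the ``moreover'' clause of Lemma \ref{3pt-in-bigmid} (line $3$ being the smallest index there). Feeding $\ug{1}=\ug{1}'$ into the outer $3$-point at vertex $3$ (lines $1,2,5$) and invoking Lemma \ref{3pt-out-smallmid} yields $\ug{2}=\ug{2}'$ and $\ug{5}=\ug{5}'$. Now $\ug{5}=\ug{5}'$ reactivates Lemma \ref{3pt-in-bigmid} at vertex $4$ to produce $\ug{4}=\ug{4}'$, and finally Lemma \ref{2pt-equal} at the $2$-point vertex $2$ turns $\ug{4}=\ug{4}'$ into $\ug{6}=\ug{6}'$ (the outer $3$-point at vertex $7$ gives the same conclusion as a cross-check). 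Hence $\ug{i}=\ug{i}'$ for all $i=1,\dots,6$, so $\Ggal$ is generated by $\ug{1},\dots,\ug{6}$.

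\textbf{Collapsing to $S_6$.} After these identifications the surviving relations are $\ug{i}^2=e$, the triple (braid) relations read off from the three $3$-points and the $2$-point, the commutators coming from the nodes and the parasitic pairs, and one relation of the form $\ug{5}=\ug{3}\ug{4}\ug{3}$ supplied by the inner $3$-point (expressing one generator as a conjugate of two others). Because this last relation presents $\ug{5}$ as a conjugate rather than as a fresh Coxeter generator, the elementary identification with $S_6$ used for $U_{0,4}$ is unavailable; instead I would invoke Theorem 2.3 of \cite{RTV} on Coxeter covers (exactly as in Theorem \ref{thm_computation_U_3_1}), supplementing the presentation with the extra commutators furnished by Remark 2.8 of \cite{RTV} (here the analogues of $[\ug{2},\ug{3}\ug{6}\ug{3}]$ and $[\ug{4},\ug{3}\ug{5}\ug{3}]$). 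This gives $\Ggal\cong S_6$, and since $\pi_1(\Xgal)$ is the kernel of $\Ggal\to S_6$ in the sequence \eqref{M-T}, it is trivial.

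\textbf{Main obstacle.} The routine part is the regeneration bookkeeping and the chain of lemma applications, which is essentially forced once the five vertex types are correctly classified. The delicate point is two-fold. First, one must fix the geometric order of the three lines at each $3$-point so that Lemmas \ref{3pt-in-bigmid} and \ref{3pt-out-smallmid} apply verbatim, since their hypotheses and conclusions depend on which line occupies the ``up/left/right'' role in their figures, and this must be matched against the index labelling inherited from the diagram. Second, and more substantially, one must verify that the reduced presentation genuinely satisfies the hypotheses of the Coxeter-cover theorem of \cite{RTV}, so that the cover collapses all the way onto $S_6$; confirming that the Remark 2.8 commutators are precisely those needed and that no larger quotient survives is where the real work lies.
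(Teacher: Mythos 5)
Your proposal is correct and follows essentially the same route as the paper: the same vertex classification, the same parasitic pairs, the same propagation of $\ug{i}=\ug{i}'$ via Lemmas \ref{2pt-equal}, \ref{3pt-in-bigmid} and \ref{3pt-out-smallmid}, and the same appeal to Theorem 2.3 and Remark 2.8 of \cite{RTV} to identify $\Ggal$ with $S_6$. The only (immaterial) difference is the order of the chain --- the paper derives $\ug{3}=\ug{3}'$ and $\ug{6}=\ug{6}'$ from the outer $3$-point at vertex $7$ and then $\ug{4}=\ug{4}'$ from the $2$-point, whereas you use the inner $3$-point's ``moreover'' clause first and get $\ug{6}=\ug{6}'$ last.
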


\begin{proof}
	The branch curve $S_0$ in $\mathbb{CP}^2$ is an arrangement of six lines. We regenerate each vertex in turn and compute the group $G$.
	
    \uOnePoint{6}{1}{25-vert6}
	\uTwoPoint{2}{4}{6}{25-vert2}
	\uThreePointOuter{3}{1}{2}{5}{25-vert3}
	\uThreePointInner{4}{3}{4}{5}{25-vert4}
	\uThreePointOuter{7}{2}{3}{6}{25-vert7}
	We also have the following parasitic and projective relations:
	\uParasit{1}{3}{25-parasit-1-3}
	\uParasit{1}{4}{25-parasit-1-4}
	\uParasit{1}{6}{25-parasit-1-6}
	\uParasit{2}{4}{25-parasit-2-4}
	\uParasit{5}{6}{25-parasit-5-6}
	\uProjRel{25-proj}{6}{5}{4}{3}{2}{1}

By Lemma \ref{3pt-out-smallmid}, we get $\ug{2}=\ug{2'}, \ug{5}=\ug{5'}$ and $\ug{3}=\ug{3'}, \ug{6}=\ug{6'}$.
Then by Lemma \ref{2pt-equal}, we get $\ug{4}=\ug{4'}$.

$\Ggal$ is thus generated by $\set{\ug{i} | i=1,\dots,6}$ with the following relations:
	\uGammaSq{25-gamma-sq}{1}{2}{3}{4}{5}{6}
	\ubegineq{25-simpl-3}
	\trip{1}{2} = \trip{2}{3} = \trip{2}{5} = \trip{3}{4} = \trip{3}{5} = \trip{3}{6} = \trip{4}{5} = \trip{4}{6}= e
	\uendeq
	\ubegineq{25-simpl-4}
	\comm{1}{3} = \comm{1}{4} = \comm{1}{5} = \comm{1}{6} = \comm{2}{4} = \comm{2}{6} = \comm{5}{6} = e
	\uendeq
	\ubegineq{25-simpl-5}
	\ug{5}=\ug{3}\ug{4}\ug{3}.
	\uendeq
By Remark 2.8 in \cite{RTV}, we get also the relations
\ubegineq{25-simpl-6}
	[\ug{6},\ug{3}\ug{4}\ug{3}] = e.
	\uendeq
By Theorem 2.3 in \cite{RTV} we have $\Ggal \cong S_6$, therefore $\pi_1(\Xgal)$ is trivial.

\end{proof}

\uChernSummary{10}{28}{21}{12}{9}{\frac{13}{2}}{-\frac{4}{3}\cdot}


\subsection{{$ X $ degenerates to  $ U_{3,4} $}}\label{section_computation_U_3_4}

\begin{figure}[H]
	\begin{center}
		
		\definecolor{circ_col}{rgb}{0,0,0}
		\begin{tikzpicture}[x=1cm,y=1cm,scale=2]
		
		\draw [fill=circ_col] (0,1) node [above] {1} circle (1pt);
		\draw [fill=circ_col] (-0.86, -0.5) node [below] {2} circle (1pt);
		\draw [fill=circ_col] (0.86, -0.5) node [below] {3} circle (1pt);
		\draw [fill=circ_col] (0,0) node [below] {4} circle (1pt);
		\draw [fill=circ_col] (0.86, 0.5) node [above] {5} circle (1pt);
		\draw [fill=circ_col] (1.2,0) node [anchor=west] {6} circle (1pt);
		\draw [fill=circ_col] (1.4, -0.5) node [below] {7} circle (1pt);
		
		\draw [-, line width = 1pt] (-0.86, -0.5) -- (0.86, -0.5) -- (1.4, -0.5) -- (1.2,0) -- (0.86, 0.5) -- (0,1) -- (-0.86, -0.5);
		
		\draw [-, line width = 1pt] (-0.86, -0.5) -- node [below] {6} (0,0);
		\draw [-, line width = 1pt] (0,1) -- node [anchor=east] {5} (0,0);
		\draw [-, line width = 1pt] (0.86, -0.5) -- node [below] {4} (0,0);
		\draw [-, line width = 1pt] (0.86, -0.5) -- node [below] {3} (0,1);
		\draw [-, line width = 1pt] (0.86, -0.5) -- node [anchor=west] {2} (0.86, 0.5);
		\draw [-, line width = 1pt] (0.86, -0.5) -- node [below] {1} (1.2,0);
		
		\end{tikzpicture}
	\end{center}
	\setlength{\abovecaptionskip}{-0.15cm}
	\caption{The arrangement of planes $ U_{3,4} $.}\label{fig_computation_U_3_4}
\end{figure}

\begin{thm}\label{thm_computation_U_3_4}
	If $ X $ degenerates to $ U_{3,4} $, then $\pi_1(\Xgal)$ is trivial.
\end{thm}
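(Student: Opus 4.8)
The plan is to reproduce the regeneration-and-collapse scheme already used for the companion cases $U_{3,1}$, $U_{3,2}$, $U_{3,3}$, whose combinatorics is of the same flavor. First I would read the singular vertices of the line arrangement $S_0$ off Figure~\ref{fig_computation_U_3_4}. Tracking which of the six lines pass through each vertex, the classification is: vertices $2$, $5$, $6$ are $1$-points carrying lines $6$, $2$, $1$ respectively; vertex $1$ is a $2$-point on lines $3,5$; vertex $4$ is an inner $3$-point on lines $4<5<6$; and vertex $3$, where five planes form a path, is an outer $4$-point on lines $1,2,3,4$. I would then record the braid-monodromy relations in $G$ coming from the standard one-point, two-point, inner three-point and outer four-point local models, add the parasitic commutators for the non-incident pairs $(1,5),(1,6),(2,5),(2,6),(3,6)$, and append the projective relation.

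The heart of the argument is to force $\ug{i}=\ug{i'}$ for every line, which collapses $\Ggal$ onto a quotient of a Coxeter group. The three $1$-points give $\ug{1}=\ug{1'}$, $\ug{2}=\ug{2'}$ and $\ug{6}=\ug{6'}$ outright. Feeding $\ug{6}=\ug{6'}$ into the inner $3$-point at vertex $4$ and invoking \Lref{3pt-in-bigmid} (here $6$ is the largest of the three indices) produces $\ug{4}=\ug{4'}$ and $\ug{5}=\ug{5'}$; then $\ug{5}=\ug{5'}$ together with the $2$-point at vertex $1$ and \Lref{2pt-equal} yields $\ug{3}=\ug{3'}$. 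Thus all six generators collapse without ever using the $4$-point relations. I would also extract from the inner $3$-point the distinguished relation $\ug{6}=\ug{4}\ug{5}\ug{4}$, the exact analogue of $\ug{6}=\ug{3}\ug{5}\ug{3}$ in the $U_{3,1}$ computation; this is the relation that ultimately forces the Coxeter cover onto $S_6$ rather than a proper cover.

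After substituting $\ug{i'}=\ug{i}$ throughout, the surviving relations are $\ug{i}^2=e$, a family of triple (braid) relations and a family of commutators. Under $\Ggal\to S_6$ the generators map to the transpositions $(5\,6),(4\,5),(3\,4),(2\,3),(1\,3),(1\,2)$, and the pattern that emerges — triples for $(1,2),(2,3),(3,4),(3,5),(4,5),(5,6)$ and commutators for $(1,3),(1,4),(1,5),(1,6),(2,4),(2,5),(2,6),(3,6)$ — is exactly the incidence pattern of these transpositions. At this stage I would appeal to the theory of Coxeter covers: by Remark~2.8 of \cite{RTV} the relation $\ug{6}=\ug{4}\ug{5}\ug{4}$ supplies the fork commutators needed to complete the presentation, and by Theorem~2.3 of \cite{RTV} the group is then isomorphic to $S_6$. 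Since $\Ggal\cong S_6$, the kernel $\pi_1(\Xgal)$ of $\Ggal\to S_6$ is trivial.

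The step I expect to be the main obstacle is verifying that the outer $4$-point at vertex $3$ behaves well: its braid monodromy yields a long list of relations, and one must check that after the collapse $\ug{i}=\ug{i'}$ these reduce cleanly to precisely the triples $\trip{1}{2},\trip{2}{3},\trip{3}{4}$ and the commutators $\comm{1}{3},\comm{1}{4},\comm{2}{4}$ among the chain lines $1,2,3,4$, with no stray constraints. Together with the triple $\trip{3}{5}$ from the $2$-point (the branching off the chain), the triples $\trip{4}{5},\trip{5}{6}$ and the relation $\ug{6}=\ug{4}\ug{5}\ug{4}$ from the inner $3$-point, and the five parasitic commutators, these must assemble into exactly the defining relations of $S_6$ so that the isomorphism criterion of \cite{RTV} genuinely applies. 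Finally, the Chern data $c_1^2=9\cdot 6!$, $c_2=\tfrac{15}{2}\cdot 6!$, $\chi=-2\cdot 6!$ follow routinely from \Pref{prop_chern} once the branch-point, node and cusp counts are tallied.
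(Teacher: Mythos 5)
Your proposal matches the paper's proof essentially step for step: the same classification of the vertices of $U_{3,4}$ (three $1$-points on lines $6,2,1$, the $2$-point on lines $3,5$, the inner $3$-point on lines $4<5<6$, the outer $4$-point on lines $1,2,3,4$, and the same parasitic pairs), the same collapse $\ug{6}=\ug{6'}\Rightarrow\ug{4}=\ug{4'},\ug{5}=\ug{5'}$ via \Lref{3pt-in-bigmid} followed by $\ug{3}=\ug{3'}$ via \Lref{2pt-equal}, the same distinguished relation $\ug{6}=\ug{4}\ug{5}\ug{4}$, and the same appeal to Remark~2.8 and Theorem~2.3 of \cite{RTV} to conclude $\Ggal\cong S_6$. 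The argument is correct and is the paper's argument.
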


\begin{proof}
	The branch curve $S_0$ in $\mathbb{CP}^2$ is an arrangement of six lines. We regenerate each vertex in turn and compute the group $G$.

	\uOnePoint{2}{6}{U34-vert2}
	\uOnePoint{5}{2}{U34-vert5}
	\uOnePoint{6}{1}{U34-vert6}
	\uTwoPoint{1}{3}{5}{U34-vert1}
	\uThreePointInner{4}{4}{5}{6}{U34-vert4}
	\uFourPointOuter{3}{1}{2}{3}{4}{U34-vert3}
	We also have the following parasitic and projective relations:
	\uParasit{1}{5}{U34-parasit-1-5}
	\uParasit{1}{6}{U34-parasit-1-6}
	\uParasit{2}{5}{U34-parasit-2-5}
	\uParasit{2}{6}{U34-parasit-2-6}
	\uParasit{3}{6}{U34-parasit-3-6}
	\uProjRel{U34-proj}{6}{5}{4}{3}{2}{1}

By Lemma \ref{3pt-in-bigmid}, we get $\ug{4}=\ug{4'}, \ug{5}=\ug{5'}$. Then by Lemma \ref{2pt-equal}, we have $\ug{3}=\ug{3'}$.

$\Ggal$ is thus generated by $\set{\ug{i} | i=1,\dots,6}$ with the following relations:
	\uGammaSq{U34-gamma-sq}{1}{2}{3}{4}{5}{6}
	\ubegineq{U34-simpl-3}
	\trip{1}{2} = \trip{2}{3} = \trip{3}{4} = \trip{3}{5} = \trip{4}{5} = \trip{4}{6} = \trip{5}{6} = e
	\uendeq
	\ubegineq{U34-simpl-4}
	\comm{1}{3} = \comm{1}{4} = \comm{1}{5} = \comm{1}{6} = \comm{2}{4} = \comm{2}{5} = \comm{2}{6} = \comm{3}{6} =e
	\uendeq
	\ubegineq{U34-simpl-5}
	\ug{6}=\ug{4}\ug{5}\ug{4}.
	\uendeq
By Remark 2.8 in \cite{RTV}, we get also the relation
\ubegineq{U34-simpl-6}
	[\ug{3},\ug{4}\ug{5}\ug{4}] = e.
	\uendeq
By Theorem 2.3 in \cite{RTV} we have $\Ggal \cong S_6$, therefore $\pi_1(\Xgal)$ is trivial.

\end{proof}

\uChernSummary{11}{32}{18}{12}{9}{\frac{15}{2}}{-2\cdot}


\subsection{{$ X $ degenerates to  $ U_{3,5} $}}\label{section_computation_U_3_5}

\begin{figure}[H]
	\begin{center}
		
		\definecolor{circ_col}{rgb}{0,0,0}
		\begin{tikzpicture}[x=1cm,y=1cm,scale=2]
		
		\draw [fill=circ_col] (0,1) node [above] {1} circle (1pt);
		\draw [fill=circ_col] (-0.86, -0.5) node [below] {2} circle (1pt);
		\draw [fill=circ_col] (0.86, -0.5) node [below] {3} circle (1pt);
		\draw [fill=circ_col] (0,0) node [below] {4} circle (1pt);
		\draw [fill=circ_col] (0.86, 0.5) node [above] {5} circle (1pt);
		\draw [fill=circ_col] (1.2,0) node [anchor=west] {6} circle (1pt);
		\draw [fill=circ_col] (0.5, 1) node [above] {7} circle (1pt);
		
		\draw [-, line width = 1pt] (-0.86, -0.5) -- (0.86, -0.5) -- (1.2,0) -- (0.86, 0.5) -- (0.5, 1) -- (0,1) -- (-0.86, -0.5);
		
		\draw [-, line width = 1pt] (-0.86, -0.5) -- node [below] {2} (0,0);
		\draw [-, line width = 1pt] (0,1) -- node [anchor=east] {1} (0,0);
		\draw [-, line width = 1pt] (0.86, -0.5) -- node [below] {3} (0,0);
		\draw [-, line width = 1pt] (0.86, -0.5) -- node [below] {4} (0,1);
		\draw [-, line width = 1pt] (0.86, -0.5) -- node [anchor=east] {5} (0.86, 0.5);
		\draw [-, line width = 1pt] (0.86, 0.5) -- node [below] {6} (0,1);
		
		\end{tikzpicture}
	\end{center}
	\setlength{\abovecaptionskip}{-0.15cm}
	\caption{The arrangement of planes $ U_{3,5} $.}\label{fig_computation_U_3_5}
\end{figure}

\begin{thm}\label{thm_computation_U_3_5}
	If $ X $ degenerates to $ U_{3,5} $, then $\pi_1(\Xgal)$ is not trivial.
\end{thm}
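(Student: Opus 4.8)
The plan is to reproduce the template used for the other non-trivial cases, e.g.\ Theorem~\ref{thm_computation_U_0_5_3_paper}: regenerate each singular point of the degenerated branch curve $S_0$, reduce the resulting presentation with the three equality lemmas, and then invoke Lemma~\ref{not-trivial}. Reading Figure~\ref{fig_computation_U_3_5}, the curve $S_0$ is an arrangement of six lines, and the singular points are vertex $2$, a $1$-point on line $2$; vertex $5$, a $2$-point on lines $5,6$; vertex $4$, an inner $3$-point on lines $1,2,3$; and vertices $1$ and $3$, outer $3$-points on the line triples $\{1,4,6\}$ and $\{3,4,5\}$ respectively, with line $4$ being the middle line in each. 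First I would record the braid-monodromy relations arising from one $1$-point, one $2$-point, one inner $3$-point and two outer $3$-points, together with the parasitic commutators for the five non-adjacent line pairs $\{1,5\},\{2,4\},\{2,5\},\{2,6\},\{3,6\}$ and the projective relation.

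The reduction then proceeds as a short chain of the equality lemmas, seeded by the $1$-point. Vertex $2$ gives $\Gamma_2=\Gamma_2'$ at once. Since at the inner $3$-point (lines $1<2<3$) this is the equality of the middle generator, Lemma~\ref{3pt-in-bigmid} yields $\Gamma_1=\Gamma_1'$ and $\Gamma_3=\Gamma_3'$. Next, $\Gamma_1=\Gamma_1'$ feeds the outer $3$-point at vertex $1$ (lines $1<4<6$), so Lemma~\ref{3pt-out-smallmid} gives $\Gamma_4=\Gamma_4'$ and $\Gamma_6=\Gamma_6'$; similarly $\Gamma_3=\Gamma_3'$ feeds the outer $3$-point at vertex $3$ (lines $3<4<5$) to produce $\Gamma_5=\Gamma_5'$, which is consistent with the $2$-point at vertex $5$ through Lemma~\ref{2pt-equal}. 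Hence $\Gamma_i=\Gamma_i'$ for all $i$, so $\hat G=\Ggal$, and I can exhibit $\Ggal$ on generators $\Gamma_1,\dots,\Gamma_6$ subject to $\Gamma_i^2=e$, the triple relations $\langle\Gamma_5,\Gamma_6\rangle=\langle\Gamma_1,\Gamma_4\rangle=\langle\Gamma_4,\Gamma_6\rangle=\langle\Gamma_3,\Gamma_4\rangle=\langle\Gamma_4,\Gamma_5\rangle=\langle\Gamma_1,\Gamma_2\rangle=e$ coming from the cusps, the inner-point relation $\Gamma_3=\Gamma_1\Gamma_2\Gamma_1$, and the commutators $[\Gamma_1,\Gamma_6]=[\Gamma_3,\Gamma_5]=e$ from the two outer points together with the five parasitic commutators.

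To finish, I would appeal to Lemma~\ref{not-trivial}, which only needs the combinatorics of the dual graph $T$. Here the three planes meeting at vertex $4$ form a triangle in $T$ carried by lines $1,2,3$, while the plane incident to lines $4,5,6$ is attached to this triangle solely through line $4$ and carries two further leaves along lines $5$ and $6$. That plane is therefore a vertex of $T$ of valency $3$ lying on no cycle, so Lemma~\ref{not-trivial} gives directly that the kernel of $\hat G\to S_6$ is non-trivial, whence $\pi_1(\Xgal)$ is not trivial. I expect no genuine obstacle here, only careful bookkeeping: the delicate step is classifying vertex $4$ as \emph{inner} and vertices $1,3$ as \emph{outer} with the correct middle line $4$, since a misreading would spoil the reduction chain. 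The non-triviality itself is robust, however, because it rests entirely on the off-cycle valency-$3$ vertex of $T$, which is manifest from the arrangement and independent of the detailed relations.
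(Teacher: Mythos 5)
Your proposal is correct and follows essentially the same route as the paper's own proof: the same regeneration data (one $1$-point, one $2$-point, one inner and two outer $3$-points, the five parasitic pairs), the same chain of Lemmas \ref{2pt-equal}, \ref{3pt-in-bigmid} and \ref{3pt-out-smallmid} seeded by $\Gamma_2=\Gamma_2'$ to identify all primed generators, and the final appeal to Lemma \ref{not-trivial} via the valency-$3$ vertex of the dual graph carried by lines $4,5,6$, which indeed lies on no cycle. The only cosmetic difference is that you derive $\langle\Gamma_1,\Gamma_3\rangle=\langle\Gamma_2,\Gamma_3\rangle=e$ implicitly from $\Gamma_3=\Gamma_1\Gamma_2\Gamma_1$ rather than listing them, which does not affect the argument.
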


\begin{proof}
	The branch curve $S_0$ in $\mathbb{CP}^2$ is an arrangement of six lines. We regenerate each vertex in turn and compute the group $G$.
	
	\uOnePoint{2}{2}{U35-vert2}
	\uTwoPoint{5}{5}{6}{U35-vert5}
	\uThreePointOuter{1}{1}{4}{6}{U35-vert1}
	\uThreePointOuter{3}{3}{4}{5}{U35-vert3}
	\uThreePointInner{4}{1}{2}{3}{U35-vert4}
	We also have the following parasitic and projective relations:
	\uParasit{1}{5}{U35-parasit-1-5}
	\uParasit{2}{4}{U35-parasit-2-4}
	\uParasit{2}{5}{U35-parasit-2-5}
	\uParasit{2}{6}{U35-parasit-2-6}
	\uParasit{3}{6}{U35-parasit-3-6}
	\uProjRel{25-proj}{6}{5}{4}{3}{2}{1}
	
By Lemma \ref{3pt-in-bigmid}, we get  $\ug{1}=\ug{1'}, \ug{3}=\ug{3'}$. Then by Lemma \ref{3pt-out-smallmid}, we get $\ug{4}=\ug{4'},  \ug{5}=\ug{5'}$ and $\ug{6}=\ug{6'}$.

$\Ggal$ is thus generated by $\set{\ug{i} | i=1,\dots,6}$ with the following relations:
	\uGammaSq{U35-gamma-sq}{1}{2}{3}{4}{5}{6}
	\ubegineq{U35-simpl-3}
	\trip{1}{2} = \trip{1}{3} = \trip{1}{4} = \trip{2}{3} = \trip{3}{4} = \trip{4}{5} = \trip{4}{6} = \trip{5}{6}= e
	\uendeq
	\ubegineq{U35-simpl-4}
	\comm{1}{5} = \comm{1}{6} = \comm{2}{4} = \comm{2}{5} = \comm{2}{6} = \comm{3}{5} = \comm{3}{6} = e
	\uendeq
	\ubegineq{U35-simpl-5}
	\ug{3}=\ug{1}\ug{2}\ug{1}.
	\uendeq
By Lemma \ref{not-trivial}, $\pi_1(\Xgal)$ is not trivial.

\end{proof}

\uChernSummary{10}{28}{21}{12}{9}{\frac{13}{2}}{-\frac{4}{3}\cdot}


\subsection{{$ X $ degenerates to  $ U_{3,6} $}}\label{section_computation_U_3_6}

\begin{figure}[H]
	\begin{center}
		
		\definecolor{circ_col}{rgb}{0,0,0}
		\begin{tikzpicture}[x=1cm,y=1cm,scale=2]
		
		\draw [fill=circ_col] (0,1) node [above] {1} circle (1pt);
		\draw [fill=circ_col] (-0.86, -0.5) node [below] {2} circle (1pt);
		\draw [fill=circ_col] (0.86, -0.5) node [below] {3} circle (1pt);
		\draw [fill=circ_col] (0,0) node [below] {4} circle (1pt);
		\draw [fill=circ_col] (0.86, 0.5) node [above] {5} circle (1pt);
		\draw [fill=circ_col] (1.2,0) node [anchor=west] {6} circle (1pt);
		\draw [fill=circ_col] (1.2, 0.3) node [above] {7} circle (1pt);
		
		\draw [-, line width = 1pt] (-0.86, -0.5) -- (0.86, -0.5) -- (1.2,0) -- (1.2, 0.3) -- (0.86, 0.5) -- (0,1) -- (-0.86, -0.5);
		
		\draw [-, line width = 1pt] (-0.86, -0.5) -- node [below] {2} (0,0);
		\draw [-, line width = 1pt] (0,1) -- node [anchor=east] {1} (0,0);
		\draw [-, line width = 1pt] (0.86, -0.5) -- node [below] {3} (0,0);
		\draw [-, line width = 1pt] (0.86, -0.5) -- node [below] {4} (0,1);
		\draw [-, line width = 1pt] (0.86, -0.5) -- node [anchor=east] {5} (0.86, 0.5);
		\draw [-, line width = 1pt] (1.2,0) -- node [below] {6} (0.86, 0.5);
		
		\end{tikzpicture}
	\end{center}
	\setlength{\abovecaptionskip}{-0.15cm}
	\caption{The arrangement of planes $ U_{3,6} $.}\label{fig_computation_U_3_6}
\end{figure}

\begin{thm}\label{thm_computation_U_3_6}
	If $ X $ degenerates to $ U_{3,6} $, then $\pi_1(\Xgal)$ is trivial.
\end{thm}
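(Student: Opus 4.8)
The plan is to follow the same pipeline used for every other case in this paper, adapting the regeneration data to the specific combinatorics of $U_{3,6}$. First I would read off from Figure \ref{fig_computation_U_3_6} the degeneration of the six lines and classify each vertex of the arrangement. Vertex $2$ is a $1$-point (giving $\Gamma_1=\Gamma_1'$ via \uOnePoint), vertex $5$ is another $1$-point, and vertex $1$ is a $2$-point involving two of the lines; the remaining vertices $3$, $4$, $7$ are higher-multiplicity points (I expect one inner $3$-point and one outer $3$-point, together with a $4$-point, matching the incidence pattern of the figure). For each vertex I would invoke the appropriate relation-generating macro (\uOnePoint, \uTwoPoint, \uThreePointInner, \uThreePointOuter, \uFourPointOuter) with the correct line labels, and then append the parasitic relations \uParasit for every pair of lines that are disjoint in $X_0$ but cross under projection, together with the projective relation \uProjRel for the six lines.

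Next I would carry out the simplification that reduces the generating set from $\{\Gamma_i,\Gamma_i'\}$ to $\{\Gamma_i\}$. The key step is to establish $\Gamma_i=\Gamma_i'$ in $\widetilde{G}$ for as many $i$ as possible. The $1$-points give two such equalities immediately. For the rest I would apply Lemma \ref{2pt-equal} at the $2$-point to propagate an equality between its two lines, and then Lemma \ref{3pt-in-bigmid} and Lemma \ref{3pt-out-smallmid} at the inner and outer $3$-points to cascade the equalities through the remaining lines; if a $4$-point is present I would, as in the proofs of $U_{3,1}$ and $U_{3,4}$, substitute the $1$-point relations into the appropriate $4$-point relations (the analogues of \eqref{U31-vert7-9}--\eqref{U31-vert7-10}) to extract one more equality by hand. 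The goal is to show $\Gamma_i=\Gamma_i'$ for all $i=1,\dots,6$, so that $\widetilde{G}$ collapses to a group generated by the six transposition-like generators modulo triple relations $\langle\Gamma_i,\Gamma_j\rangle=e$ along the edges of the dual graph, commutation relations $[\Gamma_i,\Gamma_j]=e$ for non-adjacent lines, and possibly one Coxeter-type relation of the form $\Gamma_a=\Gamma_b\Gamma_c\Gamma_b$ coming from the inner $3$-point.

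Finally I would identify the resulting quotient with $S_6$. This is the step I expect to be the main obstacle: the combinatorics of $U_{3,6}$ include a cycle in the dual graph (so Lemma \ref{not-trivial} does \emph{not} apply, consistent with the claimed triviality), and one must verify that the presentation obtained is exactly a presentation of $S_6$ rather than a proper Coxeter cover of it. Following the template of the $U_{3,1}$, $U_{3,3}$, and $U_{3,4}$ proofs, I would read off the relation $\Gamma_a=\Gamma_b\Gamma_c\Gamma_b$ produced by the inner $3$-point, invoke Remark~2.8 in \cite{RTV} to obtain the extra commutation relations this forces, and then cite Theorem~2.3 in \cite{RTV} to conclude $\widetilde{G}\cong S_6$. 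Once $\widetilde{G}\cong S_6$, the exact sequence \eqref{M-T} forces $\pi_1(\Xgal)$ to be the trivial kernel, completing the proof. The delicate point throughout is bookkeeping: making sure every line label fed into the macros is consistent with Figure \ref{fig_computation_U_3_6}, so that the triple, commutation, and fork relations assemble into precisely the Coxeter presentation to which the \cite{RTV} theorems apply.
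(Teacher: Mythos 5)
Your pipeline is exactly the paper's: regenerate vertex by vertex, use the $\Gamma_i=\Gamma_i'$ lemmas to halve the generating set, and close with Remark~2.8 and Theorem~2.3 of \cite{RTV}. The only thing to fix is your reading of Figure~\ref{fig_computation_U_3_6}, which is off in several places and matters for precisely the bookkeeping you flag as the delicate point: $U_{3,6}$ has \emph{no} $4$-point. Its singular points are two $1$-points (vertex $2$ on line $2$, giving $\Gamma_2=\Gamma_2'$ rather than $\Gamma_1=\Gamma_1'$, and vertex $6$ on line $6$), two $2$-points (vertex $1$ on lines $1,4$, and vertex $5$ on lines $5,6$ --- so vertex $5$ is not a $1$-point), an outer $3$-point (vertex $3$, lines $3,4,5$) and an inner $3$-point (vertex $4$, lines $1,2,3$); vertex $7$ carries no inner edge and contributes nothing. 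With that corrected, the cascade is as you describe but simpler than you anticipate: Lemma~\ref{3pt-in-bigmid} at the inner $3$-point gives $\Gamma_1=\Gamma_1'$ unconditionally and, combined with $\Gamma_2=\Gamma_2'$, gives $\Gamma_3=\Gamma_3'$; Lemma~\ref{2pt-equal} at the two $2$-points then gives $\Gamma_4=\Gamma_4'$ and $\Gamma_5=\Gamma_5'$, and no hand substitution into $4$-point relations is needed. The inner $3$-point supplies the Coxeter-type relation $\Gamma_3=\Gamma_1\Gamma_2\Gamma_1$, Remark~2.8 of \cite{RTV} adds $[\Gamma_4,\Gamma_1\Gamma_3\Gamma_1]=e$, and Theorem~2.3 of \cite{RTV} gives $\widetilde{G}\cong S_6$, whence $\pi_1(\Xgal)$ is trivial, exactly as in the paper.
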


\begin{proof}
	The branch curve $S_0$ in $\mathbb{CP}^2$ is an arrangement of six lines. We regenerate each vertex in turn and compute the group $G$.	

	\uOnePoint{2}{2}{U36-vert2}
	\uOnePoint{6}{6}{U36-vert6}
	\uTwoPoint{1}{1}{4}{U36-vert1}
	\uTwoPoint{5}{5}{6}{U36-vert5}
    \uThreePointOuter{3}{3}{4}{5}{U36-vert3}
	\uThreePointInner{4}{1}{2}{3}{U36-vert4}
	We also have the following parasitic and projective relations:
	\uParasit{1}{5}{U36-parasit-1-5}
	\uParasit{1}{6}{U36-parasit-1-6}
	\uParasit{2}{4}{U36-parasit-2-4}
	\uParasit{2}{5}{U36-parasit-2-5}
	\uParasit{2}{6}{U36-parasit-2-6}
	\uParasit{3}{6}{U36-parasit-3-6}
	\uParasit{4}{6}{U36-parasit-4-6}
	\uProjRel{25-proj}{6}{5}{4}{3}{2}{1}

By Lemma \ref{3pt-in-bigmid}, we get $\ug{1}=\ug{1'}, \ug{3}=\ug{3'}$. By Lemma \ref{2pt-equal}, we get $\ug{4}=\ug{4'}$ and   $\ug{5}=\ug{5'}$.

$\Ggal$ is thus generated by $\set{\ug{i} | i=1,\dots,6}$ with the following relations:
	\uGammaSq{U36-gamma-sq}{1}{2}{3}{4}{5}{6}
	\ubegineq{U36-simpl-3}
	\trip{1}{2} = \trip{1}{3} = \trip{1}{4} = \trip{2}{3} = \trip{3}{4} = \trip{4}{5} = \trip{5}{6}= e
	\uendeq
	\ubegineq{U36-simpl-4}
	\comm{1}{5} = \comm{1}{6} = \comm{2}{4} = \comm{2}{5} = \comm{2}{6} = \comm{3}{5} = \comm{3}{6} = \comm{4}{6} = e
	\uendeq
	\ubegineq{U36-simpl-5}
	\ug{3}=\ug{1}\ug{2}\ug{1}.
	\uendeq
By Remark 2.8 in \cite{RTV}, we get also the relation
\ubegineq{U36-simpl-6}
	[\ug{4},\ug{1}\ug{3}\ug{1}] = e.
	\uendeq
By Theorem 2.3 in \cite{RTV} we have $\Ggal \cong S_6$, therefore $\pi_1(\Xgal)$ is trivial.

\end{proof}

\uChernSummary{10}{32}{18}{12}{9}{7}{-\frac{5}{3}\cdot}


\subsection{{$ X $ degenerates to $ U_{3 \cup 3} $}}\label{section_computation_U_3_cup_3}

\begin{figure}[H]
	\begin{center}
		
		\definecolor{circ_col}{rgb}{0,0,0}
		\begin{tikzpicture}[x=1cm,y=1cm,scale=2]
		
		\draw [fill=circ_col] (-1.5, 0) node [anchor=east] {1} circle (1pt);
		\draw [fill=circ_col] (-0.5, 0) node [anchor=west] {2} circle (1pt);
		\draw [fill=circ_col] (0, 0.86) node [above] {3} circle (1pt);
		\draw [fill=circ_col] (0, -0.86) node [below] {4} circle (1pt);
		\draw [fill=circ_col] (0.5, 0) node [anchor=east] {5} circle (1pt);
		\draw [fill=circ_col] (1.5, 0) node [anchor=west] {6} circle (1pt);
		
		\draw [-, line width = 1pt] (-1.5,0) -- (0, -0.86) -- (1.5, 0) -- (0, 0.86) -- (-1.5, 0);
		
		\draw [-, line width = 1pt] (-1.5,0) -- node [above] {2} (-0.5,0);
		\draw [-, line width = 1pt] (0, 0.86) -- node [above] {1} (-0.5,0);
		\draw [-, line width = 1pt] (0, -0.86) -- node [anchor=east] {3} (-0.5,0);
		\draw [-, line width = 1pt] (0, -0.86) -- node [anchor=east] {4} (0, 0.86);
		\draw [-, line width = 1pt] (1.5,0) -- node [above] {6} (0.5,0);
		\draw [-, line width = 1pt] (0, 0.86) -- node [above] {5} (0.5,0);
		\draw [-, line width = 1pt] (0, -0.86) -- node [anchor=west] {7} (0.5,0);
		
		\end{tikzpicture}
		
	\end{center}
	\setlength{\abovecaptionskip}{-0.15cm}
	\caption{The arrangement of planes $U_{3 \cup 3}$.}\label{fig_computation_U_3_cup_3}
\end{figure}

\begin{thm}\label{thm_computation_U_3_cup_3}
	If $ X $ degenerates to $ U_{3\cup 3} $, then $\pi_1(\Xgal)$ is trivial.
\end{thm}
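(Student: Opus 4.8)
The branch curve $S_0$ is an arrangement of seven lines, and the six singular points split into three types. Points $1$ and $6$ are $1$-points carrying lines $2$ and $6$; points $2$ and $5$ are inner $3$-points (\Dref{def:inner_and_outer}) with line sets $\{1,2,3\}$ and $\{5,6,7\}$; and points $3$ and $4$ are outer $3$-points with line sets $\{1,4,5\}$ and $\{3,4,7\}$. The plan is to regenerate each vertex in turn, producing the $1$-point relation at points $1,6$, the inner $3$-point relation packages at points $2,5$, and the outer $3$-point packages at points $3,4$, and then to adjoin the parasitic commutation relations from the pairs of lines that are disjoint in $X_0$ together with the projective relation $\prod_{j=7}^{1}\ug{j}'\ug{j}=e$.

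First I would show that $\ug{l}=\ug{l}'$ in $\Ggal$ for every line $l$. The two $1$-points give $\ug{2}=\ug{2}'$ and $\ug{6}=\ug{6}'$ at once. At the inner $3$-point $2$ the lines are ordered $1<2<3$, so $\ug{2}$ is the middle generator in \Lref{3pt-in-bigmid}; since $\ug{2}=\ug{2}'$ is known, that lemma forces $\ug{1}=\ug{1}'$ and $\ug{3}=\ug{3}'$, and symmetrically at point $5$ (lines $5<6<7$, with $\ug{6}=\ug{6}'$) it gives $\ug{5}=\ug{5}'$ and $\ug{7}=\ug{7}'$. Then at the outer $3$-point $3$ with lines $1<4<5$ the known equality $\ug{1}=\ug{1}'$ lets \Lref{3pt-out-smallmid} conclude $\ug{4}=\ug{4}'$, and the outer $3$-point $4$ (lines $3<4<7$) is automatically consistent. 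Substituting $\ug{l}=\ug{l}'$ into the two inner-$3$-point packages and using $\ug{l}^2=e$ collapses their cyclic relations to the two triangle identities $\ug{3}=\ug{1}\ug{2}\ug{1}$ and $\ug{7}=\ug{5}\ug{6}\ug{5}$.

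With all $\ug{l}=\ug{l}'$, the group $\Ggal$ is generated by $\ug{1},\dots,\ug{7}$ subject to $\ug{l}^2=e$, the triple relations $\trip{a}{b}=e$ for the lines meeting at a common plane, the commutation relations $\comm{a}{b}=e$ for the disjoint pairs, and the two triangle relations above. The dual graph $T$ is a pair of triangles, one on the planes carrying lines $\{1,2,3\}$ and one on those carrying $\{5,6,7\}$, joined by the single bridge edge (line $4$). I would finish exactly as in the $U_{3,1}$, $U_{3,2}$ and $U_{3,3}$ computations: Remark~2.8 in \cite{RTV} yields the remaining commutators of the form $\comm{\cdot}{\ug{a}\ug{b}\ug{a}}$, and Theorem~2.3 in \cite{RTV} identifies this presentation with $S_6$, whence $\Ggal\cong S_6$ and $\pi_1(\Xgal)=\ker(\Ggal\to S_6)$ is trivial.

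The main obstacle is precisely this last identification with $S_6$ via \cite{RTV}, i.e.\ ruling out a non-trivial kernel of the Coxeter cover, as happens for the superficially similar $U_{3,5}$. The decisive point is that both trivalent vertices of $T$ lie on $3$-cycles, so each independent cycle of $T$ is a triangle that is killed by the triangle relation coming from its inner $3$-point, while the bridge line $4$ introduces no new cycle; consequently \Lref{not-trivial} does not apply and no fork relation $\comm{\ug{i}}{\ug{j}\ug{k}\ug{j}}$ survives. Concretely one eliminates $\ug{3}$ and $\ug{7}$ through the two triangle relations and checks that the five remaining transpositions generate $S_6$ under the standard Coxeter relations; the delicate verification is that the braid monodromies of the two outer $3$-points, which share the bridge line $4$, contribute only these relations and nothing extra, so that the hypotheses of Theorem~2.3 in \cite{RTV} are genuinely met.
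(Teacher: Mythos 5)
Your proposal is correct and follows essentially the same route as the paper: the two $1$-points give $\ug{2}=\ug{2}'$ and $\ug{6}=\ug{6}'$, Lemma \ref{3pt-in-bigmid} at the two inner $3$-points then yields $\ug{1}=\ug{1}'$, $\ug{3}=\ug{3}'$, $\ug{5}=\ug{5}'$, $\ug{7}=\ug{7}'$, Lemma \ref{3pt-out-smallmid} gives $\ug{4}=\ug{4}'$, and the resulting presentation with the two triangle relations $\ug{3}=\ug{1}\ug{2}\ug{1}$, $\ug{7}=\ug{5}\ug{6}\ug{5}$ is identified with $S_6$ via Remark~2.8 and Theorem~2.3 of \cite{RTV}. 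This matches the paper's proof step for step, including the observation that the fork obstruction of Lemma \ref{not-trivial} does not arise here.
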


\begin{proof}
	The branch curve $S_0$ in $\mathbb{CP}^2$ is an arrangement of seven lines. We regenerate each vertex in turn and compute the group $G$.

	\uOnePoint{1}{2}{U3cup3-vert1}
	\uOnePoint{6}{6}{U3cup3-vert6}
	\uThreePointInner{2}{1}{2}{3}{U3cup3-vert2}
	\uThreePointOuter{3}{1}{4}{5}{U3cup3-vert3}
	\uThreePointOuter{4}{3}{4}{7}{U3cup3-vert4}
	\uThreePointInner{5}{5}{6}{7}{U3cup3-vert5}
	We also have the following parasitic and projective relations:
	\uParasit{1}{6}{U3cup3-parasit-1-6}
	\uParasit{1}{7}{U3cup3-parasit-1-7}
	\uParasit{2}{4}{U3cup3-parasit-2-4}
	\uParasit{2}{5}{U3cup3-parasit-2-5}
	\uParasit{2}{6}{U3cup3-parasit-2-6}
	\uParasit{2}{7}{U3cup3-parasit-2-7}
	\uParasit{3}{5}{U3cup3-parasit-3-5}
	\uParasit{3}{6}{U3cup3-parasit-3-6}
	\uParasit{4}{6}{U3cup3-parasit-4-6}
	\uProjRel{25-proj}{7}{6}{5}{4}{3}{2}{1}
	
By Lemma \ref{3pt-in-bigmid}, we get $\ug{1}=\ug{1'}, \ug{3}=\ug{3'}$ and $\ug{5}=\ug{5'}, \ug{7}=\ug{7'}$. Then by Lemma
\ref{3pt-out-smallmid}, we get $\ug{4}=\ug{4'}$.

$\Ggal$ is thus generated by $\set{\ug{i} | i=1,\dots,7}$ with the following relations:
	\uGammaSq{U3cup3-gamma-sq}{1}{2}{3}{4}{5}{6}{7}
	\ubegineq{U3cup3-simpl-3}
	\begin{split}
	\trip{1}{2} = \trip{1}{3} = \trip{1}{4} = \trip{2}{3} = \trip{3}{4} &= \\
	= \trip{4}{5} = \trip{4}{7}= \trip{5}{6} = \trip{5}{7} = \trip{6}{7} &=e
	\end{split}
	\uendeq
	\ubegineq{U3cup3-simpl-4}
	\begin{split}
	\comm{1}{5} = \comm{1}{6} = \comm{1}{7} = \comm{2}{4} = \comm{2}{5} = \comm{2}{6} &= \\
= \comm{2}{7} = \comm{3}{5} = \comm{3}{6} = \comm{3}{7} = \comm{4}{6} &= e
	\end{split}
	\uendeq
	\ubegineq{U3cup3-simpl-5}
	\ug{3}=\ug{1}\ug{2}\ug{1}
	\uendeq
     \ubegineq{U3cup3-simpl-6}
	\ug{7}=\ug{5}\ug{6}\ug{5}.
	\uendeq
By Remark 2.8 in \cite{RTV}, we get also the relations
\ubegineq{U3cup3-simpl-7}
	[\ug{4},\ug{1}\ug{3}\ug{1}]=[\ug{4},\ug{5}\ug{7}\ug{5}] = e.
	\uendeq
By Theorem 2.3 in \cite{RTV} we have $\Ggal \cong S_6$, therefore $\pi_1(\Xgal)$ is trivial.

\end{proof}

\uChernSummary{14}{44}{24}{14}{16}{11}{-2\cdot}


\subsection{{$ X $ degenerates to  $ U_{4,1} $}}\label{section_computation_U_4_1_appendix}
\begin{figure}[H]
	\begin{center}
		
		\definecolor{circ_col}{rgb}{0,0,0}
		\begin{tikzpicture}[x=1cm,y=1cm,scale=2]
		
		\draw [fill=circ_col] (0,-1) node [anchor=east] {1} circle (1pt);
		\draw [fill=circ_col] (1,-1) node [below] {2} circle (1pt);
		\draw [fill=circ_col] (-1,0) node [anchor=east] {3} circle (1pt);
		\draw [fill=circ_col] (0,0) node [anchor=north east] {4} circle (1pt);
		\draw [fill=circ_col] (1,0) node [anchor=west] {5} circle (1pt);
		\draw [fill=circ_col] (0,1) node [above] {6} circle (1pt);
		\draw [fill=circ_col] (1,1) node [above] {7} circle (1pt);
		
		\draw [-, line width = 1pt] (0,-1) -- (1,-1) -- (1,0) -- (1,1) -- (0,1) -- (-1,0) -- (0,-1);
		
		\draw [-, line width = 1pt] (0,-1) -- node [above]{1} (1,0);
		\draw [-, line width = 1pt] (0,-1) -- node [anchor=west]{2} (0,0);
		\draw [-, line width = 1pt] (-1,0) -- node [above]{3} (0,0);
		\draw [-, line width = 1pt] (0,0) -- node [above]{4} (1,0);
		\draw [-, line width = 1pt] (0,0) -- node [anchor=east]{5} (0,1);
		\draw [-, line width = 1pt] (0,1) -- node [above]{6} (1,0);
		\end{tikzpicture}
	\end{center}
	\setlength{\abovecaptionskip}{-0.15cm}
	\caption{The arrangement of planes $ U_{4,1} $.}\label{fig_computation_U_4_1_appendix}
\end{figure}

\begin{thm}
	If $ X $ degenerates to $ U_{4,1} $, then $\pi_1(\Xgal)$ is trivial.	
\end{thm}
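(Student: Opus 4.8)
The plan is to treat $U_{4,1}$ by the same scheme as the worked cases $U_{0,6,1}$ and $U_{4\cup 3,1}$: regenerate each singular point of the six-line arrangement $S_0$, collect the local relations together with the parasitic and projective ones, prove that $\Gamma_i=\Gamma_i'$ holds in $\widetilde{G}$ for every line, and then identify the resulting group with $S_6$. Reading \Fref{fig_computation_U_4_1_appendix}, the singular points are: vertex $3$, a $1$-point on line $3$; vertices $1$ and $6$, two $2$-points, on lines $1,2$ and on lines $5,6$; vertex $5$, an outer $3$-point on lines $1<4<6$; and vertex $4$, an inner $4$-point on lines $2,3,4,5$ whose four planes close into a cycle (so it is an inner point in the sense of \Dref{def:inner_and_outer}). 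The four pairs of lines that are disjoint in $X_0$ give the parasitic commutators $[\Gamma_1,\Gamma_3]=[\Gamma_1,\Gamma_5]=[\Gamma_2,\Gamma_6]=[\Gamma_3,\Gamma_6]=e$, and to these I append the projective relation $\prod_{j}\Gamma_j'\Gamma_j=e$.

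The core of the argument is to force $\Gamma_i=\Gamma_i'$ for every line. The $1$-point at vertex $3$ gives $\Gamma_3=\Gamma_3'$ at once. Substituting this into the inner $4$-point relations, the two relations of the shape $\Gamma_3'\Gamma_3\Gamma_2'\Gamma_3{\Gamma_2'}^{-1}\Gamma_3^{-1}{\Gamma_3'}^{-1}=\Gamma_5\Gamma_4'\Gamma_5^{-1}$ and $\Gamma_3'\Gamma_3\Gamma_2'\Gamma_3'{\Gamma_2'}^{-1}\Gamma_3^{-1}{\Gamma_3'}^{-1}=\Gamma_5\Gamma_4'\Gamma_4{\Gamma_4'}^{-1}\Gamma_5^{-1}$ acquire identical left-hand sides, so equating their right-hand sides forces $\Gamma_4=\Gamma_4'$ --- exactly the maneuver carried out at vertex $1$ of $U_{4\cup 3,1}$. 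With $\Gamma_4=\Gamma_4'$ in hand, \Lref{3pt-out-smallmid} applied to the outer $3$-point at vertex $5$ (lines $1<4<6$) gives $\Gamma_1=\Gamma_1'$ and $\Gamma_6=\Gamma_6'$, and \Lref{2pt-equal} at the $2$-points $1$ and $6$ then promotes these to $\Gamma_2=\Gamma_2'$ and $\Gamma_5=\Gamma_5'$. This settles every equality.

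Once all $\Gamma_i=\Gamma_i'$, the presentation of $\widetilde{G}$ reduces to the six involutions $\Gamma_1,\dots,\Gamma_6$ subject to the braid relations $\langle\Gamma_1,\Gamma_2\rangle=\langle\Gamma_2,\Gamma_3\rangle=\langle\Gamma_1,\Gamma_4\rangle=\langle\Gamma_2,\Gamma_4\rangle=\langle\Gamma_3,\Gamma_5\rangle=\langle\Gamma_4,\Gamma_5\rangle=\langle\Gamma_4,\Gamma_6\rangle=\langle\Gamma_5,\Gamma_6\rangle=e$, the complementary commutators, and the single cycle relation $\Gamma_2\Gamma_3\Gamma_2=\Gamma_5\Gamma_4\Gamma_5$ surviving from the inner $4$-point. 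The assignment $\Gamma_1\mapsto(1\,2)$, $\Gamma_2\mapsto(2\,3)$, $\Gamma_3\mapsto(3\,4)$, $\Gamma_4\mapsto(2\,5)$, $\Gamma_5\mapsto(4\,5)$, $\Gamma_6\mapsto(5\,6)$ realizes precisely these relations in $S_6$, and the corresponding transposition graph on the six sheets is unicyclic, carrying a single $4$-cycle on $\{2,3,4,5\}$. I would therefore check the fork relations of \cite[Remark 2.8]{RTV} and conclude $\widetilde{G}\cong S_6$ by \cite[Theorem 2.3]{RTV}; the Moishezon--Teicher sequence \eqref{M-T} then shows that $\pi_1(\Xgal)$ is trivial.

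The main obstacle is the inner $4$-point at vertex $4$. Its braid monodromy (from \cite{Ogata}) expands into twelve relations, and the delicate task is to order the four lines $2,3,4,5$ consistently with their plane-cycle and to single out the correct pair of opposite-edge relations, so that $\Gamma_3=\Gamma_3'$ genuinely collapses their left-hand sides and equating right-hand sides produces $\Gamma_4=\Gamma_4'$ rather than a tautology; all subsequent equalities hinge on this one step. The secondary subtlety is the final identification: because the generating graph contains a cycle, $\widetilde{G}\cong S_6$ cannot be read off from the standard Coxeter presentation of $S_6$ and genuinely requires the cycle-and-fork machinery of \cite{RTV}.
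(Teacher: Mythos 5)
Your proposal is correct and follows essentially the same route as the paper: the $1$-point at vertex $3$ gives $\Gamma_3=\Gamma_3'$, substituting this into a pair of the inner $4$-point relations with identical left-hand sides yields $\Gamma_4=\Gamma_4'$, Lemma~\ref{3pt-out-smallmid} at vertex $5$ and Lemma~\ref{2pt-equal} at vertices $1$ and $6$ give the remaining equalities, and the conclusion $\widetilde{G}\cong S_6$ follows from the cycle-and-fork results of \cite{RTV}. The only (immaterial) difference is that you equate the pair of $4$-point relations involving $\Gamma_2'$ rather than the pair involving $\Gamma_2$ used in the paper; both yield $\Gamma_4=\Gamma_4'$ by the same cancellation.
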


\begin{proof}
The branch curve $S_0$ in $\mathbb{CP}^2$ is an arrangement of six lines. We regenerate each vertex in turn and compute the group $G$.

	\uOnePoint{3}{3}{U41-vert3}
	\uTwoPoint{1}{1}{2}{U41-vert1}
    \uTwoPoint{6}{5}{6}{U41-vert6}
	\uThreePointOuter{5}{1}{4}{6}{U41-vert5}	
	\uFourPointInner{4}{2}{3}{4}{5}{U41-vert4}
	We also have the following parasitic and projective relations:
	\uParasit{1}{3}{U41-parasit-1-3}
	\uParasit{1}{5}{U41-parasit-1-5}
	\uParasit{2}{6}{U41-parasit-2-6}
	\uParasit{3}{6}{U41-parasit-3-6}
	\uProjRel{U41-proj}{6}{5}{4}{3}{2}{1}

By  substituting \eqref{U41-vert3} and equating \eqref{U41-vert4-11} and \eqref{U41-vert4-12}, we get $\ug{4}=\ug{4'}$. By Lemma \ref{3pt-out-smallmid}, we have $\ug{1}=\ug{1'}$ and $\ug{6}=\ug{6'}$. Then by Lemma \ref{2pt-equal}, we get $\ug{2}=\ug{2'}$ and $\ug{5}=\ug{5'}$.

$\Ggal$ is thus generated by $\set{\ug{i} | i=1,\dots,6}$ with the following relations:
	\uGammaSq{U41-gamma-sq}{1}{2}{3}{4}{5}{6}
	\ubegineq{U41-simpl-3}
	\trip{1}{2} = \trip{1}{4} = \trip{2}{3} = \trip{2}{4} = \trip{3}{5} = \trip{4}{5}= \trip{4}{6} = \trip{5}{6} = e
	\uendeq
	\ubegineq{U41-simpl-4}
	\comm{1}{3} = \comm{1}{5} = \comm{1}{6} = \comm{2}{5} = \comm{2}{6} = \comm{3}{4} = \comm{3}{6} = e
	\uendeq
	\ubegineq{U41-simpl-5}
	\ug{2}\ug{3}\ug{2}=\ug{5}\ug{4}\ug{5}.
	\uendeq
By Remark 2.8 in \cite{RTV}, we get also the relations
\ubegineq{U41-simpl-6}
	[\ug{1},\ug{2}\ug{4}\ug{2}]=[\ug{6},\ug{4}\ug{5}\ug{4}] = e.
	\uendeq
By Theorem 2.3 in \cite{RTV} we have $\Ggal \cong S_6$, therefore $\pi_1(\Xgal)$ is trivial.

\end{proof}

\uChernSummary{9}{24}{24}{12}{9}{\frac{11}{2}}{-\frac{2}{3}\cdot}


\subsection{{$ X $ degenerates to $U_{4,2}$}}\label{section_computation_U_4_2}
\begin{figure}[H]
	\begin{center}
		
		\definecolor{circ_col}{rgb}{0,0,0}
		\begin{tikzpicture}[x=1cm,y=1cm,scale=2]
		
		\draw [fill=circ_col] (-1,-1) node [anchor=east] {1} circle (1pt);
		\draw [fill=circ_col] (0,-1) node [below] {2} circle (1pt);
		\draw [fill=circ_col] (-1,0) node [anchor=east] {3} circle (1pt);
		\draw [fill=circ_col] (0,0) node [anchor=north east] {4} circle (1pt);
		\draw [fill=circ_col] (1,0) node [below] {5} circle (1pt);
		\draw [fill=circ_col] (0,1) node [above] {6} circle (1pt);
		\draw [fill=circ_col] (1,1) node [above] {7} circle (1pt);
		
		\draw [-, line width = 1pt] (-1,-1) -- (0,-1) -- (1,0) -- (1,1) -- (0,1) -- (-1,0) -- (-1,-1);
		
		\draw [-, line width = 1pt] (0,-1) -- node [above]{1} (-1,0);
		\draw [-, line width = 1pt] (0,-1) -- node [anchor=west]{2} (0,0);
		\draw [-, line width = 1pt] (-1,0) -- node [above]{3} (0,0);
		\draw [-, line width = 1pt] (0,0) -- node [above]{4} (1,0);
		\draw [-, line width = 1pt] (0,0) -- node [anchor=east]{5} (0,1);
		\draw [-, line width = 1pt] (0,1) -- node [above]{6} (1,0);
		\end{tikzpicture}
	\end{center}
	\setlength{\abovecaptionskip}{-0.15cm}
	\caption{The arrangement of planes $U_{4,2}$.}\label{fig_computation_U_4_2}
\end{figure}

Consider $ X $ that degenerates to $ U_{4,2} $.
The branch curve $S_0$ in $\mathbb{CP}^2$ is an arrangement of six lines. We regenerate each vertex in turn and compute the group $G$.

\uTwoPoint{2}{1}{2}{27-vert2}
\uTwoPoint{3}{1}{3}{27-vert3}
\uTwoPoint{5}{4}{6}{27-vert5}
\uTwoPoint{6}{5}{6}{27-vert6}
\uFourPointInner{4}{2}{3}{4}{5}{27-vert4}
We also have the following parasitic and projective relations:
\uParasit{1}{4}{27-parasit-1-4}
\uParasit{1}{5}{27-parasit-1-5}
\uParasit{1}{6}{27-parasit-1-6}
\uParasit{2}{6}{27-parasit-2-6}
\uParasit{3}{6}{27-parasit-3-6}
\uProjRel{27-proj}{6}{5}{4}{3}{2}{1}

As mentioned in Section \ref{sec:results}, it is a challenging problem to determine if the kernel of the natural surjection from $ \widetilde{G} $ to $ S_6 $ is trivial or not.
This question should be addressed once the relevant algebraic tools are developed.
We note that the use of the computer aided algebra system MAGMA, have not yielded satisfactory results.

\uChernSummary{8}{24}{24}{12}{9}{5}{-\frac{1}{3}\cdot}


\subsection{{$ X $ degenerates to $ U_{4,3} $}}\label{section_computation_U_4_3}
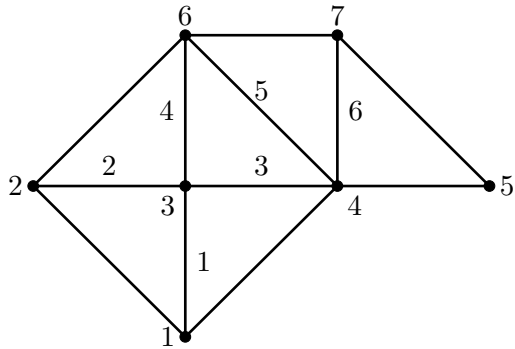
\begin{figure}[H]
	\begin{center}
		
		\definecolor{circ_col}{rgb}{0,0,0}
		\begin{tikzpicture}[x=1cm,y=1cm,scale=2]
		
		\draw [fill=circ_col] (0,-1) node [anchor=east] {1} circle (1pt);
		\draw [fill=circ_col] (-1,0) node [anchor=east] {2} circle (1pt);
		\draw [fill=circ_col] (0,0) node [anchor=north east] {3} circle (1pt);
		\draw [fill=circ_col] (1,0) node [anchor=north west] {4} circle (1pt);
		\draw [fill=circ_col] (2,0) node [anchor=west] {5} circle (1pt);
		\draw [fill=circ_col] (0,1) node [above] {6} circle (1pt);
		\draw [fill=circ_col] (1,1) node [above] {7} circle (1pt);
		
		\draw [-, line width = 1pt] (0,-1) -- (1,0) --  (2,0) -- (1,1) -- (0,1) -- (-1,0) -- (0,-1);
		
		\draw [-, line width = 1pt] (1,0) -- node [anchor=west]{6} (1,1);
		\draw [-, line width = 1pt] (0,-1) -- node [anchor=west]{1} (0,0);
		\draw [-, line width = 1pt] (-1,0) -- node [above]{2} (0,0);
		\draw [-, line width = 1pt] (0,0) -- node [above]{3} (1,0);
		\draw [-, line width = 1pt] (0,0) -- node [anchor=east]{4} (0,1);
		\draw [-, line width = 1pt] (0,1) -- node [above]{5} (1,0);
		\end{tikzpicture}
	\end{center}
	\setlength{\abovecaptionskip}{-0.15cm}
	\caption{The arrangement of planes $ U_{4,3} $.}\label{fig_computation_U_4_3}
\end{figure}

\begin{thm}\label{thm_computation_U_4_3}
If $ X $ degenerates to $ U_{4,3} $, then $\pi_1(\Xgal)$ is trivial.	
\end{thm}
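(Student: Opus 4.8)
The plan is to run the standard template of this section, and in fact to follow almost verbatim the argument given for $U_{4,1}$, since the dual graph $T$ of $U_{4,3}$ has the same shape: a $4$-cycle with a pendant path attached. First I would read the singular points off Figure~\ref{fig_computation_U_4_3} together with the incidence pattern of the six planes. Writing the planes as the triangles of the diagram and the six lines as their pairwise intersections, the cycle $P_a - P_b - P_c - P_d$ gives the lines $1 = P_a\cap P_b$, $3 = P_b\cap P_c$, $4 = P_c\cap P_d$, $2 = P_d\cap P_a$, while the tail $P_c - P_e - P_f$ gives $5 = P_c\cap P_e$ and $6 = P_e\cap P_f$. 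Accordingly the vertices to regenerate are three $1$-points (on lines $1$, $2$ and $6$), one $2$-point (on lines $4,5$), one outer $3$-point (on the path-lines $3,5,6$, with $5$ central, so $i=3$, $j=5$, $k=6$ in the notation of Lemma~\ref{3pt-out-smallmid}), and one inner $4$-point (on the cycle-lines $1,2,3,4$). I would record each vertex's contribution with the corresponding one-, two-, outer three- and inner four-point relations, fixing the line order of the inner $4$-point so that lines $1,2$ meet at the plane $P_a$ and lines $3,4$ at the opposite plane $P_c$; this is the ordering $(1,2,3,4)$. To these I would append the parasitic commutators between the non-adjacent planes and the projective relation.

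The core of the proof is to show $\Gamma_i=\Gamma_i'$ for all $i$. The three $1$-points give $\Gamma_1=\Gamma_1'$, $\Gamma_2=\Gamma_2'$, $\Gamma_6=\Gamma_6'$ at once. The decisive step, exactly as for $U_{4,1}$, is to substitute the relation $\Gamma_2=\Gamma_2'$ into the inner $4$-point relations: because line $2$ occupies the central slot of the cycle, the left-hand sides of the two long relations analogous to \eqref{U41-vert4-11} and \eqref{U41-vert4-12} become identical, and equating the right-hand sides and cancelling forces $\Gamma_3=\Gamma_3'$. Now Lemma~\ref{3pt-out-smallmid} applied at the outer $3$-point, with $\Gamma_3=\Gamma_3'$ playing the role of the hypothesis $\Gamma_i=\Gamma_i'$, yields $\Gamma_5=\Gamma_5'$; and Lemma~\ref{2pt-equal} at the $2$-point on lines $4,5$ then gives $\Gamma_4=\Gamma_4'$. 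Thus all six equalities hold, and $\Ggal$ is generated by $\Gamma_1,\dots,\Gamma_6$.

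With every $\Gamma_i=\Gamma_i'$ substituted, each relation collapses to a relation among $\Gamma_1,\dots,\Gamma_6$: the involutions $\Gamma_i^2=e$; a triple relation $\langle\Gamma_i,\Gamma_j\rangle=e$ for each pair of lines sharing a plane and a commutator $[\Gamma_i,\Gamma_j]=e$ for each pair that does not; and the single extra relation $\Gamma_1\Gamma_2\Gamma_1=\Gamma_4\Gamma_3\Gamma_4$ that survives from the inner $4$-point and encodes the diagonal transposition of the $4$-cycle. Adjoining the commutators supplied by Remark~2.8 of \cite{RTV}, this is precisely the Coxeter-type presentation handled by Theorem~2.3 of \cite{RTV}, so $\Ggal\cong S_6$ and hence $\pi_1(\Xgal)=\ker(\Ggal\to S_6)$ is trivial. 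I expect the main difficulty to lie in the inner $4$-point step: one must extract the correct line ordering for the four-point relations (whose braid monodromy is that of \cite{Ogata}) directly from the plane incidences, so that substituting $\Gamma_2=\Gamma_2'$ genuinely equalizes the two monodromy relations, and then check that the leftover cycle relation together with the Remark~2.8 commutators cuts the Coxeter cover down to $S_6$ rather than to a proper cover of it.
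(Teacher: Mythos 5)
Your proposal is correct and follows essentially the same route as the paper: the three $1$-points give $\Gamma_1=\Gamma_1'$, $\Gamma_2=\Gamma_2'$, $\Gamma_6=\Gamma_6'$; substituting $\Gamma_2=\Gamma_2'$ into the inner $4$-point relations (the paper equates the pair \eqref{29-vert3-9}--\eqref{29-vert3-10} rather than the pair analogous to \eqref{U41-vert4-11}--\eqref{U41-vert4-12}, but either pair yields the same cancellation) forces $\Gamma_3=\Gamma_3'$, after which Lemma~\ref{3pt-out-smallmid} and Lemma~\ref{2pt-equal} give the remaining identifications, and the collapsed presentation together with the relation $\Gamma_1\Gamma_2\Gamma_1=\Gamma_4\Gamma_3\Gamma_4$ and the Remark~2.8 commutators is handled by Theorem~2.3 of \cite{RTV}. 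No gaps.
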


\begin{proof}
The branch curve $S_0$ in $\mathbb{CP}^2$ is an arrangement of six lines. We regenerate each vertex in turn and compute the group $G$.

	\uOnePoint{1}{1}{29-vert1}
	\uOnePoint{2}{2}{29-vert2}
	\uOnePoint{7}{6}{29-vert7}
	\uTwoPoint{6}{4}{5}{29-vert6}
	\uThreePointOuter{4}{3}{5}{6}{29-vert4}
	\uFourPointInner{3}{1}{2}{3}{4}{29-vert3}
	We also have the following parasitic and projective relations:
	\uParasit{1}{5}{29-parasit-1-5}
	\uParasit{1}{6}{29-parasit-1-6}
	\uParasit{2}{5}{29-parasit-2-5}
	\uParasit{2}{6}{29-parasit-2-6}
	\uParasit{4}{6}{29-parasit-4-6}
	\uProjRel{29-proj}{6}{5}{4}{3}{2}{1}

Substituting \eqref{29-vert1} and \eqref{29-vert2} and equating  \eqref{29-vert3-9} and \eqref{29-vert3-10}, we get $\ug{3}=\ug{3'}$.
By Lemma \ref{3pt-out-smallmid}, we have $\ug{5}=\ug{5'}$. Then by Lemma \ref{2pt-equal}, we get $\ug{4}=\ug{4'}$.

$\Ggal$ is thus generated by $\set{\ug{i} | i=1,\dots,6}$ with the following relations:
	\uGammaSq{29-gamma-sq}{1}{2}{3}{4}{5}{6}
	\ubegineq{29-simpl-3}
	\trip{1}{2} = \trip{1}{3} = \trip{2}{4} = \trip{3}{4} = \trip{3}{5} = \trip{4}{5}= \trip{5}{6} = e
	\uendeq
	\ubegineq{29-simpl-4}
	\comm{1}{4} = \comm{1}{5} = \comm{1}{6} = \comm{2}{3} = \comm{2}{5} = \comm{2}{6} = \comm{3}{6} = \comm{4}{6} = e
	\uendeq
	\ubegineq{29-simpl-5}
	\ug{1}\ug{2}\ug{1}=\ug{4}\ug{3}\ug{4}.
	\uendeq
By Remark 2.8 in \cite{RTV}, we get also the relation
\ubegineq{29-simpl-6}
	[\ug{5}, \ug{4}\ug{3}\ug{4}]=e.
	\uendeq
By Theorem 2.3 in \cite{RTV} we have $\Ggal \cong S_6$, therefore $\pi_1(\Xgal)$ is trivial.

\end{proof}

\uChernSummary{10}{28}{21}{12}{9}{\frac{13}{2}}{-\frac{4}{3}\cdot}

\subsection{{$ X $ degenerates to $ U_{4 \cup 3,1} $}}\label{section_computation_U_4_cup_3_1}

\begin{figure}[H]
	\begin{center}
		
		\definecolor{circ_col}{rgb}{0,0,0}
		\begin{tikzpicture}[x=1cm,y=1cm,scale=3]

		\draw [fill=circ_col] (-1.4, 0) node [below] {1} circle (1pt);
		\draw [fill=circ_col] (-0.5, 0) node [anchor=west] {2} circle (1pt);
		\draw [fill=circ_col] (0, 0.86) node [above] {3} circle (1pt);
		\draw [fill=circ_col] (0, -0.86) node [below] {4} circle (1pt);
		\draw [fill=circ_col] (0.5, 0) node [anchor=east] {5} circle (1pt);
		\draw [fill=circ_col] (-2, 0) node [anchor=east] {6} circle (1pt);
		
		\draw [-, line width = 1pt] (-2,0) -- (0, -0.86) -- (0.5, 0) -- (0, 0.86) -- (-2, 0);
		
		\draw [-, line width = 1pt] (-1.4,0) -- node [above] {5} (-0.5,0);
		\draw [-, line width = 1pt] (0, 0.86) -- node [above] {6} (-0.5,0);
		\draw [-, line width = 1pt] (0, -0.86) -- node [above] {2} (-0.5,0);
		\draw [-, line width = 1pt] (0, -0.86) -- node [anchor=east] {1} (0, 0.86);
		\draw [-, line width = 1pt] (-2,0) -- node [above] {4} (-1.4,0);
		\draw [-, line width = 1pt] (0, 0.86) -- node [below] {7} (-1.4,0);
		\draw [-, line width = 1pt] (0, -0.86) -- node [above] {3} (-1.4,0);
		\end{tikzpicture}
		
	\end{center}
	\setlength{\abovecaptionskip}{-0.15cm}
	\caption{The arrangement of planes $U_{4 \cup 3,1}$.}\label{fig_computation_U_4_cup_3_1_appendix}
\end{figure}

\begin{thm}\label{thm_computation_U_4_cup_3_1}
If $ X $ degenerates to $ U_{4 \cup 3,1} $, then $\pi_1(\Xgal)$ is trivial.	
\end{thm}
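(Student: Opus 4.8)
The plan is to follow exactly the same pattern used for the other $D_4 \cup D_3$ cases (compare the proof for $U_{4\cup 3,1}$ already given in the main text, which this appendix theorem restates), namely: regenerate every vertex of the line arrangement in Figure~\ref{fig_computation_U_4_cup_3_1_appendix}, collect all the braid-monodromy relations, use the structural lemmas to force $\Gamma_l = \Gamma_l'$ for every line $l$, and then recognize the resulting quotient $\widetilde{G}$ as $S_6$ via \cite{RTV}. First I would read off the vertex types from the figure: vertex $6$ is a $1$-point (on line $4$); vertex $2$ is an inner $3$-point with lines $2<5<6$; vertices $3$ and $4$ are outer $3$-points with lines $1<6<7$ and $1<2<3$ respectively; and vertex $1$ is an inner $4$-point with lines $3,4,5,7$. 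I would invoke the corresponding macros \uOnePoint, \uThreePointInner, \uThreePointOuter, and \uFourPointInner to generate the full relation lists, and then list the parasitic relations (\uParasit for the pairs $\{1,4\},\{1,5\},\{2,4\},\{2,7\},\{3,6\},\{4,6\}$) and the projective relation \uProjRel over the seven lines.

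The core of the argument is the chain of deductions that every generator equals its primed version in $\widetilde{G}$. I would start from the $1$-point relation $\Gamma_4 = \Gamma_4'$ (from \eqref{U4cup3,1-vert6}), substitute it into the inner $4$-point relations \eqref{U4cup3,1-vert1-9} and \eqref{U4cup3,1-vert1-10}, and equate them to obtain $\Gamma_5 = \Gamma_5'$. Then Lemma~\ref{3pt-in-bigmid}, applied to the inner $3$-point at vertex $2$ (where $5$ plays the role of the middle line $j$), yields $\Gamma_2 = \Gamma_2'$ and $\Gamma_6 = \Gamma_6'$. Finally Lemma~\ref{3pt-out-smallmid}, applied to the two outer $3$-points at vertices $3$ and $4$, propagates the equality to $\Gamma_1 = \Gamma_1'$, $\Gamma_3 = \Gamma_3'$, and $\Gamma_7 = \Gamma_7'$. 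At this stage $\widetilde{G}$ is generated by the seven unprimed generators subject to the simplified triple, commutator, and two symmetric relations $\Gamma_6 = \Gamma_2\Gamma_5\Gamma_2$ and $\Gamma_3\Gamma_4\Gamma_3 = \Gamma_7\Gamma_5\Gamma_7$, together with the additional fork-type commutators furnished by Remark~2.8 of \cite{RTV}.

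The final step is to verify that these relations are exactly a presentation of $S_6$ via the Coxeter-cover machinery of \cite{RTV}: Theorem~2.3 there identifies such a group with the symmetric group once the generators satisfy the braid-like triple relations along the edges of the dual graph, the disjointness commutators, and the two ``middle'' identifications coming from the cycles in the graph. Since $U_{4\cup 3,1}$ has dual graph consisting of two triangles sharing a single vertex (no vertex of valency $\geq 3$ outside a cycle), Lemma~\ref{not-trivial} does not apply, and instead the $\langle \Gamma_i, \Gamma_j\Gamma_k\Gamma_j\rangle$-type relations become the genuine $S_6$ relations; hence $\widetilde{G} \cong S_6$ and by the exact sequence \eqref{M-T} the kernel $\pi_1(\Xgal)$ is trivial.

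The part requiring the most care is the very first deduction $\Gamma_5 = \Gamma_5'$, since it is the only step not supplied directly by one of the three clean lemmas: it requires correctly substituting the $1$-point relation into the long inner-$4$-point relations and checking that the two resulting words genuinely coincide after simplification. I expect this bookkeeping — tracking the conjugating words in \eqref{U4cup3,1-vert1-9} and \eqref{U4cup3,1-vert1-10} once $\Gamma_4=\Gamma_4'$ is imposed — to be the main obstacle; everything downstream is a mechanical application of Lemmas~\ref{3pt-in-bigmid} and \ref{3pt-out-smallmid} followed by citing \cite{RTV}.
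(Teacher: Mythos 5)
Your proposal reproduces the paper's own argument: the same vertex-type identification and relation lists, the same initial deduction of $\Gamma_5=\Gamma_5'$ by substituting the $1$-point relation into the two inner-$4$-point relations and equating their right-hand sides, the same propagation via Lemmas~\ref{3pt-in-bigmid} and \ref{3pt-out-smallmid} to all other generators, and the same conclusion $\widetilde{G}\cong S_6$ via Remark~2.8 and Theorem~2.3 of \cite{RTV}. This is essentially identical to the proof of Theorem~\ref{thm_computation_U_4_cup_3_1_paper}, to which the appendix statement defers.
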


\begin{proof}
	
	See Theorem \ref{thm_computation_U_4_cup_3_1_paper}.

\end{proof}

\uChernSummary{13}{36}{30}{14}{16}{\frac{19}{2}}{-}

\subsection{{$ X $ degenerates to $ U_{4 \cup 3,2} $}}\label{section_computation_U_4_cup_3_2}

\begin{figure}[H]
	\begin{center}
		
		\definecolor{circ_col}{rgb}{0,0,0}
		\begin{tikzpicture}[x=1cm,y=1cm,scale=3]

		\draw [fill=circ_col] (-1.5, 0) node [above] {1} circle (1pt);
		\draw [fill=circ_col] (-0.5, 0) node [anchor=west] {2} circle (1pt);
		\draw [fill=circ_col] (0, 0.86) node [above] {3} circle (1pt);
		\draw [fill=circ_col] (0, -0.86) node [below] {4} circle (1pt);
		\draw [fill=circ_col] (-1.2, 0.6) node [above] {5} circle (1pt);
		\draw [fill=circ_col] (-2, 0) node [anchor=east] {6} circle (1pt);
		
		\draw [-, line width = 1pt] (-2,0) -- (0, -0.86) -- (0, 0.86) -- (-1.2, 0.6) -- (-2, 0);
		
		\draw [-, line width = 1pt] (-1.5,0) -- node [above] {3} (-0.5,0);
		\draw [-, line width = 1pt] (0, 0.86) -- node [anchor=west] {7} (-0.5,0);
		\draw [-, line width = 1pt] (0, -0.86) -- node [anchor=west] {6} (-0.5,0);
		\draw [-, line width = 1pt] (-2, 0) -- node [above] {4} (0, 0.86);
		\draw [-, line width = 1pt] (-1.3,0) -- node [below] {2} (-2,0);
		\draw [-, line width = 1pt] (0, 0.86) -- node [below] {5} (-1.5,0);
		\draw [-, line width = 1pt] (0, -0.86) -- node [above] {1} (-1.5,0);
		\end{tikzpicture}
		
	\end{center}
	\setlength{\abovecaptionskip}{-0.15cm}
	\caption{The arrangement of planes $U_{4 \cup 3,2}$.}\label{fig_computation_U_4_cup_3_2}
\end{figure}

\begin{thm}\label{thm_computation_U_4_cup_3_2}
If $ X $ degenerates to $ U_{4 \cup 3,2} $, then $\pi_1(\Xgal)$ is trivial.	
\end{thm}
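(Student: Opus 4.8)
The plan is to mirror the proof of \Tref{thm_computation_U_4_cup_3_1_paper}: regenerate every singular point of the seven-line arrangement $S_0$, collect the resulting relations in $\widetilde G$, prove that $\ug{i}=\ug{i'}$ for every generator, and then identify the resulting Coxeter-type quotient with $S_6$ via \cite{RTV}, so that the kernel $\pi_1(\Xgal)$ of the natural map $\widetilde G\to S_6$ is trivial.

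First I would read the singular points off \Fref{fig_computation_U_4_cup_3_2} and classify them by the number and cyclic arrangement of the planes meeting there. The seven edges give generators $\ug{1},\dots,\ug{7}$, and the points to regenerate are: vertex $1$, an inner $4$-point on lines $1,2,3,5$ (four planes in a cycle); vertex $2$, an inner $3$-point on lines $3,6,7$; vertex $3$, an outer $3$-point on lines $4,5,7$ (four planes in a path, with $5$ the middle line); and two $2$-points, vertex $4$ on lines $1,6$ and vertex $6$ on lines $2,4$. Vertex $5$ is merely a corner of the single plane $\{3,5,6\}$ and carries no braid. I would then write the braid-monodromy relations through \texttt{uFourPointInner}, \texttt{uThreePointInner}, \texttt{uThreePointOuter} and \texttt{uTwoPoint} (the $4$-point monodromy being the one from \cite{Ogata}), and add the parasitic commutators between non-incident lines together with the projective relation.

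The heart of the argument is a cascade of identifications. The inner $3$-point at vertex $2$ gives $\ug{3}=\ug{3'}$ for free, since by \Lref{3pt-in-bigmid} the identification on the smallest line of an inner $3$-point is always present. Substituting $\ug{3}=\ug{3'}$ into the inner $4$-point relations at vertex $1$ makes the left-hand sides of the two relations expressing the line opposite to $3$ coincide; equating their right-hand sides then forces $\ug{2}=\ug{2'}$, in exact analogy with the step $\ug{4}=\ug{4'}\Rightarrow\ug{5}=\ug{5'}$ of \Tref{thm_computation_U_4_cup_3_1_paper}. From this single seed the Lemmas close everything up: \Lref{2pt-equal} at vertex $6$ turns $\ug{2}=\ug{2'}$ into $\ug{4}=\ug{4'}$; \Lref{3pt-out-smallmid} at vertex $3$ (with $4$ the smallest of $4,5,7$) gives $\ug{5}=\ug{5'}$ and $\ug{7}=\ug{7'}$; \Lref{3pt-in-bigmid} at vertex $2$ (with $7$ the largest of $3,6,7$) gives $\ug{6}=\ug{6'}$; and \Lref{2pt-equal} at vertex $4$ finally gives $\ug{1}=\ug{1'}$. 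Hence $\ug{i}=\ug{i'}$ for all $i$.

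Once all generators are identified, $\widetilde G$ is presented by $\ug{1},\dots,\ug{7}$ with $\ug{i}^2=e$, the triple relations $\trip{i}{j}=e$ along incident lines, the commutators $\comm{i}{j}=e$ along non-incident lines, and the two residual relations from the inner points (of the shape $\ug{7}=\ug{3}\ug{6}\ug{3}$ and a conjugacy equality between the opposite lines at vertex $1$). As in the proofs for $U_{4\cup 3,1}$ and $U_{3,1}$, Remark~2.8 of \cite{RTV} then supplies the extra commutators $[\ug{a},\ug{b}\ug{c}\ug{b}]=e$, and Theorem~2.3 of \cite{RTV} certifies that this is the canonical presentation of $S_6$; thus $\widetilde G\cong S_6$ and $\pi_1(\Xgal)$ is trivial. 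I expect the delicate part to be the inner $4$-point computation: one must fix the cyclic order of the lines $1,2,3,5$ so that $\ug{3}$ sits in the slot whose substitution collapses the correct pair of relations, and afterwards check that the surviving relations really satisfy the hypotheses of \cite{RTV} and do not merely present a proper quotient of $S_6$.
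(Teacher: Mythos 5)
Your proposal is correct and follows essentially the same route as the paper: the same classification of the singular points of $U_{4\cup 3,2}$ (inner $4$-point at vertex $1$ on lines $1,2,3,5$; inner $3$-point at vertex $2$ on $3,6,7$; outer $3$-point at vertex $3$ on $4,5,7$; $2$-points at vertices $4$ and $6$), the same seed $\ug{3}=\ug{3'}$ from \Lref{3pt-in-bigmid} feeding into the two conjugacy relations of the inner $4$-point to force $\ug{2}=\ug{2'}$, the same cascade through \Lref{2pt-equal}, \Lref{3pt-out-smallmid}, \Lref{3pt-in-bigmid} and \Lref{2pt-equal} again, and the same appeal to Remark~2.8 and Theorem~2.3 of \cite{RTV} to identify $\Ggal$ with $S_6$. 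The only discrepancy is the cosmetic left/right labelling of which sides of the $4$-point relations coincide after substituting $\ug{3}=\ug{3'}$, which does not affect the argument.
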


\begin{proof}
The branch curve $S_0$ in $\mathbb{CP}^2$ is an arrangement of seven lines. We regenerate each vertex in turn and compute the group $G$.

	\uTwoPoint{4}{1}{6}{U4cup3-2-vert4}
	\uTwoPoint{6}{2}{4}{U4cup3-2-vert6}
    \uThreePointInner{2}{3}{6}{7}{U4cup3-2-vert2}
	\uThreePointOuter{3}{4}{5}{7}{U4cup3-2-vert3}
	\uFourPointInner{1}{1}{2}{3}{5}{U4cup3-2-vert1}
	We also have the following parasitic and projective relations:
	\uParasit{1}{4}{U4cup3-2-parasit-1-4}
	\uParasit{1}{7}{U4cup3-2-parasit-1-7}
	\uParasit{2}{6}{U4cup3-2-parasit-2-6}
	\uParasit{2}{7}{U4cup3-2-parasit-2-7}
	\uParasit{3}{4}{U4cup3-2-parasit-3-4}
    \uParasit{4}{6}{U4cup3-2-parasit-4-6}
    \uParasit{5}{6}{U4cup3-2-parasit-5-6}
	\uProjRel{U4cup3-2-proj}{7}{6}{5}{4}{3}{2}{1}

By Lemma \ref{3pt-in-bigmid}, we have $\ug{3}=\ug{3'}$. Using this result and equating \eqref{U4cup3-2-vert1-9} and \eqref{U4cup3-2-vert1-10}, we get $\ug{2}=\ug{2'}$. By Lemma \ref{2pt-equal}, we have $\ug{4}=\ug{4'}$. Then by Lemma \ref{3pt-out-smallmid}, we get $\ug{5}=\ug{5'}$ and $\ug{7}=\ug{7'}$. Using Lemma \ref{3pt-in-bigmid}, we get $\ug{6}=\ug{6'}$. And again by Lemma \ref{2pt-equal} we have $\ug{1}=\ug{1'}$.

$\Ggal$ is thus generated by $\set{\ug{i} | i=1,\dots,7}$ with the following relations:
	\uGammaSq{U4cup3-2-gamma-sq}{1}{2}{3}{4}{5}{6}{7}
\begin{align}	
\trip{1}{2} = \trip{1}{3} = \trip{1}{6} = \trip{2}{4} = \trip{2}{5} = \trip{3}{5}&=  \\
=\trip{3}{6} =\trip{3}{7} = \trip{4}{5}= \trip{5}{7} = \trip{6}{7} &=  e \nonumber
\end{align}	
	\begin{align}\label{U4cup3-2-simpl-4}
	\comm{1}{4} = \comm{1}{5} = \comm{1}{7} = \comm{2}{3} = \comm{2}{6} &= \\ =\comm{2}{7} = \comm{3}{4} = \comm{4}{6} = \comm{4}{7} = \comm{5}{6} &= e \nonumber
	\end{align}
     \ubegineq{U4cup3-2-simpl-5}
	\ug{7}= \ug{3}\ug{6}\ug{3}
	\uendeq
	\ubegineq{U4cup3-2-simpl-6}
	\ug{1}\ug{2}\ug{1}=\ug{5}\ug{3}\ug{5}.
	\uendeq
By Remark 2.8 in \cite{RTV}, we get also the relations
    \ubegineq{U4cup3-2-simpl-7}
	[\ug{4},\ug{2}\ug{5}\ug{2}]= [\ug{6},\ug{1}\ug{3}\ug{1}] = [\ug{7},\ug{5}\ug{3}\ug{5}] = e.
	\uendeq
By Theorem 2.3 in \cite{RTV} we have $\Ggal \cong S_6$, therefore $\pi_1(\Xgal)$ is trivial.

\end{proof}

\uChernSummary{12}{36}{30}{14}{16}{9}{-\frac{2}{3}\cdot}


\subsection{{$ X $ degenerates to $ U_{4 \cup 4} $}}\label{section_computation_U_4_cup_4}

\begin{figure}[H]
	\begin{center}
		
		\definecolor{circ_col}{rgb}{0,0,0}
		\begin{tikzpicture}[x=1cm,y=1cm,scale=2]
		
		\draw [fill=circ_col] (-1.5, 0) node [anchor=east] {1} circle (1pt);
		\draw [fill=circ_col] (-0.5, 0) node [below] {2} circle (1pt);
		\draw [fill=circ_col] (0, 0.86) node [above] {3} circle (1pt);
		\draw [fill=circ_col] (0, -0.86) node [below] {4} circle (1pt);
		\draw [fill=circ_col] (0.5, 0) node [below] {5} circle (1pt);
		\draw [fill=circ_col] (1.5, 0) node [anchor=west] {6} circle (1pt);
		
		\draw [-, line width = 1pt] (-1.5,0) -- (0, -0.86) -- (1.5, 0) -- (0, 0.86) -- (-1.5, 0);
		
		\draw [-, line width = 1pt] (-1.5,0) -- node [above] {2} (-0.5,0);
		\draw [-, line width = 1pt] (0, 0.86) -- node [above] {5} (-0.5,0);
		\draw [-, line width = 1pt] (0, -0.86) -- node [above] {1} (-0.5,0);
		\draw [-, line width = 1pt] (-0.5, 0) -- node [above] {4} (0.5, 0);
		\draw [-, line width = 1pt] (1.5,0) -- node [above] {6} (0.5,0);
		\draw [-, line width = 1pt] (0, 0.86) -- node [above] {7} (0.5,0);
		\draw [-, line width = 1pt] (0, -0.86) -- node [above] {3} (0.5,0);
		
		\end{tikzpicture}
		
	\end{center}
	\setlength{\abovecaptionskip}{-0.15cm}
	\caption{The arrangement of planes $U_{4 \cup 4}$.}\label{fig_computation_U_4_cup_4}
\end{figure}

\begin{thm}\label{thm_computation_U_4_cup_4}
	If $ X $ degenerates to $ U_{4\cup 4} $, then $ \pi_1(\Xgal) $ is trivial.
\end{thm}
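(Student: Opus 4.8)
The plan is to follow the template already used for $U_{4,1}$, $U_{4,3}$ and $U_{4\cup 3,1}$, exploiting the feature that distinguishes $U_{4\cup 4}$ from the open case $U_{4,2}$: here the arrangement \emph{does} contain $1$-points, which will seed the whole argument. First I would read off the local type of each singular vertex of Figure~\ref{fig_computation_U_4_cup_4}. Vertices $1$ and $6$ are $1$-points, carrying lines $2$ and $6$ respectively; vertices $3$ and $4$ are $2$-points, carrying the line pairs $\{5,7\}$ and $\{1,3\}$; and vertices $2$ and $5$ are inner $4$-points (four planes meeting cyclically), whose lines are $\{1,2,4,5\}$ and $\{3,4,6,7\}$, with the central line $4$ shared by the two $4$-points. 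I would regenerate each vertex in turn, using the $1$-point, $2$-point and inner $4$-point regeneration data, and then append the parasitic commutation relations together with the projective relation $\prod_{j=7}^{1}\Gamma_j'\Gamma_j=e$.

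The heart of the proof is to establish $\Gamma_i=\Gamma_i'$ in $\widetilde{G}$ for every line $i=1,\dots,7$, after which the presentation collapses to one in the seven involutions $\Gamma_1,\dots,\Gamma_7$. I would start from the two $1$-point relations $\Gamma_2=\Gamma_2'$ and $\Gamma_6=\Gamma_6'$. Substituting a known equality into the inner-$4$-point relations makes a matching pair of them coincide on the left, so equating their right-hand sides forces a fresh equality of the type $\Gamma_j=\Gamma_j'$, exactly as in the $U_{4,1}$ and $U_{4\cup 3,1}$ computations. Concretely, I expect the cascade: vertex $2$, fed $\Gamma_2=\Gamma_2'$, yields $\Gamma_4=\Gamma_4'$ (the shared central line); with $\Gamma_4=\Gamma_4'$ and $\Gamma_6=\Gamma_6'$ now both available at vertex $5$, that inner $4$-point yields $\Gamma_3=\Gamma_3'$ and $\Gamma_7=\Gamma_7'$; finally \Lref{2pt-equal} at vertices $3$ and $4$ promotes these (via $\Gamma_7=\Gamma_7'\Leftrightarrow\Gamma_5=\Gamma_5'$ and $\Gamma_3=\Gamma_3'\Leftrightarrow\Gamma_1=\Gamma_1'$) to cover the two remaining lines. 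Thus all seven symmetries $\Gamma_i=\Gamma_i'$ are reached.

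With every $\Gamma_i=\Gamma_i'$ in force, $\widetilde{G}$ is generated by $\Gamma_1,\dots,\Gamma_7$ subject to the triple relations coming from the cusps, the commutation relations coming from the nodes and parasitic intersections, and the two braid-type identities $\Gamma_a=\Gamma_b\Gamma_c\Gamma_b$ contributed by the two inner $4$-points. As in those earlier proofs, I would invoke Remark~2.8 of \cite{RTV} to produce the few remaining commutators and then Theorem~2.3 of \cite{RTV}, on Coxeter covers of the symmetric groups, to conclude $\widetilde{G}\cong S_6$. Substituting this into the Moishezon--Teicher exact sequence \eqref{M-T}, namely $0\to\pi_1(\Xgal)\to\widetilde{G}\to S_6\to 0$, makes the surjection $\widetilde{G}\to S_6$ an isomorphism, so that $\pi_1(\Xgal)$ is trivial.

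The main obstacle is precisely the propagation through the two inner $4$-points. The inner $4$-point is the configuration responsible for $U_{4,2}$ being intractable by these techniques, and the only reason I expect success here is the presence of the two $1$-points to start the chain. The delicate bookkeeping will be to verify that the cascade genuinely closes up, so that starting from $\Gamma_2$ and $\Gamma_6$ one reaches all seven lines, and in particular that knowing two of the four lines symmetric at the inner $4$-point of vertex $5$ really does force the other two. I would also have to fix the correct index assignment of the four lines into each inner $4$-point template, since which pair of relations coincides under substitution depends on which line is already known to be symmetric; and finally I must confirm that the surviving relations match, line for line, the hypotheses of the Coxeter-cover criterion of \cite{RTV} needed to identify $\widetilde{G}$ with $S_6$.
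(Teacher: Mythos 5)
Your opening moves match the paper exactly: the two $1$-points give $\Gamma_2=\Gamma_2'$ and $\Gamma_6=\Gamma_6'$, and substituting $\Gamma_2=\Gamma_2'$ into the pair of branch-point relations of the inner $4$-point at vertex $2$, namely $\Gamma_2'\Gamma_2\Gamma_1'\Gamma_2\Gamma_1'^{-1}\Gamma_2^{-1}\Gamma_2'^{-1}=\Gamma_5\Gamma_4'\Gamma_5^{-1}$ and $\Gamma_2'\Gamma_2\Gamma_1'\Gamma_2'\Gamma_1'^{-1}\Gamma_2^{-1}\Gamma_2'^{-1}=\Gamma_5\Gamma_4'\Gamma_4\Gamma_4'^{-1}\Gamma_5^{-1}$, does yield $\Gamma_4=\Gamma_4'$; this is precisely the one hand computation the paper performs. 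The gap is in the next step of your cascade. At vertex $5$ the inner $4$-point template is instantiated with the lines in the order $(3,4,6,7)$, so the two lines you already control, $4$ and $6$, occupy the two \emph{middle} slots. The four branch-point-type relations of that template only couple the primed/unprimed status of the two middle lines to each other: substituting $\Gamma_4=\Gamma_4'$ into one pair forces $\Gamma_6=\Gamma_6'$ (already known), and substituting $\Gamma_6=\Gamma_6'$ back only reproduces $\Gamma_4=\Gamma_4'$. For the outer lines $3$ and $7$, comparing the relations that differ in $\Gamma_3$ versus $\Gamma_3'$ (or in the $\Gamma_7$-conjugation versus the $\Gamma_7^{-1}\Gamma_7'\Gamma_7$-conjugation) yields only weaker statements, e.g.\ that $\Gamma_3^{-1}\Gamma_3'$ commutes with a conjugate of $\Gamma_4$ --- not the equalities $\Gamma_3=\Gamma_3'$ and $\Gamma_7=\Gamma_7'$. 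So the assertion that vertex $5$ ``yields $\Gamma_3=\Gamma_3'$ and $\Gamma_7=\Gamma_7'$'' is exactly the unproved step, and without it the $2$-points at vertices $3$ and $4$ are never activated and the collapse to seven generators does not happen.

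This is not minor bookkeeping: it is the reason the paper's own proof of this theorem stops the hand argument after deriving $\Gamma_4=\Gamma_4'$ and finishes with a MAGMA computation, expressing every $\Gamma_i'$ in terms of $\{\Gamma_j\}$ and verifying mechanically that $G/\langle\Gamma_i^2\rangle\cong S_6$. (The same obstruction --- inability to push symmetry through the outer slots of an inner $4$-point --- is what leaves $U_{4,2}$ open.) Note also that the paper has no propagation lemma for inner $4$-points analogous to Lemma~\ref{3pt-in-bigmid}, so there is nothing to cite for the step you need. To repair the proof you must either produce an explicit derivation of $\Gamma_3=\Gamma_3'$ and $\Gamma_7=\Gamma_7'$ from the full relation set (cusp, node, parasitic and projective relations included), or fall back on the computer verification as the paper does; the Rowen--Teicher--Vishne endgame you describe is fine, but only once all seven equalities are actually in hand.
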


\begin{proof}
The branch curve $S_0$ in $\mathbb{CP}^2$ is an arrangement of seven lines. We regenerate each vertex in turn and compute the group $G$.

	\uOnePoint{1}{2}{U4cup4-vert1}
	\uOnePoint{6}{6}{U4cup4-vert6}
	\uTwoPoint{3}{5}{7}{U4cup4-vert3}
	\uTwoPoint{4}{1}{3}{U4cup4-vert4}
	\uFourPointInner{2}{1}{2}{4}{5}{U4cup4-vert2}
	\uFourPointInner{5}{3}{4}{6}{7}{U4cup4-vert5}
	We also have the following parasitic and projective relations:
	\uParasit{1}{6}{U4cup4-parasit-1-6}
	\uParasit{1}{7}{U4cup4-parasit-1-7}
	\uParasit{2}{3}{U4cup4-parasit-2-3}
	\uParasit{2}{6}{U4cup4-parasit-2-6}
	\uParasit{2}{7}{U4cup4-parasit-2-7}
	\uParasit{3}{5}{U4cup4-parasit-3-5}
	\uParasit{5}{6}{U4cup4-parasit-5-6}
	\uProjRel{29-proj}{7}{6}{5}{4}{3}{2}{1}

Substituting \eqref{U4cup4-vert1} and equating  \eqref{U4cup4-vert2-9} and \eqref{U4cup4-vert2-10}, we get $\ug{4}=\ug{4'}$.

Using computer algebra system (we used MAGMA), we can express all the $ \Gamma_i' $'s by $ \{ \Gamma_j | j=1,\dots,7 \} $ and get all the defining relations of $ S_6 $.
Then it is a straight forward verification that all the relations in $ G $ hold in $ S_6 $ so $ G/\langle \Gamma_i^2 \rangle \cong S_6 $, and $ \pi_1(\Xgal) $ is trivial.

\end{proof}

\uChernSummary{12}{36}{30}{14}{16}{9}{-\frac{2}{3}\cdot}


\subsection{{$ X $ degenerates to $ U_5 $}}\label{section_computation_U_5}

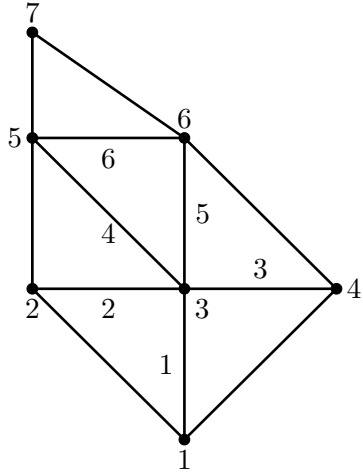
\begin{figure}[H]
	\begin{center}
		
		\definecolor{circ_col}{rgb}{0,0,0}
		\begin{tikzpicture}[x=1cm,y=1cm,scale=2]
		
		\draw [fill=circ_col] (1,0) node [below] {1} circle (1pt);
		\draw [fill=circ_col] (0,1) node [below] {2} circle (1pt);
		\draw [fill=circ_col] (1,1) node [anchor=north west] {3} circle (1pt);
		\draw [fill=circ_col] (2,1) node [anchor=west] {4} circle (1pt);
		\draw [fill=circ_col] (0,2) node [anchor=east] {5} circle (1pt);
		\draw [fill=circ_col] (1,2) node [above] {6} circle (1pt);
		\draw [fill=circ_col] (0,2.7) node [above] {7} circle (1pt);
		
		\draw [-, line width = 1pt] (1,0) -- (2,1) -- (1,2) -- (0,2.7) -- (0,2) -- (0,1) -- (1,0);
		
		\draw [-, line width = 1pt] (1,1) -- node [anchor=east]{1} (1,0);
		\draw [-, line width = 1pt] (1,1) -- node [anchor=west]{5} (1,2);
		\draw [-, line width = 1pt] (1,1) -- node [below]{2} (0,1);
		\draw [-, line width = 1pt] (1,1) -- node [above]{3} (2,1);
		\draw [-, line width = 1pt] (1,1) -- node [below]{4} (0,2);
		\draw [-, line width = 1pt] (1,2) -- node [below]{6} (0,2);
		\end{tikzpicture}
		
	\end{center}
	\setlength{\abovecaptionskip}{-0.15cm}
	\caption{The arrangement of planes $U_5$.}\label{fig_computation_U_5}
\end{figure}

\begin{thm}\label{thm_computation_U_5}
	If $ X $ degenerates to $ U_{5}$, then $\pi_1(\Xgal)$ is trivial.
\end{thm}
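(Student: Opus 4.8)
The plan is to follow the same route as in the trivial cases already treated (for instance $U_{0,6,1}$ and $U_{4\cup 3,1}$): compute $G=\pi_1(\mathbb{CP}^2-S)$ from the regeneration of $U_5$, pass to $\widetilde G=G/\langle\Gamma_i^2\rangle$, show that every doubled generator collapses ($\Gamma_i=\Gamma_i'$ for each line $i$), and then identify the resulting quotient with $S_6$, so that the kernel $\pi_1(\Xgal)$ in the exact sequence \eqref{M-T} is trivial. Reading the singular locus off Figure~\ref{fig_computation_U_5}, the vertices $1,2,4$ are $1$-points lying on the lines $1,2,3$, the vertices $5$ and $6$ are $2$-points carrying the line pairs $\{4,6\}$ and $\{5,6\}$, and the central vertex $3$ is an inner $5$-point through which the five lines $1,2,3,4,5$ pass. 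I would first record the relations of all these vertices together with the parasitic and projective relations, using the inner $5$-point relations of \cite[Corollary~2.5]{FT08} at vertex~$3$.

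The three $1$-points give $\Gamma_1=\Gamma_1'$, $\Gamma_2=\Gamma_2'$, $\Gamma_3=\Gamma_3'$ for free. The crux is to push these equalities out to the lines $4,5,6$, which are tied to the known lines only through the $5$-point. Here I would exploit the inner $5$-point exactly as Lemma~\ref{3pt-in-bigmid} exploits an inner $3$-point: among the relations of vertex~$3$ there is a pair whose left-hand sides differ only by a central factor $\Gamma_3$ versus $\Gamma_3'$, and whose right-hand sides differ only by replacing a factor $\Gamma_4'$ with its conjugate $\Gamma_4'\Gamma_4\Gamma_4'^{-1}$. Substituting the already-known $\Gamma_3=\Gamma_3'$ equates the two left-hand sides; equating the right-hand sides and cancelling the common conjugating words then forces $\Gamma_4=\Gamma_4'$. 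Once one of the lines $4,5$ is settled, Lemma~\ref{2pt-equal} applied at the $2$-point vertex~$5$ (lines $4,6$) and then at vertex~$6$ (lines $5,6$) delivers $\Gamma_6=\Gamma_6'$ and $\Gamma_5=\Gamma_5'$. I would also note explicitly that Lemma~\ref{not-trivial} does not apply: the only vertex of the dual graph $T$ of valency $\ge 3$ is the one corresponding to the plane incident to lines $4,5,6$, and it sits on the pentagonal cycle of $T$, so the hypothesis of that lemma fails---as it must, given the asserted triviality.

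With $\Gamma_i=\Gamma_i'$ for all $i$, the group $\widetilde G$ is generated by $\Gamma_1,\dots,\Gamma_6$ subject to $\Gamma_i^2=e$, the triple (braid) relations for the pairs of lines meeting at a common vertex---namely $\{1,2\},\{1,3\},\{2,4\},\{3,5\},\{4,5\}$ coming from the $5$-point, together with $\{4,6\},\{5,6\}$ from the two $2$-points---commutation relations for all remaining pairs, and the non-braid relation(s) surviving from the inner $5$-point. The associated braid graph is a pentagon on $\{1,2,4,5,3\}$ carrying an extra triangle on $\{4,5,6\}$; being neither a tree nor simply-laced, $\widetilde G$ is not literally a standard Coxeter presentation of $S_6$. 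To finish I would argue as for $U_{3,1}$ and $U_{4\cup3,1}$: reduce the leftover $5$-point relation to an expression of one generator as a short palindrome in the others, extract the additional commutation relations furnished by Remark~2.8 of \cite{RTV}, and then invoke Theorem~2.3 of \cite{RTV} to conclude $\widetilde G\cong S_6$.

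I expect the genuine difficulty to be organisational rather than conceptual. One must fix the cyclic labelling under which the five lines $1,2,3,4,5$ are fed into the inner $5$-point template, so that a line carrying a $1$-point (e.g.\ line $3$) occupies the slot whose doubling governs the pair of relations used above, while the line whose equality one wants to derive lands in the complementary slot. Getting this matching right is precisely what makes the single equating step collapse to $\Gamma_4=\Gamma_4'$; under a careless labelling the same two relations only produce a commutation identity, not an equality of generators. The second, more technical, point is checking that the surviving $5$-point relation fits the hypotheses of \cite{RTV} Theorem~2.3, given that the braid graph here contains both a $5$-cycle and a triangle.
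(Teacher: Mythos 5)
Your overall route is the same as the paper's: the three $1$-points at vertices $1,2,4$ give $\Gamma_1=\Gamma_1'$, $\Gamma_2=\Gamma_2'$, $\Gamma_3=\Gamma_3'$ for free, a matched pair of inner $5$-point relations then collapses one of the remaining doubled generators, Lemma \ref{2pt-equal} at the two $2$-points (vertices $5$ and $6$) propagates the collapse to the other two, and the quotient is identified with $S_6$ via Remark 2.8 and Theorem 2.3 of \cite{RTV}. Your remark that Lemma \ref{not-trivial} cannot apply here is also correct.

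The one step that does not survive contact with the actual labelling is your mechanism for extracting a new equality from the $5$-point. Under the feeding $(1,2,3,4,5)$ of the lines into the template of \cite[Corollary~2.5]{FT08}, the pair you describe --- left-hand sides differing by a central $\Gamma$ versus $\Gamma'$, right-hand sides differing by $\Gamma'$ versus $\Gamma'\Gamma\Gamma'^{-1}$ --- is \eqref{15-vert3-9} versus \eqref{15-vert3-10}; it is keyed on $\Gamma_2$ versus $\Gamma_2'$ on the left and on line $3$ on the right, so it only re-derives $\Gamma_3=\Gamma_3'$, which you already know from the $1$-point at vertex $4$. As written, your derivation of $\Gamma_4=\Gamma_4'$ is therefore circular. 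The pair that does new work is \eqref{15-vert3-9} versus \eqref{15-vert3-15}, whose left-hand sides differ by $\Gamma_1'$ versus $\Gamma_1$: equating the right-hand sides yields $\Gamma_5\,w\,\Gamma_5^{-1}=(\Gamma_5^{-1}\Gamma_5'\Gamma_5)\,w\,(\Gamma_5^{-1}\Gamma_5'^{-1}\Gamma_5)$ with $w=\Gamma_4\Gamma_3'\Gamma_4^{-1}$, and one then needs the triple relations \eqref{15-vert3-8} and \eqref{15-vert3-14} (exactly the manipulation in the proof of Lemma \ref{2pt-equal}), not bare cancellation of conjugating words, to conclude $\Gamma_5=\Gamma_5'$. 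After that, $\Gamma_6=\Gamma_6'$ and $\Gamma_4=\Gamma_4'$ follow from the two $2$-points, and the rest of your argument goes through. Since you explicitly flagged the slot-matching as the point to be verified, this is a repairable misidentification rather than a structural flaw, but the proof is incomplete until that step is replaced.
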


\begin{proof}
	The branch curve $S_0$ in $\mathbb{CP}^2$ is an arrangement of six lines. We regenerate each vertex in turn and compute the group $G$.

	\uOnePoint{1}{1}{15-vert1}
	\uOnePoint{2}{2}{15-vert2}
	\uOnePoint{4}{3}{15-vert4}
	\uTwoPoint{5}{4}{6}{15-vert5}
	\uTwoPoint{6}{5}{6}{15-vert6}
	\uFivePointInner{3}{1}{2}{3}{4}{5}{15-vert3}
	We also have the following parasitic and projective relations:
	\uParasit{1}{6}{15-parasit-1-6}
	\uParasit{2}{6}{15-parasit-2-6}
	\uParasit{3}{6}{15-parasit-3-6}
	\uProjRel{15-proj}{6}{5}{4}{3}{2}{1}

Using \eqref{15-vert1}, \eqref{15-vert2}, \eqref{15-vert4}, \eqref{15-vert3-8}, \eqref{15-vert3-14} and equating \eqref{15-vert3-9} and \eqref{15-vert3-15}, we get $\ug{5}=\ug{5'}$. By Lemma \ref{2pt-equal}, $\ug{6}=\ug{6'}$ and $\ug{4}=\ug{4'}$.

$\Ggal$ is thus generated by $\set{\ug{i} | i=1,\dots,6}$ with the following relations:
	\uGammaSq{15-gamma-sq}{1}{2}{3}{4}{5}{6}
	\ubegineq{15-simpl-3}	
\trip{1}{2} = \trip{1}{3} = \trip{2}{4} = \trip{3}{5} = \trip{4}{5} = \trip{4}{6}= \trip{5}{6} =  e
\uendeq
	\ubegineq{15-simpl-4}
	\comm{1}{4} = \comm{1}{5} = \comm{1}{6} = \comm{2}{3} = \comm{2}{5} = \comm{2}{6} = \comm{3}{4} = \comm{3}{6} = e
	\uendeq
     \ubegineq{15-simpl-5}
	\ug{1}\ug{2}\ug{1}=\ug{4}\ug{5}\ug{3}\ug{5}\ug{4}.
	\uendeq
By Remark 2.8 in \cite{RTV}, we get also the relation
    \ubegineq{15-simpl-7}
	[\ug{6},\ug{4}\ug{5}\ug{4}] = e.
	\uendeq
By Theorem 2.3 in \cite{RTV} we have $\Ggal \cong S_6$, therefore $\pi_1(\Xgal)$ is trivial.

\end{proof}

\uChernSummary{10}{24}{24}{12}{9}{6}{-}


\subsection{{$ X $ degenerates to $ U_{5\cup 3} $}}\label{section_computation_U_5_cup_3}

\begin{figure}[H]
	\begin{center}
		
		\definecolor{circ_col}{rgb}{0,0,0}
		\begin{tikzpicture}[x=1cm,y=1cm,scale=2]
		\draw [fill=circ_col] (-1, 1) node [above] {1} circle (1pt);
		\draw [fill=circ_col] (1, 1) node [above] {2} circle (1pt);
		\draw [fill=circ_col] (0,0.5) node [above] {3} circle (1pt);
		\draw [fill=circ_col] (0,-0.5) node [below] {4} circle (1pt);
		\draw [fill=circ_col] (-1, -1) node [below] {5} circle (1pt);
		\draw [fill=circ_col] (1, -1) node [below] {6} circle (1pt);
		
		\draw [-, line width=1pt] (-1,-1) -- (-1, 1) -- (1,1) -- (1,-1) -- (-1,-1);
		
		\draw [-, line width = 1pt] (0,0.5) -- node [above] {3} (-1,1);
		\draw [-, line width = 1pt] (0,0.5) -- node [above] {1} (1,1);
		\draw [-, line width = 1pt] (0,0.5) -- node [anchor=east] {4} (0, -0.5);
		\draw [-, line width = 1pt] (0,0.5) -- node [anchor=east] {5} (-1, -1);
		\draw [-, line width = 1pt] (0,0.5) -- node [anchor=west] {2} (1, -1);
		\draw [-, line width = 1pt] (0,-0.5) -- node [below] {6} (-1, -1);
		\draw [-, line width = 1pt] (0,-0.5) -- node [below] {7} (1, -1);
		\end{tikzpicture}
	\end{center}
	\setlength{\abovecaptionskip}{-0.15cm}
	\caption{The arrangement of planes $U_{5\cup 3}$.}\label{fig_computation_U_5_cup_3}
\end{figure}

\begin{thm}\label{thm_computation_U_5_cup_3}
If $ X $ degenerates to $ U_{5\cup 3} $, then $\pi_1(\Xgal)$ is trivial.
\end{thm}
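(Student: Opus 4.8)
The plan is to mirror the regeneration computations already carried out for $U_5$ and for the doubly-cyclic cases, reading all the combinatorial data off Figure~\ref{fig_computation_U_5_cup_3}. First I would classify the singular points of the seven-line arrangement $S_0$: vertices $1$ and $2$ are $1$-points carrying lines $3$ and $1$, so they contribute $\ug{3}=\ug{3'}$ and $\ug{1}=\ug{1'}$ immediately; vertices $5$ and $6$ are $2$-points on the line pairs $\{5,6\}$ and $\{2,7\}$; vertex $3$ is an inner $5$-point on lines $1,2,3,4,5$; and vertex $4$ is an inner $3$-point on lines $4,6,7$, sharing the line $4$ with the $5$-point. To these I would append the six parasitic commutators for the non-adjacent pairs $(1,6),(1,7),(2,6),(3,6),(3,7),(5,7)$ together with the projective relation, thereby assembling the full relation set of $G$ from the corresponding $1$-point, $2$-point, inner $3$-point and inner $5$-point templates (the $5$-point monodromy being the one of \cite[Corollary~2.5]{FT08}).

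The heart of the argument is to prove $\ug{i}=\ug{i'}$ for every line $i$, which collapses $\Ggal$ to a group generated by $\ug{1},\dots,\ug{7}$. I would organise the deductions as a chain. The two $1$-points already give $\ug{1}=\ug{1'}$ and $\ug{3}=\ug{3'}$, and since line $4$ is the smallest line of the inner $3$-point at vertex $4$, Lemma~\ref{3pt-in-bigmid} supplies $\ug{4}=\ug{4'}$ unconditionally. Substituting these three equalities into the $5$-point relations, I expect to extract $\ug{5}=\ug{5'}$, exactly as in the proof for $U_5$, where one equates two of the long inner-$5$-point relations after the substitutions. Lemma~\ref{2pt-equal} at vertex $5$ then promotes $\ug{5}=\ug{5'}$ to $\ug{6}=\ug{6'}$; with $\ug{6}=\ug{6'}$ in hand a second application of Lemma~\ref{3pt-in-bigmid} at vertex $4$ yields $\ug{7}=\ug{7'}$; and finally Lemma~\ref{2pt-equal} at vertex $6$ turns $\ug{7}=\ug{7'}$ into $\ug{2}=\ug{2'}$, exhausting all seven lines.

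Once every prime is eliminated, the surviving relations are the triple relations coming from cusps, the commutators coming from nodes and parasitic intersections, and two structural relations produced by the inner points: a relation $\ug{7}=\ug{4}\ug{6}\ug{4}$ from the inner $3$-point and a longer braid word from the $5$-point equating a conjugate of $\ug{1}$ to a word in $\ug{3},\ug{4},\ug{5}$. The dual graph $T$ of $U_{5\cup 3}$ has six vertices and seven edges, hence two independent cycles, namely the $5$-cycle of the $5$-point and the $3$-cycle of the $3$-point, sharing the edge labelled $4$; a spanning-tree argument therefore does not by itself close the computation. Following the pattern of the $U_{3\cup 3}$ and $U_{4\cup 3,1}$ proofs, I would adjoin the auxiliary commutators furnished by Remark 2.8 of \cite{RTV} and then invoke Theorem 2.3 of \cite{RTV} to conclude $\Ggal\cong S_6$; triviality of $\pi_1(\Xgal)$ is then immediate from the Moishezon--Teicher exact sequence \eqref{M-T}.

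The step I expect to be hardest is controlling the inner $5$-point: unlike the $2$- and $3$-point cases there is no prepackaged lemma, so the equality $\ug{5}=\ug{5'}$ must be teased out of the explicit Friedman--Teicher relations by selecting the right pair of relations to equate after the substitutions $\ug{1}=\ug{1'}$, $\ug{3}=\ug{3'}$, $\ug{4}=\ug{4'}$. A secondary difficulty is verifying that the hypotheses of \cite[Theorem~2.3]{RTV} genuinely hold in the presence of the two cycles; this is precisely where the Remark 2.8 relations are indispensable, and I would check, as in the analogous doubly-cyclic cases, that they are exactly what is needed to annihilate the extra kernel generated by the two cycles.
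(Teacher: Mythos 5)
Your proposal is correct and follows essentially the same route as the paper: regenerate each vertex, eliminate all the primed generators via Lemmas \ref{2pt-equal} and \ref{3pt-in-bigmid} together with the explicit inner-$5$-point relations, and conclude $\Ggal\cong S_6$ from the Coxeter-cover results of \cite{RTV} after adjoining the Remark~2.8 relations. The only difference is the order of the elimination chain: the paper enters the $5$-point by equating its relations (9) and (10) after substituting $\ug{3}=\ug{3'}$ to get $\ug{2}=\ug{2'}$ cheaply, then propagates $2\to 7\to\{4,6\}\to 5$, whereas you seed with the unconditional $\ug{4}=\ug{4'}$ and run the same chain in reverse, paying for the harder $\ug{5}=\ug{5'}$ extraction up front — both versions close.
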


\begin{proof}
The branch curve $S_0$ in $\mathbb{CP}^2$ is an arrangement of seven lines. We regenerate each vertex in turn and compute the group $G$.
	
	\uOnePoint{1}{3}{U5cup3-vert1}
	\uOnePoint{2}{1}{U5cup3-vert2}
	\uTwoPoint{5}{5}{6}{U5cup3-vert5}
	\uTwoPoint{6}{2}{7}{U5cup3-vert6}
	\uThreePointInner{4}{4}{6}{7}{U5cup3-vert4}
	\uFivePointInner{3}{1}{2}{3}{4}{5}{U5cup3-vert3}
	We also have the following parasitic and projective relations:
	\uParasit{1}{6}{U5cup3-parasit-1-6}
	\uParasit{1}{7}{U5cup3-parasit-1-7}
	\uParasit{2}{6}{U5cup3-parasit-2-6}
	\uParasit{3}{6}{U5cup3-parasit-3-6}
	\uParasit{3}{7}{U5cup3-parasit-3-7}
	\uParasit{5}{7}{U5cup3-parasit-5-7}
	\uProjRel{U5cup3-proj}{7}{6}{5}{4}{3}{2}{1}
	
Substituting \eqref{U5cup3-vert1}, \eqref{U5cup3-vert2} and Equating \eqref{U5cup3-vert3-9} and \eqref{U5cup3-vert3-10}, we get
$\ug{2}=\ug{2'}$. By Lemma \ref{2pt-equal}, $\ug{7}=\ug{7'}$. Then by Lemma \eqref{3pt-in-bigmid} we get $\ug{4}=\ug{4'}$ and $\ug{6}=\ug{6'}$. Then again by Lemma \eqref{2pt-equal} we have $\ug{5}=\ug{5'}$.

$\Ggal$ is thus generated by $\set{\ug{i} | i=1,\dots,7}$ with the following relations:
	\uGammaSq{U5cup3-gamma-sq}{1}{2}{3}{4}{5}{6}{7}
	\ubegineq{U5cup3-simpl-3}	
\begin{split}
\trip{1}{2} = \trip{1}{3} = \trip{2}{4} = \trip{2}{7} = \trip{3}{5} & =\\
= \trip{4}{5}= \trip{4}{6} = \trip{4}{7} =\trip{5}{6} =\trip{6}{7} & = e
\end{split}
\uendeq
	\ubegineq{U5cup3-simpl-4}
	\begin{split}
	\comm{1}{4} = \comm{1}{5} = \comm{1}{6} = \comm{1}{7} = \comm{2}{3} = \comm{2}{5} &=\\
	=\comm{2}{6} = \comm{3}{4} = \comm{3}{6} = \comm{3}{7} =  \comm{5}{7} &= e
	\end{split}
	\uendeq
 \ubegineq{U5cup3-simpl-5}
	\ug{7}=\ug{4}\ug{6}\ug{4}
	\uendeq
     \ubegineq{U5cup3-simpl-6}
	\ug{1}\ug{2}\ug{1}=\ug{4}\ug{5}\ug{3}\ug{5}\ug{4}.
	\uendeq
By Remark 2.8 in \cite{RTV}, we get also the relations
    \ubegineq{U5cup3-simpl-7}
	[\ug{2},\ug{4}\ug{7}\ug{4}] = [\ug{5},\ug{4}\ug{6}\ug{4}] = e.
	\uendeq
By Theorem 2.3 in \cite{RTV} we have $\Ggal \cong S_6$, therefore $\pi_1(\Xgal)$ is trivial.

\end{proof}

\uChernSummary{13}{36}{30}{14}{16}{\frac{19}{2}}{-}


\subsection{{$ X $ degenerates to $ U_6 $}}\label{section_computation_U_6}

\begin{figure}[H]
	\begin{center}
		
		\definecolor{circ_col}{rgb}{0,0,0}
		\begin{tikzpicture}[x=1cm,y=1cm,scale=2]
		
		\draw [fill=circ_col] (0,0) node [above] {1} circle (1pt);
		\draw [fill=circ_col] (1,0) circle (1pt);
		\draw [fill=circ_col] (-1,0) circle (1pt);
		\draw [fill=circ_col] (0.5,0.7) circle (1pt);
		\draw [fill=circ_col] (-0.5,0.7) circle (1pt);
		\draw [fill=circ_col] (0.5,-0.7) circle (1pt);
		\draw [fill=circ_col] (-0.5,-0.7) circle (1pt);
		
		\draw [-, line width = 1pt] (1,0) -- (0.5,0.7) -- (-0.5,0.7) -- (-1,0) -- (-0.5, -0.7) -- (0.5, -0.7) -- (1,0);
		
		\draw [-, line width = 1pt] (0,0) -- (1,0);
		\draw [-, line width = 1pt] (0,0) -- (0.5,0.7);
		\draw [-, line width = 1pt] (0,0) -- (0.5,-0.7);
		\draw [-, line width = 1pt] (0,0) -- (-1,0);
		\draw [-, line width = 1pt] (0,0) -- (-0.5,0.7);
		\draw [-, line width = 1pt] (0,0) -- (-0.5,-0.7);
		
		\end{tikzpicture}
	\end{center}
	\setlength{\abovecaptionskip}{-0.15cm}
	\caption{The arrangement of planes $U_6$.}\label{fig_computation_U_6}
\end{figure}

\begin{thm}\label{thm_computation_U_6}
	If $ X $ degenerates to $ U_6 $, then $ \pi_1(\Xgal) $ is trivial.
\end{thm}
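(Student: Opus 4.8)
The plan is to follow the same template used for the trivial cases above (for instance \Tref{thm_computation_U_0_7} and \Tref{thm_computation_U_3_1}): regenerate every vertex of the arrangement, collect the resulting relations in $G$, use them to force $\Gamma_i=\Gamma_i'$ for every line $i$, simplify the presentation of $\Ggal$, and finally invoke the Coxeter-cover machinery of \cite{RTV} to identify $\Ggal$ with $S_6$. Once $\Ggal\cong S_6$, the exact sequence \eqref{M-T} shows that $\pi_1(\Xgal)$, being the kernel of $\Ggal\to S_6$, is trivial.

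The arrangement $S_0$ for $U_6$ consists of six lines, all passing through the central vertex, which is an \emph{inner $6$-point} in the sense of \Dref{def:inner_and_outer}; by the $D_6$ symmetry of the hexagon the six outer vertices are $1$-points, exactly one lying on each line. First I would regenerate the six $1$-points. Each contributes a relation $\Gamma_i=\Gamma_i'$, and since every one of the six lines carries such a point, we obtain $\Gamma_i=\Gamma_i'$ for all $i=1,\dots,6$ in $\Ggal$ at once. This is the pleasant feature of the fully symmetric hexagonal case: in contrast with the cases that rely on \Lref{2pt-equal}--\Lref{3pt-out-smallmid}, no propagation argument is needed here to equate the generators with their primed counterparts.

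Next I would regenerate the inner $6$-point, whose braid monodromy is the one recorded for $6$-points in \cite{ZAPP}. Reading off the induced relations and substituting $\Gamma_i=\Gamma_i'$, the van Kampen factors of types $(2)$ and $(3)$ collapse to triple (braid) relations $\langle\Gamma_i,\Gamma_{i+1}\rangle=e$ between cyclically adjacent lines, commutation relations $[\Gamma_i,\Gamma_j]=e$ for the non-adjacent pairs, together with a small number of extra relations that express certain generators as conjugates of others and thereby ``close'' the cycle. I would note that there are no parasitic relations to adjoin: because all six lines meet at the $6$-point, every pair of lines already intersects in $X_0$, so the projection creates no new intersections. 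Adjoining only the projective relation, $\Ggal$ is then generated by $\Gamma_1,\dots,\Gamma_6$ subject to these relations, and the corresponding transpositions have braid/commutation graph equal to the $6$-cycle (the affine diagram $\widetilde{A}_5$).

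Finally, since the closing relations are precisely of the shape handled by \cite[Remark~2.8]{RTV} and \cite[Theorem~2.3]{RTV}, applying those results gives $\Ggal\cong S_6$, and hence $\pi_1(\Xgal)$ is trivial. The hard part will be the middle step: importing the full inner $6$-point braid monodromy from \cite{ZAPP}, carrying out the lengthy simplification of its relations after the substitution $\Gamma_i=\Gamma_i'$, and verifying that the resulting presentation genuinely meets the hypotheses of \cite[Theorem~2.3]{RTV}, i.e.\ that the cyclic (affine $\widetilde{A}_5$) Coxeter cover really collapses onto $S_6$ rather than onto a strictly larger group. By comparison, the $1$-point step and the concluding appeal to \cite{RTV} are routine.
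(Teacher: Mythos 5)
Your overall strategy is methodologically sound and matches the template used throughout the paper, but it is not the route the paper actually takes here: the paper's entire proof of \Tref{thm_computation_U_6} is the citation ``See \cite[Theorem 6]{ZAPP}'', since $U_6$ is precisely the planar Zappatic deformation of type $E_6$ (the inner $6$-point in the sense of \Dref{def:inner_and_outer}) already treated in that reference. What you propose is essentially to re-derive that result inside the present framework. Your preliminary observations are all correct: the six spokes are the only lines of $S_0$; each carries a $1$-point at its outer end, so $\Gamma_i=\Gamma_i'$ holds for all $i$ at once with no need for the propagation lemmas (\Lref{2pt-equal}--\Lref{3pt-out-smallmid}); there are no parasitic relations because every pair of lines already meets at the central point; and after substitution the braid/commutation pattern on $\Gamma_1,\dots,\Gamma_6$ is the $6$-cycle, so the extra ``cycle-closing'' relations coming from the $6$-point monodromy are genuinely indispensable before \cite[Theorem 2.3]{RTV} can be invoked --- the bare affine $\widetilde{A}_5$ presentation is infinite, as you note. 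The trade-off between the two routes is clear: the citation keeps the paper short and delegates the heavy $6$-point regeneration to a reference where it is done once and for all; your version would make the argument self-contained and uniform with the other cases. The caveat is that, as written, your proposal defers exactly the decisive step --- importing the inner $6$-point braid monodromy, carrying out the lengthy simplification of its relations, and verifying the hypotheses of \cite[Theorem 2.3]{RTV} --- so it currently stands as a correct plan rather than a complete proof; if you intend to pursue it instead of citing \cite{ZAPP}, that computation is the entire content of the argument and cannot remain a placeholder.
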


\begin{proof}
	See \cite[Theorem 6]{ZAPP}.

\end{proof}

\uChernSummary{12}{24}{24}{12}{9}{7}{-\frac{5}{3}}

\end{document}